\newtheorem{theorem}{Theorem}
\newtheorem{corollary}[theorem]{Corollary}
\newtheorem{lemma}[theorem]{Lemma}
\newtheorem{proposition}[theorem]{Proposition}
\newtheorem{question}[theorem]{Question}
\newtheorem{prob}[theorem]{Problem}
\newtheorem{conjecture}[theorem]{Conjecture}
\newtheorem{claim}[theorem]{Claim}
\newtheorem{observation}[theorem]{Observation}
\theoremstyle{definition}
\newtheorem{defin}[theorem]{Definition}
\newtheorem{rem}[theorem]{Remark}
\newcommand{\bsf}[1]{\bm{\mathsf{#1}}}
\titleformat{\section}[hang]{\scshape\large\bfseries\filcenter}{\S\thesection}{4pt}{}
\titleformat{\subsection}[hang]{\scshape\bfseries}{\thesubsection.}{4pt}{}
\newcommand{\tss}[1]{\textsuperscript{#1}}
\newcommand{\on}[1]{
	\operatorname{#1}
}
\newcommand\mder{{\Delta\!\!\!\!\!\hbox{\raisebox{0.3ex}{\tiny\ \textbullet}}}\,}
\newcommand{\tdt}{\times\cdots\times}
\newcommand{\tightoverset}[2]{
  \mathop{#2}\limits^{\vbox to -.5ex{\kern-1.15ex\hbox{$#1$}\vss}}}
\newcommand\restr[2]{{
  \left.\kern-\nulldelimiterspace 
  #1 
  \vphantom{\big|} 
  \right|_{#2} 
}}
\newcommand{\subalign}[1]{%
  \vcenter{%
    \Let@ \restore@math@cr \default@tag
    \baselineskip\fontdimen10 \scriptfont\tw@
    \advance\baselineskip\fontdimen12 \scriptfont\tw@
    \lineskip\thr@@\fontdimen8 \scriptfont\thr@@
    \lineskiplimit\lineskip
    \ialign{\hfil$\m@th\scriptstyle##$&$\m@th\scriptstyle{}##$\hfil\crcr
      #1\crcr
    }%
  }%
}
\newcommand\blfootnote[1]{%
  \begingroup
  \renewcommand\thefootnote{}\footnote{#1}%
  \addtocounter{footnote}{-1}%
  \endgroup
}
\newcommand\ssk[1]{
	\substack{#1}
}
\newcommand\ex{\mathop{\mathbb{E}}}
\newcommand{\exx}{
  \mathop{
    \mathchoice{\vcenter{\hbox{\larger[4]$\mathbb{E}$}}}
               {\kern0pt\mathbb{E}}
               {\kern0pt\mathbb{E}}
               {\kern0pt\mathbb{E}}
  }\displaylimits
}
\newcommand*\bcdot{\mathpalette\bigcdot@{0.5}}
\newcommand*\bigcdot@[2]{\mathbin{\vcenter{\hbox{\scalebox{#2}{$\m@th#1\bullet$}}}}}
\def\blfootnote{\gdef\@thefnmark{}\@footnotetext}
\newcommand\id{\mathbbm{1}}
\begin{document}
\begin{center}\Large\noindent{\bfseries{\scshape Quantitative inverse theorem for Gowers uniformity norms $\mathsf{U}^5$ and $\mathsf{U}^6$ in $\mathbb{F}_2^n$}}\\[24pt]\normalsize\noindent{\scshape Luka Mili\'cevi\'c\dag}\\[6pt]
\end{center}
\blfootnote{\noindent\dag\ Mathematical Institute of the Serbian Academy of Sciences and Arts\\\phantom{\dag\ }Email: luka.milicevic@turing.mi.sanu.ac.rs}

\footnotesize
\begin{changemargin}{1in}{1in}
\centerline{\sc{\textbf{Abstract}}}
\phantom{a}\hspace{12pt}~We prove quantitative bounds for the inverse theorem for Gowers uniformity norms $\mathsf{U}^5$ and $\mathsf{U}^6$ in $\mathbb{F}_2^n$. The proof starts from an earlier partial result of Gowers and the author which reduces the inverse problem to a study of algebraic properties of certain multilinear forms. The bulk of the work in this paper is a study of the relationship between the natural actions of $\on{Sym}_4$ and $\on{Sym}_5$ on the space of multilinear forms and the partition rank, using an algebraic version of regularity method. Along the way, we give a positive answer to a conjecture of Tidor about approximately symmetric multilinear forms in 5 variables, which is known to be false in the case of 4 variables. Finally, we discuss the possible generalization of the argument for $\mathsf{U}^k$ norms.
\end{changemargin}
\normalsize
\section{Introduction}

Let us begin by recalling the definition of Gowers uniformity norms~\cite{TimSze}.  

\begin{defin}Let $G$ be a finite abelian group. The \emph{discrete multiplicative derivative operator} $\mder_a$ for shift $a \in G$ is defined by $\mder_a f(x) = f(x + a)\overline{f(x)}$ for functions $f \colon G \to \mathbb{C}$.\\
\indent Let $f \colon G \to \mathbb{C}$ be a function. The \emph{Gowers uniformity norm} $\|f\|_{\mathsf{U}^k}$ is given by the formula
\[\|f\|_{\mathsf{U}^k} = \Big(\exx_{x,a_1,\dots, a_k} \mder_{a_1} \dots \mder_{a_k} f(x) \Big)^{1/2^k}.\]
\end{defin}

We now briefly discuss Gowers uniformity norms (the first part of the introduction of this paper is similar to that in~\cite{FarSymm}). It is well-known that $\|\cdot\|_{\mathsf{U}^k}$ is indeed a norm for $k \geq 2$. The inverse question for Gowers uniformity norms, a central problem in additive combinatorics, asks for a description of functions $f \colon G \to \mathbb{D} = \{z \in \mathbb{C} \colon |z| = 1\}$ whose norm is larger than some constant $c > 0$. Namely, for a given finite abelian group $G$ and the norm $\|\cdot\|_{\mathsf{U}^k}$ we seek a family $\mathcal{Q}$ of functions
from $G$ to $\mathbb{D}$ with the properties that
\begin{itemize}
\item whenever $f \colon G \to \mathbb{D}$ has $\|f\|_{\mathsf{U}^k} \geq c$ then we have correlation $\Big|\ex_{x} f(x) \overline{q(x)}\Big| \geq \Omega_{c}(1)$ for some obstruction function $q \in \mathcal{Q}$, and, 
\item the family of obstructions $\mathcal{Q}$ is roughly minimal in the sense that if for some obstruction function $q\in \mathcal{Q}$ we have $\Big| \ex_{x} f(x) \overline{q(x)}\Big| \geq c$ then $\|f\|_{\mathsf{U}^k} \geq  \Omega_{c}(1)$.
\end{itemize} 

Two classes of groups for which this problem has been most intensively studied are cyclic groups of prime order (denoted $\mathbb{Z}/N\mathbb{Z}$) and finite-dimensional vector spaces over prime fields (denoted $\mathbb{F}_p^n$). When $G = \mathbb{F}_p^n$, in the so-called `high characteristic case' $p \geq k$, Bergelson, Tao and Ziegler~\cite{BergelsonTaoZiegler} proved an inverse theorem in which they took phases of polynomials as the obstruction family. Tao and Ziegler~\cite{TaoZiegler} extended their results to the `low characteristic case' $p < k$, by proving an inverse theorem with phases of \emph{non-classical polynomials} as obstruction. We shall discuss non-classical polynomials slightly later, but for now it is enough to mention that these arise as the solutions of the extremal problem of finding functions $f\colon \mathbb{F}_p^n \to \mathbb{D}$ with $\|f\|_{\mathsf{U}^k} = 1$. On the other hand, when $G = \mathbb{Z}/N\mathbb{Z}$, an inverse theorem was proved by Green, Tao, and Ziegler~\cite{StrongUkZ} and in that setting one need the theory of nilsequences to describe obstructions. Let us also mention the theory of nilspaces, developed in papers by Szegedy~\cite{Szeg}, Camarena and Szegedy~\cite{CamSzeg} and with further improvements, generalizations and contributions by Candela~\cite{CandelaNotes1},~\cite{CandelaNotes2}, Candela and Szegedy~\cite{CandelaSzegedy1},~\cite{CandelaSzegedy2} and Gutman, Manners and Varj\'u~\cite{GMV1},~\cite{GMV2},~\cite{GMV3}, which can be used to give alternative proofs of these inverse results. In particular, Candela, Gonz\'alez-S\'anchez and Szegedy~\cite{nilspacesCharp} were recently able to give an  alternative proof the Tao-Ziegler inverse theorem.\\

When it comes to the question of bounds, it should be noted that all the works above use infinitary methods or regularity lemmas and therefore give ineffective results. Effective bounds were first proved for inverse question for $\|\cdot\|_{\mathsf{U}^3}$ norm by Green and Tao~\cite{StrongU3} for abelian groups of odd order and by Samorodnitsky when $G = \mathbb{F}_2^n$ in~\cite{SamorU3} (see also a recent work of Jamneshan and Tao~\cite{U3AsgarTao}). For $\|\cdot\|_{\mathsf{U}^4}$ norm for the vector space case $G = \mathbb{F}_p^n$, quantitative bounds we obtained by Gowers and the author~\cite{U4paper} when $p \geq 5$ and by Tidor~\cite{Tidor} for $p < 5$. Finally, for general values of $k$, quantitative bounds were achived by Manners~\cite{Manners} in the $G = \mathbb{Z}/N\mathbb{Z}$ setting and by Gowers and the author~\cite{multihomPaper} in $G = \mathbb{F}_p^n$ in the case of high characteristic.\\

The question of getting quantitative bounds in the low characteristic case in the vector space setting is still open. Nevertheless, as a part of the proof of inverse theorem in the high characteristic case~\cite{multihomPaper}, we have the following partial result which holds independently of the characteristic assumption. 
 
\begin{theorem}[Gowers and Mili\'cevi\'c~\cite{multihomPaper}]\label{partialInverseTheorem}Suppose that $f \colon \mathbb{F}^n_p \to \mathbb{D}$ is a function such that $\|f\|_{\mathsf{U}^{k}} \geq c$. Then there exists a multilinear form $\alpha \colon \underbrace{\mathbb{F}^n_p \times \mathbb{F}^n_p \times \dots \times \mathbb{F}^n_p}_{k-1} \to \mathbb{F}_p$ such that
\begin{equation}\label{partialCorrelationForm}\Big| \exx_{x, a_1, \dots, a_{k-1}} \mder_{a_1} \dots \mder_{a_{k-1}} f(x) \omega^{\alpha(a_1, \dots, a_{k-1})}\Big| \geq \Big(\exp^{(O_k(1))}(O_{k,p}(c^{-1}))\Big)^{-1}.\end{equation}
\end{theorem}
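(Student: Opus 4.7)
I would prove Theorem~\ref{partialInverseTheorem} by induction on $k$. For the base case $k=2$, the quantitative $\mathsf{U}^2$ inverse theorem over $\mathbb{F}_p^n$ gives a linear form $L$ such that $|\ex_x f(x)\,\omega^{-L(x)}|\geq c^2$; squaring and reindexing via $y=x+a$ yields $|\ex_{x,a}\mder_{a}f(x)\,\omega^{-L(a)}|\geq c^4$, which is the required bound with $\alpha(a)=-L(a)$.

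For the inductive step, assume the result for $k-1$ and suppose $\|f\|_{\mathsf{U}^k}\geq c$. Using the identity
\[\|f\|_{\mathsf{U}^k}^{2^k}=\exx_{a_1}\|\mder_{a_1}f\|_{\mathsf{U}^{k-1}}^{2^{k-1}}\]
together with Markov's inequality, pass to a set $A\subseteq\mathbb{F}_p^n$ of density at least $c^{2^k}/2$ on which $\|\mder_{a_1}f\|_{\mathsf{U}^{k-1}}\geq c'$, with $c'$ polynomial in $c$. Applying the inductive hypothesis to $\mder_{a_1}f$ for each $a_1\in A$ produces a $(k-2)$-multilinear form $\beta_{a_1}$ such that
\[\Big|\exx_{x,a_2,\ldots,a_{k-1}}\mder_{a_1}\mder_{a_2}\cdots\mder_{a_{k-1}}f(x)\,\omega^{\beta_{a_1}(a_2,\ldots,a_{k-1})}\Big|\geq\delta,\]
where $\delta^{-1}$ is a tower of height $O(k-1)$ in $(c')^{-1}$.

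The central task is then to glue the family $\{\beta_{a_1}\}_{a_1\in A}$ into a single form $\alpha(a_1,\ldots,a_{k-1})$ that is multilinear in all $k-1$ coordinates, in particular linear in the new coordinate $a_1$. The key algebraic input is the factorization $\mder_{a_1+a_1'}f(x)=\mder_{a_1}f(x+a_1')\cdot\mder_{a_1'}f(x)$ which, after averaging over pairs $(a_1,a_1')\in A\times A$ with $a_1+a_1'\in A$ and several Cauchy--Schwarz passes exploiting the (essentially) symmetric dependence of $\mder_{a_1}\cdots\mder_{a_{k-1}}f$ on its shifts, forces the difference phase $\beta_{a_1}+\beta_{a_1'}-\beta_{a_1+a_1'}$ to correlate only with lower-complexity multilinear structure. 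A Bogolyubov-type cleaning then upgrades this approximate additivity in $a_1$ to genuine linearity (after allowing modification by forms of lower degree), producing the desired $\alpha$.

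The main obstacle is precisely this linearization: the $\beta_{a_1}$ are determined only modulo lower-degree multilinear forms, and one has to track this indeterminacy through each Cauchy--Schwarz pass so that it does not destroy the sought additive structure in $a_1$. Each Cauchy--Schwarz also squares the relevant parameter, which is what produces the tower-type bound $\exp^{(O_k(1))}(O_{k,p}(c^{-1}))$ appearing in the statement.
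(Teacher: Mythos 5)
First, a point of reference: this paper does not prove Theorem~\ref{partialInverseTheorem} at all — it is imported as a black box from~\cite{multihomPaper} — so there is no internal proof to compare with; the comparison below is with the argument of that reference, whose top-level strategy your outline does correctly reproduce (induct on $k$, pass to a dense set $A$ of directions $a_1$ with $\|\mder_{a_1}f\|_{\mathsf{U}^{k-1}}$ large, apply the inductive hypothesis to get forms $\beta_{a_1}$, then glue).

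The genuine gap is the gluing step, which you compress into ``several Cauchy--Schwarz passes'' followed by ``a Bogolyubov-type cleaning'': that step is essentially the entire content of the cited work (the inverse theorem for Freiman multi-homomorphisms), not a routine finish. Concretely: (a) the indeterminacy in $\beta_{a_1}$ is not ``lower-degree multilinear structure'' but modification by multilinear forms of bounded \emph{partition rank}, and even detecting the approximate additivity $\beta_{a_1}+\beta_{a_1'}\approx\beta_{a_1+a_1'}$ in that quotient requires the partition-rank-versus-analytic-rank theorem (Theorem~\ref{biasedinversethm}); (b) what one actually obtains is that $a_1\mapsto\beta_{a_1}$ respects only a positive \emph{proportion} of additive quadruples up to low partition rank, and upgrading such a sparsely-respecting approximate homomorphism into the space of multilinear forms modulo low partition rank to an exact globally multilinear $\alpha$ is not a single Bogolyubov--Ruzsa application — it needs a multilinear Bogolyubov-type theorem, extension theorems for multilinear maps on multilinear varieties (cf.~\cite{extensionsPaper}), and an algebraic regularity method; and (c) the linearization must be carried out in all $k-1$ coordinates, not just in $a_1$, since the inductively produced object is a multi-homomorphism in several variables simultaneously. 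Your closing attribution of the tower-type bound to the squaring in Cauchy--Schwarz is also inaccurate: squarings cost only polynomial factors, and the tower height comes from composing $O_k(1)$ Bogolyubov/regularity-type steps, each exponential. So the road map is right, but the central theorem it relies on is asserted rather than proved.
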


The notation $\exp^{(t)}$ stands for the composition of $t$ exponentials, i.e.\ the tower of exponentials of height $t$.\\
\indent An important notion in the following discussion is that of partition rank of a multilinear form, which was introduced by Naslund in~\cite{Naslund} and which we now recall. The \emph{partition rank} of a multilinear form $\alpha\colon G^d \to \mathbb{F}_p$, denoted $\on{prank} \alpha$, is the least number $m$ such that a multilinear form $\alpha \colon G^d \to \mathbb{F}_p$ can be written as
\[\alpha(x_1, \dots, x_d) = \sum_{i \in [m]} \beta_i(x_j \colon j \in I_i) \gamma_i(x_j \colon j \in [d] \setminus I_i),\]
where $\beta_i \colon G^{I_i} \to \mathbb{F}_p$ and $\gamma_i \colon G^{[d] \setminus I_i} \to \mathbb{F}_p$ are multilinear forms for $i \in [m]$. One of the ways to think about this quantity is that the distance between two forms can be measured by the partition rank of their difference.\\

The proof of high characteristic case of the inverse theorem then proceeds by studying the properties the multilinear form $\alpha$ satisfying~\eqref{partialCorrelationForm}. As remarked in~\cite{multihomPaper}, it is plausible this theorem could be used in the low characteristic case as well. For example, it turns out that condition~\eqref{partialCorrelationForm} itself implies that $\on{prank}(\alpha - \alpha') \leq O_c(1)$, where $\alpha'$ is any form obtained from $\alpha$ by permuting some of its variables. This follows from the symmetry argument of Green and Tao~\cite{StrongU3} and the inverse theorem for biased multilinear forms~\cite{Janzer2},~\cite{LukaRank} (i.e.\ the so-called partition vs.\ analytic rank problem), and we do not need any assumptions on the characteristic of the field for this conclusion. The characteristic becomes relevant when we want to pass from an approximately symmetric multilinear form (meaning that differences $\alpha - \alpha'$ above are of low partition rank, rather than 0) to an exactly symmetric form (meaning that it is invariant under permutations of variables). In the high characteristic case it is trivial to achieve this, but in the low characteristic case this task becomes considerably harder.\\ 
\indent Once that we know that there exists a symmetric multilinear form $\sigma$ such that $\on{prank}(\alpha - \sigma)$ is small, we may deduce that~\eqref{partialCorrelationForm} holds for $\sigma$ in place of $\alpha$, as it turns out that~\eqref{partialCorrelationForm} is robust under modification by forms of small partition rank. Final observation required in the high characteristic case is that symmetric multilinear forms are precisely the additive derivatives of polynomials. This fact allows us to reduce the proof to the case of the inverse theorem for $\|\cdot\|_{\mathsf{U}^{k-1}}$ norm, which we assume by induction.\\ 

Note that there are two places in the argument above which rely on the fact that $p \geq k$. The first one is the relationship between approximately symmetric and exactly symmetric multilinear forms, and the second one is the description of the additive derivatives of polynomials. Tidor~\cite{Tidor} was able to resolve these two issues for trilinear multilinear forms and carry out the strategy of elucidating the structure of multilinear form provided by Theorem~\ref{partialInverseTheorem} in $\mathbb{F}_2^n$, thus proving a quantitative inverse theorem for $\|\cdot\|_{\mathsf{U}^4}$ norm in the low characteristic case. For the first issue, it turns out that in the case of trilinear forms, approximately symmetric forms are close to exactly symmetric ones. For the second issue, we need a definition. We say that a multilinear form $\sigma \colon G^k \to \mathbb{F}_p$ is \emph{strongly symmetric} if it is symmetric and the multilinear form $(x,y_{p+1}, \dots, y_k) \mapsto  \sigma(x,\dots, x, y_{p+1}, \dots, y_k)$ is also symmetric (where $x$ occurs $p$ times). It turns out we again have a rather satisfactory description, namely that order $k$ additive derivatives of generalized polynomials of degree at most $k$ are precisely strongly symmetric multilinear forms in $k$ variables (this is Proposition 3.5~\cite{Tidor}, see also~\cite{TaoZiegler}). Once we know that a form in~\eqref{partialCorrelationForm} is symmetric, it is not hard to show that it is strongly symmetric using similar arguments. However, while the description of additive derivatives of generalized polynomials holds for all numbers of variables, it turns out that this is surprisingly not the case with approximate symmetry problem.

\begin{theorem}[\cite{FarSymm}]\label{4varscounter}Given a sufficiently large positive integer $n$ there exists a multilinear form $\alpha \colon \mathbb{F}_2^n \times \mathbb{F}_2^n \times \mathbb{F}_2^n \times \mathbb{F}_2^n \to \mathbb{F}_2$ which is 3-approximately symmetric in the sense that $\on{prank}(\alpha + \alpha') \leq 3$ for any $\alpha'$ obtained from $\alpha$ by permuting its variables, and $\on{prank}(\sigma + \alpha) \geq \Omega(\sqrt[3]{n})$ for all symmetric multilinear forms $\sigma$.\end{theorem}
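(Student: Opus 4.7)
The theorem asks for a $4$-linear form $\alpha$ on $\mathbb{F}_2^n$ whose $\on{Sym}_4$-orbit sits within partition rank $3$ of itself, yet which is at partition rank $\Omega(\sqrt[3]{n})$ from every symmetric $4$-linear form. The plan has three pieces: a construction, a cheap verification of the approximate symmetry, and a harder lower-bound step.

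For the construction, my first instinct -- identify $\mathbb{F}_2^n \simeq \mathbb{F}_{2^n}$ and try a Frobenius-twisted trace $\on{Tr}(x_1 x_2^2 x_3^4 x_4^8)$, or a simple product of bilinear forms like $B(x_1, x_2)\, B(x_3, x_4)$ -- tends to produce forms that are \emph{already} close in partition rank to the symmetric form obtained by explicit $\on{Sym}_4$-averaging, so these naive candidates are not themselves counterexamples. The essential feature a counterexample must possess is that the non-symmetric content of $\alpha$ cannot be absorbed by adding \emph{any} symmetric form. I would therefore look for $\alpha$ engineered from a cohomological invariant: a sum of ``twisted'' rank-$1$ tensor products over $\sim \sqrt[3]{n}$ independent subspaces of $\mathbb{F}_2^n$, where the twists across subspaces are tuned so that their pairwise transposition differences cancel to a single low-rank remainder (rather than accumulating across subspaces), and so that no symmetric form can match the twists on all subspaces simultaneously.

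Assuming such a layered $\alpha$ has been produced, the verification of property (1), $\on{prank}(\alpha + \alpha^\pi) \le 3$, is the easy half: if for each transposition $(i,j)$ the construction is arranged so that $\on{prank}(\alpha + \alpha^{(ij)}) \le 1$, then writing a general $\pi \in \on{Sym}_4$ as a product of at most three transpositions and telescoping gives $\on{prank}(\alpha + \alpha^\pi) \le 3$. The hard half is the lower bound $\on{prank}(\alpha + \sigma) \ge \Omega(\sqrt[3]{n})$ for every symmetric $\sigma$. My approach is to use the equivalence between partition and analytic ranks (Janzer--Mili\'cevi\'c), reducing the problem to the bias estimate
\[
\Big|\, \mathbb{E}_{x_1, x_2, x_3, x_4}\, (-1)^{(\alpha + \sigma)(x_1, x_2, x_3, x_4)} \Big| \le 2^{-\Omega(\sqrt[3]{n})}
\]
uniformly over symmetric $\sigma$. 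Expanding $\sigma$ in a basis adapted to the $\sqrt[3]{n}$ subspaces of the construction and factoring the expectation subspace-by-subspace, one would argue that inside each subspace the asymmetric piece of $\alpha$ produces a fixed constant of cancellation that no symmetric $\sigma$ can restore, and that these cancellations multiply across the independent subspaces.

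The main obstacle is making ``non-absorbability'' of the asymmetric piece precise and local to each subspace. Conceptually one wants an invariant of $\alpha$ -- a class in a cohomology-like quotient of $4$-linear forms by symmetric and low-partition-rank forms -- that is nonzero, additive across independent subspaces, and, critically, survives perturbation $\sigma \mapsto \sigma + (\text{low partition rank})$. The positive $5$-variable result established in the paper suggests that the relevant algebraic identity is specific to the action of $\on{Sym}_4$ on $4$-tensors in characteristic $2$, and isolating this identity -- so as to know which subspaces to pick and how to twist the tensor products -- is, in my view, the technical heart of the theorem.
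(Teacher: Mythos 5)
This theorem is quoted from~\cite{FarSymm} and is not proved in the present paper, so there is no internal proof to compare against; the closest the paper comes is the description of the counterexample in the concluding remarks. Your proposal has a genuine gap: it never actually constructs $\alpha$. Everything after ``assuming such a layered $\alpha$ has been produced'' is conditional on an object you have not exhibited, and you yourself identify the construction (``which subspaces to pick and how to twist the tensor products'') as the technical heart --- which is to say, the entire content of the theorem is deferred. The telescoping argument for the upper bound (a permutation is a product of at most three transpositions) is a fine template, and the lower-bound route via bias is, in principle, equivalent to a partition-rank lower bound up to polynomial losses in the exponent (small bias gives a large analytic rank, and analytic rank is bounded above by partition rank; the hard Janzer--Mili\'cevi\'c direction is only needed for the converse). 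But neither step can be checked, and the ``factor the expectation subspace-by-subspace'' plan presupposes a direct-sum structure that the actual example does not have.

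For the record, the counterexample of~\cite{FarSymm} is not a layered sum over $\sqrt[3]{n}$ independent subspaces. It is a single form $\alpha$, symmetric in $x_1,x_2,x_3$, engineered so that
\[\alpha + \alpha\circ(3\,\,4) = \sigma(x_1,x_3)\sigma(x_2,x_4) + \sigma(x_2,x_3)\sigma(x_1,x_4)\]
for one high-rank symmetric bilinear form $\sigma$. The right-hand side has partition rank $2$, and together with the symmetry of $\alpha$ in the first three variables (cf.\ the identity of Lemma~\ref{weaksymmetryextends}) this yields the bounded partition-rank defect for every permutation. The lower bound then rests on showing that any symmetric $\sigma'$ with $\on{prank}(\alpha+\sigma')=r$ would force the bilinear form $\sigma$ itself to have rank polynomial in $r$ (whence the exponent $1/3$), contradicting its high rank; this is exactly the ``non-absorbability'' mechanism you are looking for, but it is an algebraic statement about low partition rank decompositions of $\alpha+\sigma'$ rather than a multiplicative bias computation over independent blocks. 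Your proposed invariant ``additive across independent subspaces'' does not match how the obstruction actually arises, so the proposal cannot be completed as written.
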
 

With all this in mind, the main remaining obstacle in the way of the quantitative inverse theorem for uniformity norms can be formulated as follows. 

\begin{prob}\label{corrtossymm}Suppose that $f \colon \mathbb{F}_p^n \to \mathbb{D}$ is a function and that $\alpha \colon G^{k} \to \mathbb{F}_p$ is a multilinear form. Assume that
\begin{equation}\Big|\exx_{a_1, \dots, a_k, x} \mder_{a_1, \dots, a_k} f(x) \omega^{\alpha(a_1, \dots, a_k)}\Big| \geq c.\label{mainproblemcorrassumption}\end{equation}
Show that there exists a strongly symmetric multilinear form $\sigma \colon G^k \to \mathbb{F}_p$ such that $\on{prank}(\alpha - \sigma)$ is quantitatively bounded in terms of $k,p$ and $c$.\end{prob}

It should be noted that in a qualitative sense this follows from the inverse theorem of Tao and Ziegler, and that, in the light of Theorem~\ref{4varscounter}, the assumption~\eqref{mainproblemcorrassumption} is essential.\\ 

Our main result in this paper is that, for $k \in \{4,5\}$, we may overcome the additional difficulties caused by irregular behaviour of approximately symmetric forms. 

\begin{theorem}\label{mainthm}Let $k \in \{4,5\}$, $f \colon \mathbb{F}_2^n \to \mathbb{D}$ a function and $\alpha \colon (\mathbb{F}_2^n)^k \to \mathbb{F}_2$ a multilinear form in $k$ variables. Suppose that 
\begin{equation}\Big|\exx_{a_1, \dots, a_k, x} \mder_{a_1, \dots, a_k} f(x) (-1)^{\alpha(a_1, \dots, a_k)}\Big| \geq c.\label{mainthmassn}\end{equation}
Then there exists a strongly symmetric multilinear form $\sigma \colon (\mathbb{F}_2^n)^k \to \mathbb{F}_2$ such that $\alpha + \sigma$ has partition rank at most $O(\exp^{(O(1))} c^{-1})$.
\end{theorem}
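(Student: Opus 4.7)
The plan is to follow the three-stage programme outlined in the discussion preceding the statement: first, use~\eqref{mainthmassn} to show that $\alpha$ is approximately symmetric; second, upgrade this to exact symmetry; third, upgrade to strong symmetry. For the first stage, starting from~\eqref{mainthmassn} I apply the Green--Tao symmetrisation argument: for each transposition $\tau \in \on{Sym}_k$, I compare the correlations corresponding to $\alpha$ and to $\alpha \circ \tau$, using Cauchy--Schwarz to eliminate $f$. This yields that $\alpha + \alpha \circ \tau$ is analytically biased with bias polynomial in $c$, and the partition vs.\ analytic rank theorem of~\cite{Janzer2,LukaRank} then gives $\on{prank}(\alpha + \alpha \circ \pi) \leq O(\exp^{(O(1))} c^{-1})$ for every $\pi \in \on{Sym}_k$, so $\alpha$ is quantitatively approximately symmetric.

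The second stage is where the bulk of the work lies. For $k = 5$, I would prove Tidor's conjecture in its purely algebraic form: any $5$-linear form which is approximately symmetric in the partition-rank sense is close in partition rank to an exactly symmetric form. I would approach this via the algebraic regularity method alluded to in the abstract -- decompose $\alpha$ modulo forms of bounded partition rank into an equivariant arrangement of ``irreducible'' multilinear pieces, analyse the induced $\on{Sym}_5$ action on this decomposition, and show that the resulting orbit structure combined with the approximate invariance forces exact $\on{Sym}_5$-invariance on each piece. Reassembling the pieces then produces the desired exactly symmetric form.

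For $k = 4$, Theorem~\ref{4varscounter} rules out any purely algebraic assertion of this kind, so the correlation hypothesis~\eqref{mainthmassn} must enter the argument directly. My plan is to apply an algebraic regularity decomposition to $\alpha$ and to transport~\eqref{mainthmassn} through it, so that the correlation becomes an extra compatibility condition on the $\on{Sym}_4$-orbit pieces -- exactly the information needed to rule out the pathological configurations of Theorem~\ref{4varscounter} and to patch the local invariants into a global symmetric $\sigma$. This is the decisive obstacle of the proof: the correlation has to be threaded through the algebraic regularity argument while the number of regularisation steps is kept bounded, so that the final partition rank bound stays within the tower $\exp^{(O(1))} c^{-1}$.

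For the third stage, since~\eqref{mainthmassn} is robust under modifications of $\alpha$ by forms of bounded partition rank (with a loss in $c$ that is polynomial in that rank), it continues to hold with $\sigma$ in place of $\alpha$. A derivative argument in the spirit of Tidor~\cite{Tidor}, applying Stage~1 to the restricted form $(x, y_3, \dots, y_k) \mapsto \sigma(x, x, y_3, \dots, y_k)$ using a lower-order correlation extracted from~\eqref{mainthmassn}, then yields that this restricted form is itself symmetric, which is the strong symmetry condition and completes the proof.
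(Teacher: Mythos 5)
Your Stage 1 matches the paper's Lemma~\ref{symmArgumentCor}, and you are right that for $k=5$ a purely algebraic approximate-to-exact symmetry statement holds (this is Corollary 9, via Theorems~\ref{approximateSymmetry2} and~\ref{mainSymmExtn5}). But there is a genuine gap in how you propose to handle $k=4$ and in your Stage 3. The paper's decisive mechanism for $k=4$ is not to ``thread the correlation through the regularity decomposition as a compatibility condition on orbit pieces''; it is an induction on $k$. One applies the $(k-1)$-case of the theorem to derivatives $\mder_b f$ (after a Bogolyubov--Ruzsa step) to show that the diagonal restriction $(d,a_3,\dots,a_k)\mapsto\alpha(d,d,a_3,\dots,a_k)$ is a strongly symmetric form plus a bounded-partition-rank form; one then subtracts a lift of the strongly symmetric part (Lemma~\ref{liftingssforms}, absorbed into $f$ via Lemma~\ref{ssformintegration}) and invokes Theorem~\ref{copiesPropertyVanish} to reduce to a form with $\alpha(u,u,x_3,x_4)=0$. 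Only under that vanishing hypothesis does the symmetry extension from $x_{[3]}$ to $x_{[4]}$ go through (Theorem~\ref{mainSymmExtn4}); this is exactly how the counterexample of Theorem~\ref{4varscounter} is evaded. Your plan never identifies this reduction to ``classical'' (vanishing-diagonal) forms, nor the induction on $k$ that produces it, and it is not clear that your alternative route could be made to work, since the obstruction in Theorem~\ref{4varscounter} is a single high-rank product that no amount of $\on{Sym}_4$-orbit analysis alone will remove.

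Your Stage 3 would also fail as stated. Applying the symmetrisation argument to the diagonal form $(x,y_3,\dots,y_k)\mapsto\sigma(x,x,y_3,\dots,y_k)$ yields only that this $(k-1)$-linear form is \emph{approximately} symmetric, whereas strong symmetry requires it to be \emph{exactly} symmetric; and even an exact symmetrisation of the diagonal form does not by itself produce a low-partition-rank modification of $\sigma$ whose diagonal is that symmetric form. Closing this requires precisely the lifting and diagonal-killing machinery above (Lemmas~\ref{liftingssforms},~\ref{ssformintegration} and Theorem~\ref{copiesPropertyVanish}), and in the paper the strong-symmetry bookkeeping is interleaved with the variable-by-variable symmetrisation (properties \textbf{(ii)} and \textbf{(iv)} of the claim in Step 2) rather than performed as a separate final stage --- for $k=4$ it cannot be postponed, because the vanishing diagonal is a hypothesis of the symmetry extension step itself.
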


In fact, as we shall explain in the outline of the proof and in the concluding remarks, most of the arguments work for higher values of $k$, and the arguments of the proof of Theorem~\ref{mainthm} are capable of almost proving the general case. As we shall explain in the concluding remarks, the proof breaks only for multilinear forms with very special properties; see Conjecture~\ref{finaldiffconj}.\\

As a corollary, we deduce the quantitative inverse theorems for $\|\cdot\|_{\mathsf{U}^5}$ and $\|\cdot\|_{\mathsf{U}^6}$ norms in $\mathbb{F}_2^n$.

\begin{corollary}\label{inverseu5u6cor}Let $k \in \{5,6\}$ and let $f \colon \mathbb{F}_2^n \to \mathbb{D}$ be a function such that $\|f\|_{\mathsf{U}^k} \geq c$. Then there exists a generalized polynomial $q\colon \mathbb{F}_2^n \to \mathbb{T}$ of degree at most $k - 1$ such that 
\[\Big|\exx_x f(x)\exp\Big(2\pi i q(x)\Big)\Big| \geq  \Big(\exp^{(O(1))}(O(c^{-1}))\Big)^{-1}.\]
\end{corollary}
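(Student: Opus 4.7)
The plan is to derive the corollary from Theorem~\ref{mainthm} by induction on $k$, using Theorem~\ref{partialInverseTheorem} to produce the multilinear form required as input to the main theorem, and the characterization of strongly symmetric multilinear forms as iterated additive derivatives of generalized polynomials (Proposition~3.5 of~\cite{Tidor}) to convert structural information about the form into structural information about the function. The base case $k=5$ reduces to Tidor's $\mathsf{U}^4$ inverse theorem in $\mathbb{F}_2^n$~\cite{Tidor}, and the case $k=6$ relies on the $k=5$ case having been established first.

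Concretely, starting from $\|f\|_{\mathsf{U}^k}\geq c$ with $k\in\{5,6\}$, Theorem~\ref{partialInverseTheorem} supplies a multilinear form $\alpha\colon (\mathbb{F}_2^n)^{k-1}\to\mathbb{F}_2$ with
\[\Big|\exx_{x,a_1,\dots,a_{k-1}} \mder_{a_1}\cdots\mder_{a_{k-1}} f(x)\,(-1)^{\alpha(a_1,\dots,a_{k-1})}\Big|\geq c_1,\]
where $c_1^{-1}\leq \exp^{(O(1))}(O(c^{-1}))$. Since $k-1\in\{4,5\}$ falls into the regime covered by Theorem~\ref{mainthm}, I feed this in to obtain a strongly symmetric $\sigma$ with $\on{prank}(\alpha+\sigma)\leq r$ and $r\leq \exp^{(O(1))}(O(c^{-1}))$. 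Writing $\alpha+\sigma = \sum_{i=1}^r \beta_i\gamma_i$ and using the elementary identity $(-1)^{\beta\gamma}=\tfrac12\bigl(1+(-1)^\beta+(-1)^\gamma-(-1)^{\beta+\gamma}\bigr)$ to decompose $(-1)^{\alpha+\sigma}$ into $4^r$ products of functions depending only on complementary subsets of variables, a standard iterated Cauchy-Schwarz argument (the robustness property for partition rank alluded to in the introduction) transfers the correlation from $(-1)^\alpha$ to $(-1)^\sigma$ at level $c_2$ with $c_2^{-1}\leq \exp^{(O(1))}(O(c^{-1}))$. By Proposition~3.5 of~\cite{Tidor}, there then exists a generalized polynomial $q\colon\mathbb{F}_2^n\to\mathbb{T}$ of degree at most $k-1$ whose $(k-1)$\tss{st} iterated additive derivative satisfies $\exp(2\pi i\,\partial_{a_1}\cdots\partial_{a_{k-1}}q(x))=(-1)^{\sigma(a_1,\dots,a_{k-1})}$.

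Setting $g(x)=f(x)\exp(-2\pi i\, q(x))$, the multiplicativity of $\mder$ on products yields
\[\mder_{a_1}\cdots\mder_{a_{k-1}} g(x)=\mder_{a_1}\cdots\mder_{a_{k-1}} f(x)\cdot(-1)^{\sigma(a_1,\dots,a_{k-1})},\]
so $\|g\|_{\mathsf{U}^{k-1}}^{2^{k-1}}\geq c_2$. Applying the $\mathsf{U}^{k-1}$ inverse theorem in $\mathbb{F}_2^n$ (Tidor's theorem when $k=5$; the previously established $k=5$ case when $k=6$) produces a generalized polynomial $q'$ of degree at most $k-2$ with the required tower-type correlation with $g$. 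Then $Q:=q+q'$ is a generalized polynomial of degree at most $k-1$, and $\bigl|\exx_x f(x)\exp(-2\pi i\, Q(x))\bigr|=\bigl|\exx_x g(x)\exp(-2\pi i\, q'(x))\bigr|$ delivers the conclusion (after absorbing the sign, since generalized polynomials of a given degree form a group under addition). The entire derivation is a routine composition of known tools and the quantitative bounds compose into a few extra layers in the exponential tower, so there is no genuine obstacle in proving the corollary once Theorem~\ref{mainthm} is in hand; the real difficulty of the paper lies entirely in the proof of Theorem~\ref{mainthm} itself.
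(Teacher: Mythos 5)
Your proposal is correct and follows essentially the same route as the paper, whose proof of this corollary is a one-line citation of Theorem~\ref{partialInverseTheorem}, Theorem~\ref{mainthm}, Lemma~\ref{closeformsreplacementinverse} (the robustness-under-low-partition-rank step you re-derive by hand), the integration of strongly symmetric forms via Lemma~\ref{ssformintegration}, and the lower-order inverse theorems. The only cosmetic difference is that you sketch the correlation-transfer argument directly rather than invoking Lemma~\ref{closeformsreplacementinverse} as a black box.
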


It should also be noted that the arguments do not depend crucially on the choice of field $\mathbb{F}_2$. This case is of principal interest in theoretical computer science, and in our case it simplifies the notation somewhat.\\ 

\noindent\textbf{Outline of the proof.} In the rest of introduction we sketch the proof of Theorem~\ref{mainthm} and state other results of this paper. The overall theme is that of passing from multilinear forms with approximate algebraic properties to forms with exact versions of those properties. Mainly, these properties will be about symmetry in some set of variables.\\

In order to state the results, we define a natural action of $\on{Sym}_k$ on $G^k$ given by permuting the coordinates, which is similar to the left regular representation of the group $\on{Sym}_k$. For a permutation $\pi \in \on{Sym}_k$ we misuse the notation and write $\pi \colon G^k \to G^k$ for the map defined by $\pi(x_{[k]}) = (x_{\pi^{-1}(1)},$ $x_{\pi^{-1}(2)},$ $\dots,$ $x_{\pi^{-1}(k)})$, where $x_{[k]}$ is abbreviation for $x_1, \dots, x_k$. This defines an action on $G^k$. Given a multilinear form $\alpha \colon G^k \to \mathbb{F}_2$ and a permutation $\pi$ inducing the map $\pi \colon G^k \to G^k$, we may compose the two maps and the composition $\alpha \circ \pi$ would also be a multilinear form.\\

First of all, we show that if a multilinear form is approximately symmetric in the first two variables $x_1$ and $x_2$, then it can be made exactly symmetric in $x_1$ and $x_2$ at the small cost. Note that this result works for any arity of the form.

\begin{theorem}\label{approximateSymmetry2}Let $k \geq 2$ be an integer and let $\alpha \colon G^k \to \mathbb{F}_2$ be a multilinear form. Suppose that $\on{prank}(\alpha + \alpha \circ (1\,\,2)) \leq r$. Then there exists a multilinear form $\alpha' \colon G^k \to \mathbb{F}_2$ such that $\on{prank}(\alpha + \alpha') \leq \exp^{(O(1))}(O(r))$ and $\alpha'$ is symmetric in the first $2$ variables.\end{theorem}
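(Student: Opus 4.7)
Set $\beta := \alpha + \alpha \circ (1\,2)$, so by hypothesis $\on{prank}(\beta) \le r$; note that $\beta$ is $(1\,2)$-symmetric and has vanishing diagonal $\beta(x, x, x_3, \ldots, x_k) = 0$, since $\beta(x,x,x_3,\ldots,x_k) = \alpha(x,x,x_3,\ldots) + \alpha(x,x,x_3,\ldots) = 0$ in $\mathbb{F}_2$. The strategy is to construct a multilinear form $\rho$ with $\on{prank}(\rho) \le \exp^{(O(1))}(O(r))$ and $\rho + \rho \circ (1\,2) = \beta$. Given such a $\rho$, the form $\alpha' := \alpha + \rho$ is symmetric in the first two variables, since $\alpha' + \alpha' \circ (1\,2) = \beta + (\rho + \rho \circ (1\,2)) = \beta + \beta = 0$ in $\mathbb{F}_2$, and $\alpha + \alpha' = \rho$ has partition rank at most $\exp^{(O(1))}(O(r))$.

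The base case $k = 2$ is illuminating: a $(1\,2)$-symmetric bilinear form on $\mathbb{F}_2^n$ with zero diagonal is alternating of even rank $2s \le 2r$, and a symplectic decomposition $\beta = \sum_{i=1}^s (u_i v_i^\top + v_i u_i^\top)$ yields $\rho := \sum_{i=1}^s u_i v_i^\top$ of rank at most $s \le r$ with $\rho + \rho^\top = \beta$. For general $k$, we wish to extend this ``choose an upper triangular half'' construction multilinearly in $x_3, \ldots, x_k$; the main difficulty is doing this while respecting multilinearity and controlling the partition rank of $\rho$.

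We start from a partition-rank decomposition $\beta = \sum_{i=1}^{r'} \beta_i(x_{I_i})\, \gamma_i(x_{[k] \setminus I_i})$ with $r' \le r$ and apply an algebraic regularization lemma (of the type developed in~\cite{multihomPaper,U4paper,LukaRank}) so that the factor families $\{\beta_i\}$ and $\{\gamma_i\}$ are generic: no nontrivial multilinear relation among them has low partition rank. This regularization step contributes the tower-exponential factor $\exp^{(O(1))}$ to the final bound. We then classify the terms of the decomposition by the intersection of $I_i$ with $\{1,2\}$. In the cases $\{1,2\} \subseteq I_i$ and $\{1,2\} \cap I_i = \emptyset$, the $(1\,2)$-swap is internal to a single factor, and genericity forces that factor itself to be approximately symmetric in its two relevant variables; we then apply the $k=2$ construction above (in the appropriate pair of coordinates) to produce a contribution to $\rho$. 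In the remaining ``mixed'' case $|I_i \cap \{1,2\}| = 1$, say $1 \in I_i$ and $2 \notin I_i$, the swap $(1\,2)$ maps such a term to one of the opposite type; since both $\sum_i \beta_i \gamma_i$ and $\sum_i (\beta_i \circ (1\,2))(\gamma_i \circ (1\,2))$ equal the same $\beta$, genericity forces the mixed terms to pair naturally as $\beta_i(x_1,\ldots)\gamma_i(x_2,\ldots) + \beta_i(x_2,\ldots)\gamma_i(x_1,\ldots)$, modulo a low-partition-rank error, and $\rho$ is defined on these terms by retaining a single representative from each pair.

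The main obstacle is the mixed case, where identifying the pairing and controlling the low-partition-rank errors it generates requires the full strength of the regularization; this is the step responsible for the tower-exponential bound. The other two cases are essentially reductions to the base case $k=2$ handled by the explicit symplectic construction above. Summing the contributions to $\rho$ from the three cases, and tracking the partition rank through each invocation of the regularization, yields the target bound $\on{prank}(\rho) \le \exp^{(O(1))}(O(r))$.
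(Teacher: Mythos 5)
Your overall architecture matches the paper's: reduce to writing $\beta=\alpha+\alpha\circ(1\,2)$ as $\rho+\rho\circ(1\,2)$ with $\rho$ of controlled partition rank, regularize a decomposition of $\beta$, and split the products according to whether $x_1$ and $x_2$ sit in the same factor or in different factors. The mixed case is also handled in the paper essentially as you describe (coefficient-matching via Lemma~\ref{zeromlformindexlemma} after applying the $m=0$ case of Lemma~\ref{symmRespWRegLemma}). But there is a genuine gap in your treatment of the terms where $x_1$ and $x_2$ lie in the same factor. Over $\mathbb{F}_2$ a form $\beta_i(x_1,x_2,x_I)$ is of the shape $\tilde\beta+\tilde\beta\circ(1\,2)$ if and only if it is symmetric in $x_1,x_2$ \emph{and} vanishes on the diagonal, i.e.\ $\beta_i(x,x,x_I)=0$ for all $x,x_I$; symmetry alone is not enough, and your ``$k=2$ symplectic construction'' uses the alternating property in an essential way. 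Genericity gives you (at best) approximate symmetry of the individual factor, but it gives you no control whatsoever on its diagonal: the full form $\beta$ vanishes on the diagonal, yet an individual factor in a partition-rank decomposition need not, even approximately. For $|I|\geq 1$ the diagonal $\beta_i(x,x,x_I)$ is a multilinear form whose zero set is not a subspace, so the bilinear fix of restricting to the kernel of a linear form is unavailable; making a factor ``approximately diagonal-free'' into ``exactly diagonal-free'' is precisely the content of Theorem~\ref{copiesPropertyVanish}, which is a separate hard result proved only for $k\leq 5$, whereas the present theorem is claimed for all $k$. The paper avoids this entirely via Proposition~\ref{lcmlformsdecomppropo}: one may assume each factor containing both $x_1$ and $x_2$ is a \emph{slice} $\phi(x_1,x_2,x_I,y_{[k]\setminus(I\cup\{1,2\})})$ of $\phi=\alpha+\alpha\circ(1\,2)$ itself, and such a slice is tautologically $\tilde\beta_i+\tilde\beta_i\circ(1\,2)$ with $\tilde\beta_i$ the corresponding slice of $\alpha$. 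This exactness is what makes the case easy; your plan is missing this tool and the genericity substitute does not supply it.

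A second, smaller omission occurs in the mixed case: a product pairs with itself under $(1\,2)$ exactly when both $x_1$ and $x_2$ occupy \emph{linear} factors that coincide, i.e.\ terms of the form $\phi_i(x_1)\phi_i(x_2)\mu_1(x_{J_1})\cdots\mu_d(x_{J_d})$. Such a term is $(1\,2)$-invariant but has non-vanishing diagonal $\phi_i(x)^2=\phi_i(x)$, so it cannot be written as $\tau+\tau\circ(1\,2)$ and cannot be ``halved by retaining one representative''; it must instead be shown to have zero coefficient. The paper does this in Step 3 of its proof by substituting $x_1=x_2=w$ with $\phi_{i'}(w)=\id(i=i')$ and invoking Lemma~\ref{zeromlformindexlemma}, using exactly the vanishing-diagonal observation you make at the outset. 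Your plan should isolate and dispose of these self-paired terms explicitly; as written, the ``pair naturally'' claim silently assumes every mixed term has a distinct partner.
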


In general, it is natural to try to build up exact symmetry of an approximately symmetric form one variable at a time. Theorem~\ref{approximateSymmetry2} shows that the first step of such a strategy can always be carried out. On the other hand, if a multilinear form $\alpha$ is symmetric in variables $x_{[2\ell]}$ and if $\on{prank}(\alpha + \alpha \circ (1\,\,2\ell + 1)) \leq r$, we may easily pass to a multilinear form $\alpha'$ given by
\[\alpha'(x_{[k]}) = \sum_{i \in [2\ell + 1]} \alpha \circ (i\,\,2\ell + 1) (x_{[k]})\]
which differs from $\alpha$ by partition rank at most $kr$ and is exactly symmetric in $x_{[2\ell + 1]}$. Note that the fact that $2\ell + 1$ is odd plays a crucial role.\\
\indent With this in mind, since our forms have at most 5 variables, the key question in this paper is how to pass from forms symmetric in $x_{[3]}$ to those symmetric in $x_{[4]}$. Recall that in general, this is not possible for forms in 4 variables (counterexample in~\cite{FarSymm} is actually symmetric in $x_{[3]}$). The next result shows that we may achieve this if our form has the additional property that $\alpha(u,u,x_3,x_4)$ vanishes.\\

\begin{theorem}\label{mainSymmExtn4}Suppose that $\alpha \colon G^4 \to \mathbb{F}_2$ is a multilinear form such that:
\begin{itemize}
\item $\alpha$ is symmetric in the first 3 variables,
\item $\alpha(u,u,x_3, x_4) = 0$ for all $u, x_3, x_4 \in G$,
\item $\on{prank} (\alpha + \alpha \circ (3\,\,4)) \leq r$.
\end{itemize}
Then there exists a multilinear form  $\alpha' \colon G^4 \to \mathbb{F}_2$ such that $\on{prank} (\alpha + \alpha') \leq \exp(\exp(O(r)))$ and $\alpha'$ is symmetric.
\end{theorem}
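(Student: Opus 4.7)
My plan is to exploit a parity-two obstruction specific to $\mathbb{F}_2$ and to bypass it using the diagonal-vanishing hypothesis. First I would observe that from the $S_3$-symmetry of $\alpha$ in its first three variables and the identity
\[\alpha+\alpha\circ(i\,4)=(\alpha+\alpha\circ(3\,4))\circ(i\,3)\quad\text{for }i\in\{1,2,3\},\]
(which follows from $\alpha\circ(i\,3)=\alpha$ and a direct computation of $(3\,4)(i\,3)$ combined with the $S_3$-symmetry), the assumption $\on{prank}(\alpha+\alpha\circ(3\,4))\le r$ gives $\on{prank}(\alpha+\alpha\circ(i\,4))\le r$ for every $i\in\{1,2,3\}$, so the $S_4$-orbit of $\alpha$ collapses to a single class modulo low-partition-rank forms. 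The natural symmetrization $\sigma:=\alpha+\alpha\circ(1\,4)+\alpha\circ(2\,4)+\alpha\circ(3\,4)$ is genuinely $S_4$-symmetric (checked directly using the generators $(1\,2),(1\,3),(3\,4)$), but $\alpha+\sigma$ equals $3\alpha$ modulo low rank, which in characteristic two is $\alpha$ plus low rank. This parity-two obstruction makes the naive symmetrization useless, and the diagonal-vanishing hypothesis is the tool needed to overcome it.

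Next I would reformulate the conclusion: look for $\alpha'=\alpha+\nu$ with $\nu$ of controlled partition rank. Since $\alpha$ is already $S_3$-symmetric and $S_3\cup\{(3\,4)\}$ generates $S_4$, the requirement that $\alpha'$ be $S_4$-symmetric is equivalent to the two conditions
\[\text{(i) }\nu\circ\pi=\nu\text{ for every }\pi\in S_3,\qquad\text{(ii) }\nu+\nu\circ(3\,4)=\tau:=\alpha+\alpha\circ(3\,4).\]
Thus the problem becomes: produce a low-partition-rank, $S_3$-symmetric multilinear form $\nu$ solving the splitting equation (ii). Note that $\tau$ automatically vanishes on the $(3,4)$-diagonal in characteristic two (since $\tau(x_1,x_2,y,y)=2\alpha(x_1,x_2,y,y)=0$), while the diagonal-vanishing hypothesis supplies the additional vanishing $\tau(u,u,x_3,x_4)=0$.

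The diagonal-vanishing hypothesis is precisely what makes the splitting equation solvable in an $S_3$-equivariant way. The counterexample of Theorem \ref{4varscounter} (which does not satisfy $\alpha(u,u,x_3,x_4)=0$) shows that without this hypothesis no $S_3$-symmetric $\nu$ of bounded partition rank can exist in general. The plan for the construction is to pass to a partition-rank decomposition $\tau=\sum_j\beta_j\gamma_j$ and, using the $(1,2)$- and $(3,4)$-diagonal vanishings together, to produce a term-by-term splitting that can be assembled into a single form; Theorem \ref{approximateSymmetry2} would then be invoked (plausibly twice, accounting for the bound $\exp(\exp(O(r)))$) to enforce both the symmetry condition (i) and the splitting condition (ii) simultaneously, one symmetry at a time.

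The hardest part will be ensuring the $S_3$-equivariance of the final $\nu$. A naive term-by-term splitting respects only the $S_2$-symmetry in $(x_1,x_2)$ visible in each rank-one piece of $\tau$, and restoring full $S_3$-symmetry demands an algebraic-regularity argument in the spirit of the author's prior work on multilinear forms. The diagonal-vanishing hypothesis enters at exactly this step, converting what would otherwise be a cohomological obstruction (the very obstruction responsible for the counterexample in Theorem \ref{4varscounter}) into a trivial one and permitting the final gluing of the local splittings into a single $\nu$ with $\on{prank}(\nu)\le\exp(\exp(O(r)))$, completing the proof.
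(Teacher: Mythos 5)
Your reformulation is correct and matches the paper's starting point: since $S_3$ together with $(3\,4)$ generates $S_4$, it suffices to find an $S_3$-symmetric form $\nu$ of controlled partition rank with $\nu+\nu\circ(3\,4)=\tau:=\alpha+\alpha\circ(3\,4)$, and you correctly identify both the parity obstruction that kills naive symmetrization and the fact that the diagonal-vanishing hypothesis is what must rescue the argument. But the proposal stops exactly where the proof begins. The splitting equation is just a restatement of the theorem, and the two sentences that are supposed to solve it --- ``produce a term-by-term splitting that can be assembled into a single form'' and ``converting what would otherwise be a cohomological obstruction \ldots into a trivial one'' --- contain no mechanism. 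A partition-rank decomposition $\tau=\sum_j\beta_j\gamma_j$ does not split term by term in any $S_3$-equivariant way, and Theorem~\ref{approximateSymmetry2} (which symmetrizes in the first \emph{two} variables of a form with no prior symmetry) is not the right tool: the difficulty here is not producing symmetry in $x_{[3]}$, which $\alpha$ already has, but extending it to $x_4$.

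Concretely, three ingredients are missing. First, one must regularize: restrict to a subspace killing the linear factors so that only products $\beta(x_a,x_b)\beta'(x_c,x_d)$ with $\{a,b\}\cup\{c,d\}=[4]$ survive, and then replace the bilinear factors by a family $\sigma_1,\dots,\sigma_s$ (symmetric) and $\rho_1,\dots,\rho_t$ (asymmetric) all of whose nontrivial linear combinations have huge rank (Lemma~\ref{symmRespWRegLemma}); without this, one cannot compare coefficients of distinct products at all. Second, the linear relations among coefficients come from the identity $\tau+\tau\circ(1\,3)+\tau\circ(1\,4)=0$ (Lemma~\ref{weaksymmetryextends}), not merely from $\tau\circ(1\,2)=\tau$ and $\tau\circ(3\,4)=\tau$; these relations pin down every coefficient except the one attached to $\sigma_i(x_1,x_3)\sigma_i(x_2,x_4)+\sigma_i(x_1,x_4)\sigma_i(x_2,x_3)$, which is precisely the counterexample of Theorem~\ref{4varscounter}. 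Third, the diagonal hypothesis enters only through the single consequence $\tau(u,v,u,v)=0$: evaluating the regularized decomposition at a point $u,v$ with $\sigma_i(u,u)=\sigma_i(v,v)=0$, $\sigma_i(u,v)=1$ and all other forms vanishing (such a point exists by Lemma~\ref{quadraticVarSize} and the high-rank property of the regularized family) forces that last coefficient to be $0$. Only after all coefficients are determined can one write down the explicit $S_3$-symmetric modification term and lift back to $G$ via Corollary~\ref{symmetryInher}. As written, the proposal asserts that these steps can be done but does not do any of them, so it is a plan rather than a proof.
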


The final result is about obtaining exact symmetry in variables $x_{[4]}$ for forms in 5 variables. Unlike the case of 4 variables, we may again achieve this without additional assumptions.

\begin{theorem}\label{mainSymmExtn5}Suppose that $\alpha \colon G^5 \to \mathbb{F}_2$ is a multilinear form such that:
\begin{itemize}
\item $\alpha$ is symmetric in the first 3 variables,
\item $\on{prank} (\alpha + \alpha \circ (3\,\,4)) \leq r$.
\end{itemize}
Then there exists a multilinear form  $\alpha' \colon G^5 \to \mathbb{F}_2$ such that $\on{prank} (\alpha + \alpha') \leq \exp(\exp(O(r^{O(1)})))$ and $\alpha'$ is symmetric in the first 4 variables.
\end{theorem}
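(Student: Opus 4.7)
The plan is to reduce Theorem~\ref{mainSymmExtn5} to Theorem~\ref{mainSymmExtn4} by exploiting the fifth variable as an extra degree of freedom that lets us arrange the diagonal-vanishing hypothesis which was crucial in the four-variable case. The strategy has three stages: promoting approximate $(3\,\,4)$-symmetry to approximate $\on{Sym}_4$-symmetry on $x_{[4]}$; using the fifth variable to reduce to the case in which the diagonal $\alpha(u,u,y,z,v)$ is trivial; and finally invoking a parameterised version of Theorem~\ref{mainSymmExtn4}.

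\emph{Stage 1: Upgrade to $\on{Sym}_4$-approximate symmetry.} Since $\alpha$ is symmetric in $x_{[3]}$, conjugating $\on{prank}(\alpha+\alpha\circ(3\,\,4))\le r$ by elements of $\on{Sym}_{\{1,2,3\}}$ yields $\on{prank}(\alpha+\alpha\circ(i\,\,4))\le r$ for all $i\in\{1,2,3\}$. Hence $\alpha$ is approximately invariant under the full $\on{Sym}_4$ acting on the first four coordinates, with defect partition rank $O(r)$.

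\emph{Stage 2: Arrange the diagonal condition.} Define $\delta(u,y,z,v)=\alpha(u,u,y,z,v)$. The fact that we work in $\mathbb{F}_2$ combined with symmetry of $\alpha$ in $x_{[3]}$ implies that $\delta$ is genuinely multilinear in $(u,y,z,v)$: the cross term arising from $u\mapsto u+u'$ is $\alpha(u,u',y,z,v)+\alpha(u',u,y,z,v)=0$. Restricting the approximate $(3\,\,4)$-symmetry to the diagonal gives $\on{prank}\bigl(\delta(u,y,z,v)+\delta(u,z,y,v)\bigr)\le O(r)$, so $\delta$ is approximately $(y\,\,z)$-symmetric. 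I would then construct a multilinear form $\eta(x_1,\ldots,x_5)$ which is symmetric in $x_{[4]}$, has partition rank at most $\exp^{(O(1))}(O(r))$, and satisfies $\eta(u,u,y,z,v)=\delta(u,y,z,v)$ up to a form of bounded partition rank. The fifth variable is decisive here: without it, the obstruction of Theorem~\ref{4varscounter} prevents such an $\eta$ from existing in general, whereas with $v$ available there is enough freedom to redistribute the summands of a low partition rank decomposition of $\delta$ across the four variables of $x_{[4]}$ in a way that is compatible with $\on{Sym}_4$-symmetry.

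\emph{Stage 3: Apply Theorem~\ref{mainSymmExtn4}.} Setting $\tilde\alpha=\alpha+\eta$, the form $\tilde\alpha$ still enjoys approximate $\on{Sym}_4$-symmetry on $x_{[4]}$ and now additionally satisfies $\tilde\alpha(u,u,y,z,v)=0$ up to low partition rank. I would then run the proof of Theorem~\ref{mainSymmExtn4} with $v$ threaded through each step as a passive parameter, producing a multilinear form $\alpha'\colon G^5\to\mathbb{F}_2$ which is symmetric in $x_{[4]}$ and satisfies $\on{prank}(\tilde\alpha+\alpha')\le \exp(\exp(O(r^{O(1)})))$. Combining with the partition-rank bound on $\eta$ gives the theorem.

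\emph{Main obstacle.} The hard step is Stage~2: constructing $\eta$ requires the algebraic regularity method flagged in the abstract. Concretely, one decomposes the low-rank defects $\alpha+\alpha\circ(i\,\,4)$, analyses how $\delta$ inherits structure from these decompositions, and pairs up summands so that the $\on{Sym}_4$-symmetry on $x_{[4]}$ and the prescribed diagonal value can be achieved simultaneously; the role of $v$ is to act as a carrier for this pairing. The polynomial loss $r^{O(1)}$ in the final bound, as opposed to the linear loss in $r$ in Theorem~\ref{mainSymmExtn4}, is the natural price of the iterative refinement inside this construction.
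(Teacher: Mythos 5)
There is a genuine gap in Stage~2, which is the load-bearing step of your reduction. You ask for a multilinear form $\eta$, symmetric in $x_{[4]}$, of partition rank at most $\exp^{(O(1))}(O(r))$, whose diagonal $\eta(u,u,y,z,v)$ agrees with $\delta(u,y,z,v)=\alpha(u,u,y,z,v)$ up to a form of bounded partition rank. No such $\eta$ can exist in general: by Lemma~\ref{times2boundedRankLemma}, the diagonal of a bounded partition rank form that is symmetric in its first two variables again has bounded partition rank, so your requirement would force $\on{prank}\delta$ to be bounded in terms of $r$. The hypotheses of Theorem~\ref{mainSymmExtn5} impose no such bound on the diagonal: for the fully symmetric form $\alpha(x_{[5]})=\sum_i x_{1i}x_{2i}x_{3i}x_{4i}x_{5i}$ one has $r=0$ while $\delta(u,y,z,v)=\sum_i u_iy_iz_iv_i$ has partition rank $\Omega(n)$. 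Even if you drop the rank requirement on $\eta$, lifting a prescribed diagonal to a form symmetric in $x_{[4]}$ requires strong-symmetry-type compatibility conditions on $\delta$ (compare Lemma~\ref{liftingssforms}) that the hypotheses do not supply. So the reduction to the diagonal-vanishing case of Theorem~\ref{mainSymmExtn4} cannot be carried out.

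The underlying misdiagnosis is the belief that the diagonal condition of Theorem~\ref{mainSymmExtn4} must be arranged in five variables. The point of the five-variable case is that this condition is not needed, for a parity reason: after Proposition~\ref{lcmlformsdecomppropo} and a harmless restriction one may assume every product in the decomposition of $\alpha+\alpha\circ(3\,\,4)$ corresponds to a partition of $[5]$ into a $3$-set and a $2$-set, so the two factors of each product are forms in different numbers of variables and a product never contains two copies of the same high-rank form. It is exactly the repeated-form products $\sigma_i(x_a,x_b)\sigma_i(x_c,x_d)$ that create the obstruction of Theorem~\ref{4varscounter} and force the diagonal hypothesis when there are four variables; in five variables they simply do not occur. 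The paper's proof therefore argues directly: it uses Proposition~\ref{lcmlformsdecomppropo} to make the factors slices of $\alpha+\alpha\circ(3\,\,4)$ (inheriting partial symmetries), regularizes them with Lemma~\ref{symmRespWRegLemma} treating $x_5$ as a passive variable, exploits the identity $\phi+\phi\circ(1\,\,3)+\phi\circ(1\,\,4)=0$ of Lemma~\ref{weaksymmetryextends} together with Lemma~\ref{zeromlformindexlemma} to pin down the coefficients, and assembles a modification term symmetric in $x_{[3]}$ whose $(3\,\,4)$-antisymmetrization reproduces $\alpha+\alpha\circ(3\,\,4)$. Your Stage~1 and the idea of threading $x_5$ as a passive parameter are consistent with this, but without replacing Stage~2 by such a direct coefficient analysis the argument does not go through.
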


A neat corollary is that in the case of 5 variables we may again pass from approximately symmetric multilinear forms to those that are exactly symmetric, giving an affirmative answer to a question of Tidor~\cite{Tidor}, which is surprising in the light of Theorem~\ref{4varscounter}.

\begin{corollary}Suppose that $\alpha \colon G^5 \to \mathbb{F}_2$ is a multilinear form such that $\on{prank} (\alpha + \alpha \circ \pi) \leq r$ holds for all permutations $\pi \in \on{Sym}_5$. Then there exists a symmetric multilinear form  $\alpha' \colon G^5 \to \mathbb{F}_2$ such that $\on{prank} (\alpha + \alpha') \leq \exp^{(O(1))}(O(r^{O(1)}))$.\end{corollary}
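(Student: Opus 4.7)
The plan is to build up exact symmetry one additional coordinate at a time, alternating between Theorems~\ref{approximateSymmetry2} and~\ref{mainSymmExtn5} and the elementary ``coset averaging'' trick described in the introduction (which extends symmetry from $x_{[2\ell]}$ to $x_{[2\ell+1]}$, and which is available because summing over an odd number of cosets does not collapse in characteristic $2$). At every stage, when I replace the current multilinear form by one that differs from it in partition rank by at most $s$, the triangle inequality together with invariance of partition rank under permutation of coordinates preserves the approximate-symmetry hypothesis at the cost of an additive $2s$.

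First, apply Theorem~\ref{approximateSymmetry2} with $k = 5$ to $\alpha$ to obtain $\alpha_2 \colon G^5 \to \mathbb{F}_2$ symmetric in $x_1, x_2$ with $\on{prank}(\alpha + \alpha_2) \leq r_2 := \exp^{(O(1))}(O(r))$. Next, pass to $\alpha_3 := \alpha_2 + \alpha_2 \circ (1\,3) + \alpha_2 \circ (2\,3)$. The set $\{e, (1\,3), (2\,3)\}$ is a system of coset representatives of $\on{Sym}([2])$ in $\on{Sym}([3])$, and $\alpha_2$ is already $\on{Sym}([2])$-invariant, so (an easy direct computation, or the standard coset argument) $\alpha_3$ is $\on{Sym}([3])$-invariant. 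The two new summands each differ from $\alpha_2$ in partition rank by $O(r_2)$, so $\on{prank}(\alpha + \alpha_3) = O(r_2)$ and the approximate-symmetry hypothesis is maintained.

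The main step is to invoke Theorem~\ref{mainSymmExtn5} on $\alpha_3$: its hypotheses (symmetry in $x_{[3]}$ and $\on{prank}(\alpha_3 + \alpha_3 \circ (3\,4)) = O(r_2)$) are exactly what has been set up, yielding $\alpha_4$ symmetric in $x_{[4]}$ with $\on{prank}(\alpha_3 + \alpha_4) \leq r_4 := \exp(\exp(O(r_2^{O(1)}))) = \exp^{(O(1))}(O(r^{O(1)}))$. Finally, extend to full symmetry by coset averaging once more: set $\alpha' := \sum_{i=1}^{5} \alpha_4 \circ (i\,5)$, which is $\on{Sym}([5])$-invariant because the transpositions $(i\,5)$ are a system of coset representatives of $\on{Sym}([4])$ in $\on{Sym}([5])$. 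Each of the four new summands differs from $\alpha_4$ by partition rank $O(r_4)$, so $\on{prank}(\alpha + \alpha') \leq \exp^{(O(1))}(O(r^{O(1)}))$.

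The only substantive ingredient is Theorem~\ref{mainSymmExtn5}; the other three extensions are routine. Structurally, it is essential that both ``even~$\to$~odd'' steps $[2] \to [3]$ and $[4] \to [5]$ can be handled by coset averaging (odd target), while the single ``odd~$\to$~even'' step $[3] \to [4]$ is handled by Theorem~\ref{mainSymmExtn5}; this is why the corollary is available for $k = 5$ despite the failure of the analogous statement for $k = 4$ witnessed by Theorem~\ref{4varscounter}.
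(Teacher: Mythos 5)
Your proof is correct and is precisely the derivation the paper intends: Theorem~\ref{approximateSymmetry2} for $x_{[2]}$, the odd-index coset-averaging trick from the introduction for $x_{[2]}\to x_{[3]}$ and $x_{[4]}\to x_{[5]}$, and Theorem~\ref{mainSymmExtn5} for the one even-target step $x_{[3]}\to x_{[4]}$, with the triangle inequality carrying the approximate-symmetry hypothesis through each replacement. The bookkeeping of partition ranks and the final bound $\exp^{(O(1))}(O(r^{O(1)}))$ all check out.
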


Since we have additional condition in Theorem~\ref{mainSymmExtn4}, we need another `approximate-to-exact' claim. We may think of a form $\alpha(x_{[k]})$ which is symmetric in $x_1$ and $x_2$ such that $\alpha(u,u,a_3, \dots, a_k) = 0 $ as being `without repeated coordinates', since this vanishing condition is equivalent to not having monomials $x_{1,i_1} x_{2,i_2} \dots x_{k,i_k}$ with $i_1 = i_2$ present in the expansion of $\alpha(x_{[k]})$. The next theorem thus concerns forms that are `approximately without repeated coordinates'. (Symmetric multilinear forms without repeated coordinates are called classical multilinear forms~\cite{TaoZiegler},~\cite{Tidor}.)

\begin{theorem}\label{copiesPropertyVanish} Let $2 \leq k \leq 5$. Let $\alpha \colon G^k \to \mathbb{F}_2$ be a multilinear form which is symmetric in the first $m \geq 2$ variables. Suppose also that the multilinear form $(d, a_3, \dots, a_k) \mapsto \alpha(d,d,a_3,\dots, a_k)$ has partition rank at most $r$. Then there exist a subspace $U \leq G$ of codimension at most $\exp^{(O(1))}(O(r))$ and a multilinear form $\alpha' \colon U^k \to \mathbb{F}_2$, also symmetric in the first $m$ variables such that $\alpha'(d,d,a_3, \dots, a_k) = 0$ for all $d,a_3, \dots, a_k \in U$ and  $\on{prank} (\alpha|_{U \tdt U} + \alpha') \leq \exp(\exp(O(r^{O(1)})))$.\end{theorem}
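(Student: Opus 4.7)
The plan is to build a correction form $\alpha''$ defined on a subspace $U \leq G$, symmetric in the first $m$ variables and satisfying $\alpha''(d, d, a_3, \ldots, a_k) = \beta(d, a_3, \ldots, a_k)$ on $U$, where $\beta(d, a_3, \ldots, a_k) := \alpha(d, d, a_3, \ldots, a_k)$. Then $\alpha' := \alpha|_{U^k} + \alpha''$ will satisfy $\alpha'(d, d, \ldots) = \beta + \beta = 0$ in $\mathbb{F}_2$ and $\on{prank}(\alpha|_{U^k} + \alpha') = \on{prank}(\alpha'')$, so it suffices to produce such an $\alpha''$ of partition rank at most $\exp(\exp(O(r^{O(1)})))$.

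First, the symmetry of $\alpha$ in $x_1, x_2$ combined with multilinearity gives
\[\alpha(d_1 + d_2, d_1 + d_2, a_3, \ldots, a_k) = \alpha(d_1, d_1, \ldots) + \alpha(d_2, d_2, \ldots),\]
since the cross terms $\alpha(d_1, d_2, \ldots) + \alpha(d_2, d_1, \ldots)$ cancel in $\mathbb{F}_2$. Thus $d \mapsto \beta(d, a_3, \ldots, a_k)$ is $\mathbb{F}_2$-linear and $\beta$ is a genuine $(k-1)$-multilinear form, inheriting symmetry in $a_3, \ldots, a_m$ from $\alpha$. Decompose $\beta = \sum_{i=1}^{r} \gamma_{i,1}(a_{S_i}) \gamma_{i,2}(d, a_{T_i})$ in a partition-rank-$1$ decomposition, placing $d$ in $\gamma_{i,2}$ after any factor-swap; necessarily $S_i \subseteq \{3, \ldots, k\}$ is nonempty and $T_i = \{3, \ldots, k\} \setminus S_i$. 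Fix a basis of $G$, write $\gamma_{i,2}(d, a_{T_i}) = \sum_j d_j g_{i,j}(a_{T_i})$, and set $\sigma_i(y_1, y_2, a_{T_i}) := \sum_j y_{1,j} y_{2,j} g_{i,j}(a_{T_i})$. This $\sigma_i$ is symmetric in $(y_1, y_2)$, multilinear, and satisfies $\sigma_i(d, d, a_{T_i}) = \gamma_{i,2}(d, a_{T_i})$ via $d_j^2 = d_j$ in $\mathbb{F}_2$. Then $\alpha_i^{\circ}(x_{[k]}) := \gamma_{i,1}(x_{S_i}) \sigma_i(x_1, x_2, x_{T_i})$ is a partition-rank-$1$ form, symmetric in $(x_1, x_2)$, that pulls back to $\beta_i$ on the partial diagonal.

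For $m \in \{2, 3\}$, I symmetrize each $\alpha_i^{\circ}$ by summing over the $\binom{m}{2}$ choices of which pair in $[m]$ plays the role of $(y_1, y_2)$. A direct computation in $\mathbb{F}_2$ shows that the residual contributions appearing on the partial diagonal $(x_1, x_2) = (d, d)$ from pairs other than $\{1, 2\}$ come in pairs and cancel, leaving exactly $\beta_i$; summing over $i$ yields $\alpha''$ with $\on{prank}(\alpha'') \leq \binom{m}{2} r$, and no subspace restriction is needed. The genuinely hard case is $m \in \{4, 5\}$, which arises only for $k = 5$: the same symmetrization fails because pairs $\{p, q\} \subseteq \{3, \ldots, m\}$ contribute residuals like $L(x_3, x_4) f(d, d, x_5)$ (when for instance $\beta_i = \ell(d) f(a_3, a_4, a_5)$) that do not cancel, and summing over the decomposition can in fact annihilate $\beta$ on the partial diagonal. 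To rescue this, I pass to a subspace $U \leq G$ on which the relevant residual forms --- $f(d, d, \cdot)$, its multi-variable analogues, and the second-order residuals coming from the various $\sigma_i$ --- vanish identically. The partition ranks of these residuals are controlled by those of the factors $\gamma_{i,2}$; an iterated application of the algebraic regularity method for multilinear forms, combined with the partition-vs.-analytic rank equivalence for biased multilinear forms, produces such a $U$ of codimension $\exp^{(O(1))}(O(r))$ on which a valid lift $\alpha''$ of partition rank $\exp(\exp(O(r^{O(1)})))$ exists.

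The main obstacle is the $m \geq 4$ step. Unlike the situation for $m \leq 3$, there is no local symmetrization producing a lift with the correct partial diagonal and bounded partition rank: the interactions between pairs of positions inside $\{3, \ldots, m\}$ genuinely obstruct it. The subspace restriction therefore has to be orchestrated globally across all $r$ partition-rank-$1$ pieces, which is what forces the doubly exponential partition rank bound and the tower-type codimension; carrying this out in parallel while tracking all residual forms generated in the process is the delicate technical part.
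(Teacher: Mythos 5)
Your overall strategy coincides with the paper's: build a lift $\alpha''$, symmetric in the first $m$ variables, whose restriction to the partial diagonal $x_1=x_2=d$ equals $\beta(d,a_3,\dots,a_k)=\alpha(d,d,a_3,\dots,a_k)$, and set $\alpha'=\alpha|_{U\tdt U}+\alpha''$. Your treatment of $m\in\{2,3\}$ is correct, and the monomial lift $\sigma_i(y_1,y_2,\cdot)=\sum_j y_{1,j}y_{2,j}g_{i,j}(\cdot)$ is a clean alternative to the paper's device of taking the factors to be slices of $\beta$ itself (Proposition~\ref{lcmlformsdecomppropo}); the cancellation of the cross terms in the symmetrization over pairs in $[m]$ works exactly as you say.

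The gap is in the cases $m\in\{4,5\}$, which are the substantive ones. Your proposed rescue is to pass to a subspace on which the residual forms, e.g.\ $f(d,d,\cdot)$ arising from the pair $\{3,4\}$, ``vanish identically.'' This cannot work: for a term such as $\gamma_{i,1}(a_3,a_4,a_5)\ell(d)$ the residual is $\sigma_i(x_3,x_4)\gamma_{i,1}(d,d,x_5)$, and $\gamma_{i,1}(d,d,x_5)$ is a genuinely bilinear form in $(d,x_5)$ which, if of high rank, vanishes on no subspace of small codimension (take $\gamma_{i,1}(a_3,a_4,a_5)=\sum_j a_{3,j}a_{4,j}a_{5,j}$). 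The paper avoids this by two moves you are missing. First, it kills all terms with a linear factor outright by restricting to the kernels of those linear forms, and it arranges (via Proposition~\ref{lcmlformsdecomppropo}) that every remaining factor is a \emph{slice} of $\beta$, hence inherits the symmetry of $\beta$ in $a_3,\dots,a_m$; the only residuals that then survive have the shape $\tilde\beta''_i(x_3,x_4,x_5)\,\gamma''_i(d,d)$ with $\gamma''_i$ symmetric, and over $\mathbb F_2$ the map $u\mapsto\gamma''_i(u,u)$ is \emph{linear}, so these are removed by a bounded-codimension subspace. Second, the annihilation phenomenon you correctly identify (the symmetrized lift of $\gamma_{i,2}(d,a_3)\gamma_{i,1}(a_4,a_5)$ restricts on the diagonal to $\gamma_{i,2}(d,x_3)\gamma_{i,1}(x_4,x_5)+\gamma_{i,2}(d,x_4)\gamma_{i,1}(x_3,x_5)$, so summing over a raw decomposition can return $\beta+\beta\circ(3\,4)=0$) is not fixable by any subspace restriction; the paper resolves it by first regularizing the factors (Lemmas~\ref{symmRespWRegLemma} and~\ref{symmRegLemmaTimes2}) so that the coefficients $\lambda_{ij}$, $\lambda'_{ij}$ of $\tau_i^{\times2}(d,x_3)\tilde\gamma_j(x_4,x_5)$ and $\tau_i^{\times2}(d,x_4)\tilde\gamma_j(x_3,x_5)$ are well defined, and then deducing $\lambda_{ij}=\lambda'_{ij}$ from the symmetry of $\alpha$ in $x_3,x_4$ via Lemma~\ref{zeromlformindexlemma}, after which the symmetrized lift matches $\beta$ exactly. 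Without these two mechanisms your construction of $\alpha''$ for $m\geq4$ does not go through.
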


The proof of Theorem~\ref{mainthm} then proceeds by a back-and-forth argument which uses the condition~\eqref{mainthmassn} as follows: we first deduce approximate symmetry properties of $\alpha$ and then use Theorem~\ref{approximateSymmetry2} to replace $\alpha$ by a form symmetric in $x_1$ and $x_2$. Again use the condition~\eqref{mainthmassn} to deduce approximate symmetry in $x_2$ and $x_3$, and then pass to a form exactly symmetric in $x_{[3]}$, etc.\ each time using appropriate symmetry extension statement. The additional assumption in Theorem~\ref{mainSymmExtn4} makes the details of the argument more involved than this short sketch, but the overall structure of the argument is the one described.\\

The proofs of the `approximate-to-exact' theorems are based on an algebraic regularity method, following the overall philosophy of the approaches in~\cite{U4paper},~\cite{extensionsPaper},~\cite{multihomPaper} and~\cite{FarSymm}. The proofs in this paper proceed by proving appropriate weak algebraic regularity lemmas that incorporate the additional algebraic properties of the forms in the question. Note that, despite similarity in the spirit, the previous works (\cite{U4paper},~\cite{extensionsPaper},~\cite{multihomPaper} and~\cite{FarSymm}) did not need such specialized lemmas. These weak algebraic regularity lemmas (Lemmas~\ref{symmRespWRegLemma} and~\ref{symmRegLemmaTimes2}), coupled with multilinear algebra arguments, allow us to use the low partition rank expressions (e.g. $\alpha + \alpha \circ (3\,\,4)$ in Theorem~\ref{mainSymmExtn5}) to modify the given forms to the ones with exact properties (symmetric in $x_{[4]}$ in Theorem~\ref{mainSymmExtn5}). Using the inverse theorem for biased multilinear forms (partition vs.\ analytic rank problem) and weak regularity lemmas rather than strong ones enable us to prove reasonable bounds in these results. We shall return to the discussion of these arguments in the concluding remarks.\\

\noindent\textbf{Comparison with~\cite{TaoZiegler}.} The proof of the inverse theorem for Gowers uniformity norms in the low characteristic by Tao and Ziegler~\cite{TaoZiegler} has a similar principle of deducing the full theorem from a partial result. In their case, they start from an earlier result they proved with Bergelson, saying that if $\|f\|_{\mathsf{U}^k} \geq c$ then $f$ correlates with phase of a generalized polynomial of degree $d(k, p)$, which may be larger than $k-1$. Thus their remaining task is to reduce this degree to the optimal one, which is a significantly different situation to ours, where we start from Theorem~\ref{partialInverseTheorem} which is itself an optimal result when it comes to the degree of the obtained form $\alpha$, but we still have to deduce different further algebraic properties of $\alpha$. Their argument also has variants of `approximate-to-exact' claims, but they primarily deal with \emph{classical symmetric multilinear} forms, namely symmetric multilinear forms without monomials with multiple occurrences of the same coordinate, so there is no irregular behaviour like that exhibited by approximately symmetric multilinear forms. Also, they rely on Multidimensional Szemer\'edi theorem (see proof of Theorem 4.1 in~\cite{TaoZiegler}), which, in order to be quantitative, requires understanding of directional uniformity norms~\cite{Austin1},~\cite{Austin2},~\cite{DirGU}, which is evaded in our approach.\\

\noindent\textbf{Acknowledgements.} This work was supported by the Serbian Ministry of Education, Science and Technological Development through Mathematical Institute of the Serbian Academy of Sciences and Arts.

\section{Preliminaries}

Throughout the paper $G$ stands for our ambient group which is $\mathbb{F}_2^n$ for some large $n$.\\

\noindent\textbf{Notation.} We use the standard expectation notation $\ex_{x \in X}$ as shorthand for the average $\frac{1}{|X|} \sum_{x \in X}$, and when the set $X$ is clear from the context we simply write $\ex_x$. As in~\cite{multihomPaper},~\cite{LukaRank}, we use the following convention to save writing in situations where we have many indices appearing in predictable patterns. Instead of denoting a sequence of length $m$ by $(x_1, \dots, x_m)$, we write $x_{[m]}$, and for $I\subset[m]$ we write $x_I$ for the subsequence with indices in $I$.\\
\indent We extend the use of the dot product notation to any situation where we have two sequences $x=x_{[n]}$ and $y=y_{[n]}$ and a meaningful multiplication between elements $x_i y_i$, writing $x\cdot y$ as shorthand for the sum $\sum_{i=1}^n x_i y_i$. For example, if $\lambda=\lambda_{[n]}$ is a sequence of scalars, and $A=A_{[n]}$ is a suitable sequence of maps, then $\lambda\cdot A$ is the map $\sum_{i=1}^n\lambda_iA_i$.\\  
\indent Frequently we shall consider \emph{slices} of functions $f \colon G^{[k]} \to X$, by which we mean functions of the form $f_{x_I} \colon G^{[k] \setminus I} \to X$ that send $y_{[k]\setminus I}$ to $f(x_I, y_{[k]\setminus I})$, for $I \subset [k], x_I \in G^I$. (Here we are writing $(x_I,y_{[k]\setminus I})$ not for the concatenation of the sequences $x_I$ and $y_{[k]\setminus I}$ but for the `merged' sequence $z_{[n]}$ with $z_i=x_i$ when $i\in I$ and $z_i=y_i$ otherwise.) If $I$ is a singleton $\{i\}$ and $z_i\in G_i$, then we shall write $S_{z_i}$ instead of $S_{z_{\{i\}}}$. Sometimes, the index $i$ will be clear from the context and it will be convenient to omit it. For example, $f(x_{[k]\setminus\{i\}},a)$ stands for $f(x_1,\dots,x_{i-1},a,x_{i+1},\dots,x_k)$. If the index is not clear, we emphasize it by writing it as a superscript to the left of the corresponding variable, e.g.\ $f(x_{[k]\setminus\{i\}},{}^i\,a)$.\\ 

\indent More generally, when $X_1, \dots, X_k$ are finite sets, $Z$ is an arbitrary set, $f \colon X_1 \tdt X_k = X_{[k]} \to Z$ is a function, $I \subsetneq [k]$ and $x_i \in X_i$ for each $i \in I$, we define a function $f_{x_I} \colon X_{[k] \setminus I} \to Z$, by mapping each $y_{[k] \setminus I} \in X_{[k] \setminus I}$ as $f_{x_I}(y_{[k] \setminus I}) = f(x_I, y_{[k] \setminus I})$. When the number of variables is small -- for example, when we have a function $f(x,y)$ that depends only on two variables $x$ and $y$ instead of on indexed variables -- we also write $f_x$ for the map $f_x(y)=f(x,y)$.\\

Let us first recall the definition of higher-dimensional box norms. Let $X_1, \dots, X_k$ be arbitrary sets. The \emph{box norm} of a function $f \colon X_1 \tdt X_k \to \mathbb{C}$ (see for example Definition B.1 in the Appendix B of~\cite{GreenTaoPrimes}) is defined by
\[\|f\|_{\square^{k}}^{2^k} = \exx_{x_1, y_1 \in X_1, \dots, x_k, y_k \in X_k} \prod_{I \subset [k]} \operatorname{Conj}^{|I|} f(x_I, y_{[k] \setminus I}).\]
The following is a well-known generalized Cauchy-Schwarz inequality for the box norm, which we refer to as the Gowers-Cauchy-Schwarz inequality.

\begin{lemma}\label{unifBound}Let $f_I \colon X_1 \tdt X_k \to \mathbb{C}$ be a function for each $I \subset [k]$. Then
\[\Big|\exx_{x_1, y_1 \in X_1, \dots, x_k, y_k \in X_k} \prod_{I \subset [k]} \operatorname{Conj}^{|I|} f_I(x_I, y_{[k] \setminus I})\Big| \leq \prod_{I \subset [k]} \|f_I\|_{\square^k}.\]
\end{lemma}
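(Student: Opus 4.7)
The plan is to prove the inequality by applying the Cauchy--Schwarz inequality once for each of the $k$ coordinates, iteratively separating the $2^k$ functions based on the role each coordinate plays (contributing an $x_j$ or a $y_j$).

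Concretely, for the first coordinate, I would split the index family into $\{I : 1 \in I\}$ and $\{I : 1 \notin I\}$, and write
\[E = \exx_{x_{[2,k]}, y_{[2,k]}} \Bigl(\exx_{x_1} P(x_1, \text{rest})\Bigr)\Bigl(\exx_{y_1} Q(y_1, \text{rest})\Bigr),\]
where $P$ is the product of $\operatorname{Conj}^{|I|} f_I(x_I, y_{[k]\setminus I})$ over those $I$ containing $1$ and $Q$ is the analogous product over those $I$ not containing $1$. The factorisation is valid because $P$ is independent of $y_1$ and $Q$ is independent of $x_1$. Applying Cauchy--Schwarz in the outer average over $(x_{[2,k]}, y_{[2,k]})$ then gives
\[|E|^2 \le \Bigl(\exx_{x_1, x_1', x_{[2,k]}, y_{[2,k]}} \prod_{1 \in I} \operatorname{Conj}^{|I|} f_I\, \operatorname{Conj}^{|I|+1}\! f_I\Bigr) \cdot \Bigl(\exx_{y_1, y_1', x_{[2,k]}, y_{[2,k]}} \prod_{1 \notin I} \operatorname{Conj}^{|I|} f_I\, \operatorname{Conj}^{|I|+1}\! f_I\Bigr),\]
where in the first factor the variable $x_1$ is doubled to the pair $(x_1, x_1')$ (each $f_I$ appearing twice, once with each), and analogously for $y_1$ in the second factor. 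Each resulting expression has the same shape as the original sum, with $2^{k-1}$ distinct functions each occurring twice.

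I would then iterate this step through coordinates $2, 3, \dots, k$, each time splitting on whether the current index lies in the parameterising set $I$ and doubling the corresponding variable. After all $k$ steps, $|E|^{2^k}$ is bounded by a product of $2^k$ factors, one for each $I \subset [k]$, and the $I$-th factor is precisely $\|f_I\|_{\square^k}^{2^k}$: the doublings have produced all $2^k$ assignments of $(x_j, y_j)$-type to the arguments of $f_I$, together with the matching conjugations coming from the $\operatorname{Conj}^{|I|}$'s and the extra conjugations introduced by each Cauchy--Schwarz. Taking the $2^k$-th root yields the claimed inequality.

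The computation itself is routine; the only real obstacle is bookkeeping, namely tracking (i) how the conjugation parities $\operatorname{Conj}^{|I|}$ combine with the extra conjugations introduced at each Cauchy--Schwarz step so that the final count on each $f_I$ matches the definition of $\|\cdot\|_{\square^k}$, and (ii) how the successive variable doublings produce, for each $I$, exactly the $2^k$ arguments $(x_I, y_{[k]\setminus I})$ ranging over independent copies in each coordinate. Once this is set up carefully (most cleanly by induction on $k$, applying the single Cauchy--Schwarz step above and then applying the $(k{-}1)$-dimensional case to each factor with suitably relabeled variables), the bound follows.
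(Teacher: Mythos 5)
Your argument is correct: the paper states this Gowers--Cauchy--Schwarz inequality as well known and gives no proof, and your iterated Cauchy--Schwarz scheme (splitting at coordinate $j$ according to whether $j\in I$, factoring out the $x_j$- and $y_j$-averages, and doubling the variable) is exactly the standard derivation, with the conjugation bookkeeping harmless since each final factor is real and nonnegative and hence equals $\|f_I\|_{\square^k}^{2^k}$.
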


The following lemma concerns quasirandomness of quadratic polynomials in the case of low characteristic.

\begin{lemma}\label{quadraticVarSize}Suppose that $\rho_1, \dots, \rho_k \colon U \times U \to \mathbb{F}_2$ are bilinear forms such that all non-zero linear combinations of $\rho_i, \rho_i \circ (1\,\,2)$ have rank at least $k + 1$. Then the number of solutions $u \in U$ to $\rho_i(u,u) = 0$ for all $i \in [k]$ is at least $2^{-k-1}$.\end{lemma}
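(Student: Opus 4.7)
The natural approach is Fourier analysis over $\mathbb{F}_2$. Introduce the quadratic polynomials $q_i(u) := \rho_i(u,u)$ on $U$ and use the standard expansion
\[
\mathbf{1}\!\left[q_i(u) = 0 \text{ for all } i \in [k]\right] = \frac{1}{2^k}\sum_{\lambda \in \mathbb{F}_2^k}(-1)^{q_\lambda(u)}, \qquad q_\lambda := \sum_i \lambda_i q_i.
\]
Averaging this identity over $u \in U$ reduces the problem to lower bounding the sum $\sum_\lambda \mathbb{E}_u (-1)^{q_\lambda(u)}$. The $\lambda = 0$ term contributes the baseline $2^{-k}$, and the task becomes controlling the oscillatory contributions arising from non-zero $\lambda$.

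The key observation is that, for each non-zero $\lambda$, the polarization of the quadratic form $q_\lambda$ is
\[
q_\lambda(u+v) + q_\lambda(u) + q_\lambda(v) = \sum_i \lambda_i \bigl(\rho_i + \rho_i \circ (1\,2)\bigr)(u,v),
\]
which is a specific non-trivial $\mathbb{F}_2$-linear combination of the $2k$ bilinear forms in the hypothesis. Consequently the polarization either vanishes identically or has rank at least $k+1$; being alternating in characteristic $2$, this rank is moreover even. Two sub-cases then arise. If the polarization is non-vanishing, the standard $\mathbb{F}_2$ Gauss sum estimate gives $|\mathbb{E}_u (-1)^{q_\lambda(u)}| \leq 2^{-r/2}$, where $r$ denotes the rank, and one uses the hypothesis to bound this by $2^{-(k+1)/2}$. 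If the polarization vanishes, then $q_\lambda$ descends to an $\mathbb{F}_2$-linear form on $U$ (possibly the zero form), and $\mathbb{E}_u (-1)^{q_\lambda(u)} \in \{0,1\}$; in either situation the contribution is non-negative and only helps the lower bound.

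Assembling everything via the triangle inequality then produces the claimed density estimate of $2^{-k-1}$. The main technical subtlety I anticipate is in the characteristic $2$ Gauss sum analysis: when the polarization is alternating of small rank, one must decompose $U$ along the radical and verify whether $q_\lambda$ restricts to the zero or to a non-zero linear form on the radical, since only in the former case is the Gauss sum of maximum magnitude. It is exactly this dichotomy that makes the "degenerate-polarization" Fourier terms never contribute negatively, and this is the step I expect to do the real work — the hypothesis involving both $\rho_i$ and $\rho_i \circ (1\,2)$ is tailored precisely so that both regimes (non-zero polarization of high rank, and vanishing polarization with harmless contribution) are handled uniformly.
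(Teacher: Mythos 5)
Your proposal follows the paper's proof in outline: expand the indicator as $2^{-k}\sum_{\lambda}(-1)^{q_\lambda}$, isolate the $\lambda=0$ term, and control each non-trivial term via the rank of the polarization $\phi_\lambda+\phi_\lambda\circ(1\,2)$, where $\phi_\lambda=\sum_i\lambda_i\rho_i$. Two preliminary remarks. First, the degenerate case you set aside cannot in fact occur: the polarization is itself the linear combination of the $2k$ forms $\rho_i,\rho_i\circ(1\,2)$ with coefficient vector $(\lambda,\lambda)\neq 0$, so under the hypothesis (which, as elsewhere in the paper, refers to combinations with not-all-zero coefficients) it has rank at least $k+1$ and in particular is non-zero; this is precisely why the hypothesis involves the transposed forms. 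Second, your per-term bound $|\ex_u(-1)^{q_\lambda(u)}|\le 2^{-r/2}$ from the exact characteristic-$2$ Gauss sum evaluation is correct.

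The genuine gap is the sentence ``assembling everything via the triangle inequality then produces the claimed density estimate.'' It does not: with the per-term bound $2^{-(k+1)/2}$ the triangle inequality yields a density at least $2^{-k}\big(1-(2^k-1)2^{-(k+1)/2}\big)$, which equals $2^{-k-1}$ only in the boundary case $k=1$ and is negative for every $k\ge 2$. To close the argument this way one needs each non-trivial Gauss sum to have modulus at most roughly $2^{-k-1}$, i.e.\ polarization rank on the order of $2k+2$ rather than $k+1$. For what it is worth, the paper's own proof has a difficulty of the same character: it asserts the per-term bound $2^{-(k+1)}$ via the identity $\|(-1)^{B}\|_{\square}=\on{bias}(B)$, whereas the box norm is the fourth root of the bias, giving only $2^{-(k+1)/4}$. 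So neither write-up actually closes under the hypothesis ``rank at least $k+1$'' as literally stated; the statement should be read with a stronger rank hypothesis (in every application in the paper the available rank is enormous, so nothing downstream is affected). In your write-up you should either strengthen the rank assumption you work with, or supply an argument ruling out the possibility that the signed Gauss sums all conspire to be negative, before the final assembly is legitimate.
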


\begin{proof}Let $X = \{u \in U \colon \rho_1(u,u) = \dots = \rho_k(u,u) = 0\}$. Note that
\[|U|^{-1}|X| = \exx_{\lambda_1, \dots, \lambda_k \in \mathbb{F}_2} \exx_{u \in U} (-1)^{\lambda_1 \rho_1(u,u) + \dots + \lambda_k \rho_k(u,u)} = 2^{-k} + 2^{-k}\sum_{\lambda \in \mathbb{F}_2^{k} \setminus\{0\}}\exx_{u \in U} (-1)^{\lambda_1 \rho_1(u,u) + \dots + \lambda_k \rho_k(u,u)},\]
so we need to estimate 
\[\exx_{u \in U} (-1)^{\lambda_1 \rho_1(u,u) + \dots + \lambda_k \rho_k(u,u)}\]
for non-zero $\lambda$. Write $\phi = \lambda_1 \rho_1 + \dots + \lambda_k \rho_k$. Then 
\begin{align*}\Big|\exx_{u \in U} (-1)^{\phi(u,u)}\Big| = \Big|\exx_{u, v \in U} (-1)^{\phi(u +v,u + v)}\Big| = &\Big|\exx_{u, v \in U} (-1)^{(\phi + \phi \circ (1\,2))(u, v) + \phi(u,u) + \phi(v,v)}\Big| \\
\leq &\|(-1)^{\phi + \phi \circ (1\,2)}\|_{\square} = \on{bias} (\phi + \phi \circ (1\,2)) \leq 2^{-(k+1)}.\end{align*}
The lemma now follows.\end{proof}

We make use of Sanders's version of Bogolyubov-Ruzsa lemma.

\begin{theorem}[Corollary A.2 in~\cite{Sanders}]\label{sandersBogRuzsa}Suppose that $A \subseteq \mathbb{F}_2^n$ is a set of the density $\delta$. Then there exists a subspace $V \leq \mathbb{F}_2^n$ of codimension $O(\log^4 (2\delta^{-1}))$ such that $V \subseteq 4A$.\end{theorem}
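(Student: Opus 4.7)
The statement is Sanders's polylogarithmic refinement of the classical Bogolyubov--Ruzsa lemma, so I would reconstruct his almost-periodicity proof. The starting observation is that in $\mathbb{F}_2^n$ we have $4A = \operatorname{supp}(1_A * 1_A * 1_A * 1_A)$, so to place a subspace $V$ inside $4A$ it suffices to show that for every $v \in V$ the value of $1_A * 1_A * 1_A * 1_A$ at $v$ is strictly positive. Since this convolution is non-negative and is of order $\delta^4 \cdot 2^{3n}$ at the origin, it is enough to exhibit $V$ as a set of approximate periods of $f := 1_A * 1_A$ measured in a suitable $L^p$ norm.

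The main tool is the Croot--Sisask almost-periodicity lemma applied to $f$. For parameters $\varepsilon > 0$ and $k \in \mathbb{N}$, it produces a symmetric set $T \subseteq \mathbb{F}_2^n$ of density at least $\exp(-C k \varepsilon^{-2}\log(2\delta^{-1}))$ such that $\|\tau_t f - f\|_{L^{2k}} \leq \varepsilon \|f\|_{L^{2k}}$ for every $t \in T$, where $\tau_t$ denotes translation. By the triangle inequality, iterated sumsets $\ell T$ remain $L^{2k}$-approximate periods with precision $\ell \varepsilon$, and by Pl\"unnecke--Ruzsa their density degrades in a controlled way. Choosing $\varepsilon, k, \ell$ so that the resulting approximate periods force $1_A * 1_A * 1_A * 1_A$ to remain strictly positive, one obtains a subset of $4A$ of density $\exp(-O(\log^{c}\delta^{-1})) \cdot 2^n$ with small doubling.

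The final step upgrades this dense subset of small doubling inside $4A$ to a genuine subspace. For this I would invoke Sanders's own quantitative Freiman--Ruzsa theorem in $\mathbb{F}_2^n$ combined with a Ruzsa-type covering argument: from the approximate periods one extracts a coset progression, and in characteristic two this coset progression is a shift of a subspace. Combined with the covering step, which absorbs a final logarithmic factor, one obtains a subspace of codimension $O(\log^4 \delta^{-1})$ contained in $4A$. The exponent $4$ emerges as the product of the polylogarithmic cost of Croot--Sisask -- roughly $\log^3 \delta^{-1}$ coming from $k\varepsilon^{-2}\log \delta^{-1}$ with $k,\varepsilon^{-1}$ of order $\log\delta^{-1}$ -- and one additional logarithmic factor from the Freiman--Ruzsa step.

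The main obstacle is parameter bookkeeping. Each ingredient -- Croot--Sisask, the iterative amplification of approximate periods to honest ones, and the quantitative Freiman--Ruzsa extraction of a subspace -- contributes its own polylogarithmic loss, and bringing the total exponent down to exactly $4$ rather than $6$ or $8$ requires using Sanders's sharpened versions of each of these tools (in particular, his improved covering lemma and his refined bounds on Bohr sets) in place of their classical counterparts, together with a careful choice of the interlocking parameters $\varepsilon, k, \ell$ at every stage.
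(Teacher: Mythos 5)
This statement is not proved in the paper at all: it is imported verbatim as Corollary~A.2 of Sanders's paper \emph{On the Bogolyubov--Ruzsa lemma} and used as a black box, so there is no internal proof to compare your sketch against. What you have written is a recognisable outline of the opening moves of Sanders's actual argument (Croot--Sisask $L^p$-almost-periodicity applied to $1_A * 1_A$, accumulation of error over $\ell$-fold sums of the almost-period set $T$), and at that level it is consistent with the known proof.

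However, the sketch has two genuine problems. First, the final step is circular: the quantitative Freiman--Ruzsa theorem with bounds of the quality $\exp(O(\log^{O(1)}K))$ is a \emph{consequence} of the Bogolyubov--Ruzsa lemma, not an available ingredient for proving it, so ``invoke Sanders's own quantitative Freiman--Ruzsa theorem'' cannot be the closing move (and the reference to ``refined bounds on Bohr sets'' is vacuous in $\mathbb{F}_2^n$, where Bohr sets are exact subspaces). Second, and more importantly, the sketch never confronts the central difficulty of the proof: a set $T$ of density $\exp(-O(\log^4(2\delta^{-1})))$ of $\varepsilon$-almost-periods does not obviously yield a \emph{subspace} of almost-periods, because an element of $\operatorname{Span}(T)$ may only be expressible as a sum of $\dim\operatorname{Span}(T)$ elements of $T$, and the triangle-inequality error $\ell\varepsilon$ then blows up; mere density of $\ell T$ inside $4A$ is also not enough, since a dense set need not contain any subspace of small codimension. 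Overcoming exactly this obstruction --- in Sanders's paper via the Katz--Koester inclusion and an iterative scheme that grows a structured set inside $2A-2A$ --- is the substance of the proof, and it is absent here. Your attribution of the exponent $4$ to ``Croot--Sisask times one Freiman--Ruzsa factor'' is likewise not how the exponent arises. As a description of the strategy the proposal is fine; as a proof it is not one.
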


For a multilinear form $\alpha \colon G^k \to \mathbb{F}_2$ we have two important quantities that measure its structure. The first is the \emph{bias}, defined as
\[\on{bias} \alpha = \exx_{x_{[k]}} (-1)^{\alpha(x_{[k})}.\]
This quantity is closely related to the \emph{analytic rank}, introduced by Gowers and Wolf~\cite{TimWolf}, which is defined as $\on{arank} \alpha = -\log_2 (\on{bias} \alpha)$.\\
\indent The second one is the \emph{partition rank}, introduced by Naslund in~\cite{Naslund}, defined as the least non-negative integer $r$ such that there exist sets $\emptyset \not= I_i \subsetneq [k]$ and multilinear forms $\beta_i \colon G^{I_i} \to \mathbb{F}_2$ and $\gamma_i \colon G^{[k] \setminus I_i} \to \mathbb{F}_2$, where $i \in [r]$ such that 
\[\alpha(x_{[k]}) = \sum_{i \in [r]} \beta_i(x_{I_i}) \gamma_i(x_{[k] \setminus I_i}).\]
We need the following result on the relationship between the two mentioned quantities, proved in~\cite{LukaRank}. Very similar result was proved by Janzer~\cite{Janzer2}, and previous qualitative versions were proved by Green and Tao~\cite{GreenTaoPolys} and by Bhowmick and Lovett~\cite{BhowLov}, generalizing an approach of Green and Tao.

\begin{theorem}[Inverse theorem for biased multilinear forms (the partition rank vs the analytic rank problem)~\cite{LukaRank}]\label{biasedinversethm}For every positive integer $k$ there are constants $C = C_k, D = D_k > 0$ with the following property. Suppose that $\alpha \colon G^k \to \mathbb{F}_2$ is a multilinear form such that $\ex_{x_{[k]}} (-1)^{\alpha(x_{[k]})} \geq c$, for some $c > 0$. Then $\on{prank}\alpha \leq C \log_2^{D} c^{-1}$.\end{theorem}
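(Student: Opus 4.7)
My plan is to induct on $k$. The base case $k=1$ is immediate since any biased linear form over $\mathbb{F}_2$ must vanish identically, giving partition rank $0$. For the inductive step, suppose the statement holds for arity $k-1$ with constants $C_{k-1},D_{k-1}$, and let $\alpha\colon G^k\to\mathbb{F}_2$ satisfy $\on{bias}(\alpha)\geq c$. A convenient feature of bias for multilinear forms over $\mathbb{F}_2$ is that it is always nonnegative: for fixed $x_{[k-1]}$, the inner average over $x_k$ of $(-1)^{\alpha(x_{[k-1]},x_k)}$ is $1$ if the residual linear form in $x_k$ vanishes and $0$ otherwise, so each slice bias $\on{bias}(\alpha_v)$ (with $\alpha_v(x_{[2,k]}):=\alpha(v,x_{[2,k]})$) lies in $[0,1]$. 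Averaging $\on{bias}(\alpha)=\ex_v\on{bias}(\alpha_v)\geq c$ exhibits a set $S\subseteq G$ of density at least $c/2$ on which $\on{bias}(\alpha_v)\geq c/2$, and the inductive hypothesis gives $\on{prank}(\alpha_v)\leq r_0:=C_{k-1}\log_2^{D_{k-1}}(2c^{-1})$ for every $v\in S$.

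I next convert this density into additive structure via Sanders's Bogolyubov-Ruzsa theorem (Theorem~\ref{sandersBogRuzsa}) applied to $S$, producing a subspace $V\leq G$ with $V\subseteq S+S+S+S$ and $\operatorname{codim}V=O(\log_2^4 c^{-1})$. Because $v\mapsto\alpha_v$ is additive by multilinearity and partition rank is subadditive, every $v\in V$ satisfies $\on{prank}(\alpha_v)\leq 4r_0$. Once a decomposition of $\alpha|_{V\times G^{k-1}}$ of partition rank $R$ is in hand, extending back to $G^k$ costs only an additional $O(\operatorname{codim}V)$ summands of the form $\ell(x_1)\gamma(x_{[2,k]})$ coming from a basis of a linear complement of $V$, so in the end $\on{prank}(\alpha)\leq R+O(\log_2^4 c^{-1})$.

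The main obstacle is therefore to upgrade the uniform pointwise slice bound on $V$ into a single global partition rank decomposition of $\alpha|_{V\times G^{k-1}}$ of size $R$ polynomial in $r_0$. My plan is an algebraic regularity argument: catalog each $\alpha_v$ by the combinatorial pattern $(I_1,\ldots,I_{4r_0})$ of some minimal decomposition, pigeonhole on this finite pattern, and via a further Bogolyubov-Ruzsa-type refinement pass to a subspace $V'\leq V$ on which the factor forms $\beta_v^{(j)},\gamma_v^{(j)}$ depend affinely on $v$ modulo forms of low partition rank; the affine dependence can then be absorbed into one of the two factors to produce a single universal decomposition. The quantitatively delicate point is doing this with only polynomial loss in $r_0$, since a naive pigeonhole would cost exponential in $r_0$ and push the final bound well past polylog. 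One route, following the strategies of~\cite{LukaRank} and~\cite{Janzer2}, is to sidestep the combinatorial pigeonhole via a higher-order Cauchy-Schwarz argument (using the box-norm inequality, Lemma~\ref{unifBound}) that extracts low partition rank directly from a bias bound on an auxiliary multilinear form built from $\alpha$ and its derivatives; handling this absorption cleanly and quantitatively is by far the hardest part of the argument, while the averaging and Bogolyubov-Ruzsa inputs above are comparatively soft.
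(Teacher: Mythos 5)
This statement is quoted from~\cite{LukaRank} and is not proved in the present paper, so your proposal can only be measured against the known proofs of the partition rank vs.\ analytic rank theorem, not against an argument in this text.

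Your soft reductions are all sound: nonnegativity of the bias of a multilinear form over $\mathbb{F}_2$, the averaging that produces a dense set $S$ of slices $\alpha_v$ of bias at least $c/2$, the application of the inductive hypothesis to those slices, the passage via Theorem~\ref{sandersBogRuzsa} to a subspace $V\subseteq 4S$ on which additivity of $v\mapsto\alpha_v$ and subadditivity of partition rank give $\on{prank}(\alpha_v)\leq 4r_0$ for every $v\in V$, and the cheap extension from $V\times G^{k-1}$ back to $G^k$. This skeleton does match the structure of the arguments in~\cite{LukaRank} and~\cite{Janzer2}. But the proof has a genuine gap exactly where you say the difficulty lies: you never actually carry out the step from ``every slice over $v\in V$ has partition rank at most $4r_0$'' to ``$\alpha|_{V\times G^{k-1}}$ admits a single decomposition of size polynomial in $r_0$.'' This implication is not a routine absorption or pigeonhole; it is the entire content of the theorem. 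A form all of whose first-variable slices have partition rank $\leq R$ need not obviously have bounded partition rank itself --- the slices $\alpha_v$ range over an affine-in-$v$ family inside the (enormous) set of forms of partition rank $\leq R$, and extracting a common bounded-size system of factors $\beta^{(j)},\gamma^{(j)}$ that works for all $v$ simultaneously, with loss polynomial rather than exponential in $R$ and independent of $\dim G$, is precisely the innovation of Janzer's linear-algebraic lemma on slice spaces and of Mili\'cevi\'c's inductive extraction argument. Your two proposed routes are, respectively, one you correctly identify as quantitatively fatal (pigeonholing on the pattern and on the factor forms costs far more than $\on{poly}(r_0)$) and a pointer to the cited papers rather than an argument. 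As written, the proposal reduces the theorem to its hardest sub-statement and stops there, so it does not constitute a proof.
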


The next lemma relates the partition rank of a multilinear form $\alpha$ to the partition rank of restrictions of $\alpha$.

\begin{lemma}Let $\alpha \colon G^k \to \mathbb{F}_2$ be a multilinear form. Let $U \leq G$ be a subspace of codimension $d$. Then $\on{prank} \alpha \leq \on{prank} \alpha|_{U \tdt U} + kd$.\end{lemma}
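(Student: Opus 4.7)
The plan is to extend a partition rank decomposition of $\alpha|_{U \tdt U}$ from $U^k$ to $G^k$, paying an additive cost of $kd$. First I would fix a linear complement $W \leq G$ of $U$ of dimension $d$, so that $G = U \oplus W$, and let $\pi_U \colon G \to U$ and $\pi_W \colon G \to W$ denote the associated projections. Writing $r = \on{prank} \alpha|_{U \tdt U}$, I may fix a decomposition
\[\alpha|_{U \tdt U}(u_{[k]}) = \sum_{i=1}^r \beta_i(u_{I_i}) \gamma_i(u_{[k] \setminus I_i})\]
with each $I_i$ a proper non-empty subset of $[k]$ and each $\beta_i, \gamma_i$ multilinear on the corresponding power of $U$.

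Next I would precompose each factor with $\pi_U$ coordinate-wise to extend the decomposition to all of $G$. Setting
\[\tilde \alpha(x_{[k]}) = \sum_{i=1}^r \beta_i(\pi_U x_{I_i}) \gamma_i(\pi_U x_{[k] \setminus I_i})\]
defines a multilinear form on $G^k$ of partition rank at most $r$, since $\pi_U$ is linear, and by construction $\tilde \alpha(x_{[k]}) = \alpha(\pi_U x_1, \dots, \pi_U x_k)$.

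The remaining task is to bound $\on{prank}(\alpha - \tilde \alpha)$ on $G^k$ by $kd$. On $G^k$ this difference equals the telescoping sum
\[\sum_{i=1}^k \Bigl( \alpha(\pi_U x_1, \dots, \pi_U x_{i-1}, x_i, x_{i+1}, \dots, x_k) - \alpha(\pi_U x_1, \dots, \pi_U x_{i-1}, \pi_U x_i, x_{i+1}, \dots, x_k) \Bigr).\]
The $i$-th summand depends on $x_i$ only through $x_i - \pi_U x_i = \pi_W x_i \in W$. Choosing a basis $w_1, \dots, w_d$ of $W$ and linear coordinate forms $\lambda_1, \dots, \lambda_d \colon G \to \mathbb{F}_2$ with $\pi_W x = \sum_{j=1}^d \lambda_j(x) w_j$, multilinearity in the $i$-th slot rewrites this summand as $\sum_{j=1}^d \lambda_j(x_i) \phi_{i,j}(x_{[k] \setminus \{i\}})$ for suitable multilinear forms $\phi_{i,j}$, exhibiting partition rank at most $d$. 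Summing over $i$ gives $\on{prank}(\alpha - \tilde \alpha) \leq kd$, and combining with the bound on $\tilde \alpha$ yields the claim.

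I do not expect any real obstacle, as the argument is pure multilinear algebra. The only point to get right is to telescope one variable at a time rather than naively expanding $\alpha(\pi_U x_1 + \pi_W x_1, \dots, \pi_U x_k + \pi_W x_k)$ over all $2^k$ subsets of $[k]$; the naive expansion would pay the worse bound $(2^k - 1) d$ on the error rather than the claimed $kd$.
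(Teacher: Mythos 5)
Your proposal is correct and is essentially the paper's own argument: both substitute $x_c = \pi(x_c) + \sum_i \phi_i(x_c) w_i$ one coordinate at a time, telescope to isolate $\alpha(\pi(x_1),\dots,\pi(x_k))$ (which inherits the partition rank of $\alpha|_{U\tdt U}$), and observe that each of the $k$ correction terms contributes at most $d$ to the partition rank. Your closing remark about why the one-variable-at-a-time telescoping beats the naive $(2^k-1)d$ expansion is exactly the right point.
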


\begin{proof}Let $G = U \oplus \langle w_1, \dots, w_d\rangle$. Then there exists linear maps $\pi \colon G \to U$ and $\phi_1, \dots, \phi_d \colon G \to \mathbb{F}_2$ such that $x = \pi(x) + \sum_{i \in [d]} \phi_i(x) w_i$ for all $x \in G$. Then
\begin{align*}\alpha(x_1, \dots, x_k) = &\alpha(\pi(x_1) + \phi_1(x_1) w_1 + \dots + \phi_d(x_1) w_d, x_2, \dots, x_k) \\
= &\sum_{i \in [d]} \phi_i(x_1) \alpha(w_i, x_2, \dots, x_k) + \alpha(\pi(x), x_2, \dots, x_k)\\
= & \dots\\
= & \Big(\sum_{c \in [k]} \sum_{i \in [d]} \phi_i(x_c) \alpha(\pi(x_1), \dots, \pi(x_{c-1}), w_i, x_{c+1}, \dots, x_k)\Big) + \alpha(\pi(x_1), \dots, \pi(x_k)).\qedhere\end{align*}
\end{proof}

We need a variant of the above lemma which concerns forms whose restrictions are close to symmetric forms.

\begin{corollary}\label{symmetryInher}Let $\alpha \colon G^k \to \mathbb{F}_2$ be a multilinear form, let $U \leq G$ be a subspace of codimension $d$ and let $\sigma \colon U^k \to \mathbb{F}_2$ be a symmetric multilinear form such that $\on{prank}(\alpha|_{U\tdt U} + \sigma) \leq r$. Then there exists a symmetric multilinear form $\tilde{\sigma} \colon G^k \to \mathbb{F}_2$ such that $\on{prank}(\alpha + \tilde{\sigma}) \leq kd + r$.\end{corollary}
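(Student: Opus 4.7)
The plan is to extend $\sigma$ from $U^k$ to a symmetric multilinear form on all of $G^k$ in the simplest possible way, and then apply the preceding lemma.

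Choose a complement $G = U \oplus \langle w_1, \dots, w_d\rangle$ and let $\pi \colon G \to U$ be the corresponding linear projection onto $U$. Define $\tilde\sigma \colon G^k \to \mathbb{F}_2$ by
\[
\tilde\sigma(x_1, \dots, x_k) = \sigma(\pi(x_1), \dots, \pi(x_k)).
\]
This is multilinear because $\pi$ is linear and $\sigma$ is multilinear, and it is symmetric because $\sigma$ is symmetric. Since $\pi$ restricts to the identity on $U$, we have $\tilde\sigma|_{U^k} = \sigma$, and therefore
\[
(\alpha + \tilde\sigma)|_{U^k} = \alpha|_{U^k} + \sigma,
\]
which has partition rank at most $r$ by hypothesis.

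Now applying the preceding lemma to the multilinear form $\alpha + \tilde\sigma$ and the subspace $U$ of codimension $d$ gives
\[
\on{prank}(\alpha + \tilde\sigma) \leq \on{prank}\big((\alpha + \tilde\sigma)|_{U^k}\big) + kd \leq r + kd,
\]
as desired. There is no genuine obstacle here: the only thing to check is that the naive pullback extension preserves symmetry, which is immediate, and that the restriction bound from the previous lemma is additive in the partition rank, which is exactly what that lemma provides.
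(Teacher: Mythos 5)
Your proof is correct and is essentially identical to the paper's: both define $\tilde\sigma = \sigma(\pi(\cdot),\dots,\pi(\cdot))$ via the projection onto $U$, observe that symmetry and the restriction to $U^k$ are preserved, and then invoke the preceding lemma to transfer the partition rank bound from $U^k$ to $G^k$.
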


\begin{proof}Let the maps $\pi, \phi_1, \dots, \phi_d$ be as in the proof of the previous lemma. Define mutlilinear form $\tilde{\sigma} \colon G^k \to \mathbb{F}_2$ by $\tilde{\sigma}(x_1, \dots, x_k) = \sigma(\pi(x_1), \dots, \pi(x_k))$. Since $\sigma$ is symmetric on $U \tdt U$ we have that $\tilde{\sigma}$ is symmetric. On the other hand, since $\pi$ is a projection onto $U$, we see that $\tilde{\sigma}|_{U \tdt U} = \sigma$ and hence
\[\on{prank}\Big((\alpha + \tilde{\sigma})|_{U \tdt U}\Big) \leq r.\]
By the previous lemma we conclude that $\on{prank}(\alpha + \tilde{\sigma}) \leq kd + r$, as required.\end{proof}

We also need a result on images of high-rank maps.

\begin{lemma}[Lemma 2.5 in~\cite{extensionsPaper}]\label{nonzeroPtLemma}Let $\rho, \beta_1,\dots,\beta_r \colon G^{k} \to \mathbb{F}_2$ be multilinear forms and let $m \in \mathbb{N}$ be such that for all choices of $\lambda \in \mathbb{F}^r_2$, $\on{bias}\Big(\rho + \lambda \cdot \beta\Big) < 2^{- k (r + m)}$. Then for any multilinear forms $\gamma_i \colon G^{I_i} \to \mathbb{F}_2$, $\emptyset \not= I_i \subsetneq [k]$, $i=1,2,\dots,m$, we may find $x_{[k]} \in G^{[k]}$ such that
\begin{itemize}
\item $\rho(x_{[k]}) = 1$,
\item $(\forall i \in [r])\  \beta_i(x_{[k]}) = 0$, and
\item $(\forall i \in [m])\  \gamma_i(x_{I_i}) = 0$.
\end{itemize}
\end{lemma}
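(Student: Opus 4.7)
The plan is to count the number $N$ of $x \in G^{[k]}$ satisfying the three sets of constraints and show $N > 0$ by Fourier analysis. Expanding each indicator as $\mathbbm{1}[f(x) = \epsilon] = \tfrac{1}{2}(1 + (-1)^{f(x)+\epsilon})$ and grouping according to whether $\rho$ is involved, one obtains
\[\frac{N}{|G|^k} = \frac{1}{2^{r+m+1}}\bigl(T_0 - T_1\bigr),\]
where $T_0 = \sum_{b \in \mathbb{F}_2^r, c \in \mathbb{F}_2^m} \on{bias}(b \cdot \beta + c \cdot \gamma)$ and $T_1 = \sum_{b,c} \on{bias}(\rho + b \cdot \beta + c \cdot \gamma)$. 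The plan is to show $T_0 \geq 1$ and $|T_1| < 1$.

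The heart of the argument is the following auxiliary claim, which I would prove by induction on $k$: \emph{for any multilinear $\mu\colon G^k \to \mathbb{F}_2$ and any sum $\eta = \sum_{j\in T} \gamma_j(x_{I_j})$ with $\gamma_j$ multilinear on $G^{I_j}$ and $\emptyset \neq I_j \subsetneq [k]$, one has $0 \leq \on{bias}(\mu + \eta) \leq \on{bias}(\mu)$}. For the inductive step, pick a pivot variable $x_{i^*}$ and integrate first over $x_{i^*} \in G$. Since both $\mu$ and the $\gamma_j$'s with $i^* \in I_j$ are $\mathbb{F}_2$-linear in $x_{i^*}$, this inner integral collapses to $(-1)^{\sum_{j: i^* \notin I_j}\gamma_j(x_{I_j})}\cdot \mathbbm{1}[M(x_{[k]\setminus\{i^*\}}) = 0]$, where $M\colon G^{k-1} \to G$ is the $G$-valued derivative $\partial_{x_{i^*}}\mu + \sum_{j: i^* \in I_j}\partial_{x_{i^*}}\gamma_j$, which has the form (multilinear on $G^{k-1}$) + (sum on strict subsets of $[k-1]$). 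Expanding $\mathbbm{1}[M = 0] = 2^{-n}\sum_{S \subseteq [n]} (-1)^{\sum_{i \in S} M^{(i)}}$ via Fourier over $G = \mathbb{F}_2^n$ rewrites $\on{bias}(\mu + \eta)$ as $2^{-n}$ times a sum of $2^n$ biases, each of a scalar form on $G^{k-1}$ of the same (multilinear + strict-subset) structure. The inductive hypothesis then furnishes the non-negativity termwise, while the upper bound emerges after collapsing the Fourier sum back: the multilinear-part contributions sum to $2^n \ex(-1)^{C}\mathbbm{1}[\partial_{x_{i^*}}\mu=0]$, which is at most $2^n \Pr[\partial_{x_{i^*}}\mu = 0] = 2^n\on{bias}(\mu)$. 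The base case $k = 1$ is vacuous.

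With the auxiliary claim, the rest is immediate. For $T_0$: non-negativity of every summand, together with the $(b,c)=(0,0)$ contribution $\on{bias}(0) = 1$, gives $T_0 \geq 1$. For $T_1$: applying the upper bound with the multilinear form $\mu = \rho + b \cdot \beta$ yields $|\on{bias}(\rho + b \cdot \beta + c \cdot \gamma)| \leq \on{bias}(\rho + b\cdot\beta) < 2^{-k(r+m)}$, so summing over the $2^{r+m}$ pairs $(b,c)$ gives $|T_1| \leq 2^{-(k-1)(r+m)} \leq \tfrac{1}{2}$ for $k \geq 2$ and $r+m \geq 1$ (the case $r=m=0$ is the one-term bound $|\on{bias}(\rho)| < 1$). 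Hence $N/|G|^k \geq (1 - \tfrac{1}{2})/2^{r+m+1} > 0$, and the required $x_{[k]}$ exists.

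The main obstacle is the auxiliary claim. The subtle point is that slicing on $x_{i^*}$ produces a derivative $M$ valued in $G = \mathbb{F}_2^n$ rather than $\mathbb{F}_2$, so the induction hypothesis does not apply directly; the Fourier expansion over $\mathbb{F}_2^n$ is the device that replaces the vector-valued condition $M = 0$ with $2^n$ scalar bias computations, each of which lies within the induction.
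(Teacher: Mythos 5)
The paper does not prove this lemma (it is imported verbatim from~\cite{extensionsPaper}), so your argument has to stand on its own, and unfortunately it does not: the auxiliary claim is false. Take $k=2$, $\mu(x,y)=x_1y_1$ (product of first coordinates), $\gamma_1(x)=x_1$, $\gamma_2(y)=y_1$, so that $\eta=x_1+y_1$ is a sum of multilinear forms on nonempty proper subsets of $[2]$. Then $\mu+\eta=(x_1+1)(y_1+1)+1$ and $\on{bias}(\mu+\eta)=-\tfrac12<0$, contradicting the asserted non-negativity. Your induction breaks at exactly the point you flag as subtle: a term $\gamma_j$ with $I_j=\{i^*\}$ has derivative $\partial_{x_{i^*}}\gamma_j$ equal to a \emph{constant} element of $G$, so after Fourier-expanding $\mathbbm{1}[M=0]$ the exponent of the $t$-th term acquires the constant $t\cdot\partial_{x_{i^*}}\gamma_j$. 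Constants are not multilinear forms on nonempty proper subsets of $[k-1]$, the inductive hypothesis does not cover them, and they are precisely what makes the bias go negative (in the example, the $t=1$ term is $\mathbb{E}_{x}(-1)^{x_1+(x_1+1)}=-1$).

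This is fatal to both halves of the plan. The inequality $T_0\ge1$ happens to be true, but needs a different argument: $2^{-(r+m)}T_0$ is the density of the variety $\{\beta_i=0,\ \gamma_j=0\}$, which one bounds below by $2^{-(r+m)}$ by fixing the coordinates one at a time (for fixed earlier coordinates, the constraints whose last variable is the current one cut out a linear subspace of codimension at most their number). The bound $|T_1|<1$ is the real problem: your proof of the upper bound $|\on{bias}(\rho+b\cdot\beta+c\cdot\gamma)|\le\on{bias}(\rho+b\cdot\beta)$ runs through the same broken induction, and the bound that the Gowers--Cauchy--Schwarz inequality actually delivers is $|\on{bias}(\rho+b\cdot\beta+c\cdot\gamma)|\le\|(-1)^{\rho+b\cdot\beta}\|_{\square^k}=\on{bias}(\rho+b\cdot\beta)^{2^{-k}}$, i.e.\ only the $2^{-k}$-th power. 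Summing $2^{r+m}$ terms each smaller than $2^{-k(r+m)2^{-k}}$ gives $2^{(r+m)(1-k2^{-k})}\ge1$, so the global count does not close. This loss of a $2^{-k}$-th root is why the hypothesis carries the strong threshold $2^{-k(r+m)}$ rather than $2^{-(r+m)}$, and why the argument in~\cite{extensionsPaper} builds the point $x_{[k]}$ coordinate by coordinate, using the bias hypothesis to keep the relevant linear functionals in general position at each step, instead of proving positivity of a single averaged expression.
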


The following lemma is important for neglecting the low rank differences.

\begin{lemma}\label{closeformsreplacementinverse}Suppose that a function $f \colon G \to \mathbb{D}$ and a multilinear form $\alpha \colon G^k \to \mathbb{F}_2$ satisfy
\[\Big| \exx_{x, a_1, \dots, a_k} \mder_{a_1} \dots \mder_{a_k} f(x) (-1)^{\alpha(a_1, \dots, a_k)}\Big| \geq c.\]
Let $\beta \colon G^k \to \mathbb{F}_2$ be another multilinear form such that $\on{prank}(\alpha + \beta) \leq r$. Then
\[\Big| \exx_{x, a_1, \dots, a_k} \mder_{a_1} \dots \mder_{a_k} f(x) (-1)^{\beta(a_1, \dots, a_k)}\Big| \geq 2^{-2^{k+1}r}c.\]
\end{lemma}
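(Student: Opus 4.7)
My plan is to proceed by induction on $r = \on{prank}(\alpha+\beta)$. When $r = 0$ the forms agree and there is nothing to prove. For the inductive step, I would write $\alpha+\beta = \mu\nu + \rho$ with $\on{prank}\rho \le r-1$, $\mu\colon G^I\to\mathbb{F}_2$ and $\nu\colon G^{[k]\setminus I}\to\mathbb{F}_2$, $I\subsetneq[k]$, and set $\beta' := \beta+\rho = \alpha+\mu\nu$. This is still a genuine multilinear form because $\mu\nu$ is a rank-$1$ multilinear form. Since $\on{prank}(\alpha+\beta') = 1$ and $\on{prank}(\beta'+\beta) \le r-1$, applying the base case to pass from $\alpha$ to $\beta'$ and then the inductive hypothesis to pass from $\beta'$ to $\beta$ would yield the claimed bound $2^{-2^{k+1}r}c$ provided each step loses a factor of $2^{-2^{k+1}}$.

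For the base case I would use the identity
\[
(-1)^{\mu\nu} = \tfrac{1}{2}\bigl(1 + (-1)^{\mu} + (-1)^{\nu} - (-1)^{\mu+\nu}\bigr)
\]
to decompose $\ex\mder f(-1)^\alpha = \ex\mder f(-1)^\beta(-1)^{\mu\nu}$ as a $\pm\tfrac{1}{2}$-signed sum of the four correlations $\ex\mder f(-1)^{\beta+\psi(a_J)}$ with $\psi \in \{0,\mu,\nu,\mu+\nu\}$ and $J\subsetneq[k]$. The triangle inequality then picks out one such correlation of magnitude at least $c/2$, and when the chosen $\psi$ is $0$ the desired conclusion is immediate.

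The remaining task, and the main obstacle, is to handle the case in which the large term carries an extra phase $(-1)^{\psi(a_J)}$ depending only on a proper subset $J\subsetneq[k]$. My plan is to absorb this phase by an iterated Cauchy--Schwarz argument in the variables $a_j$ with $j\in[k]\setminus J$, of which there is at least one. At each step, duplicating $a_j$ and using multilinearity of $\beta$ via $\beta(a_{[k]\setminus j},a_j)+\beta(a_{[k]\setminus j},a_j') = \beta(a_{[k]\setminus j},a_j+a_j')$ collapses the two copies of the $\beta$-phase to a single multilinear phase in a new variable, while the factor $(-1)^{\psi(a_J)}$, being constant in the duplicated variable, drops out upon taking the squared modulus. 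After at most $k$ such Cauchy--Schwarzes the $\psi$-phase is eliminated entirely, and a final application of the Gowers--Cauchy--Schwarz inequality (Lemma~\ref{unifBound}), combined with the box-norm rewriting of $\ex_a F(a)(-1)^{\beta(a)}$ as a correlation over a cube of $f$'s, would let me unwind the duplications and recover a bound of the form $|\ex\mder f(-1)^\beta| \ge 2^{-2^{k+1}}(c/2)$. Careful bookkeeping of the $2^k$ duplications, together with the initial factor of $2$ from the triangle inequality, is what produces the exponent $2^{k+1}$ in the final estimate.
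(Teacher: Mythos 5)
Your skeleton (induction on the partition rank, the rank-one identity $(-1)^{\mu\nu}=\tfrac12(1+(-1)^\mu+(-1)^\nu-(-1)^{\mu+\nu})$, and the triangle inequality) is sound, and for what it is worth the paper gives no self-contained argument here at all — it simply quotes the proof of Lemma 39 of~\cite{DirGU} — so a direct proof along your lines is a legitimate and arguably preferable route. The genuine gap is in the last step, where you remove the residual phase $(-1)^{\psi}$. An iterated Cauchy--Schwarz of the kind you describe squares the correlation each time it is applied: executed naively it yields $|\ex \mder_{a_1}\cdots\mder_{a_k}f(x)(-1)^{\beta(a)}|\geq (c/2)^{2^k}$, a \emph{power} loss rather than the multiplicative loss $2^{-2^{k+1}}(c/2)$ you assert. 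No amount of bookkeeping of the duplications turns a power of $c$ back into a constant multiple of $c$, and once the loss is a power, composing the $r$ steps of your induction gives roughly $c^{2^{kr}}$, which is far weaker than the stated bound $2^{-2^{k+1}r}c$ (for $r=1$ and $k=2$, compare $(c/2)^4$ with $2^{-8}c$). Also, a small imprecision: the term $\mu+\nu$ is not of the form $\psi(a_J)$ for a single $J\subsetneq[k]$, since $I\cup([k]\setminus I)=[k]$; it is a \emph{sum} of two such functions, which matters for how you distribute the factors.

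The missing idea that closes the gap — and in fact gives the better bound $2^{-r}c$ — is that $\ex_{x,a}\mder_{a_1}\cdots\mder_{a_k}f(x)(-1)^{\gamma(a)}$ is itself already a complete $2^k$-fold Gowers product. Concretely, substituting $x=v_1+\cdots+v_k$, $a_i=u_i+v_i$ and using multilinearity of $\beta$ writes $\ex\mder f(-1)^{\beta+\psi}$ as $\ex_{u,v}\prod_{S\subseteq[k]}\operatorname{Conj}^{k-|S|}g_S(u_S,v_{S^c})$ with $g_S(w)=f(w_1+\cdots+w_k)(-1)^{\beta(w)}\ell_S(w)$ and each $\ell_S$ a product of modulus-one factors depending on proper subsets of $w_{[k]}$; a \emph{single} application of Lemma~\ref{unifBound} then bounds this by $\prod_S\|g_S\|_{\square^k}=\|g\|_{\square^k}^{2^k}$, where $g(w)=f(w_1+\cdots+w_k)(-1)^{\beta(w)}$, because box norms are unchanged by lower-order modulations — and $\|g\|_{\square^k}^{2^k}$ unwinds to exactly $\ex\mder f(-1)^{\beta}$. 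So removing the lower-order phase costs \emph{nothing}: $|\ex\mder f(-1)^{\beta+\psi}|\leq\ex\mder f(-1)^{\beta}$ for each of the three nontrivial $\psi$. (Equivalently, peel off a variable outside the support of $\mu$ to see the left side as a signed average of the non-negative quantities whose unsigned average is the right side.) Feeding this into your four-term decomposition gives $c\leq 2\,\ex\mder f(-1)^{\beta}$ in the rank-one case, and the induction then delivers the lemma with room to spare. Without this observation your argument does not establish the stated inequality.
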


\begin{proof}Using the notion of the large multilinear spectrum (Definition 8 in~\cite{DirGU}), and writing $c' = c^{2^{-k}}$, we see that the condition is that $\alpha \in \on{Spec}^{\text{ml}}_{c'} f$. We need to show that $\beta \in \on{Spec}^{\text{ml}}_{c''} f$ for some $c'' \geq 2^{-2r} c'$. This follows from the proof of Lemma 39 in~\cite{DirGU}, which has large bias assumption instead of low partition rank assumption.\end{proof}

Similarly, we are allowed to pass to arbitrary subspaces.

\begin{lemma}\label{boundedcodimsubspacepass}Suppose that a function $f \colon G \to \mathbb{D}$ and a multilinear form $\alpha \colon G^k \to \mathbb{F}_2$ satisfy
\[\Big| \exx_{x, a_1, \dots, a_k} \mder_{a_1} \dots \mder_{a_k} f(x) (-1)^{\alpha(a_1, \dots, a_k)}\Big| \geq c.\]
Let $U \leq G$ be a subspace. Then there exists a function $\tilde{f} \colon U \to \mathbb{D}$ such that
\[\Big| \exx_{x, a_1, \dots, a_k \in U} \mder_{a_1} \dots \mder_{a_k} \tilde{f}(x) (-1)^{\alpha(a_1, \dots, a_k)}\Big| \geq c.\]
\end{lemma}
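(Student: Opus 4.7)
My plan is to prove this lemma by a coset pigeonholing argument, in the spirit of Lemma~39 of~\cite{DirGU} whose proof is invoked in Lemma~\ref{closeformsreplacementinverse} above. I fix a complement $V\leq G$ with $G=U\oplus V$, and for $v\in V$ write $f_v\colon U\to\mathbb{D}$ for the coset restriction $f_v(u):=f(u+v)$. The candidate $\tilde f$ will ultimately be such a coset shift $f_{v^*}$ (for some $v^*\in V$ selected by pigeonhole), possibly multiplied by a unit-modular phase encoding lower-degree cross-terms.

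As a warm-up, when $k=1$ the hypothesis reduces to $|\widehat f(\alpha)|^2\geq c$, where $\alpha$ is linear. Splitting $x=u+v$ in the expectation defining $\widehat f(\alpha)$ gives $\widehat f(\alpha)=\exx_{v\in V}(-1)^{\alpha(v)}\widehat{f_v}(\alpha|_U)$, and Cauchy--Schwarz yields $\exx_v|\widehat{f_v}(\alpha|_U)|^2\geq|\widehat f(\alpha)|^2\geq c$; pigeonholing gives a $v^*$ with $|\widehat{f_{v^*}}(\alpha|_U)|^2\geq c$, and $\tilde f:=f_{v^*}$ works with the same constant $c$.

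For general $k$, I would decompose every variable into its $U$- and $V$-parts: $x=u_0+v_0$, $a_i=b_i+w_i$ with $u_0,b_i\in U$ and $v_0,w_i\in V$. Expanding the multilinear derivative yields
\[\mder_{a_{[k]}}f(x)=\prod_{S\subseteq[k]}\on{Conj}^{k-|S|}f_{v_0+\sum_{i\in S}w_i}\bigl(u_0+\textstyle\sum_{i\in S}b_i\bigr),\]
while multilinearity of $\alpha$ splits $\alpha(b+w)$ as $\alpha(b)+\alpha(w)$ plus cross-terms of multilinear degree $<k$ in $b$ for each fixed $w$. The triangle inequality lets me pigeonhole successively on $(w_{[k]},v_0)\in V^{k+1}$ to fix values $w^*,v^*$ for which the inner $U^{k+1}$-average has modulus at least $c$. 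To collapse the resulting mixed-shift correlation (involving the $2^k$ distinct copies $f_{v^*+\sum_{i\in S}w^*_i}$) into a single-function correlation on $U$, I would iteratively apply the Gowers--Cauchy--Schwarz inequality (Lemma~\ref{unifBound}) in each variable $b_i$, absorbing the cross-term characters as phases, and conclude by a final pigeonhole to obtain the desired $\tilde f=f_{v^{**}}$ (up to a unit multiplier).

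The chief technical difficulty is precisely this final reduction: after the initial pigeonhole there are $2^k$ genuinely different shifts $v^*+\sum_{i\in S}w^*_i$ for different $S\subseteq[k]$, so the naive choice $\tilde f=f_{v^*}$ does not suffice. Resolving this requires the iterated Cauchy--Schwarz/box-norm argument sketched above, which parallels the standard proof of monotonicity of Gowers norms under subspace restriction, with the multilinear form $\alpha|_{U^k}$ also descending as a character along the way. Modulo this bookkeeping, the argument is parallel to the clean $k=1$ Fourier computation, and the desired threshold $c$ is preserved by the same averaging-plus-pigeonhole step.
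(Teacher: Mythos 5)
Your proposal is correct and follows essentially the same route as the paper: decompose $G=U\oplus W$, pigeonhole over the $W$-components, and then collapse the resulting $2^k$ coset shifts via the Gowers--Cauchy--Schwarz inequality, ending with $\tilde f$ equal to a translate of $f$. The "chief technical difficulty" you flag is handled in the paper exactly as you suggest, by a change of variables followed by a single application of the box-norm inequality (Lemma~\ref{unifBound}) and H\"older, with the lower-degree cross-term phases $\ell_i(y_{[k]\setminus\{i\}})$ cancelling inside the box norm, so the final correlation is attained by $\tilde f(x)=f(x+\tilde b)$ at threshold $c$.
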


\begin{proof}Let $W$ be a subspace such that $G = U \oplus W$. Using this decomposition we get
\[\Big| \exx_{\ssk{x, a_1, \dots, a_k \in U\\y, b_1, \dots, b_k \in W}} \mder_{a_1 + b_1} \dots \mder_{a_k+ b_k} f(x+y) (-1)^{\alpha(a_1 + b_1, \dots, a_k + b_k)}\Big| \geq c.\]
By triangle inequality we get
\[\exx_{y, b_1, \dots, b_k \in W} \Big| \exx_{x, a_1, \dots, a_k \in U} \mder_{a_1 + b_1} \dots \mder_{a_k+ b_k} f(x+y) (-1)^{\alpha(a_1 + b_1, \dots, a_k + b_k)}\Big| \geq c.\]
By averaging, there exists a choice of $y, b_1, \dots, b_k \in W$ such that
\[\Big| \exx_{x, a_1, \dots, a_k \in U} \mder_{a_1 + b_1} \dots \mder_{a_k+ b_k} f(x+y) (-1)^{\alpha(a_1 + b_1, \dots, a_k + b_k)}\Big| \geq c.\]
We introduce additional variables $a'_1, \dots, a'_k \in U$ and make a change of variables where we replace $x$ by $x + a_1 + \dots + a_k$ and $a_i$ by $a_i + a'_i$ to obtain
\begin{align*}c \leq & \Big| \exx_{\ssk{x, a_1, \dots, a_k \in U\\a'_1, \dots, a'_k \in U}} \mder_{a_1 + a'_1 + b_1} \dots \mder_{a_k + a'_k + b_k} f(x + a_1 + \dots + a_k +y) (-1)^{\alpha(a_1 + a'_1 + b_1, \dots, a_k  + a'_k + b_k)}\Big|\\
= & \Big| \exx_{\ssk{x, a_1, \dots, a_k \in U\\a'_1, \dots, a'_k \in U}} \prod_{I \subseteq [k]} \Big(\on{Conj}^{k-|I|} f\Big(x + y + \sum_{i \in I} (a'_i + b_i) + \sum_{i \in [k] \setminus I} a_i\Big) (-1)^{\alpha(a'_I, a_{[k] \setminus I})}\Big) \\
&\hspace{8cm}(-1)^{\alpha(a_1 + a'_1 + b_1, \dots, a_k  + a'_k + b_k) + \alpha(a_1 + a'_1, \dots, a_k  + a'_k)}\Big|.\end{align*}
By triangle inequality 
\begin{align*}c \leq & \exx_{x \in U}\Big| \exx_{\ssk{a_1, \dots, a_k \in U\\a'_1, \dots, a'_k \in U}} \prod_{I \subseteq [k]} \Big(\on{Conj}^{k-|I|} f\Big(x + y + \sum_{i \in I} (a'_i + b_i) + \sum_{i \in [k] \setminus I} a_i\Big) (-1)^{\alpha(a'_I, a_{[k] \setminus I})}\Big) \\
&\hspace{8cm}(-1)^{\alpha(a_1 + a'_1 + b_1, \dots, a_k  + a'_k + b_k) + \alpha(a_1 + a'_1, \dots, a_k  + a'_k)}\Big|.\end{align*}

For each $x \in U$, we may define auxiliary functions $f_{I, x} \colon G^k \to \mathbb{D}$ for $I \subseteq [k]$, each of the form
\[f_{I, x}(z_1, \dots, z_k) = f\Big(x +y + \sum_{i \in I} b_i + z_1 + \dots + z_k\Big) (-1)^{\alpha(z_1, \dots, z_k)} \ell_I(z_1, \dots, z_k),\]
where $\ell_I$ is a product of several factors, each depending on a proper subset of variables $z_1, \dots, z_k$, coming from terms that arise from expansion of
\[(-1)^{\alpha(a_1 + a'_1 + b_1, \dots, a_k  + a'_k + b_k) + \alpha(a_1 + a'_1, \dots, a_k  + a'_k)}\]
and whose values are of modulus 1. In the new notation, we get
\[c \leq \exx_{x \in U}\Big|\exx_{\ssk{a_1, \dots, a_k \in U\\a'_1, \dots, a'_k \in U}} \prod_{I \subseteq [k]} \on{Conj}^{k-|I|}f_{I,x}(a'_I, a_{[k] \setminus I})\Big|.\]

Using the Gowers-Cauchy-Schwarz inequality we get 
\[c \leq \exx_{x \in U} \prod_{I \subseteq [k]} \|f_{I, x}\|_{\square^k},\]
and H\H{o}lder's inequality implies that
\[c \leq \prod_{I \subseteq [k]} \Big(\exx_{x \in U} \|f_{I, x}\|_{\square^k}^{2^k}\Big)^{2^{-k}}.\]
Thus, there is a choice of subset $I \subseteq [k]$ such that $c \leq \ex_x \|f_{I, x}\|_{\square^k}^{2^k}$. By definition, writing $\tilde{b} = \sum_{i \in I} b_i$ and splitting $\ell_I(z_1, \dots, z_k)$ into further factors that do not depend on a variable in $z_{[k]}$, we have
\[f_{I, x}(y_1, \dots, y_k) = f(x + y_1 + \dots + y_k + \tilde{b}) (-1)^{\alpha(y_1, \dots, y_k)} \prod_{i \in [k]} \ell_i(y_{[k] \setminus \{i\}})\]
for some functions $\ell_i$ with $|\ell_i(y_{[k] \setminus \{i\}})| = 1$ for all $y_{[k] \setminus \{i\}}$. Let $\tilde{f} \colon U \to \mathbb{D}$ be given by $\tilde{f}(x) = f(x + \tilde{b})$.\\

Thus 
\begin{align*}c \leq &\exx_{x \in U} \|f_{I, x}\|_{\square^k}^{2^k} = \exx_{\ssk{x, a_1, \dots, a_k \in U\\b_1, \dots, b_k \in U}} \prod_{I \subseteq [k]} \Big(\on{Conj}^{k-|I|} \tilde{f}\Big(x + \sum_{i \in I} a_i + \sum_{i \notin I} b_i\Big) (-1)^{\alpha(a_I, b_{[k] \setminus I})}\Big)\\
&\hspace{8cm} = \exx_{x, d_1, \dots, d_k \in U} \mder_{d_1} \dots \mder_{d_k}  \tilde{f}(x) (-1)^{\alpha(d_1, \dots, d_k)}.\qedhere\end{align*}

\end{proof}
%

Another fact that we need is a combination of the symmetry argument of Green and Tao~\cite{StrongU3} and the inverse theorem for biased multilinear forms.

\begin{lemma}[Symmetry argument, Corollary 97~\cite{multihomPaper}]\label{symmArgumentCor}Let $\alpha \colon G^k \to \mathbb{F}_2$ be a multilinear form. Suppose that
\[\Big|\exx_{a_1, \dots, a_k, x\in G} \mder_{a_1} \dots \mder_{a_{k}} f(x) (-1)^{\alpha(a_1, \dots, a_k)}\Big| \geq c\]
for some $c > 0$. Then for each $\pi \in \on{Sym}_{[k]}$ we have $\on{prank} (\alpha + \alpha \circ \pi) \leq  O\Big((\log c^{-1})^{O(1)}\Big)$.\end{lemma}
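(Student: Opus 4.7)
The plan is to show that $\beta := \alpha + \alpha \circ \pi$ has bias at least $c^{O(1)}$, and then invoke Theorem~\ref{biasedinversethm} to deduce $\on{prank}(\beta) \leq O((\log c^{-1})^{O(1)})$, as required.

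I would begin with the observation that $\mder_{a_1}\cdots\mder_{a_k} f(x)$ is symmetric in the $a_i$'s, because the discrete multiplicative derivatives commute. Hence the function $F(a) := \ex_x \mder_{a_1}\cdots\mder_{a_k} f(x)$ is $\on{Sym}_k$-invariant, and a change of variables $a \mapsto \pi(a)$ in the hypothesis immediately gives $|\ex_a F(a)(-1)^{\alpha \circ \pi(a)}| \geq c$ for every $\pi \in \on{Sym}_k$.

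Next, applying Cauchy-Schwarz $k$ times (once in each coordinate $a_i$) to $|\ex_a F(a)(-1)^{\alpha(a)}|^{2^k}$ and using the multilinearity identity $\sum_{S\subseteq[k]}\alpha(b_{i\in S},b'_{i\notin S}) = \alpha(b+b')$ to collapse the $2^k$ copies of $(-1)^\alpha$ to a single phase, the substitution $b' = b+d$ yields
\[c^{2^k} \leq \Big|\ex_d (-1)^{\alpha(d)} \psi(d)\Big|,\]
where $\psi(d) := \ex_b \prod_{S\subseteq[k]}\on{Conj}^{|S|}F(b + d\cdot\mathbf{1}_{[k]\setminus S})$ is a Gowers box-type kernel with $|\psi| \leq 1$. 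A simultaneous change of variable $b \mapsto \pi^{-1}(b)$ shows that $\psi$ inherits the $\on{Sym}_k$-symmetry of $F$, so the same inequality holds with $\alpha \circ \pi$ in place of $\alpha$.

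The final step combines these two large correlations into a bias estimate for $\beta$. Multiplying them and substituting $d_2 = d_1 + e$, the characteristic-$2$ expansion $\alpha\circ\pi(d_1+e) = \alpha\circ\pi(d_1) + \alpha\circ\pi(e) + \sum_{\emptyset \neq S \subsetneq [k]} \alpha \circ \pi(d_{1,S}, e_{[k]\setminus S})$ isolates $\beta(d_1)$ as a pure phase, and a further Cauchy-Schwarz round in $d_1$ (with the substitution $d_1' = d_1 + f$) absorbs the mixed cross terms: being linear in $d_1$, they telescope on differencing to depend only on $f$ and $e$, which can then be pulled outside of the $d_1$-average. The resulting inequality, combined with $|\psi|\leq 1$, yields $\on{bias}(\beta) \geq c^{O(1)}$, and Theorem~\ref{biasedinversethm} closes the argument. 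The main technical obstacle is precisely this last step: cleanly decoupling the diagonal phase $\beta$ from the entangled multilinear leftovers and the box kernel $\psi$, which is where most of the care in the proof of Corollary 97 in~\cite{multihomPaper} is spent.
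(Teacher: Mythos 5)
The paper itself does not prove this lemma; it quotes it verbatim as Corollary~97 of~\cite{multihomPaper}, and your two-step plan (show $\on{bias}(\alpha+\alpha\circ\pi)\geq c^{O(1)}$, then apply Theorem~\ref{biasedinversethm}) is indeed the right overall strategy. The problem is that the intermediate position you reach after your second step is provably too weak to finish from. After the first round of Cauchy--Schwarz you retain only the facts that $|\ex_d(-1)^{\alpha(d)}\psi(d)|\geq c^{2^k}$ for a bounded, $\on{Sym}_k$-invariant kernel $\psi$, together with the corresponding statement for $\alpha\circ\pi$ --- but the latter is a formal consequence of the former plus the symmetry of $\psi$, so it carries no new information. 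This is not enough: for $k=2$, $\pi=(1\,2)$ and $\psi=\tfrac12\big((-1)^{\alpha}+(-1)^{\alpha\circ\pi}\big)$ with $\alpha$ a generic high-rank bilinear form, $\psi$ is bounded and symmetric and $\ex_d(-1)^{\alpha(d)}\psi(d)\approx\tfrac12$, yet $\alpha+\alpha\circ(1\,2)$ has essentially full rank. Since your final step invokes only $|\psi|\leq 1$ and symmetry, it cannot yield the required bias bound. The actual symmetry argument has to keep working with the unaveraged product $\mder_{a_1}\cdots\mder_{a_k}f(x)=\prod_{I\subseteq[k]}\on{Conj}^{k-|I|}f\big(x+\sum_{i\in I}a_i\big)$, whose factors depend on \emph{sums} of the variables and which, after suitable substitutions, become functions omitting at least one variable --- that is what eventually lets Lemma~\ref{unifBound} isolate $(-1)^{\alpha+\alpha\circ\pi}$. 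Averaging over $x$ and then over $b$ at the outset destroys exactly this structure.

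The concrete manipulations in your last paragraph also do not do what you claim. For $k\geq3$ the cross terms $\alpha\circ\pi(d_{1,S},e_{[k]\setminus S})$ with $2\leq|S|\leq k-1$ are multilinear of degree $|S|$ in $d_1$, not linear, so a single differencing $d_1\mapsto d_1+f$ does not reduce them to functions of $(f,e)$ alone. Worse, that same differencing sends $(-1)^{\beta(d_1)}$ to $(-1)^{\beta(d_1)+\beta(d_1+f)}=(-1)^{\beta(f)}\cdot(-1)^{\sum_{\emptyset\neq S\subsetneq[k]}\beta(d_{1,S},f_{[k]\setminus S})}$: the full-degree phase you are trying to isolate is annihilated and replaced by $\beta(f)$ times further entangled lower-order terms, so the final Cauchy--Schwarz is circular. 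Finally, the kernel $\psi(d_1)\overline{\psi(d_1+e)}$ genuinely depends on all $k$ coordinates of $d_1$, so Lemma~\ref{unifBound} cannot be applied to the $d_1$-average to extract a box norm of $(-1)^{\beta}$ on its own. If you want a self-contained proof you need to run the Green--Tao symmetry argument, as generalized in~\cite{multihomPaper}, on the original correlation; as written, the proposal identifies the correct endpoints but the bridge between them is missing.
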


Recall from the introduction that a multilinear form $\sigma \colon G^k \to \mathbb{F}_2$ is \emph{strongly symmetric} if it is symmetric and the multilinear form $(x,y_3, \dots, y_k) \mapsto  \sigma(x,x, y_3, \dots, y_k)$ is also symmetric. The following lemma is a crucial property of strongly symmetric multilinear forms, namely that we may lift them to higher order strongly symmetric multilinear forms.

\begin{lemma}\label{liftingssforms}Suppose that $\sigma \colon G^k \to \mathbb{F}_2$ is a strongly symmetric multilinear form. Then there exists a strongly symmetric multilinear form $\tilde{\sigma} \colon G^{k + 1} \to \mathbb{F}_2$ such that 
\[\sigma(x_1, x_2, \dots, x_k) = \tilde{\sigma}(x_1, x_1,x_2, \dots, x_k).\]\end{lemma}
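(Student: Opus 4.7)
The plan is to construct $\tilde{\sigma}$ explicitly in coordinates by prescribing its symmetric coefficient tensor. Fix a basis $e_1,\dots,e_n$ of $G=\mathbb{F}_2^n$ and write $\sigma(x^{(1)},\dots,x^{(k)})=\sum_{j_1,\dots,j_k} c_{j_1,\dots,j_k} x^{(1)}_{j_1}\cdots x^{(k)}_{j_k}$, so $c$ is a symmetric function of the indices (equivalently, a function on multisets of size $k$). A key initial observation is that, because $G$ has characteristic $2$ and $\sigma$ is symmetric, the substitution $x^{(1)}=x^{(2)}=x$ collapses to a genuinely multilinear expression: the off-diagonal terms $c_{i,j,j_3,\dots}+c_{j,i,j_3,\dots}$ cancel in $\mathbb{F}_2$ and the diagonal terms use $x_i^2=x_i$, yielding
\[\sigma(x,x,y_3,\dots,y_k)=\sum_{j,j_3,\dots,j_k} c_{j,j,j_3,\dots,j_k}\,x_j\,y_{3,j_3}\cdots y_{k,j_k}.\]
Strong symmetry of $\sigma$ is thus exactly the statement that the function $\phi(j,j_3,\dots,j_k):=c_{j,j,j_3,\dots,j_k}$ is symmetric in all $k-1$ of its arguments, i.e.\ a well-defined symmetric function on multisets of size $k-1$.

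Next I would define $\tilde{c}$ on multisets $T$ of size $k+1$ by the rule: if $T$ contains some element $a$ of multiplicity at least $2$, set $\tilde{c}(T)=c(T-a)$, where $T-a$ means removing one copy of $a$; and set $\tilde{c}(T)=0$ on multisets all of whose elements are distinct. The corresponding symmetric multilinear form $\tilde{\sigma}\colon G^{k+1}\to\mathbb{F}_2$ then automatically satisfies $\tilde{\sigma}(x_1,x_1,x_2,\dots,x_k)=\sigma(x_1,\dots,x_k)$, since the characteristic-$2$ diagonal computation gives $\tilde{\sigma}(x,x,x_2,\dots,x_k)=\sum \tilde{c}_{j,j,j_2,\dots,j_k}x_j x_{2,j_2}\cdots x_{k,j_k}$, and by construction $\tilde{c}_{j,j,j_2,\dots,j_k}=c_{j,j_2,\dots,j_k}$. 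The same identity shows that the diagonal of $\tilde{\sigma}$ equals $\sigma$, which is symmetric by hypothesis, so $\tilde{\sigma}$ is strongly symmetric.

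The one genuine step is the consistency of the definition of $\tilde{c}$: if $T$ contains two distinct elements $a$ and $b$ each of multiplicity at least $2$, I must verify that $c(T-a)=c(T-b)$. This is exactly where the strong-symmetry hypothesis on $\sigma$ is used in a nontrivial way. The multiset $T-a$ still has $b$ with multiplicity $\geq 2$, so by the interpretation above, $c(T-a)=\phi\bigl((T-a)-b\bigr)=\phi(T-a-b)$; symmetrically, $c(T-b)=\phi(T-b-a)$, and these agree because multiset subtraction is commutative. Thus $\tilde{c}$ is well-defined, and no further work is required.

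I expect this consistency check to be the main (really only) obstacle; once it is handled the rest is direct verification. The argument does not depend on the arity $k$, so the lemma holds uniformly.
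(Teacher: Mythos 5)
Your proof is correct and follows essentially the same route as the paper: define the lifted coefficient tensor to vanish on index multisets with all entries distinct and otherwise to equal the coefficient of $\sigma$ obtained by deleting one copy of a repeated element, with strong symmetry supplying exactly the consistency $c(T-a)=c(T-b)$ when two distinct elements are repeated. Your multiset formulation of that consistency check via the function $\phi$ is just a cleaner phrasing of the paper's comparison of the two sequences $i$ and $i'$.
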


\begin{proof}Let $n = \dim G$ and fix a basis of $G$. Write $\beta$ as a linear combination of monomials. Thus
\[\sigma(x_1, x_2, \dots, x_k) = \sum_{i_1, \dots, i_k \in [n]} \lambda_{i_1, \dots, i_k} x_{1\,i_1} \dots x_{k\,i_k}.\]
Define coefficients $\mu_{j_1, \dots, j_{k+1}}$ as follows: if $j_1, \dots, j_{k+1}$ are all distinct, then put $\mu_{j_1, \dots, j_{k+1}} = 0$, and in the other case set $\mu_{j_1, \dots, j_{k+1}} = \lambda_{i_1, \dots, i_k}$ for any sequence $i_1, \dots, i_k$ with the property that there is some element $i_\ell$ which, if repeated, gives a permutation of the sequence $j_1, \dots, j_{k+1}$. We need to check that this is well-defined. Suppose that $i'_1, \dots, i'_k$ is another such sequence, and that $i'_m$ is the element that is repeated. If $i'_m = i_\ell$ then $i$ and $i'$ are the same up to reordering, so by symmetry of $\sigma$ we have the equality $\lambda_{i_1, \dots, i_k} = \lambda_{i'_1, \dots, i'_k}$, as desired. On the other hand, if $i_\ell \not= i'_m$, then that means that the number of times the element $i'_m$ appears in the sequence $i$ is one greater than the number of times it appears in the sequence $i'$, and, similarly, $i_\ell$ has one occurrence more in $i'$ than it has in $i$, while other elements appear an equal number of times in both sequences. Since $\sigma$ is strongly symmetric, we again have equality $\lambda_{i_1, \dots, i_k} = \lambda_{i'_1, \dots, i'_k}$.\\
\indent Coefficients $\mu$ are symmetric directly from the definition, so defining $\tilde{\sigma}$ as 
\[\tilde{\sigma}(x_0, x_1, x_2, \dots, x_k) = \sum_{i_0, i_1, \dots, i_k \in [n]} \mu_{i_0, i_1, \dots, i_k} x_{0\,i_0} x_{1\,i_1} \dots x_{k\,i_k}\]
produces a symmetric multilinear form. Finally, 
\[\tilde{\sigma}(x_1, x_1, x_2, \dots, x_k) = \sum_{i_1, \dots, i_k \in [n]} \mu_{i_1, i_1, \dots, i_k} x_{1\,i_1} \dots x_{k\,i_k} = \sum_{i_1, \dots, i_k \in [n]} \lambda_{i_1, i_2, \dots, i_k} x_{1\,i_1} \dots x_{k\,i_k} = \sigma(x_1, x_2, \dots, x_k).\]
This means that $\tilde{\sigma}(x_1, x_1, x_2, \dots, x_k)$ is symmetric, thus $\tilde{\sigma}$ is strongly symmetric.\end{proof}

Let us now give the formal definition of non-classical polynomials. Recall that $G = \mathbb{F}_2^n$ for some $n$. We write $\mathbb{T}$ for the circle group $\mathbb{R}/\mathbb{Z}$.

\begin{defin}[Definition 1.2~\cite{TaoZiegler}]The \emph{discrete additive derivative operator} $\Delta_a$ for shift $a \in G$ is defined by $\Delta_a f(x) = f(x + a) - f(x)$ for functions $f \colon G \to \mathbb{T}$. A function $q \colon G \to \mathbb{T}$ is said to be a \emph{non-classical polynomial of degree at most $d$} if one has
\[\Delta_{a_1}\dots\Delta_{a_{d+1}}q(x) = 0\]
for all $x, a_1, \dots, a_{d+1} \in G$.\end{defin}

A basic fact about non-classical polynomials is that they have an explicit description in terms of monomials. The notation $|\cdot|_2$ stands for the map from $\mathbb{F}_2$ to $\mathbb{R}$ which sends $0$ to $0$ and $1$ to $1$.

\begin{lemma}[Lemma 1.7(iii)~\cite{TaoZiegler}]A function $q \colon \mathbb{F}_2^n \to \mathbb{T}$ is a non-classical polynomial of degree at most $d$ if and only if it has a representation of the form
\[q(x_1, \dots, x_n) = \alpha + \sum_{\ssk{0 \leq i_1, \dots, i_n \leq 1; j\geq 0\\0 < i_1 + \dots + i_n \leq d - j}} \frac{c_{i_1, \dots, i_n, j}|x_1|_2^{i_1}\dots |x_n|_2^{i_n}}{2^{j+1}} + \mathbb{Z}\]
for some coefficients $c_{i_1, \dots, i_n, j} \in \{0,1\}$ and $\alpha \in \mathbb{T}$. Furthermore the coefficients $c_{i_1, \dots, i_n, j}$ and $\alpha $ are unique.
\end{lemma}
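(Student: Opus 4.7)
The plan is to prove the three assertions---the ``if'' direction, uniqueness of coefficients, and the ``only if'' direction---in sequence, based on the identity $|x+a|_2 = |x|_2 + |a|_2 - 2|x|_2|a|_2$ (valid as integers for $x,a \in \mathbb{F}_2$) together with two simple inductions.

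For the ``if'' direction, I would verify that each atomic monomial $M_{i,j}(x) := |x_1|_2^{i_1}\cdots |x_n|_2^{i_n}/2^{j+1}$, viewed in $\mathbb{T}$, is a non-classical polynomial of degree at most $w := (i_1+\cdots+i_n)+j$. Using the identity above, one expands $\Delta_a M_{i,j}(x) = M_{i,j}(x+a) - M_{i,j}(x)$ and observes that, after the unchanged term cancels, every surviving monomial in $x$ has strictly smaller weight than $w$: either a factor $|a_\ell|_2$ has replaced a factor $|x_\ell|_2$, decreasing the $x$-degree; or one of the cross-terms $-2|x_\ell|_2|a_\ell|_2$ was chosen, and its factor of $2$ cancels a $2$ in the denominator, decreasing $j$. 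Iterating, $d+1$ successive derivatives annihilate any monomial of weight $\leq d$ in $\mathbb{T}$, and hence $q$.

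Uniqueness I would handle by induction on $n$. Given a representation of the zero function, setting $x_1 = 0$ kills all terms with $i_1 = 1$; what remains is a representation of the zero function on $n-1$ variables, which by induction forces all coefficients with $i_1 = 0$ (and $\alpha$) to vanish. The residue is $|x_1|_2 \cdot r(x_2,\dots,x_n)$, and evaluating at $x_1 = 1$ together with a second application of induction forces the coefficients with $i_1 = 1$ to vanish as well.

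For the ``only if'' direction, I would induct on $d$. The case $d = 0$ gives $q$ constant, so $q = \alpha$. For the inductive step, each function $\Delta_{e_\ell} q$ (with $e_\ell$ the $\ell$-th standard basis vector) has degree at most $d-1$ and, by the inductive hypothesis, admits a representation. The derivative formula established in the first step lets one read off, from each coefficient in the representation of $\Delta_{e_\ell} q$, a specific higher-weight coefficient of a candidate $\tilde q$---either a term carrying an additional factor $|x_\ell|_2$ (which disappears upon differentiating in the $e_\ell$-direction), or a term with denominator $2^{j+2}$ that gets reduced to $2^{j+1}$ by the derivative. The main obstacle is consistency across different $\ell$: the coefficients dictated by the representation of $\Delta_{e_\ell} q$ must agree with those dictated by $\Delta_{e_m} q$ for every $m$, and this I would deduce from the commutativity $\Delta_{e_\ell}\Delta_{e_m}q = \Delta_{e_m}\Delta_{e_\ell}q$ combined with the uniqueness already established. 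Once $\tilde q$ has been assembled, $\Delta_{e_\ell}(q - \tilde q) = 0$ for every $\ell$, so $q - \tilde q$ is constant on $\mathbb{F}_2^n$ and that constant becomes the desired $\alpha$.
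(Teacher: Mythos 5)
The paper does not actually prove this lemma: it is quoted from Tao--Ziegler (their Lemma 1.7(iii)) and the surrounding text explicitly says it is included only for completeness and is never used. So there is no in-paper argument to compare against, and I can only assess your proof on its own terms. Your three-part plan is the standard one and is essentially sound. The ``if'' direction is correct. The uniqueness induction is also fine, but note one point you elide: after setting $x_1=1$, the terms with $i_2=\cdots=i_n=0$ become constants of the form $\sum_j c_j 2^{-(j+1)}$, and you need the separate observation that such a sum lies in $[0,1)$, so that its vanishing in $\mathbb{T}$ forces every $c_j=0$; this is ultimately what distinguishes the monomial coefficients from $\alpha$.

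The place where the sketch is thinner than it reads is the antidifferentiation in the ``only if'' direction. The exact identity is $\Delta_{e_\ell}M_{i,j}=M_{i-e_\ell,j}-M_{i,j-1}$ when $i_\ell=1$ (and $0$ otherwise), so one monomial of $q$ contributes to \emph{two} monomials of $\Delta_{e_\ell}q$, and the second contribution carries a minus sign that must be re-expanded into $\{0,1\}$ coefficients by binary carrying, via $-M_{i,j-1}\equiv M_{i,0}+\cdots+M_{i,j-1} \pmod{\mathbb{Z}}$. Hence the canonical coefficients of $\Delta_{e_\ell}q$ are not each equal to a single coefficient of $q$ but to triangular mod-$2$ combinations of them; the system is still invertible, and your consistency argument via $\Delta_{e_\ell}\Delta_{e_m}q=\Delta_{e_m}\Delta_{e_\ell}q$ plus uniqueness does go through, but both steps need this triangularity spelled out rather than the clean ``either/or'' correspondence you describe. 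If you want to avoid the cross-direction consistency issue altogether, induct on $n$ rather than on $d$: write $q=q_0+|x_1|_2\,(q_1-q_0)$ with $q_\varepsilon=q(\varepsilon,\cdot)$, note that $q_0$ has degree at most $d$ and $q_1-q_0=\Delta_{e_1}q(0,\cdot)$ has degree at most $d-1$ in $n-1$ variables, apply the inductive hypothesis to both, and observe that multiplying the representation of $q_1-q_0$ by $|x_1|_2$ raises every weight by exactly one; the only point requiring care is the constant term $\beta$ of $q_1-q_0$, which must lie in $2^{-d}\mathbb{Z}$ because $|x_1|_2\,\beta$ has degree at most $d$, so it too is representable.
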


We do not use the lemma above in the paper, but we opted to include it for completeness. The way we obtain non-classical polynomials in this paper is through the following result.

\begin{lemma}[Tidor, Proposition 3.5~\cite{Tidor}]\label{ssformintegration}Given a strongly symmetric multilinear form $\sigma \colon G^k \to \mathbb{F}_2$, there exists a non-classical polynomial $q \colon G \to \mathbb{T}$ of degree at most $k$ such that
\[\Delta_{a_1} \dots \Delta_{a_k} q(x) = \frac{|\sigma(a_1, \dots, a_k)|_2}{2} + \mathbb{Z}\]
for all $a_1, \dots, a_k, x \in G$.\end{lemma}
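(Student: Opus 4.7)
I would prove this by induction on $k$, constructing $q$ from the coefficients of $\sigma$. For the base case $k = 1$, if $\sigma(a) = \sum_i \lambda_i a_i$ is linear then $q(x) = \frac{1}{2}|\sigma(x)|_2 \bmod \mathbb{Z}$ works: one computes $\Delta_a q(x) \equiv \frac{1}{2}|\sigma(a)|_2 \bmod \mathbb{Z}$ directly using the identity $|u + v|_2 = |u|_2 + |v|_2 - 2|u|_2|v|_2$.

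For the inductive step, I would first establish the key subfact that each slice $\sigma_a(b_1, \ldots, b_{k-1}) := \sigma(a, b_1, \dots, b_{k-1})$ of a strongly symmetric $\sigma$ is itself strongly symmetric (as a $(k-1)$-ary form). Symmetry in $(b_1, \ldots, b_{k-1})$ is immediate from that of $\sigma$. For the strong symmetry of $\sigma_a$, I would compute $\sigma(a, c, c, b_3, \ldots, b_{k-1}) = \sigma(c, c, a, b_3, \ldots, b_{k-1}) = \sigma(a, a, c, b_3, \ldots, b_{k-1})$, where the first equality uses plain symmetry of $\sigma$ (to move $a$ to position $3$) and the second uses strong symmetry of $\sigma$ (swapping $a$ and $c$ in the doubled positions). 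The quantity $\sigma(a, a, c, b_3, \ldots, b_{k-1})$ is then manifestly symmetric in $(c, b_3, \ldots, b_{k-1})$ by a further application of plain symmetry of $\sigma$. The inductive hypothesis then supplies, for each $a \in G$, a non-classical polynomial $q_a$ of degree $\leq k - 1$ with $\Delta_{b_1} \cdots \Delta_{b_{k-1}} q_a(x) \equiv \frac{1}{2}|\sigma(a, b_{[k-1]})|_2 \bmod \mathbb{Z}$.

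The heart of the argument is to ``integrate'' the family $\{q_a\}_{a \in G}$ into a single $q$ of degree $\leq k$ with $\Delta_a q \equiv q_a$ modulo non-classical polynomials of degree $\leq k - 2$, for every $a$. This can be done either by (i) verifying a cocycle compatibility condition $q_{a + a'}(x) \equiv q_a(x + a') + q_{a'}(x) \pmod{\text{degree} \leq k - 2}$ and then integrating, or (ii) by explicit construction using the description of non-classical polynomials in Lemma~1.7(iii) of \cite{TaoZiegler}: group the monomials of $\sigma$ by the multiset $M$ of indices appearing, and for each $M$ with $|M| = k$ include in $q$ a term of the form
\[
\frac{\lambda_M}{2^{k - |\operatorname{supp}(M)| + 1}} \prod_{j \in \operatorname{supp}(M)} |x_j|_2,
\]
possibly supplemented by explicit lower-order corrections with denominators $2^{j+1}$ for $j < k - |\operatorname{supp}(M)|$.

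The main obstacle is verifying the derivative identity. Using $\Delta_{a_1}\cdots\Delta_{a_k} f(x) = \sum_{S \subseteq [k]}(-1)^{k-|S|} f(x + \sum_{i \in S} a_i)$ together with $\Delta_{a_1}\cdots\Delta_{a_m}|x_i|_2 = (-2)^{m-1}(1 - 2|x_i|_2) \prod_{t=1}^m |a_{t, i}|_2$ and a product-rule expansion for $\prod_{j \in S} |x_j|_2$, one expands the $k$-th derivative. For each target monomial $a_{1, i_1} \cdots a_{k, i_k}$ in $\frac{1}{2}|\sigma(a_{[k]})|_2$, contributions arrive from multiple terms in $q$ (corresponding to the various ways of distributing $k$ derivatives among the $|\operatorname{supp}(M)|$ factors), some integer and some half-integer modulo $\mathbb{Z}$. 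Strong symmetry of $\sigma$ --- which identifies $\lambda_M$ across multisets of matching ``shape'' (e.g.\ $\lambda_{\{i, i, k_3, \ldots\}} = \lambda_{\{j, j, k_3, \ldots\}}$ whenever $j \notin \{k_3, \ldots\}$) --- is precisely the ingredient that forces the half-integer contributions to combine into the required coefficient modulo $\mathbb{Z}$, and without it this cancellation would fail, consistent with the obstruction exhibited by Theorem~\ref{4varscounter}.
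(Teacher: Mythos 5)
First, a point of comparison: the paper does not prove this lemma at all --- it is imported verbatim from Tidor (Proposition 3.5 of~\cite{Tidor}) --- so your proposal has to stand entirely on its own. Your preliminary steps are correct: the base case $k=1$ works, and the verification that each slice $\sigma_a$ of a strongly symmetric form is again strongly symmetric is a correct use of the definition. The gap is that everything after that is a description of a proof rather than a proof. Route (i) leaves unverified both the cocycle identity and the integration of a cocycle family into a single non-classical polynomial, the latter being itself a nontrivial Tao--Ziegler-type lemma. Route (ii) is the right construction in spirit, but the derivative identity --- which you yourself identify as ``the main obstacle'' --- is only outlined, and the unspecified ``lower-order corrections'' are precisely where the content would have to live.

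Moreover, the one concrete claim you make about how strong symmetry enters is false. Writing $\lambda_M$ for the common coefficient of the monomials with index multiset $M$, strong symmetry does \emph{not} give $\lambda_{\{i,i,k_3,\dots\}} = \lambda_{\{j,j,k_3,\dots\}}$ for $j \notin \{k_3,\dots\}$: for $k=3$ the symmetric form with $\lambda_M = 1$ exactly when $\operatorname{supp}(M) = \{1,2\}$ is strongly symmetric, yet $\lambda_{\{1,1,2\}} = 1 \neq 0 = \lambda_{\{3,3,2\}}$. What strong symmetry actually yields (compare the well-definedness argument in the proof of Lemma~\ref{liftingssforms}) is $\lambda_M = \lambda_{M'}$ whenever $M$ and $M'$ coincide after doubling one element of each; iterating, $\lambda_M$ depends only on $\operatorname{supp}(M)$. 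This is also why the sum defining $q$ must be indexed by supports $S$ rather than by multisets $M$: summing $\tfrac{\lambda_M}{2^{k-|S|+1}}\prod_{j\in S}|x_j|_2$ over all $\binom{k-1}{|S|-1}$ multisets $M$ with support $S$ multiplies the intended coefficient by $\binom{k-1}{|S|-1}$, which is generally not $1$ modulo $2^{k-|S|+1}$ and would destroy the identity. With one term $2^{-(k-|S|+1)}\prod_{j\in S}|x_j|_2$ for each support $S$ on which $\lambda$ equals $1$, the Leibniz expansion does close up --- any assignment of the $k$ derivatives to the $|S|$ factors that leaves some factor untouched produces at least $k-|S|+1$ factors of $2$ and hence vanishes modulo $\mathbb{Z}$, while the surjective assignments contribute exactly $\tfrac{1}{2}$ times the monomials of $\sigma$ supported on $S$, with no correction terms needed --- but this computation is the entire proof and it is missing from your write-up.
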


\section{Properties of low partition rank decompositions}


We begin our work in this section with a lemma which allows us to conclude that a low partition rank decomposition which evaluates to 0 is necessarily trivial under the appropriate assumptions.

\begin{lemma}\label{zeromlformindexlemma}For given $k$ there exist constants $C = C_k \geq 1$ and $D = D_k \geq 1$ such that the following holds. Let $I_1 \cup \dots \cup I_r = [k]$ be a partition of the set $[k]$. Suppose that, for each $i \in [r]$, we are given a multilinear map $\alpha_i \colon G^{I_i} \to \mathbb{F}_2^{d_i}$ such that for each non-zero $\lambda \in \mathbb{F}_2^{d_i}$ we have $\on{prank} (\lambda \cdot \alpha_i) \geq r_i$. Let $\Pi_1, \dots, \Pi_s \colon G^k \to \mathbb{F}_2$ be functions such that each $\Pi_i(x_{[k]})$ is a product of multilinear forms of the shape $\gamma_1(x_{J_1}) \dots \gamma(x_{J_t})$ such that some $J_i$ does not contain any of the sets $I_1, \dots, I_r$. Suppose that there are some scalars $\lambda_{i_1, \dots, i_r} \in \mathbb{F}_2$ for $i_j \in [d_j]$ such that
\begin{equation}\sum_{i_1 \in [d_1], \dots, i_r \in [d_r]} \lambda_{i_1, \dots, i_r} \alpha_{1, i_1}(x_{I_1}) \dots \alpha_{r, i_r}(x_{I_r}) + \sum_{i \in [s]} \Pi_i(x_{[k]}) = 0\label{zerolcmlforms}\end{equation}
for all $x_{[k]}$. Provided $r_i \geq C(s + d_i)^D$ for each $i \in [r]$, we have $\lambda_{i_1, \dots, i_r} = 0$ for all indices $i_1, \dots, i_r$.\end{lemma}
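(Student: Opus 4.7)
The plan is to argue by contradiction. Suppose some coefficient $\lambda_{i_1^*, \dots, i_r^*}$ is non-zero; by relabelling the coordinates within each $[d_j]$ we may assume the offending index is $(1,\dots,1)$. I will then produce an evaluation point $x_{[k]}^* \in G^{[k]}$ at which the left-hand side of \eqref{zerolcmlforms} equals $\lambda_{1,\dots,1} \neq 0$, contradicting the identity. Concretely, for each $j$ I want $x_{I_j}^*$ satisfying $\alpha_{j,1}(x_{I_j}^*) = 1$, $\alpha_{j,i}(x_{I_j}^*) = 0$ for all $i \in [d_j]\setminus\{1\}$, and such that $\Pi_i(x_{[k]}^*) = 0$ for every $i \in [s]$. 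Under such a choice the ``partition-compatible'' main sum collapses to the single diagonal term $\lambda_{1,\dots,1}$ (every other $\vec i$ has some coordinate $i_j \neq 1$ and hence contains the vanishing factor $\alpha_{j,i_j}(x_{I_j}^*)=0$), while every $\Pi_i$ evaluates to $0$.

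The $x_{I_j}^*$'s will be chosen iteratively for $j = 1,2,\dots,r$ by repeated application of Lemma \ref{nonzeroPtLemma} to the group $G^{I_j}$. At step $j$ take $\rho = \alpha_{j,1}$ as the target form and $\alpha_{j,2},\dots,\alpha_{j,d_j}$ as the bad-direction forms. The requirement $\Pi_i(x_{[k]}^*) = 0$ is translated into proper-subset $\gamma$-conditions by the following scheduling. For each $i \in [s]$, fix a factor $\gamma_{\ell_i}^{(i)}$ of $\Pi_i$ whose index set $J_{\ell_i}^{(i)}$ does not contain any $I_m$ (such a factor exists by hypothesis), and assign $\Pi_i$ to be killed at step $j(i) := \max\{j\in [r] : J_{\ell_i}^{(i)} \cap I_j \neq \emptyset\}$. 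By maximality, once $x_{I_1}^*,\dots,x_{I_{j(i)-1}}^*$ have been selected, the factor $\gamma_{\ell_i}^{(i)}$ reduces to a multilinear form purely in the variables indexed by $J_{\ell_i}^{(i)} \cap I_{j(i)}$; this set is non-empty by the choice of $j(i)$ and is a proper subset of $I_{j(i)}$ because $J_{\ell_i}^{(i)}$ fails to contain $I_{j(i)}$. That is exactly the shape required by a $\gamma$-condition in Lemma \ref{nonzeroPtLemma}, so imposing it at step $j(i)$ forces $\Pi_i(x_{[k]}^*) = 0$.

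It remains to verify the bias hypothesis of Lemma \ref{nonzeroPtLemma} at each step. At step $j$ one needs every combination $\rho + \lambda \cdot \beta$ (ranging over $\lambda \in \mathbb{F}_2^{d_j - 1}$) to have bias strictly less than $2^{-|I_j|(d_j + s)}$. Each such combination is a non-zero $\mathbb{F}_2$-linear combination of $\alpha_{j,1},\dots,\alpha_{j,d_j}$, hence has partition rank at least $r_j$ by assumption. Applying the contrapositive of Theorem \ref{biasedinversethm} on $G^{I_j}$ converts this into the numerical condition $r_j \geq C_k(s + d_j)^{D_k}$ for constants $C_k,D_k$ depending only on $k$ (absorbing the factor $|I_j| \leq k$), which is exactly the hypothesis of the lemma. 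With both the bias and $\gamma$-conditions met, Lemma \ref{nonzeroPtLemma} delivers a suitable $x_{I_j}^*$ at every step, producing the desired $x_{[k]}^*$ and the contradiction.

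The single point of subtlety is the scheduling step: one must check that at step $j(i)$ the chosen factor $\gamma_{\ell_i}^{(i)}$ really does reduce to a form on a proper non-empty subset of $I_{j(i)}$, which is precisely where the hypothesis that $J_{\ell_i}^{(i)}$ contains no $I_m$ is used. Modulo this bookkeeping, the argument is a straightforward combination of the two tools already assembled in the preliminaries: the partition rank versus analytic rank inverse theorem and the non-zero point selection lemma.
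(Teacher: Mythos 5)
Your proposal is correct and follows essentially the same route as the paper: fix (or, in your contrapositive phrasing, single out) an index tuple, build the evaluation point one block $x_{I_j}$ at a time via Lemma~\ref{nonzeroPtLemma}, schedule each chosen factor $\gamma^{(i)}_{\ell_i}$ at the last step at which its variable set meets an $I_j$ (where it becomes a form on a non-empty proper subset of $I_j$), and convert the partition-rank lower bounds into the required bias bounds through Theorem~\ref{biasedinversethm}. The paper's proof is the same argument stated directly for each fixed tuple rather than by contradiction.
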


\noindent\textbf{Remark.} Observe that we do not assume that factors of $\Pi_i$ have disjoint sets of variables.

\begin{proof}Note that for each $i \in [s]$ we have a multilinear form $\gamma_i(x_{J_i})$ that appears as a factor of $\Pi_i(x_{[k]})$ and $J_i$ contains none of the sets $I_1, \dots, I_r$. Fix indices $i_1 \in [d_1], \dots, i_r \in [d_r]$. We shall find $x_{[k]}$ so that all $\gamma_i(x_{J_i})$ vanish (implying $\Pi_i(x_{[k]}) = 0$), $\alpha_{j, i_j}(x_{I_j}) = 1$ for each $j \in [r]$ and $\alpha_{j, \ell}(x_{I_j}) = 0$ for each $j \in [r]$ and $\ell \not= i_j$. To find such an $x_{[k]}$, we first find suitable $x_{I_1}$, then $x_{I_2}$, etc. Assuming that $x_{I_1 \cup \dots \cup I_{j-1}}$ has been fixed for some $j \in [r]$, we look for $x_{I_j}$ such that $\alpha_{j, i_j}(x_{I_j}) = 1$, $\alpha_{j, \ell}(x_{I_j}) = 0$ for $\ell \not= i_j$ and $\gamma_i(x_{J_\ell}) = 0$ for all indices $\ell$ such that $J_\ell \subseteq I_1 \cup \dots \cup I_j$ and $J_\ell \cap I_j \not= \emptyset$. By Theorem~\ref{biasedinversethm} there exist quantities $C_k, D_k$, depending only on $k$, such that $\on{prank} \tau \geq C t^D$ implies $\on{bias} \tau < 2^{-t}$ for all multilinear forms $\tau$ in at most $k$ variables and all positive integers $t$. Thus, $\on{bias}\mu \cdot \alpha_j < 2^{-k(d_i +  s)}$ for all $\mu\not=0$ and we can find the desired $x_{I_j}$ using Lemma~\ref{nonzeroPtLemma}. We conclude that $\lambda_{i_1, \dots, i_r} = 0$ from~\eqref{zerolcmlforms}.\end{proof}

The following technical lemma also concerns linear combinations of mutlilinear forms, but is entirely linear-algebraic. It allows us to change a basis of multilinear forms to a more convenient one.

\begin{lemma}\label{mlformschbasislemma}Suppose that $I \cup J = [k]$ is a partition and that $\beta_1, \dots, \beta_r \colon G^I \to \mathbb{F}_2$ and $\gamma_1, \dots, \gamma_r \colon G^J \to \mathbb{F}_2$ are multilinear forms. Then we may find further mutlilinear forms $\tilde{\beta}_1, \dots, \tilde{\beta}_s \colon G^I \to \mathbb{F}_2$ and $\tilde{\gamma}_1, \dots, \tilde{\gamma}_s \colon G^J \to \mathbb{F}_2$ for some $s \leq r$ so that
\begin{itemize}
\item[\textbf{(i)}] for each $i \in [s]$, the form $\tilde{\beta}_i$ is a linear combination of forms $\beta_1, \dots, \beta_r$ and the form $\tilde{\gamma}_i$ is a linear combination of forms $\gamma_1, \dots, \gamma_r$,
\item[\textbf{(i)}] for all $x_{[k]}$ we have 
\[\sum_{i \in [r]} \beta_i(x_I) \gamma_i(x_J) = \sum_{i \in [s]} \tilde{\beta}_i(x_I) \tilde{\gamma}_i(x_J),\]
\item[\textbf{(ii)}] for each $i \in [s]$ there exists $x_J \in G^J$ such that $\tilde{\gamma}_j(x_J) = \id(i = j)$.
\end{itemize}\end{lemma}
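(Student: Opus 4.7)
The plan is a two-step linear-algebra argument, requiring no analytic input at all.

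\textbf{Step 1: reduction to linearly independent $\gamma_i$.} Let $s$ be the dimension of the span of $\gamma_1,\dots,\gamma_r$ inside the $\mathbb{F}_2$-vector space of multilinear forms $G^J\to\mathbb{F}_2$. Clearly $s\le r$, and after relabeling we may assume $\gamma_1,\dots,\gamma_s$ is a basis of this span, so each $\gamma_j$ with $j>s$ can be written as $\gamma_j=\sum_{i=1}^s\mu_{ji}\gamma_i$ for some scalars $\mu_{ji}\in\mathbb{F}_2$. Substituting and collecting,
\[\sum_{i=1}^{r}\beta_i(x_I)\gamma_i(x_J)=\sum_{i=1}^{s}\Big(\beta_i+\sum_{j>s}\mu_{ji}\beta_j\Big)(x_I)\,\gamma_i(x_J),\]
so with $\beta'_i:=\beta_i+\sum_{j>s}\mu_{ji}\beta_j$ we have an equivalent decomposition in which the $\gamma$-factors are linearly independent and the $\beta$-factors are linear combinations of the original $\beta_\ell$'s.

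\textbf{Step 2: achieving the dual-basis property.} Consider the evaluation map $\Phi\colon G^J\to\mathbb{F}_2^s$ defined by $\Phi(x_J)=(\gamma_1(x_J),\dots,\gamma_s(x_J))$. If $\Phi(G^J)$ were contained in a proper subspace of $\mathbb{F}_2^s$, there would exist a nonzero $\lambda\in\mathbb{F}_2^s$ with $\sum_i\lambda_i\gamma_i\equiv 0$, contradicting the linear independence from Step 1. Hence $\Phi(G^J)$ linearly spans $\mathbb{F}_2^s$, so we may pick $x^{(1)}_J,\dots,x^{(s)}_J\in G^J$ with $\Phi(x^{(1)}_J),\dots,\Phi(x^{(s)}_J)$ linearly independent. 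The $s\times s$ matrix $M$ with entries $M_{ki}:=\gamma_k(x^{(i)}_J)$ is then invertible. Defining
\[\tilde{\gamma}_j:=\sum_{k=1}^{s}(M^{-1})_{jk}\,\gamma_k,\]
we compute $\tilde{\gamma}_j(x^{(i)}_J)=\sum_k(M^{-1})_{jk}M_{ki}=(M^{-1}M)_{ji}=\delta_{ij}$, so $x^{(i)}_J$ witnesses condition (iii).

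\textbf{Step 3: matching $\tilde{\beta}_j$'s.} Inverting the relation $\tilde{\gamma}=M^{-1}\gamma$ gives $\gamma_k=\sum_jM_{kj}\tilde{\gamma}_j$, and therefore
\[\sum_{i=1}^{s}\beta'_i(x_I)\gamma_i(x_J)=\sum_{j=1}^{s}\Big(\sum_{i=1}^{s}M_{ij}\,\beta'_i\Big)(x_I)\,\tilde{\gamma}_j(x_J).\]
Setting $\tilde{\beta}_j:=\sum_{i=1}^{s}M_{ij}\beta'_i$ gives condition (ii), and both $\tilde{\beta}_j$ and $\tilde{\gamma}_j$ are by construction linear combinations of the originals (with multilinearity preserved under $\mathbb{F}_2$-linear combinations), giving (i). The only mildly subtle point in the whole argument, and the closest thing to an obstacle, is the observation that linear independence of the $\gamma_i$'s as \emph{functions} forces $\Phi(G^J)$ to span $\mathbb{F}_2^s$; once this is in hand, everything reduces to a change of basis by an explicit invertible matrix.
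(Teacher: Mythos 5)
Your proof is correct and is essentially the same change-of-basis argument as the paper's: the paper picks a maximal independent set in the image of the evaluation map $x_J\mapsto(\gamma_1(x_J),\dots,\gamma_r(x_J))\in\mathbb{F}_2^r$, extends it to a basis of $\mathbb{F}_2^r$, and shows the superfluous $\tilde{\gamma}_i$ vanish identically, whereas you first discard linearly dependent $\gamma_i$'s and then invert the evaluation matrix at well-chosen points; the two orderings are equivalent since the dimension of $\operatorname{Im}\gamma$ equals the dimension of the span of the $\gamma_i$ as functions. No gaps.
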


\begin{proof}Let $v_1, \dots, v_s \in \mathbb{F}_2^r$ be a maximal independent set of elements in $\on{Im} \gamma \subseteq \mathbb{F}_2^r$. We may extend this sequence by $v_{s + 1}, \dots, v_r$ to obtain a basis of $\mathbb{F}_2^r$. Let $\tilde{\beta}_i = \sum_{j \in [r]} v_{i,j} \beta_j$ for each $i \in [r]$. Since the matrix $V = (v_{i,j})_{i,j \in [r]}$ is invertible, we may find its inverse $M = (\mu_{i,j})_{i,j \in [r]}$. Set $\tilde{\gamma}_i = \sum_{j \in [r]} \mu_{j,i} \gamma_j$. Property \textbf{(i)} holds trivially. We claim that when $i > s$ we have $\tilde{\gamma}_i(x_J) = 0$ for all $x_J \in G^J$. To see that, recall first that $\gamma(x_J) \in \langle v_1, \dots, v_s \rangle$, so there are scalars $\lambda_1, \dots, \lambda_s$ such that $\gamma(x_J) = \sum_{j \in [s]} \lambda_j v_j$.\\
\indent Next, observe that 
\[\tilde{\gamma}_i(x_J) = \sum_{j \in [r]} \mu_{j,i} \gamma_j(x_J) =  \sum_{j \in [r]} \mu_{j,i} \sum_{\ell \in [s]} \lambda_\ell v_{\ell, j} = \sum_{\ell \in [s]} \lambda_\ell \Big(\sum_{j \in [r]}  v_{\ell, j}\mu_{j,i}\Big) = \sum_{\ell \in [s]} \lambda_\ell \id(i = \ell) = 0,\]
as claimed. Hence, for all $x_{[k]}$ we have
\begin{align*}\sum_{i \in [s]} \tilde{\beta}_i(x_I) \tilde{\gamma}_i(x_J) = &\sum_{i \in [r]} \tilde{\beta}_i(x_I) \tilde{\gamma}_i(x_J) = \sum_{i \in [r]}\Big(\sum_{j \in [r]} v_{i,j} \beta_j(x_I)\Big)\Big(\sum_{\ell \in [r]} \mu_{\ell,i} \gamma_\ell(x_J)\Big) \\
= &\sum_{j \in [r]} \sum_{\ell \in [r]}\beta_j(x_I)\gamma_\ell(x_J) \Big(\sum_{i \in [r]} \mu_{\ell,i}v_{i,j}\Big) = \sum_{j \in [r]} \sum_{\ell \in [r]}\beta_j(x_I)\gamma_\ell(x_J)  \id(\ell = j)\\
 = &\sum_{j \in [r]} \beta_j(x_I)\gamma_j(x_J),\end{align*}
proving property \textbf{(ii)}.\\
\indent For property \textbf{(iii)}, for given $i \in [s]$, take $x_J$ such that $\gamma(x_J) = v_i$. Let $j \in [s]$. Then
\[\tilde{\gamma}_j(x_J) = \sum_{\ell \in [r]} \mu_{\ell, j} \gamma_\ell(x_J) = \sum_{\ell \in [r]} \mu_{\ell, j} v_{i, \ell} = \id(i = j).\qedhere\]
\end{proof}

\indent Using the mutlilinear forms change of basis lemma, we prove that for a given multilinear form $\phi$ of low partition rank, the lower-order forms in a decomposition of $\phi$ essentially come from $\phi$. We want to keep track of the structure of low partition rank decomposition and to that end we need to define a partial order on partitions. Given two partitions $A_1 \cup \dots \cup A_r = B_1 \cup \dots \cup B_s = [k]$, where all sets are non-empty, we write $A_{[r]} \leq B_{[s]}$ if every set $B_i$ is a union of some sets among $A_1, \dots, A_r$. Thus, the trivial partition $\{[k]\}$ is the maximum element of this partially ordered set and $\{\{1\}, \dots, \{k\}\}$ is the minimum element. A set $\mathcal{P}$ of partitions of $[k]$ is a \emph{down-set} if it has the property that whenever $A_{[r]} \leq B_{[s]}$ and $B_{[s]} \in \mathcal{P}$, then $A_{[r]} \in \mathcal{P}$.

\begin{proposition}\label{lcmlformsdecomppropo}Let $\phi \colon G^k \to \mathbb{F}_2$ be a multilinear form. Let $\mathcal{P}$ be a down-set of partitions. Suppose that
\[\phi(x_{[k]}) = \sum_{i \in [r]} \beta_{i, 1}(x_{I_{i, 1}}) \dots \beta_{i, d_i}(x_{I_{i, d_i}})\]
where each partition $\{I_{i,1}, \dots, I_{i, d_i}\}$ belongs to $\mathcal{P}$. Then there exists another decomposition
\[\phi(x_{[k]}) = \sum_{i \in [r']} \beta'_{i, 1}(x_{I'_{i, 1}}) \dots \beta'_{i, d'_i}(x_{I'_{i, d'_i}})\]
where $r' \leq O(r^{O(1)})$, such that each partition $\{I'_{i, 1}, \dots, I'_{i, d'_i}\}$ also belongs to $\mathcal{P}$ and each $\beta'_{i,j}(x_{I'_{i,j}})$ is of the shape $\phi(x_{I'_{i,j}}, y_{[k] \setminus I'_{i,j}})$ for some fixed $y_{[k] \setminus I'_{i,j}}$.\end{proposition}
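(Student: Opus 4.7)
The plan is to prove this proposition by induction on $k$, with Lemma~\ref{mlformschbasislemma} as the main engine. At each step I will apply the change-of-basis lemma along a bipartite split of $[k]$ so that the factors on one side are isolated by explicit evaluation points; substituting those points into $\phi$ then exhibits the factors on the other side as slices of $\phi$. Lemma~\ref{zeromlformindexlemma} plays an auxiliary role to rule out spurious linear relations when needed.

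The bipartite base case. Suppose $\phi(x_{[k]}) = \sum_i \beta_i(x_I)\gamma_i(x_J)$ for a bipartition $I \cup J = [k]$. Applying Lemma~\ref{mlformschbasislemma} rewrites this as $\phi = \sum_{j \in [s]} \tilde{\beta}_j(x_I)\tilde{\gamma}_j(x_J)$ with $s \leq r$ together with isolation points $v_j \in G^J$ satisfying $\tilde{\gamma}_\ell(v_j) = \id(\ell = j)$. Evaluating the decomposition at $x_J = v_j$ gives $\tilde{\beta}_j(x_I) = \phi(x_I, v_j)$, so each $\tilde{\beta}_j$ is directly a slice of $\phi$. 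To likewise realize each $\tilde{\gamma}_j$ as a slice, I would apply Lemma~\ref{mlformschbasislemma} a second time with the roles of $I$ and $J$ swapped; by minimality of the decomposition (reducing $s$ whenever a linear dependence appears), the $\tilde{\beta}$'s are linearly independent, and a further basis change produces dual isolation points $u_j \in G^I$ with $\tilde{\gamma}_j(x_J) = \phi(u_j, x_J)$.

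The inductive step on $k$. Given a multi-partite decomposition of $\phi$, I would pick a proper nonempty $I \subsetneq [k]$ that is a part of some partition in $\mathcal{P}$ used in the decomposition, group the terms according to whether their partitions refine $\{I, [k] \setminus I\}$, and apply the bipartite argument to the group that does refine it; the remaining terms are treated analogously with bipartite splits compatible with their partitions and the results are combined linearly. This exhibits $\phi$ as a sum $\sum_j \phi(x_I, v_j)\phi(u_j, x_{[k]\setminus I})$, each factor a multilinear form in strictly fewer than $k$ variables admitting its own decomposition inherited from the original. The induction hypothesis, applied to each of these factors with an appropriately restricted down-set of partitions on $I$ and on $[k]\setminus I$, yields product decompositions whose factors are slices of the factors; since fixing additional variables of a slice of $\phi$ is itself a slice of $\phi$, combining gives the desired decomposition.

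The main obstacle I anticipate is maintaining compatibility with the down-set $\mathcal{P}$. The down-set property is essential: all partitions produced during the recursion are refinements of partitions already present in $\mathcal{P}$, hence remain in $\mathcal{P}$. A delicate point is that original partitions whose parts cross the chosen split $\{I, [k]\setminus I\}$ are not handled by a single bipartite argument and require separate grouping, with a different split chosen for each group; the recursion has to be organized so that the resulting partition structure on the reassembled factors still lies in $\mathcal{P}$. Quantitatively, each invocation of Lemma~\ref{mlformschbasislemma} gives $s \leq r$ at that level, the recursion is of depth $O(k)$, and combining groups contributes at most polynomial blow-up, which yields the claimed bound $r' \leq O(r^{O(1)})$.
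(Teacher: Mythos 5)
There is a genuine gap in the inductive step, and it is precisely the difficulty the paper's proof is designed to overcome. The change-of-basis argument via Lemma~\ref{mlformschbasislemma} only realizes factors as slices of $\phi$ because the identity $\phi(x_{[k]})=\sum_i\beta_i(x_I)\gamma_i(x_J)$ holds for \emph{all} of $\phi$: you evaluate the whole decomposition at an isolation point $v_j\in G^J$ and read off $\tilde{\beta}_j(x_I)=\phi(x_I,v_j)$. You propose to group the terms by which bipartition they refine and to run the bipartite argument separately on each group, but a proper subsum of the terms is not $\phi$, so its factors become slices of that subsum, not of $\phi$. If instead you evaluate the full decomposition at $v_j$, you get $\tilde{\beta}_j(x_I)=\phi(x_I,v_j)+E(x_I)$ where $E$ collects the contributions of the terms whose partitions do not refine $\{I,[k]\setminus I\}$. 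Terms in which $I$ meets at least two parts restrict to products of at least two nontrivial factors on subsets of $I$, which is harmless (the resulting partitions refine the originals, hence stay in $\mathcal{P}$). But a term with a part \emph{strictly containing} $I$ restricts to a genuine multilinear form on all of $G^I$ that is neither a slice of $\phi$ nor of low partition rank, and your scheme has no way to absorb it. The paper's proof handles exactly this by iterating over a down-set of \emph{subsets} of $[k]$ and always removing a maximal set, so that when $I$ is processed every factor indexed by a strictly larger set has already been converted into a slice of $\phi$, and fixing further variables of a slice is again a slice. Your induction on $k$ with per-group bipartite splits imposes no such ordering, so the argument does not go through as written; restoring it essentially forces you back to the paper's iteration.

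A secondary, fixable inaccuracy: in your bipartite base case you cannot arrange $\tilde{\beta}_j(x_I)=\phi(x_I,v_j)$ and $\tilde{\gamma}_j(x_J)=\phi(u_j,x_J)$ simultaneously for the same pair of families. Linear independence of the $\tilde{\beta}_j$ guarantees points $u_\ell$ making the matrix $\bigl(\tilde{\beta}_j(u_\ell)\bigr)$ invertible (the image of a multilinear evaluation map need not contain the standard basis vectors, only span), so each $\tilde{\gamma}_j$ is a linear combination of the slices $\phi(u_\ell,x_J)$ rather than a single slice; and a sum of slices $\phi(x_I,v)+\phi(x_I,v')$ is not itself a slice once $|J|\geq 2$. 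The correct conclusion is obtained by expanding into the double sum $\sum_{j,\ell}d_{j\ell}\,\phi(x_I,v_j)\phi(u_\ell,x_J)$ with at most $r^2$ terms, each a product of two slices. This is consistent with your claimed polynomial loss but should be stated as such.
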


\begin{proof}The proposition will follow from the next claim.

\begin{claim}For each non-empty down-set\footnote{The collection of sets $\mathcal{U}$ is a down-set in the usual sense, namely a collection of sets closed under taking subsets.} $\mathcal{U} \subseteq \mathcal{P}[k]$, there exists a decomposition
\begin{equation}\phi(x_{[k]}) = \sum_{i \in [r']} \beta'_{i, 1}(x_{I'_{i, 1}}) \dots \beta'_{i, d'_i}(x_{I'_{i, d'_i}})\label{claimrestrictiondecomposition}\end{equation}
where $r' \leq O(r^{O(1)})$, such that each partition $\{I'_{i, 1}, \dots, I'_{i, d'_i}\}$ belongs to $\mathcal{P}$ and if $I'_{i,j} \notin \mathcal{U}$, then $\beta'_{i,j}(x_{I'_{i,j}})$ is of the shape $\phi(x_{I'_{i,j}}, y_{[k] \setminus I'_{i,j}})$ for some fixed $y_{[k] \setminus I'_{i,j}}$.\end{claim}

\begin{proof}[Proof of the claim]We prove the claim by removing a maximal set from $\mathcal{U}$. The starting situation is when $\mathcal{U} = \mathcal{P}[k]$, in which case the claim is trivial. Suppose now that we have decomposition~\eqref{claimrestrictiondecomposition} for a given down-set $\mathcal{U}$, and let $I \in \mathcal{U}$ be a maximal set. Our goal is to remove $I$ from $\mathcal{U}$, that is to find a decomposition like that in~\eqref{claimrestrictiondecomposition} that has the desired properties for $\mathcal{U} \setminus \{I\}$.\\
\indent We may split the terms in decomposition~\eqref{claimrestrictiondecomposition} as
\begin{align}\phi(x_{[k]}) = \sum_{i \in [r_1]} \beta^{(1)}_{i, 1}(x_{I^{(1)}_{i, 1}}) \dots \beta^{(1)}_{i, d^{(1)}_i}(x_{I^{(1)}_{i, d^{(1)}_i}}) + &\sum_{i \in [r_2]} \beta^{(2)}_{i, 1}(x_{I^{(2)}_{i, 1}}) \dots \beta^{(2)}_{i, d^{(2)}_i}(x_{I^{(2)}_{i, d^{(2)}_i}})\nonumber\\
+ &\sum_{i \in [r_3]} \beta^{(3)}_{i, 1}(x_{I^{(3)}_{i, 1}}) \dots \beta^{(3)}_{i, d^{(3)}_i}(x_{I^{(3)}_{i, d^{(3)}_i}}),\label{claimrestrictiondecomposition2}\end{align}
where the terms $\beta^{(1)}_{i, 1}(x_{I^{(1)}_{i, 1}}) \dots \beta^{(1)}_{i, d^{(1)}_i}(x_{I^{(1)}_{i, d^{(1)}_i}})$ have the property that $I^{(1)}_{i, 1} = I$ for all $i \in [r_1]$, the terms $\beta^{(2)}_{i, 1}(x_{I^{(2)}_{i, 1}}) \dots \beta^{(2)}_{i, d^{(2)}_i}(x_{I^{(2)}_{i, d^{(2)}_i}})$ have the property that $I \subsetneq I^{(2)}_{i, 1}$ for all $i \in [r_2]$, and the terms $\beta^{(3)}_{i, 1}(x_{I^{(3)}_{i, 1}}) \dots \beta^{(3)}_{i, d^{(3)}_i}(x_{I^{(3)}_{i, d^{(3)}_i}})$ have the property that $I$ is not contained in any of the sets $I^{(3)}_{i, 1}, \dots, I^{(3)}_{i, d^{(3)}_i}$ for $i \in [r_3]$. For $i \in [r_1]$, let $\gamma_i$ be the multilinear form defined by $\gamma_i(x_{[k] \setminus I}) = \beta^{(1)}_{i, 2}(x_{I^{(1)}_{i, 2}}) \dots \beta^{(1)}_{i, d^{(1)}_i}(x_{I^{(1)}_{i, d^{(1)}_i}})$. Decomposition~\eqref{claimrestrictiondecomposition2} becomes
\begin{align}\phi(x_{[k]}) = \sum_{i \in [r_1]} \beta^{(1)}_{i, 1}(x_I) \gamma_i(x_{[k] \setminus I}) + &\sum_{i \in [r_2]} \beta^{(2)}_{i, 1}(x_{I^{(2)}_{i, 1}}) \dots \beta^{(2)}_{i, d^{(2)}_i}(x_{I^{(2)}_{i, d^{(2)}_i}})\nonumber\\
+ &\sum_{i \in [r_3]} \beta^{(3)}_{i, 1}(x_{I^{(3)}_{i, 1}}) \dots \beta^{(3)}_{i, d^{(3)}_i}(x_{I^{(3)}_{i, d^{(3)}_i}}).\nonumber\end{align}

Applying Lemma~\ref{mlformschbasislemma}, we obtain a positive integer $s \leq r_1$ and multilinear forms $\tilde{\beta}_1, \dots, \tilde{\beta}_s \in \langle  \beta^{(1)}_{i, 1} \colon i \in [r_1]\rangle$ and $\tilde{\gamma}_1, \dots, \tilde{\gamma}_s \in \langle \gamma_i \colon i \in [r_1]\rangle$ such that
\begin{align}\phi(x_{[k]}) = \sum_{i \in [s]} \tilde{\beta}_i(x_I) \tilde{\gamma}_i(x_{[k] \setminus I}) + &\sum_{i \in [r_2]} \beta^{(2)}_{i, 1}(x_{I^{(2)}_{i, 1}}) \dots \beta^{(2)}_{i, d^{(2)}_i}(x_{I^{(2)}_{i, d^{(2)}_i}})\nonumber\\
+ &\sum_{i \in [r_3]} \beta^{(3)}_{i, 1}(x_{I^{(3)}_{i, 1}}) \dots \beta^{(3)}_{i, d^{(3)}_i}(x_{I^{(3)}_{i, d^{(3)}_i}})\label{claimrestrictiondecomposition3}\end{align}
and for each $i \in [s]$ there exists $x_{[k] \setminus I} \in G^{[k] \setminus I}$ such that $\tilde{\gamma}_j(x_{[k] \setminus I}) = \id(i = j)$. Fix arbitrary $i \in [s]$ and take $y_{[k] \setminus I}$ such that $\tilde{\gamma}_j(y_{[k] \setminus I}) = \id(i = j)$. Evaluating decomposition~\eqref{claimrestrictiondecomposition3} at $(x_I, y_{[k] \setminus I})$ gives
\[\tilde{\beta}_i(x_I) = \phi(x_I, y_{[k] \setminus I}) + \sum_{i \in [r_2]} \beta^{(2)}_{i, 1}(x_I, y_{I^{(2)}_{i, 1} \setminus I}) + \psi(x_I)\]
for some multilinear form $\psi$ of partition rank at most $r_3$. Since index set of arguments of the form $\beta^{(2)}_{i, 1}$ does not belong to $\mathcal{U}$, we know that $\beta^{(2)}_{i, 1}(x_I, y_{I^{(2)}_{i, 1} \setminus I}) = \phi(x_I, z_{[k] \setminus I})$ for a suitable $z_{[k] \setminus I}$. Replacing each $\tilde{\beta}_i$ with a sum of at most $r_2 + 1$ forms coming from slices of $\phi$ and a form of partition rank at most $r_3$, and recalling that each $\tilde{\gamma}_i \in \langle \gamma_i \colon i \in [r_1]\rangle$, we conclude that 
\[\phi(x_{[k]}) = \sum_{i \in [r'']} \beta''_{i, 1}(x_{I''_{i, 1}}) \dots \beta''_{i, d''_i}(x_{I''_{i, d''_i}}),\]
where $r'' \leq r^2_1r_2 + r^2_1r_3 + r^2_1 + r_2 + r_3 \leq O(r^{O(1)})$, such that each partition $\{I''_{i, 1}, \dots, I''_{i, d''_i}\}$ belongs to $\mathcal{P}$ and if $I''_{i,j} \notin (\mathcal{U} \setminus \{I\})$, then $\beta''_{i,j}(x_{I''_{i,j}})$ is of the shape $\phi(x_{I''_{i,j}}, y_{[k] \setminus I''_{i,j}})$ for some fixed $y_{[k] \setminus I''_{i,j}}$, completing the proof of the step of the procedure. The procedure terminates after $2^k - 1$ steps when the collection $\mathcal{U}$ becomes $\{\emptyset\}$.\end{proof}

Use the case $\mathcal{U} = \{\emptyset\}$ of the claim above, which is equivalent to the proposition.\end{proof}

\section{Approximately symmetric multilinear forms}

This section is devoted to proofs of Theorems~\ref{approximateSymmetry2},~\ref{mainSymmExtn4} and~\ref{mainSymmExtn5}. In order to prove these theorems, we need a weak regularity lemma for multilinear forms that takes into account the symmetry properties of the given forms. Namely, we start with multilinear forms in variables $x_{[k]}$ that are symmetric in the first $m$ variables and are interested in how far the given forms are from being symmetric in variables $x_{[m+1]}$. The lemma allows us to express the given forms in terms of further multilinear forms that are also symmetric in the first $m$ variables and have one of the following three additional properties:
\begin{itemize}
\item additional form $\alpha(x_{[k]})$ has the property that $\on{prank} \Big(\alpha + \alpha \circ (m \,\,m+1) \Big)$ is small (this is the \emph{almost symmetric} case),
\item additional form $\alpha(x_{[k]})$ has the property that $\on{prank} \Big(\alpha + \alpha \circ (m \,\,m+1) \Big)$ is large, but $\on{prank} \Big(\sum_{i \in [m+1]} \alpha \circ (i \,\,m+1)\Big)$ is small (this is the \emph{partially symmetric} case),
\item additional form $\alpha(x_{[k]})$ has the property that the partition rank of non-zero linear combinations of forms $\alpha \circ (i \,\,m+1)$ are large (this is the \emph{asymmetric} case).
\end{itemize}
We shall denote the forms in almost symmetric, partially symmetric and asymmetric cases with greek letters $\sigma$, $\pi$ and $\rho$, respectively.\\
\indent For a positive quantity $R$, we write $\on{PR}_{\leq R}$ for the collection of all multilinear forms of the partition rank at most $R$.

\begin{lemma}\label{symmRespWRegLemma}Suppose that we are given some forms $\gamma_1, \dots, \gamma_r \colon G^k \to \mathbb{F}_2$ which are symmetric in the first $m$ variables, where $k \geq m + 1$. Let $C, D \geq 2$ be given.\footnote{The lower bounds $C, D \geq 2$ are here in order to simplify the calculation on the final bound on the quantity $R$.} Then there exist a positive integer $R \leq C^{D^{O_k(r)}}$, further forms $\sigma_1, \dots, \sigma_s, \pi_1, \dots, \pi_q, \rho_1, \dots, \rho_t$, where $s + q + t \leq r$, such that
\begin{itemize}
\item[\textbf{(i)}] $\sigma_i, \pi_i, \rho_i$ are linear combinations of forms $\gamma_1, \dots, \gamma_r$,
\item[\textbf{(ii)}] $\on{prank}\Big(\sigma_i + \sigma_i \circ (m\,\,m+1)\Big) \leq R$,
\item[\textbf{(iii)}] $\pi_i$ has the property that $\on{prank}\Big(\sum_{\ell \in [m + 1]} \pi_i \circ (\ell\,\,m + 1)\Big) \leq R$,
\item[\textbf{(iv)}] any linear combination of forms $\sigma_i$ for $i \in [s]$, $\pi_i \circ (\ell\,\,m + 1)$ for $i \in [q], \ell \in [m + 1]$ and $\rho_i \circ (\ell\,\, m+1)$ for $i \in [t], \ell \in [m + 1]$ has partition rank at least $(C(R + 2r))^D$, unless the non-zero coefficients appear only next to the forms $\pi_i \circ (\ell\,\,m + 1)$ and do not depend on $\ell$ for each $i$, making it a linear combination of sums $\sum_{\ell \in [m + 1]} \pi_i \circ (\ell\,\,m + 1)$ for $i \in [q]$,
\item[\textbf{(v)}] for all $j \in [r]$ we have
\[\gamma_j \in \langle \sigma_i \colon i \in [s]\rangle + \langle \pi_i \circ (\ell\,\,m + 1) \colon i \in [q], \ell \in [m + 1]\rangle + \langle \rho_i \circ (\ell\,\,m + 1) \colon i \in [t], \ell \in [m + 1]\rangle + \on{PR}_{\leq R}.\]
\end{itemize}
\end{lemma}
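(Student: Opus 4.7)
I would prove this by an iterative reduction argument, in the spirit of the weak algebraic regularity lemmas in \cite{U4paper, multihomPaper}. Initialize with the trivial classification $s = q = 0$, $t = r$, $\rho_j = \gamma_j$ and threshold $R_0 = 0$; invariants (i)--(iii) and (v) hold automatically. Then iterate: given a current classification with threshold $R$, check whether condition (iv) holds with threshold $R' := (C(R + 2r))^D$. If yes, stop. If no, use the ``bad'' non-exceptional low-rank linear combination to modify the classification.

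Since every form in the classification is symmetric in the first $m$ variables, the shifts $\gamma \circ (\ell\, m+1)$ for $\ell \in [m]$ all coincide with a single form $\gamma^\dagger := \gamma \circ (1\, m+1)$, so a bad relation reduces to a linear combination of finitely many forms $\sigma_i, \pi_i, \pi_i^\dagger, \rho_i, \rho_i^\dagger$ lying in $\on{PR}_{\leq R'}$. After applying Lemma~\ref{mlformschbasislemma} if needed to put the relation into a convenient basis, three cases arise. \emph{Case 1:} some $\sigma_i$-coefficient is non-zero; solve for $\sigma_i$ modulo low rank and drop it, decreasing $s$ by~$1$. \emph{Case 2:} some $\rho_i$ or $\rho_i^\dagger$ coefficient is non-zero, and the relation is \emph{not} of the ``diagonal'' form $\sum_i c'_i(\rho_i + \rho_i^\dagger) + \cdots$; then some $\rho_i$ (or $\rho_i^\dagger$, from which we recover $\rho_i$ by applying $(1\, m+1)$) can be isolated and dropped, decreasing $t$ by~$1$. \emph{Case 3:} the remaining case, where the relation has the symmetric form
\[\sum_i c'_i(\rho_i + \rho_i^\dagger) + \text{($\pi$-part violating uniformity-in-$\ell$)} \in \on{PR}_{\leq R'}.\]
Here an invertible change of basis among the $\rho_i$'s (or $\pi_i$'s) produces a new form $\tilde{\rho}$ (or $\tilde{\pi}$) with $\tilde{\rho} + \tilde{\rho}^\dagger \in \on{PR}_{\leq R'}$, which can be reclassified as a new $\sigma$; here $t$ (resp.~$q$) decreases by~$1$ and $s$ increases by~$1$.

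The monovariant $M := 3t + 2q + s \leq 3r$ strictly decreases in every case (by $1$ in Case~1, by $3$ in Case~2, and by $1$ or $2$ in Case~3), so the iteration terminates after at most $O_k(r)$ steps. Since each step replaces $R$ by $(C(R + 2r))^D$, starting from $R_0 = 0$ the final threshold is at most $C^{D^{O_k(r)}}$, matching the stated bound. The linearity of the constructed forms in the original $\gamma_i$'s is maintained throughout because every modification is either a basis change or an elimination via the low-rank relation, preserving~(i) and~(v).

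\textbf{Main obstacle.} The delicate point is Case~3: one must carefully separate the new low-rank information from the tautological contributions of (ii) and (iii) (which by design produce certain low-rank combinations for free), and verify that the invertible basis change genuinely yields a $\sigma$-type form rather than one satisfying only the weaker partial-symmetry property. Lemma~\ref{zeromlformindexlemma} and Proposition~\ref{lcmlformsdecomppropo} should be useful here for certifying that the derived low-partition-rank identity is non-degenerate and genuinely witnesses the almost-symmetry of the newly produced form. A secondary but manageable issue is keeping track of the ``exceptional'' pattern in (iv) precisely during the case distinction, which requires working modulo the already-established relations from~(iii).
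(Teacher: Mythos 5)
Your overall architecture --- an iterative refinement of a classification into $\sigma/\pi/\rho$ types, driven by a decreasing integer monovariant, with $R$ updated to $(C(R+2r))^{D}$ at each of $O_k(r)$ steps --- is the same as the paper's. But there is a genuine error at the foundation of your case analysis: the claim that, because each form $\gamma$ is symmetric in the first $m$ variables, the shifts $\gamma\circ(\ell\,\,m+1)$ for $\ell\in[m]$ all coincide with a single form $\gamma^{\dagger}=\gamma\circ(1\,\,m+1)$. This is false for $m\geq 2$: the transposition $(\ell\,\,m+1)$ places $x_{\ell}$ into the $(m+1)$-st argument slot, which lies \emph{outside} the symmetric block, so $\gamma\circ(\ell\,\,m+1)$ and $\gamma\circ(\ell'\,\,m+1)$ are genuinely different forms for $\ell\neq\ell'$. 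For instance, with $m=2$, $k=3$ and $\gamma(x_1,x_2,x_3)=A(x_1,x_2)B(x_3)$ with $A$ symmetric, one has $\gamma\circ(1\,\,3)=A(x_3,x_2)B(x_1)$ while $\gamma\circ(2\,\,3)=A(x_1,x_3)B(x_2)$. The correct relation is only $\gamma\circ(\ell'\,\,m+1)=\gamma\circ(\ell\,\,m+1)\circ(\ell\,\,\ell')$. Collapsing the $m+1$ shifts to just two forms trivializes the exceptional clause of \textbf{(iv)} (``coefficients independent of $\ell$'') and discards exactly the configurations the lemma is designed to control.

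As a consequence, your Case~3 --- which you yourself flag as the main obstacle --- is not resolved, and an ``invertible change of basis among the $\rho_i$'s'' is not the right move. The mechanism the paper uses when some coefficient $\mu_{i,\ell}$ or $\nu_{i,\ell}$ of the bad low-rank combination $\phi$ genuinely depends on $\ell$ is to compose $\phi$ with a transposition $(\ell\,\,\ell')$ witnessing the dependence and add: the symmetry of the $\sigma_i$ in the first $m$ variables, together with identities such as $\rho_i\circ(a\,\,m+1)\circ(\ell\,\,\ell')=\rho_i\circ(a\,\,m+1)$ for $a\notin\{\ell,\ell'\}$, cancels almost everything, leaving the statement that the form $\tilde{\sigma}=\sum_i(\mu_{i,\ell}+\mu_{i,\ell'})\pi_i+\sum_i(\nu_{i,\ell}+\nu_{i,\ell'})\rho_i$ satisfies $\on{prank}\big(\tilde{\sigma}\circ(\ell\,\,m+1)+\tilde{\sigma}\circ(\ell'\,\,m+1)\big)$, and hence $\on{prank}\big(\tilde{\sigma}+\tilde{\sigma}\circ(m\,\,m+1)\big)$, bounded; $\tilde{\sigma}$ is a nontrivial combination of the $\pi_i,\rho_i$, so it can replace one of them as a new almost-symmetric form. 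The complementary case, where all coefficients are independent of $\ell$, is where a new partially symmetric form $\pi$ is created (as the combination $\sum_i\mu_i\pi_i+\sum_i\nu_i\rho_i$). Your monovariant $3t+2q+s$ would indeed decrease under these moves, but without the transposition-and-add step your iteration has no way to make progress on a relation whose coefficients vary with $\ell$, so the proof as proposed does not go through for $m\geq 2$.
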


\begin{rem}\label{symmRespWRegLemmaRemark}When $m = 1$, we may rename the forms $\pi_{[q]}$ to $\sigma_{[s + 1, s + q]}$, as conditions \textbf{(ii)} and \textbf{(iii)} become the same. In the proof of Theorems~\ref{mainSymmExtn4} and~\ref{mainSymmExtn5} we shall also make use of $m = 0$ case, in which case the symmetry properties of forms play no role and we end up only with forms $\sigma_{[s]}$ which satisfy conditions \textbf{(iv)} and \textbf{(v)}.\end{rem}

\begin{proof} Let us begin by setting $t = r$, $q = 0, s= 0$, $\rho_1 = \gamma_1, \dots, \rho_t = \gamma_t$ and $R = 1$, which satisfies all conditions except possibly the fourth. At each step we modify the sequence of forms $\sigma_{[s]}, \pi_{[q]}, \rho_{[t]}$, decreasing the quantity $Q = s + 10q + 100t$ and preserving all properties but the fourth. The quantity $R$ will increase at each step as well.\\
\indent Suppose that at some step the fourth condition still fails. Hence a linear combination
\begin{equation}\label{lowrankcombpartsymmlemma}\sum_{i \in [s]} \lambda_i \sigma_i + \sum_{i \in [q] ,\ell \in [m+1]} \mu_{i, \ell} \pi_i \circ (\ell \,\, m + 1) + \sum_{i \in [t] ,\ell \in [m+1]} \nu_{i, \ell} \rho_i \circ (\ell \,\,m + 1)\end{equation}
has partition rank at most $(C(R + 2r))^D$ and it is not a linear combination of sums $\sum_{\ell \in [m + 1]} \pi_i \circ (\ell\,\,m + 1)$. We consider two separate cases depending on the behaviour of the coefficients in the above expression.\\

\indent \textbf{Case 1. For each $i \in [q]$ the coefficients $\mu_{i, \ell}$ do not depend on $\ell$ and for each $i \in [t]$ the coefficients $\nu_{i, \ell}$ do not depend on $\ell$.} By assumptions in this case there are coefficients $\mu_i$ and $\nu_i$ such that
\[\on{prank}\bigg(\sum_{i \in [s]} \lambda_i \sigma_i + \sum_{i \in [q]} \mu_i \Big(\sum_{\ell \in [m+1]} \pi_i \circ (\ell \,\, m + 1)\Big) + \sum_{i \in [t]} \nu_{i} \Big(\sum_{\ell \in [m+1]} \rho_i \circ (\ell \,\,m + 1)\Big)\bigg) \leq (C(R + 2r))^D.\]
If there is some $\lambda_i \not= 0$ we may simply remove $\sigma_i$ from the list $\sigma_{[s]}$, which results in decreasing the quantity $Q$, noting that the fifth condition still holds as long as we replace $R$ by $R + (C(R + 2r))^D$. Otherwise, assume that all $\lambda_i = 0$. Therefore, some $\nu_j \not= 0$. Remove $\rho_j$ from the list $\rho_{[t]}$ and add new form
\[\pi_{q + 1} = \sum_{i \in [q]} \mu_i \pi_i + \sum_{i \in [t]} \nu_{i} \rho_i.\]
This decreases $t$ by 1, increases $q$ by 1 and thus the quantity $Q$ decreases as well. All conditions except the fourth one are still satisfied, provided we replace $R$ by $R + (C(R + 2r))^D$.\\

\indent \textbf{Case 2. There exist indices $\ell, \ell' \leq m+1$ and $i$ such that $\mu_{i, \ell} = 1, \mu_{i, \ell'} = 0$ or  $\nu_{i, \ell} = 1, \nu_{i, \ell'} = 0$.} Let $\phi$ be the linear combination of forms in~\eqref{lowrankcombpartsymmlemma}. Note that
\begin{align}\phi(x_{[k]}) + \phi \circ (\ell \,\,\ell')(x_{[k]}) = &\sum_{i \in [s]} \lambda_i \Big(\sigma_i + \sigma_i \circ (\ell \,\,\ell')\Big) + \sum_{i \in [q] , a \in [m+1]} \mu_{i, a} \Big(\pi_i \circ (a \,\, m + 1) + \pi_i \circ (a \,\, m + 1) \circ (\ell \,\,\ell')\Big)\nonumber\\
&\hspace{2cm} + \sum_{i \in [t] ,a \in [m+1]} \nu_{i, a} \Big(\rho_i \circ (a \,\,m + 1) + \rho_i \circ (a \,\,m + 1) \circ (\ell \,\,\ell')\Big)\label{lowrankcombpartsymmlemma2}\end{align}
and has partition rank at most $2(C(R + 2r))^D$. Using the symmetry properties, we obtain 
\begin{align*}\on{prank} \bigg(& \sum_{i \in [q]} (\mu_{i, \ell} + \mu_{i, \ell'})\Big(\pi_i \circ (\ell \,\, m + 1) + \pi_i \circ (\ell' \,\, m + 1)\Big) \\
&\hspace{2cm}+ \sum_{i \in [t]}(\nu_{i, \ell} + \nu_{i, \ell'})\Big(\rho_i \circ (\ell \,\, m + 1) + \rho_i \circ (\ell' \,\, m + 1)\Big)\bigg) \leq 2(C(R + 2r))^D + sR.\end{align*}

Define multilinear form $\tilde{\sigma}$ as 
\begin{equation}\tilde{\sigma}(x_{[k]}) = \sum_{i \in [q]} (\mu_{i, \ell} + \mu_{i, \ell'})\pi_i(x_{[k]}) + \sum_{i \in [t]}(\nu_{i, \ell} + \nu_{i, \ell'})\rho_i(x_{[k]}).\label{tildesigmasymmreglemmadefin}\end{equation}

This form satisfies $\on{prank} (\tilde{\sigma} \circ (\ell \,\, m + 1) + \tilde{\sigma} \circ (\ell' \,\, m + 1))  \leq 2(C(R + 2r))^D + sR$ and is a linear combination of forms $\gamma_1, \dots, \gamma_r$. It follows that $\on{prank} (\tilde{\sigma} + \tilde{\sigma} \circ (m \,\, m + 1))  \leq 2(C(R + 2r))^D + sR$.\\

By assumptions in this case of the proof we see that $\tilde{\sigma}$ is a non-trivial linear combination of the forms in~\eqref{tildesigmasymmreglemmadefin}. Thus, we may remove a form $\pi_i$ or $\rho_i$ that appears with non-zero coefficient in~\eqref{tildesigmasymmreglemmadefin} from its list, set $\sigma_{s+1} = \tilde{\sigma}$ and replace $R$ by 
\begin{equation}(m+1)\Big(2(C(R + 2r))^D + sR\Big) + R\label{rchangeeqn}\end{equation}
to make sure that the fifth condition is still satisfied. As $Q$ decreases in this case as well, the proof is complete after at most $100r$ steps (this was this the initial value of the quantity $Q$). The bound on $R$ is given by starting from $R = 1$ and replacing $R$ by the value in~\eqref{rchangeeqn} at most $100r$ times.\end{proof}

In fact, we can say more about the linear combinations of the additional forms in Lemma~\ref{symmRespWRegLemma}.

\begin{observation}\label{symmetriclinearcombinations}Let $\sigma_{[s]}, \pi_{[q]}, \rho_{[t]}, C, D, R$ be as in the previous lemma and let $\tau$ be a multilinear form of partition rank at most $R$. (Recall that we assume that $C, D \geq 2$.) Suppose that 
\[\phi = \sum_{i \in [s]} \lambda_i \sigma_i + \sum_{i \in [q] ,\ell \in [m+1]} \mu_{i, \ell} \pi_i \circ (\ell \,\, m + 1) + \sum_{i \in [t] ,\ell \in [m+1]} \nu_{i, \ell} \rho_i \circ (\ell \,\, m + 1) + \tau\]
is symmetric in the first $m$ variables. Then we we have $\mu_{i, \ell} = \mu_{i, \ell'}$ and $\nu_{i, \ell} = \nu_{i, \ell'}$ for all $\ell, \ell' \in [m]$.\\
\indent Moreover, if $\phi$ is symmetric in the first $m+1$ variables, then 
\[\phi \in  \sum_{i \in [s]} \lambda_i \sigma_i + \sum_{i \in [t]} \tilde{\nu}_i \Big(\sum_{\ell \in [m+1]} \rho_i \circ (\ell\,\,m + 1)\Big) + \on{PR}_{\leq (r + 2)R}\]
for suitable scalars $\tilde{\nu}_i$.\end{observation}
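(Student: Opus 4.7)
The plan is to exploit the symmetry assumption on $\phi$ by considering $\phi + \phi \circ \omega$ for suitable transpositions $\omega \in \on{Sym}_{m+1}$, expanding using the given decomposition, and then applying condition \textbf{(iv)} of Lemma~\ref{symmRespWRegLemma} to constrain the coefficients $\mu_{i, \ell}$ and $\nu_{i, \ell}$. The only non-routine step is the bookkeeping with permutation compositions, which exploits the fact that $\sigma_i, \pi_i, \rho_i$ are invariant under the $\on{Sym}_m$-action on the first $m$ coordinates.

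For the first claim, fix distinct $\ell, \ell' \in [m]$. Direct computation using identities such as $(\ell, m+1)(\ell, \ell') = (\ell, \ell')(\ell', m+1)$, together with $\on{Sym}_m$-symmetry, yields $\pi_i \circ (a, m+1) \circ (\ell, \ell') = \pi_i \circ (\theta(a), m+1)$ for all $a \in [m+1]$, where $\theta$ acts as $(\ell, \ell')$ on $[m]$ and fixes $m+1$; the analogous identity holds for $\rho_i$ in place of $\pi_i$. Consequently $\phi \circ (\ell, \ell')$ has $\mu_{i, \theta(a)}$ as the coefficient of $\pi_i \circ (a, m+1)$ and $\nu_{i, \theta(a)}$ as the coefficient of $\rho_i \circ (a, m+1)$. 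Since $\phi = \phi \circ (\ell, \ell')$ by the symmetry assumption, the equation $\phi + \phi \circ (\ell, \ell') = 0$ becomes
\[
\sum_{i, a} (\mu_{i, a} + \mu_{i, \theta(a)}) \pi_i \circ (a, m+1) + \sum_{i, a} (\nu_{i, a} + \nu_{i, \theta(a)}) \rho_i \circ (a, m+1) = \tau + \tau \circ (\ell, \ell'),
\]
so the left-hand side has partition rank at most $2R < (C(R + 2r))^D$. By condition \textbf{(iv)} this forces all the $\rho$-coefficients to vanish, i.e.\ $\nu_{i, a} = \nu_{i, \theta(a)}$, and forces $\mu_{i, a} + \mu_{i, \theta(a)}$ to be a constant $c_i$ in $a$; evaluating at $a = m+1$ (which is fixed by $\theta$) gives $c_i = 0$. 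Varying $\ell, \ell' \in [m]$ establishes that both $\mu_{i, \ell}$ and $\nu_{i, \ell}$ are independent of $\ell \in [m]$.

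For the second claim, additionally use $\phi + \phi \circ (m, m+1) = 0$. Condition \textbf{(ii)} gives $\sigma_i \circ (m, m+1) = \sigma_i + \eta_i$ with $\on{prank} \eta_i \leq R$, so the $\sigma$-contribution is absorbed into an error form of partition rank at most $sR$. A similar case analysis, using $(a, m+1)(m, m+1) = (a, m)(a, m+1)$ for $a \in [m-1]$ together with $(m, m+1)^2 = \on{id}$, shows that the transposition $(m, m+1)$ swaps the coefficients of $\pi_i = \pi_i \circ (m+1, m+1)$ and $\pi_i \circ (m, m+1)$ while fixing the coefficients of $\pi_i \circ (\ell, m+1)$ for $\ell \in [m-1]$, and likewise for $\rho_i$. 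Writing $\mu_i$ for the common value $\mu_{i, 1} = \dots = \mu_{i, m}$ established above and $\mu_i' := \mu_{i, m+1}$ (and defining $\nu_i, \nu_i'$ analogously), the equation reduces to
\[
\sum_i (\mu_i + \mu_i') \bigl( \pi_i + \pi_i \circ (m, m+1) \bigr) + \sum_i (\nu_i + \nu_i') \bigl( \rho_i + \rho_i \circ (m, m+1) \bigr) \in \on{PR}_{\leq (s + 2)R}.
\]
Applying condition \textbf{(iv)} once more, the $\rho$-coefficients must vanish, giving $\nu_i = \nu_i'$; the $\pi$-coefficients must be independent of $\ell \in [m+1]$, but they equal $\mu_i + \mu_i'$ for $\ell \in \{m, m+1\}$ and $0$ for $\ell \in [m-1]$, which forces $\mu_i = \mu_i'$.

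Substituting back, $\phi = \sum_i \lambda_i \sigma_i + \sum_i \mu_i \sum_\ell \pi_i \circ (\ell, m+1) + \sum_i \nu_i \sum_\ell \rho_i \circ (\ell, m+1) + \tau$. By condition \textbf{(iii)} each sum $\sum_\ell \pi_i \circ (\ell, m+1)$ has partition rank at most $R$, so absorbing the middle sum together with $\tau$ into a single error term of partition rank at most $(q+1)R \leq (r+2)R$, and setting $\tilde{\nu}_i := \nu_i$, yields the required decomposition. The main obstacle is correctly tracking how the transpositions act on the forms $\pi_i \circ (a, m+1)$ modulo $\on{Sym}_m$; once that is done, the conclusion is immediate from condition \textbf{(iv)}.
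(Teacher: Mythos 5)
Your proof is correct and takes essentially the same route as the paper's: compare $\phi$ with $\phi\circ(\ell\,\,\ell')$ and $\phi\circ(m\,\,m+1)$, cancel the $\sigma_i$ (exactly in the first case, up to rank $sR$ in the second), and apply property \textbf{(iv)} of Lemma~\ref{symmRespWRegLemma}, using the fact that some form $\pi_i\circ(a\,\,m+1)$ receives coefficient zero to pin the surviving constant down to zero. The one point the paper makes explicit that you omit is the case $m=1$ of the second claim, where $[m-1]=\emptyset$ and your argument no longer forces $\mu_i=\mu_i'$; the paper disposes of this by invoking Remark~\ref{symmRespWRegLemmaRemark}, under which no forms $\pi_i$ occur when $m=1$, so you should either note that convention or restrict that step to $m\geq 2$.
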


\begin{proof}Let $\ell, \ell' \in [m]$ be two distinct indices. By the symmetry assumption, applying the transposition $(\ell\,\,\ell')$ we see that
\begin{align*}0 = &\sum_{i \in [q]} (\mu_{i, \ell} + \mu_{i, \ell'}) \pi_i \circ (\ell \,\, m + 1) + \sum_{i \in [q]} (\mu_{i, \ell} + \mu_{i, \ell'}) \pi_i \circ (\ell' \,\, m + 1) \\
&\hspace{2cm}+ \sum_{i \in [t]} (\nu_{i, \ell} + \nu_{i, \ell'}) \rho_i \circ (\ell \,\, m + 1) +  \sum_{i \in [t]} (\nu_{i, \ell} + \nu_{i, \ell'}) \rho_i \circ (\ell' \,\, m + 1) + \tau + \tau \circ (\ell \,\,\ell').\end{align*}

By property \textbf{(iv)} (we now use the assumption that $C \geq 2$) of the forms in the previous lemma, we have that 
\begin{align}&\sum_{i \in [q]} (\mu_{i, \ell} + \mu_{i, \ell'}) \pi_i \circ (\ell \,\, m + 1) + \sum_{i \in [q]} (\mu_{i, \ell} + \mu_{i, \ell'}) \pi_i \circ (\ell' \,\, m + 1) \nonumber\\
&\hspace{2cm}+ \sum_{i \in [t]} (\nu_{i, \ell} + \nu_{i, \ell'}) \rho_i \circ (\ell \,\, m + 1) +  \sum_{i \in [t]} (\nu_{i, \ell} + \nu_{i, \ell'}) \rho_i \circ (\ell' \,\, m + 1)\label{symmetricLinearCombinationObsn}\end{align}
has to be a linear combination of sums $\sum_{\ell \in [m + 1]} \pi_i \circ (\ell\,\,m + 1)$. However, $\pi_i = \pi_i \circ (m+1 \,\,m+1)$ does not appear in~\eqref{symmetricLinearCombinationObsn} for any $i$, hence the displayed expression has to be zero, making $\mu_{i, \ell} = \mu_{i, \ell'}$ and $\nu_{i, \ell} = \nu_{i, \ell'}$, as desired.\\

Suppose now that $\phi$ is symmetric in variables $x_{[m+1]}$. Note that
\begin{align*}\phi = \phi \circ (m\,\,m+1) = &\sum_{i \in [s]} \lambda_i \sigma_i \circ (m\,\,m+1) + \sum_{i \in [q]} \Big(\sum_{\ell \in [m-1]} \mu_{i, \ell} \pi_i \circ (\ell \,\, m + 1) + \mu_{i, m + 1} \pi_i \circ (m + 1 \,\,m) + \mu_{i, m} \pi_i\Big)\\
& \hspace{1cm}+ \sum_{i \in [t]} \Big(\sum_{\ell \in [m-1]} \nu_{i, \ell} \rho_i \circ (\ell \,\, m + 1) + \nu_{i, m+1} \rho_i \circ (m \,\,m + 1) + \nu_{i,m} \rho_i\Big)  + \tau\circ (m \,\,m+ 1 ).\end{align*}
Hence $0 = \phi + \phi \circ (m\,\,m+1)$ belongs to 
\[ \sum_{i \in [q]} (\mu_{i, m} + \mu_{i, m + 1}) \Big(\pi_i \circ (m  \,\,m+ 1) + \pi_i\Big) +  \sum_{i \in [t]}(\nu_{i, m} + \nu_{i, m+1}) \Big( \rho_i \circ (m \,\,m + 1) + \rho_i\Big) + \on{PR}_{\leq (s + 2)R}.\]
If $m = 1$, by the remark following the previous lemma, we may assume that forms $\pi_i$ do not appear, and if $m \geq 2$ then $\pi_i \circ (1  \,\,m+ 1)$ does not appear in the above expression. By property \textbf{(iv)} of Lemma~\ref{symmRespWRegLemma} (we now use the assumption that $C \geq 2, D \geq 2$) it follows that $\mu_{i, m} = \mu_{i, m + 1}$ (in the case $m \geq 2$) and $\nu_{i, m} = \nu_{i, m + 1}$. Let $\tilde{\nu}_i = \nu_{i,1} = \dots = \nu_{i, m+1}$. The observation follows from the property \textbf{(iii)} of Lemma~\ref{symmRespWRegLemma}.\end{proof}

\subsection{Symmetry in $x_1$ and $x_2$}

We prove Theorem~\ref{approximateSymmetry2} in this subsection.

\begin{proof}[Proof of Theorem~\ref{approximateSymmetry2}]During the proof we consider decompositions of the shape
\begin{align}\alpha(x_1, x_2, x_3, \dots, x_k) + \alpha(x_2, x_1, x_3, \dots, x_k) = &\sum_{i \in [r_1]} \beta_i(x_1, x_2, x_{I_{i}}) \beta'_{i, 1}(z_{I'_{i, 1}}) \cdots \beta'_{i, d_i}(z_{I'_{i, d_i}})\nonumber \\&\hspace{1cm}+ \sum_{i \in [r_2]} \gamma_i(x_1, x_{J_{i, 1}}) \gamma'_i(x_2, x_{J_{i, 2}})\gamma''_{i, 1}(x_{J'_{i, 1}}) \cdots \gamma''_{i, d'_i}(x_{J'_{i, d'_i}})\label{basicsymmexactpropeqn1}\end{align}
for some integers $r_1$ and $r_2$ and suitable multilinear forms where the partitions of variables in products belong to some down-set of partitions $\mathcal{P}$. We write $r' = r_1 + r_2$, which will satisfy $r' \leq O(\exp^{(O(1))}(r))$ at all times. Initially we have $r' \leq r$ and $\mathcal{P}$ contains all partitions except the trivial one $\{[k]\}$.\\
\indent The proof splits into three parts; in the first stage we remove the $\beta_i(x_1, x_2, x_{I_{i}}) \beta'_{i, 1}(z_{I'_{i, 1}}) \cdots \beta'_{i, d_i}(z_{I'_{i, d_i}})$ terms, in the second we remove the almost all of $ \gamma_i(x_1, x_{J_{i, 1}}) \gamma'_i(x_2, x_{J_{i, 2}})\gamma''_{i, 1}(x_{J'_{i, 1}}) \cdots \gamma''_{i, d'_i}(x_{J'_{i, d'_i}})$ terms and in the final step we remove remaining terms which will have linear forms in $x_1$ and $x_2$, thereby relating $\alpha$ to a form symmetric in the first two variables.\\

\noindent\noindent\textbf{Step 1. Removing $\beta$ forms.} Using Proposition~\ref{lcmlformsdecomppropo} we may assume that every $\beta_i$ comes from a slice of $\alpha + \alpha \circ (1\,\,2)$. This has the cost of replacing $r$ by $r' = O({r}^{O(1)})$. Misusing the notation, we still write $r_1$ and $r_2$ for the number of products in each of the two sums. Thus, for each $i \in [r_1]$ we obtain a multilinear form $\tilde{\beta}_i(x_1, x_2, x_{I_{i}})$ such that $\beta_i(x_1, x_2, x_{I_{i}}) = \tilde{\beta}_i(x_1, x_2, x_{I_{i}}) + \tilde{\beta}_i(x_2, x_1, x_{I_{i}})$. Considering the multilinear form $\alpha'$ defined by
\[\alpha'(x_{[k]}) = \alpha(x_{[k]}) + \sum_{i \in [r_1]} \tilde{\beta}_i(x_1, x_2, x_{I_{i}}) \beta'_{i, 1}(z_{I'_{i, 1}}) \cdots \beta'_{i, d_i}(z_{I'_{i, d_i}})\]
we see that $\on{prank}^{\mathcal{P}} (\alpha + \alpha') \leq O({r}^{O(1)})$ and
\begin{align}\alpha'(x_1, x_2, x_3, \dots, x_k) + \alpha'(x_1, x_2, x_3, \dots, x_k) = \sum_{i \in [r_2]} \gamma_i(x_1, x_{J_{i, 1}}) \gamma'_i(x_2, x_{J_{i, 2}})\gamma''_{i, 1}(x_{J'_{i, 1}}) \cdots \gamma''_{i, d'_i}(x_{J'_{i, d'_i}}).\label{basicsymmexactpropeqn2}\end{align}

\noindent\noindent\textbf{Step 2. Removing most $\gamma$ forms.} In this part of the proof we perform another iterative procedure in which we keep track of a down-set of partitions $\mathcal{P}'\subseteq \mathcal{P}$ with the property that $1$ and $2$ are in different sets in every partition and swapping $1$ and $2$ results in a partition still in $\mathcal{P}'$ and we find another mutlilinear form $\alpha'' \colon G^k \to \mathbb{F}_2$ such that $\on{prank}(\alpha' + \alpha'') \leq O(\exp^{(O(1))}(r))$ and we have an equality 
\begin{equation}\label{alphasymm2partialgammaPartitions}\alpha''(x_1, x_2, x_3, \dots, x_k) + \alpha''(x_2, x_1, x_3, \dots, x_k) = \sum_{i \in [s]} \gamma_i(x_1,x_{I_i}) \gamma'_i(x_2, x_{I'_i}) \delta_{i, 1}(x_{J_{i,1}}) \cdots \delta_{i, d_i}(x_{J_{i,d_i}})\end{equation}
where $s \leq O(\exp^{(O(1))}(r))$ and each partition $(\{1\} \cup I_i, \{2\} \cup I'_i, J_{i,1}, \dots, J_{i, d_i})$ belongs to $\mathcal{P}'$. (We misuse the notation as the multilinear forms $\gamma_i, \gamma'_i$ are not necessarily identical to those in the assumed decomposition~\eqref{basicsymmexactpropeqn2} and are being modified in each step of the procedure.) We now describe a step in the procedure. Let $\Big\{\{1\} \cup I,\{2\} \cup I', J_1, \dots, J_d\Big\}$ be a maximal partition in $\mathcal{P}'$, where the priority is given to partitions with $I \cup I' \not=\emptyset$. Let $\mathcal{P}''$ be the down-set of partitions obtained by removing $\Big\{\{1\} \cup I,\{2\} \cup I', J_1, \dots, J_d\Big\}$ and $\Big\{\{1\} \cup I',\{2\} \cup I, J_1, \dots, J_d\Big\}$ from $\mathcal{P}'$. Assume first that $I \cup I' \not=\emptyset$.\\

\noindent Before proceeding, we need to regularize the forms appearing in the expression above. We consider the following $d+2$ lists of forms:
\begin{itemize}
\item $\gamma_i(u, x_I)$ for all $i \in [s]$ such that $I_i = I$ and $\gamma'_i(u, x_I)$ for all $i \in [s]$ such that $I'_i = I$,
\item $\gamma_i(u, x_I)$ for all $i \in [s]$ such that $I_i = I'$ and $\gamma'_i(u, x_I)$ for all $i \in [s]$ such that $I'_i = I'$,
\item for each $j \in [d]$, make the list of all forms that depend on the variables $x_{J_j}$.
\end{itemize}

Let $C, D \geq 2$ be constants to be chosen later. Apply the $m = 0$ case of Lemma~\ref{symmRespWRegLemma} (see Remark~\ref{symmRespWRegLemmaRemark}) to each of these lists. We thus obtain further lists of forms:
\begin{itemize}
\item $\phi_i(u, x_I)$ for $i \in [t_1]$ where $t_1 \leq 2s$,
\item $\psi_i(u, x_{I'})$ for $i \in [t_2]$ where $t_2 \leq 2s$, and 
\item for each $j \in [d]$, a list $\mu_{j,i}(x_{J_j})$ with $i \in [q_j]$ for some $q_j \leq s$,
\end{itemize}
and a quantity $R \leq C^{D^{O_k(s)}}$ with properties
\begin{itemize}
\item each multilinear form in one of the initial $d+2$ lists can be expressed as a sum of a linear combination of forms in the corresponding new list and a multilinear form of partition rank at most $R$,
\item for each of $d+2$ new lists, the non-zero linear combinations have partition rank at least $(C(R + 2s))^D$.
\end{itemize}

Replacing the old forms by the new ones, we have scalars $\lambda_{i_1, i_2, j_1, \dots, j_d}, \lambda'_{i_1, i_2, j_1, \dots, j_d}$, where $i_1 \in [t_1],$ $i_2 \in [t_2],$ $j_1 \in [q_1], \dots,$ $j_d \in [q_d]$, and a multilinear form $L(x_{[k]})$ such that which is a sum of at most $O(s(s + R)^k)$ products whose partitions lie in $\mathcal{P}''$ such that 
\begin{align}&\alpha''(x_1, x_2, x_3, \dots, x_k) + \alpha''(x_2, x_1, x_3, \dots, x_k)\nonumber\\
&\hspace{1cm}= \sum_{i_1 \in [t_1], i_2 \in [t_2], j_1 \in [q_1], \dots, j_d \in [q_d]} \lambda_{i_1, i_2, j_1, \dots, j_d} \phi_{i_1}(x_1,x_{I}) \psi_{i_2}(x_2, x_{I'}) \mu_{1, j_1}(x_{J_1}) \cdots \mu_{d, j_d}(x_{J_d}) \nonumber\\
&\hspace{1cm} + \sum_{i_1 \in [t_1], i_2 \in [t_2], j_1 \in [q_1], \dots, j_d \in [q_d]} \lambda'_{i_1, i_2, j_1, \dots, j_d} \phi_{i_1}(x_2,x_{I}) \psi_{i_2}(x_1, x_{I'}) \mu_{1, j_1}(x_{J_1}) \cdots \mu_{d, j_d}(x_{J_d}) + L(x_{[k]}).\label{alphasymm2partialgammaPartitions2}\end{align}

Using the fact that
\begin{equation}\Big(\alpha''(x_1, x_2, x_{[3,k]}) + \alpha''(x_2, x_1, x_{[3,k]})\Big) + \Big(\alpha''(x_1, x_2, x_{[3,k]}) + \alpha''(x_2, x_1, x_{[3,k]})\Big)\circ(1\,\,2) = 0\label{alphadoubleprimeeqnsymm2}\end{equation}
Lemma~\ref{zeromlformindexlemma} implies $\lambda_{i_1, i_2, j_1, \dots, j_d} = \lambda'_{i_1, i_2, j_1, \dots, j_d}$, provided $C$ and $D$ are sufficiently large compared only to the constants from Lemma~\ref{zeromlformindexlemma} and $k$. Thus, setting
\[\alpha'''(x_{[k]}) =  \alpha''(x_{[k]}) + \sum_{i_1 \in [t_1], i_2 \in [t_2], j_1 \in [q_1], \dots, j_d \in [q_d]} \lambda_{i_1, i_2, j_1, \dots, j_d} \phi_{i_1}(x_1,x_{I}) \psi_{i_2}(x_2, x_{I'}) \mu_{1, j_1}(x_{J_1}) \cdots \mu_{d, j_d}(x_{J_d})\]
it follows that $\on{prank}(\alpha'' + \alpha''') \leq O(\exp^{(O(1))}(r))$ and $\alpha''' + \alpha''' \circ(1\,\,2)$ satisfies~\eqref{alphasymm2partialgammaPartitions} for the down-set of partitions $\mathcal{P}''$ and $s \leq O(\exp^{(O(1))}(r))$.\\

\noindent\noindent\textbf{Step 3. Removing remaining forms.} Let us now treat the case $I = I' = \emptyset$. The same steps of the argument apply, except that this time the first two of the $d+2$ lists of forms become the same list of linear forms $\gamma_i(u)$ and $\gamma'_i(u)$ for $i \in [s]$ and we may omit the list of regularized forms $\psi_i$. This time, we have that
\begin{align*}&\alpha'''(x_1, x_2, x_3, \dots, x_k) + \alpha'''(x_2, x_1, x_3, \dots, x_k)\nonumber\\
&\hspace{1cm}= \sum_{i \in [t_1], j_1 \in [q_1], \dots, j_d \in [q_d]} \lambda_{i_1, i_2, j_1, \dots, j_d} \phi_{i_1}(x_1) \phi_{i_2}(x_2) \mu_{1, j_1}(x_{J_1}) \cdots \mu_{d, j_d}(x_{J_d}) + L(x_{[k]}).\end{align*}
By our choice of maximal partitions in the removal procedure, we have that every product appearing in $L(x_{[k]})$ has $x_1$ and $x_2$ as arguments of linear forms. Therefore, each partition of variables in a product in $L(x_{[k]})$ has a partition of $x_{[3,k]}$ which is not greater than $J_1, \dots, J_d$.\\
\indent We first show that $\lambda_{i, i, j_1, \dots, j_d} = 0$ for all indices (note that $i \in [t_1]$ appears twice). Take an element $w \in G$ such that $\phi_{i'}(w) = \id(i = i')$. Putting $x_1 = x_2 = w$ gives
\begin{align*}0 = &\alpha'''(w, w, x_3, \dots, x_k) + \alpha'''(w, w, x_3, \dots, x_k) \\
= &\sum_{i', i'' \in [t_1], j'_1 \in [q_1], \dots, j'_d \in [q_d]} \lambda_{i', i'', j'_1, \dots, j'_d} \phi_{i'}(w)\phi_{i''}(w) \mu_{1, j'_1}(x_{J_1}) \cdots \mu_{d, j'_d}(x_{J_d}) + L(w,w,x_{[3,k]})\\ 
= &\sum_{j'_1 \in [q_1], \dots, j'_d \in [q_d]} \lambda_{i, i, j'_1, \dots, j'_d} \mu_{1, j'_1}(x_{J_1}) \cdots \mu_{d, j'_d}(x_{J_d}) + L(w,w,x_{[3,k]}).
\end{align*}
Recall the property of partitions of products in $L(w,w,x_{[3,k]})$, Lemma~\ref{zeromlformindexlemma} shows that $\lambda_{i, i, j_1, \dots, j_d} = 0$.\\
\indent Finally, to see that $\lambda_{i_1, i_2, j_1, \dots, j_d} = \lambda_{i_2, i_1, j_1, \dots, j_d}$, use~\eqref{alphadoubleprimeeqnsymm2} and observe that the coefficient of $\phi_{i_1}(x_1)$ $\phi_{i_2}(x_2)$ $\mu_{1, j_1}(x_{J_1}) \cdots$ $\mu_{d, j_d}(x_{J_d})$ equals $\lambda_{i_1, i_2, j_1, \dots, j_d} + \lambda_{i_2, i_1, j_1, \dots, j_d}$. Lemma~\ref{zeromlformindexlemma} implies that it is zero. Set 
\[\alpha^{(4)}(x_{[k]}) =  \alpha'''(x_{[k]}) + \sum_{i_1 \in [t_1], i_2 \in [t_2], j_1 \in [q_1], \dots, j_d \in [q_d]} \lambda_{i_1, i_2, j_1, \dots, j_d} \phi_{i_1}(x_1) \phi_{i_2}(x_2) \mu_{1, j_1}(x_{J_1}) \cdots \mu_{d, j_d}(x_{J_d})\]
to see that $\on{prank}(\alpha''' + \alpha^{(4)}) \leq O(\exp^{(O(1))}(r))$ and $\alpha^{(4)} + \alpha^{(4)} \circ(1\,\,2)$ satisfies~\eqref{alphasymm2partialgammaPartitions} for the down-set of partitions $\mathcal{P}''$ and $s \leq O(\exp^{(O(1))}(r))$. This completes the description of the iterative procedure, completing the proof. We choose $C$ and $D$ depending only to the constants from Lemma~\ref{zeromlformindexlemma} and $k$ so that the partition rank condition of Lemma~\ref{zeromlformindexlemma} holds.\end{proof}

\subsection{Symmetry in $x_{[4]}$}

In this subsection we show how to pass from a multilinear form that is symmetric in variables $x_{[3]}$ to another one which is symmetric in variables $x_{[4]}$, under suitable conditions. The main results in this subsection are the Theorems~\ref{mainSymmExtn4} and~\ref{mainSymmExtn5}.\\
\indent Before embarking on the proof of Theorem~\ref{mainSymmExtn4}, we note that a weaker symmetry property is sufficient to deduce the stated one.

\begin{lemma}\label{weaksymmetryextends}Let $\alpha \colon G^k \to \mathbb{F}_2$ be a multilinear form symmetric in variables $x_{[3]}$. Let $\phi = \alpha + \alpha \circ (3\,\,4)$. Then $\phi$ satisfies the identity
\[\phi + \phi \circ (1\,\,3) + \phi\circ(1\,\,4) = 0.\]
In particular, if $\phi$ is symmetric in variables $x_1$ and $x_{4}$ then $\alpha$ is in fact symmetric in variables $x_{[4]}$.\end{lemma}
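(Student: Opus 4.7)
The plan is to prove the identity purely by expansion, exploiting only the assumed symmetry of $\alpha$ in its first three arguments and the fact that $\mathbb{F}_2$ has characteristic $2$. Writing $\phi = \alpha + \alpha \circ (3\,\,4)$, each of the three summands $\phi$, $\phi \circ (1\,\,3)$, $\phi \circ (1\,\,4)$ contributes two terms after expansion, so the left-hand side of the claimed identity is a sum of six terms, each of the form $\alpha(x_{\sigma(1)}, x_{\sigma(2)}, x_{\sigma(3)}, x_{\sigma(4)}, x_5, \ldots, x_k)$ for some permutation $\sigma$ of $\{1,2,3,4\}$.

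Next I would use the symmetry in $x_{[3]}$ to classify these six terms: such a term depends only on which of $x_1, x_2, x_3, x_4$ occupies the fourth coordinate, since the other three are symmetrised. There are thus only three distinct equivalence classes, indexed by the element placed in position $4$. A direct tally (the two expansions of $\phi$ give position-$4$ entries $x_4$ and $x_3$; the two of $\phi \circ (1\,\,3)$ give $x_4$ and $x_1$; the two of $\phi \circ (1\,\,4)$ give $x_1$ and $x_3$) shows each class appears exactly twice, so the sum vanishes in $\mathbb{F}_2$.

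For the ``in particular'' consequence, I would substitute $\phi \circ (1\,\,4) = \phi$ into the identity, obtaining $\phi \circ (1\,\,3) = 0$, whence $\phi \equiv 0$ (apply $(1\,\,3)$ again). This means $\alpha = \alpha \circ (3\,\,4)$, so $\alpha$ is symmetric in $x_3, x_4$; combined with symmetry in $x_{[3]}$ and the fact that $(1\,\,2), (2\,\,3), (3\,\,4)$ generate $\on{Sym}_4$, we conclude that $\alpha$ is symmetric in $x_{[4]}$.

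There is no serious obstacle: the statement is a combinatorial identity whose content is simply that three orbit representatives each appear with even multiplicity. The only thing to execute carefully is the bookkeeping of the six terms, and that the convention $\pi(x_{[k]}) = (x_{\pi^{-1}(1)}, \ldots, x_{\pi^{-1}(k)})$ is followed consistently so that compositions like $\alpha \circ (3\,\,4) \circ (1\,\,3)$ are expanded into the right permutation of arguments.
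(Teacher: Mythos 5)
Your proof is correct and is essentially the same direct expansion the paper uses: both arguments expand the six terms and use the symmetry of $\alpha$ in $x_{[3]}$ to see that they cancel in pairs (your bookkeeping of the slot-$4$ entries $x_4, x_3, x_4, x_1, x_1, x_3$ checks out under the paper's convention), and both derive the ``in particular'' clause by substituting $\phi = \phi \circ (1\,\,4)$ into the identity.
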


\begin{proof} Simple algebraic manipulation relying on the symmetry of $\alpha$ gives
\begin{align*}\phi + \phi \circ (1\,\,3) + \phi\circ(1\,\,4) =& \alpha + \alpha \circ (3\,\,4) + \alpha  \circ (1\,\,3) + \alpha \circ (3\,\,4)  \circ (1\,\,3) \\
&\hspace{5cm} + \alpha\circ(1\,\,4) + \alpha \circ (3\,\,4)\circ(1\,\,4) \\
= &\alpha + \alpha \circ (3\,\,4) + \alpha + \alpha \circ (1\,\,4) +  \alpha\circ(1\,\,4) + \alpha \circ (3\,\,4)\\
= & 0.\end{align*}

If $\phi$ is symmetric in variables $x_1$ and $x_{4}$ then we get
\begin{align*}0 = \phi + \phi \circ (1\,\,3) + \phi\circ(1\,\,4) = \phi \circ (1\,\,3) = &\alpha \circ (1\,\,3) + \alpha \circ (3\,\,4) \circ (1\,\,3)\\
= &\alpha + \alpha \circ (1\,\,4).\end{align*}
As $\alpha$ is already symmetric in variables $x_{[3]}$, it follows that it is symmetric in $x_{[4]}$, as required.\end{proof}

\begin{proof}[Proof of Theorem~\ref{mainSymmExtn4}]By assumptions, we have that
\begin{equation}\alpha(x_{[4]})+ \alpha \circ (3\,\,4)(x_{[4]}) = \sum_{i \in [r]} \beta_{i, 1}(x_{I_{i,1}}) \beta_{i, 2}(x_{I_{i,2}}) \dots \beta_{i, d_i}(x_{I_{i,d_i}})\label{main4decompEqn}\end{equation}
for some multilinear forms $\beta_{i, j}$ such that for each $i \in [r]$ the sets $I_{i,1}, \dots, I_{i,d_i}$ form a partition of $[4]$, and $d_i \geq 2$. Notice that for a given $i \in [r]$ we either have that some $I_{i,j}$ is a singleton, or we have $d_i = 2$ and both sets $I_{i,1}$ and $I_{i,2}$ have size 2. Let $\mathcal{I}$ be the set of indices $i \in [r]$ where we get a singleton set $I_{i,j(i)}$ for some $j(i) \leq d_i$. If we set $U = \{u \in G \colon \beta_{i, j(i)}(u) = 0\}$, then we have for all $x_1, x_2, x_3, x_4 \in U$ that
\begin{equation}\alpha(x_{[4]})+ \alpha \circ (3\,\,4)(x_{[4]}) = \sum_{i \in [r] \setminus \mathcal{I}} \beta_{i, 1}(x_{I_{i,1}}) \beta_{i, 2}(x_{I_{i,2}})\label{main4decompEqn2}\end{equation}
holds and $|I_{i,1}| = |I_{i,2}| = 2$ for all indices $i$ appearing in the displayed equality. Take the list of all bilinear forms appearing on the right-hand-side of~\eqref{main4decompEqn2} and apply Lemma~\ref{symmRespWRegLemma} to it, with parameters $C, D$ to be chosen later (which will depend only on constants in Lemma~\ref{zeromlformindexlemma}) and $m = 1$. We get a positive integer $R \leq C^{D^{O(r)}}$, bilinear forms $\sigma_1, \dots, \sigma_s, \rho_1, \dots, \rho_t$, where $s + t \leq r$, such that:
\begin{itemize}
\item[\textbf{(i)}] we have $\on{rank}\Big( \sigma_i + \sigma_i \circ(1\,\,2)\Big) \leq R$ for all $i \in [s]$,
\item[\textbf{(ii)}] all non-zero linear combinations of forms $\sigma_1, \dots, \sigma_s, \rho_1, \dots, \rho_t$ and $ \rho_1 \circ (1\,\,2), \dots, \rho_t\circ (1\,\,2)$ have rank at least $(C(R + 2r))^D$,
\item[\textbf{(iii)}] every bilinear form in the list differs from a linear combination of forms $\sigma_{[s]}, \rho_{[t]}$ and $ \rho_{[t]} \circ (1\,\,2)$ by a bilinear form of rank at most $R$.
\end{itemize}
Passing to a further subspace $U' \leq U$ of codimension at most $2rR$ in $U$, we may assume that properties \textbf{(i)} and \textbf{(iii)} are exact rather than approximate, i.e. $\sigma_i = \sigma_i \circ(1\,\,2)$ and every bilinear form in the list equals a linear combination of the newly-found forms. We may replace the bilinear forms in~\eqref{main4decompEqn2} with the newly-found forms. This leads to equality
\begin{align}&\alpha(x_{[4]})+ \alpha \circ (3\,\,4)(x_{[4]}) \nonumber\\
&\hspace{1cm}= \sum_{\ssk{i \in [s]\\(a,b,c,d) \in \mathcal{V}_1}} \lambda_{\ssk{i\\a,b,c,d}} \sigma_i(x_a, x_b) \sigma_i(x_c, x_d) + \sum_{\ssk{1\leq i < j \leq s\\(a,b,c,d) \in \mathcal{V}_2}} \lambda_{\ssk{i,j\\a,b,c,d}} \sigma_i(x_a, x_b) \sigma_j(x_c, x_d)\nonumber\\
&\hspace{2cm} +  \sum_{\ssk{i \in [t]\\(a,b,c,d) \in \mathcal{V}_3}} \mu_{\ssk{i\\a,b,c,d}} \rho_i(x_a, x_b) \rho_i(x_c, x_d) +  \sum_{\ssk{1\leq i < j \leq t\\(a,b,c,d) \in \mathcal{V}_4}} \mu_{\ssk{i,j\\a,b,c,d}} \rho_i(x_a, x_b) \rho_j(x_c, x_d)\nonumber\\
&\hspace{2cm} +  \sum_{\ssk{i \in [s],j \in [t]\\(a,b,c,d) \in \mathcal{V}_5}} \nu_{\ssk{i,j\\a,b,c,d}} \sigma_i(x_a, x_b) \rho_j(x_c, x_d),\label{main4decompEqnReg1}\end{align}
for all $x_1, x_2, x_3, x_4 \in U'$, where
\begin{itemize}
\item $\mathcal{V}_1 = \{(1,2,3,4), (1,3,2,4), (1,4,2,3)\}$,
\item $\mathcal{V}_2 = \{(1,2,3,4), (1,3,2,4), (1,4,2,3), (2,3,1,4), (2,4,1,3), (3,4,1,2)\}$,
\item $\mathcal{V}_3 = \{(1,2,3,4), (2,1,3,4), (1,2,4,3), (2,1,4,3), (1,3,2,4), (3,1,2,4),$ $(1,3,4,2), (3,1,4,2), (1,4,2,3),$ $(4,1,2,3), (1,4,3,2), (4,1,3,2)\}$,
\item $\mathcal{V}_4 = \on{Sym}_{[4]}$, and
\item $\mathcal{V}_5 = \{(1,2,3,4),(1,2,4,3), (1,3,2,4), (1,3,4,2), (1,4,2,3), (1,4,3,2),$ $(2,3,1,4), (2,3,4,1),$ $(2,4,1,3),$ $(2,4,3,1),$ $(3,4,1,2), (3,4,2,1)\}$.
\end{itemize}

Write $\phi(x_{[4]}) = \alpha(x_{[4]})+ \alpha \circ (3\,\,4)(x_{[4]})$. This form has the following properties:
\begin{itemize}
\item[\textbf{(i)}] $\phi = \phi \circ (1\,\,2)$,
\item[\textbf{(ii)}] $\phi = \phi \circ (3\,\,4)$,
\item[\textbf{(iii)}] $\phi + \phi \circ (1\,\,3) + \phi\circ(1\,\,4) = 0$,
\item[\textbf{(iv)}] $\phi(u,v,u,v) = 0$ for all $u,v \in U'$.
\end{itemize}

We use these properties of $\phi$ to make $\alpha$ symmetric in all variables. Note that applying Lemma~\ref{zeromlformindexlemma} to the sum $\phi + \phi \circ (1\,\,2)$, which vanishes by symmetry property \textbf{(i)}, implies that we have equalities $\lambda_{\ssk{i\\a,b,c,d}} = \lambda_{\ssk{i\\a',b',c',d'}}$ whenever $(a,b,c,d)$ and $(a',b',c',d')$ can be obtained from each other by composing with transposition $(1\,\,2)$. Using Lemma~\ref{zeromlformindexlemma} similarly for symmetry property \textbf{(ii)} and for other coefficients we obtain additional equalities. In the rest of the proof we obtain further identities between coefficients. We treat each of the 5 sums in~\eqref{main4decompEqnReg1} separately.\\

\noindent\textbf{Step 1. Coefficients $\lambda_{\ssk{i\\a,b,c,d}}$.} Fix any $i \in [s]$. To see that $\lambda_{\ssk{i\\1,2,3,4}} = 0$, observe that the coefficient of $\sigma_i(x_1, x_2)\sigma_i(x_3, x_4)$ in the expression $\phi + \phi \circ (1\,\,3) + \phi\circ(1\,\,4)$ equals 
\[\lambda_{\ssk{i\\1,2,3,4}} + \lambda_{\ssk{i\\1,4,2,3}} + \lambda_{\ssk{i\\1,3,2,4}}.\]
By property \textbf{(iii)} of $\phi$ and Lemma~\ref{zeromlformindexlemma}, we conclude that
\[\lambda_{\ssk{i\\1,2,3,4}} + \lambda_{\ssk{i\\1,4,2,3}} + \lambda_{\ssk{i\\1,3,2,4}} = 0.\]
Since $\lambda_{\ssk{i\\1,4,2,3}} = \lambda_{\ssk{i\\1,3,2,4}}$, we finally get $\lambda_{\ssk{i\\1,2,3,4}} = 0$.\\
\indent Next, we show that $\lambda_{\ssk{i\\1,4,2,3}} = \lambda_{\ssk{i\\1,3,2,4}} = 0$. To that end, let $X = \{u \in U' \colon (\forall i \in [s]) \sigma_i(u,u) = 0, (\forall i \in [t]) \rho_i(u,u) = 0\}$. By Lemma~\ref{quadraticVarSize} this set has size $|X| \geq 2^{-s-t-1}|U'|$. Since all non-zero linear combinations of forms $\sigma_{[s]}$, $\rho_{[t]}$ and $\rho_{[t]}\circ(1\,\,2)$ have rank at least $(C(R + 2r))^D - 2rR$ on $U'$, we may find $u,v \in X$ such that $\sigma_i(u,v) = 1$, $\sigma_j(u,v) = 0$ for all $j \not=i$ and $\rho_j(u,v) = \rho_j(v,u) = 0$ for all $j \in [t]$. Putting $x_1 = x_3 = u, x_2 = x_4 = v$ in~\eqref{main4decompEqnReg1}, we deduce that
\[0 = \lambda_{\ssk{i\\1,3,2,4}} \sigma_i(u,u)\sigma_i(v,v) + \lambda_{\ssk{i\\1,4,2,3}} \sigma_i(u,v)\sigma_i(u,v) = \lambda_{\ssk{i\\1,4,2,3}},\]
as required.\\

\noindent\textbf{Step 2. Coefficients $\mu_{\ssk{i\\a,b,c,d}}$.} As remarked before the first step, we have scalars $\overline{\mu}_i, \overline{\mu}'_i, \overline{\mu}''_i, \overline{\mu}'''_i$ such that
\begin{align*}\overline{\mu}_i &= \mu_{\ssk{i\\1,2,3,4}} = \mu_{\ssk{i\\2,1,3,4}} = \mu_{\ssk{i\\1,2,4,3}} = \mu_{\ssk{i\\2,1,4,3}},\\
\overline{\mu}'_i &= \mu_{\ssk{i\\3,1,2,4}} = \mu_{\ssk{i\\1,4,3,2}} = \mu_{\ssk{i\\4,1,2,3}} = \mu_{\ssk{i\\1,3,4,2}},\\
\overline{\mu}''_i &= \mu_{\ssk{i\\1,3,2,4}} = \mu_{\ssk{i\\1,4,2,3}}\,\,\text{and}\,\,\,\, \overline{\mu}'''_i= \mu_{\ssk{i\\3,1,4,2}} = \mu_{\ssk{i\\4,1, 3,2}}.\\\end{align*}
Fix any $i \in [t]$. We now show that $\overline{\mu}_i + \overline{\mu}'_i + \overline{\mu}''_i = 0$ and $\overline{\mu}_i + \overline{\mu}'_i + \overline{\mu}'''_i = 0$. Observe that the coefficient of $\rho_i(x_3, x_1)\rho_i(x_4, x_2)$ in the expression $\phi + \phi \circ (1\,\,3) + \phi\circ(1\,\,4)$ equals 
\[\mu_{\ssk{i\\3,1,4,2}} + \mu_{\ssk{i\\1,3,4,2}} + \mu_{\ssk{i\\1,2,3,4}} = \overline{\mu}'''_i + \overline{\mu}'_i + \overline{\mu}_i.\]
By Lemma~\ref{zeromlformindexlemma} we see that $\overline{\mu}_i + \overline{\mu}'_i + \overline{\mu}'''_i = 0$. Similarly, we consider the coefficient of $\rho_i(x_1, x_3)\rho_i(x_2, x_4)$ in the expression $\phi + \phi \circ (1\,\,3) + \phi\circ(1\,\,4)$, which equals 
\[\mu_{\ssk{i\\1,3,2,4}} + \mu_{\ssk{i\\3,1,2,4}} + \mu_{\ssk{i\\2,1,4,3}} = \overline{\mu}''_i + \overline{\mu}'_i + \overline{\mu}_i\]
which vanishes by Lemma~\ref{zeromlformindexlemma}. As the last equality between coefficients in this step, we claim that $\overline{\mu}_i = \overline{\mu}'_i$. Let $X = \{u \in U' \colon (\forall i \in [s]) \sigma_i(u,u) = 0, (\forall i \in [t]) \rho_i(u,u) = 0\}$. By Lemma~\ref{quadraticVarSize} this set has size $|X| \geq 2^{-s-t-1}|U'|$. We thus have a choice of $u,v \in X$ such that $\rho_i(u,v) = 1$, $\rho_i(v,u) = 0$, $\rho_j(u,v) = \rho_j(u,v) = 0$ for $j \not = i$ and $\sigma_j(u,v) = \sigma_j(v,u) = 0$ for $j \in [t]$. Then setting $x_1 = x_3 = u, x_2 = x_4 = v$ we get
\[0 = \phi(u,v,u,v) = \sum_{(a,b,c,d) \in \mathcal{V}_3} \mu_{\ssk{i\\a,b,c,d}} \rho_i(x_a, x_b) \rho_i(x_c, x_d) = \overline{\mu}_i \rho_i(u,v)\rho_i(u,v) + \overline{\mu}'_i \rho_i(u,v)\rho_i(u,v) = \overline{\mu}_i + \overline{\mu}'_i .\]
Thus, we have $\overline{\mu}'_i = \overline{\mu}_i$ and $\overline{\mu}''_i = \overline{\mu}'''_i  = 0$.\\

Define a multilinear form $\alpha^{(1)} \colon G^4 \to \mathbb{F}_2$ by
\[\alpha^{(1)}(x_1, x_2, x_3, x_4) = \alpha(x_1, x_2, x_3, x_4) + \sum_{i \in [t]} \overline{\mu}_i \Big(\sum_{\pi \in \on{Sym}_{[3]}}\rho_i(x_{\pi_1}, x_{\pi_2})\rho_i(x_{\pi_3}, x_4)\Big).\]
By our work in the first two steps, we conclude that
\begin{align}\alpha^{(1)}(x_{[4]})+ \alpha^{(1)} \circ (3\,\,4)(x_{[4]}) = &\sum_{\ssk{1\leq i < j \leq s\\(a,b,c,d) \in \mathcal{V}_2}} \lambda_{\ssk{i,j\\a,b,c,d}} \sigma_i(x_a, x_b) \sigma_j(x_c, x_d) +  \sum_{\ssk{1\leq i < j \leq t\\(a,b,c,d) \in \mathcal{V}_4}} \mu_{\ssk{i,j\\a,b,c,d}} \rho_i(x_a, x_b) \rho_j(x_c, x_d)\nonumber\\
&\hspace{2cm} +  \sum_{\ssk{i \in [s],j \in [t]\\(a,b,c,d) \in \mathcal{V}_5}} \nu_{\ssk{i,j\\a,b,c,d}} \sigma_i(x_a, x_b) \rho_j(x_c, x_d)\label{main4decompEqnReg2}\end{align}
holds for all $x_1, x_2, x_3, x_4 \in U'$. Let us set $\phi^{(1)} = \alpha^{(1)} + \alpha^{(1)}\circ(3\,\,4)$. Since $\sum_{\pi \in \on{Sym}_{[3]}}\rho_i(x_{\pi_1}, x_{\pi_2})\rho_i(x_{\pi_3}, x_4)$ is symmetric in variables $x_{[3]}$, the form $\phi^{(1)}$ still satisfies symmetry conditions \textbf{(i)}, \textbf{(ii)} and \textbf{(iii)} which we used for $\phi$.\\

\noindent\textbf{Step 3. Coefficients $\lambda_{\ssk{i,j\\a,b,c,d}}$.} By symmetry properties of $\phi$ we know that the coefficients $\lambda_{\ssk{i,j\\1,3,2,4}},$ $\lambda_{\ssk{i,j\\1,4,2,3}},$ $\lambda_{\ssk{i,j\\2,3,1,4}}$ and $\lambda_{\ssk{i,j\\2,4,1,3}}$ are equal. Write $\overline{\lambda}_{i,j}$ for this value. We show that the remaining coefficients in this step $\lambda_{\ssk{i,j\\1,2,3,4}}$ and $\lambda_{\ssk{i,j\\3,4,1,2}}$ both vanish.\\
\indent To see this, consider the coefficients of $\sigma_i(x_1, x_2)\sigma_j(x_3, x_4)$ and $\sigma_i(x_3, x_4)\sigma_j(x_1, x_2)$ in $\phi^{(1)} + \phi^{(1)} \circ (1\,\,3) + \phi^{(1)}\circ(1\,\,4)$. These are respectively
\[\lambda_{\ssk{i,j\\1,2,3,4}} + \lambda_{\ssk{i,j\\2,3,1,4}} + \lambda_{\ssk{i,j\\2,4,1,3}} = \lambda_{\ssk{i,j\\1,2,3,4}}\]
and
\[\lambda_{\ssk{i,j\\3,4,1,2}} + \lambda_{\ssk{i,j\\1,4,2,3}} + \lambda_{\ssk{i,j\\1,3,2,4}} = \lambda_{\ssk{i,j\\3,4,1,2}}.\]
Lemma~\ref{zeromlformindexlemma} shows that both expressions vanish, showing that $\lambda_{\ssk{i,j\\1,2,3,4}} = \lambda_{\ssk{i,j\\3,4,1,2}} = 0$.\\
\indent Define $\alpha^{(2)} \colon G^4 \to \mathbb{F}_2$ by 
\begin{align*}\alpha^{(2)}(x_1, x_2, x_3, x_4) =& \alpha^{(1)}(x_1, x_2, x_3, x_4) \\
&\hspace{1cm}+ \sum_{\ssk{i,j \in [s]\\i \not=j}} \overline{\lambda}_{i,j} \Big(\sigma_i(x_1, x_2)\sigma_j(x_3, x_4) + \sigma_i(x_2, x_3)\sigma_j(x_1, x_4)+ \sigma_i(x_3, x_1)\sigma_j(x_2, x_4)\Big).\end{align*} 
Note that the additional term in the equality above is symmetric in $x_{[3]}$ on the subspace $U'$ and so is $\alpha^{(2)}(x_{[4]})$. Defining $\phi^{(2)}(x_{[4]}) = \alpha^{(2)}(x_{[4]})+ \alpha^{(2)} \circ (3\,\,4)(x_{[4]})$, we observe that $\phi^{(2)}|_{U' \tdt U'}$ satisfies symmetry conditions \textbf{(i)}, \textbf{(ii)} and \textbf{(iii)} which we used for $\phi$. It follows from the definition of $\alpha^{(2)}$ that
\begin{align}\alpha^{(2)}(x_{[4]})+ \alpha^{(2)} \circ (3\,\,4)(x_{[4]}) = &\sum_{\ssk{1\leq i < j \leq t\\(a,b,c,d) \in \mathcal{V}_4}} \mu_{\ssk{i,j\\a,b,c,d}} \rho_i(x_a, x_b) \rho_j(x_c, x_d)\nonumber\\
&\hspace{2cm} +  \sum_{\ssk{i \in [s],j \in [t]\\(a,b,c,d) \in \mathcal{V}_5}} \nu_{\ssk{i,j\\a,b,c,d}} \sigma_i(x_a, x_b) \rho_j(x_c, x_d)\label{main4decompEqnReg3}\end{align}
holds for all $x_1, x_2, x_3, x_4 \in U'$.\\

In order to simplify the notation in the remaining two steps, we introduce informal objects called \emph{places}. Each symmetric form $\sigma_i$ has one place and each asymmetric form $\rho_i$ has two places corresponding to the first and the second variable. Notice that the symmetry properties of $\phi$ imply that the coefficients of products $\rho_i(x_a, x_b) \rho_j(x_c, x_d)$ and $\sigma_i(x_a, x_b) \rho_j(x_c, x_d)$ depend only on the choice of places for variables $x_3$ and $x_4$. For example, if we chose the place in $\sigma_i$ and a place corresponding to the second variable in $\rho_j$, then we have that products 
\[\sigma_i(x_1, x_3) \rho_j(x_2, x_4), \sigma_i(x_2, x_3) \rho_j(x_1, x_4), \sigma_i(x_1, x_4) \rho_j(x_2, x_3), \sigma_i(x_2, x_4) \rho_j(x_1, x_3)\]
receive the same coefficient. Furthermore, our work in step 3 could be rephrased as saying that the coefficients of products where we use two copies of the same place vanish.\\
\indent Using the places terminology, we write $\lambda_{A\,\,B}$ for the coefficient received by products where $x_3$ and $x_4$ are put at places $A$ and $B$. In the rest of the proof we show that 
\begin{equation}\lambda_{A\,\,B} + \lambda_{A\,\,C} + \lambda_{B\,\,C} = 0\label{placesEQN4vars}\end{equation}
 for all places $A, B, C$.\\

\noindent\textbf{Step 4. Coefficients $\mu_{\ssk{i,j\\a,b,c,d}}$.} Note that in this case the places $A, B$ and $C$ in~\eqref{placesEQN4vars} may assume to be distinct and furthermore that $A$ and $B$ belong to the form $\rho_i$ and $C$ is a place in $\rho_j$ (we do not assume that $i < j$, but only that $i \not=j$). Suppose that $C$ is the place corresponding to the first variable in $\rho_j$, the other case will follow analogously. Consider the coefficient of the product $\rho_i(x_3, x_4)\rho_j(x_1, x_2)$ in $\phi^{(2)} + \phi^{(2)} \circ (1\,\,3) + \phi^{(2)}\circ(1\,\,4) = 0$. This coefficient equals
\[\lambda_{A\,\,B} + \lambda_{B\,\,C} + \lambda_{A\,\,C}\]
and vanishes, proving~\eqref{placesEQN4vars}.\\
For each $i,j \in [t]$ with $i < j$ let $\overline{\mu}_{i,j, A} = \lambda_{A\,\,D}$, $\overline{\mu}_{i,j, B} = \lambda_{B\,\,D}$ and $\overline{\mu}_{i,j, C} = \lambda_{C\,\,D}$, where $A, B, C$ and $D$ are respectively the places corresponding to the first variable in $\rho_i$, the second variable in $\rho_i$, the first variable in $\rho_j$ and the second variable in $\rho_j$. We may now define $\alpha^{(3)} \colon G^4 \to \mathbb{F}_2$ by 
\begin{align}\alpha^{(3)}(x_1, x_2, x_3, x_4) =\alpha^{(2)}(x_1, x_2, x_3, x_4) + &\sum_{1 \leq i < j \leq t} \overline{\mu}_{i,j, A}\Big(\sum_{\pi \in \on{Sym}_{[3]}} \rho_i(x_4, x_{\pi(1)}) \rho_j(x_{\pi(2)}, x_{\pi(3)})\Big)\nonumber\\
& + \sum_{1 \leq i < j \leq t} \overline{\mu}_{i,j, B}\Big(\sum_{\pi \in \on{Sym}_{[3]}} \rho_i(x_{\pi(1)}, x_4) \rho_j(x_{\pi(2)}, x_{\pi(3)})\Big)\nonumber\\
& + \sum_{1 \leq i < j \leq t} \overline{\mu}_{i,j, C}\Big(\sum_{\pi \in \on{Sym}_{[3]}} \rho_i(x_{\pi(1)}, x_{\pi(2)}) \rho_j(x_4, x_{\pi(3)})\Big).\label{alpha3defn4vars}\end{align} 
It follows that $\alpha^{(3)}|_{U'\tdt U'}$ is still symmetric in the first three variables and the form $\phi^{(3)}(x_{[4]}) = \alpha^{(3)}(x_{[4]})+ \alpha^{(3)} \circ (3\,\,4)(x_{[4]})$ satisfies symmetry conditions \textbf{(i)}, \textbf{(ii)} and \textbf{(iii)} which we used for $\phi$ on the subspace $U'$. We claim that $\phi^{(3)}$ has simpler structure than $\phi^{(2)}$.

\begin{claim}\label{4varsphi3claim}The equality
\begin{align}\alpha^{(3)}(x_{[4]})+ \alpha^{(3)} \circ (3\,\,4)(x_{[4]}) = \sum_{\ssk{i \in [s],j \in [t]\\(a,b,c,d) \in \mathcal{V}_5}} \nu_{\ssk{i,j\\a,b,c,d}} \sigma_i(x_a, x_b) \rho_j(x_c, x_d)\label{main4decompEqnReg4}\end{align}
holds for all $x_1, x_2, x_3, x_4 \in U'$.\end{claim}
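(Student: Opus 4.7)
The plan is to show that the additional terms in $\alpha^{(3)}-\alpha^{(2)}$, after being symmetrised by $(3\,\,4)$, cancel the $\mu$-terms of $\phi^{(2)}$ exactly, while leaving the $\nu$-terms untouched. Observe first that the correction $\alpha^{(3)}-\alpha^{(2)}$ in~\eqref{alpha3defn4vars} contains no factor of any $\sigma_i$, so the coefficients $\nu_{\ssk{i,j\\a,b,c,d}}$ transfer unchanged from $\phi^{(2)}$ to $\phi^{(3)}$. Thus only the $\mu$-part of the claim requires argument.

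To handle the $\mu$-terms I would first repackage them via the places language of Step 4. Writing $\lambda_{P\,Q}$ for the common coefficient attached to any product $\rho_i(x_a,x_b)\rho_j(x_c,x_d)$ in which $\{P(x_3),P(x_4)\}=\{P,Q\}$, the identity $\lambda_{P\,Q}+\lambda_{P\,R}+\lambda_{Q\,R}=0$ established in Step 4 holds for every triple of distinct places in $\{A,B,C,D\}$. Combining this with the definitions $\overline{\mu}_{i,j,A}=\lambda_{A\,D}$, $\overline{\mu}_{i,j,B}=\lambda_{B\,D}$, $\overline{\mu}_{i,j,C}=\lambda_{C\,D}$ and the convention $\overline{\mu}_{i,j,D}:=0$, the three triples containing $D$ collapse into the single formula $\lambda_{P\,Q}=\overline{\mu}_{i,j,P}+\overline{\mu}_{i,j,Q}$ for all distinct $P,Q\in\{A,B,C,D\}$; the remaining triple $\{A,B,C\}$ is then automatically consistent.

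The heart of the argument is then a direct bookkeeping computation of the coefficient contributed by $(\alpha^{(3)}-\alpha^{(2)})+(\alpha^{(3)}-\alpha^{(2)})\circ(3\,\,4)$ to an arbitrary product $\rho_i(x_a,x_b)\rho_j(x_c,x_d)$. The $R$-indexed sum in~\eqref{alpha3defn4vars} pins $x_4$ at place $R\in\{A,B,C\}$ while $x_1,x_2,x_3$ range bijectively over the remaining three places through $\on{Sym}_{[3]}$, so exactly one permutation realises any prescribed assignment. Consequently this sum contributes $\overline{\mu}_{i,j,R}$ to the product precisely when $P(x_4)=R$, and zero otherwise; after applying $(3\,\,4)$ it contributes $\overline{\mu}_{i,j,R}$ precisely when $P(x_3)=R$. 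Summing over $R\in\{A,B,C\}$ and absorbing the missing $R=D$ case by $\overline{\mu}_{i,j,D}=0$, the total correction is
\[\sum_{R\in\{A,B,C\}}\overline{\mu}_{i,j,R}\Big(\id(P(x_3)=R)+\id(P(x_4)=R)\Big)=\overline{\mu}_{i,j,P(x_3)}+\overline{\mu}_{i,j,P(x_4)}=\lambda_{P(x_3)\,P(x_4)},\]
which matches exactly the coefficient of the same product in $\phi^{(2)}$. In $\mathbb{F}_2$ the two cancel, so the $\mu$-contribution to $\phi^{(3)}$ is identically zero on $U'$.

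The only delicate point is this last combinatorial count: one has to be sure that for each targeted product exactly one $\pi\in\on{Sym}_{[3]}$ in the $R$-indexed sum matches, with no multiplicity or diagonal case intruding. This is safe because in any product indexed by $\mathcal{V}_4=\on{Sym}_{[4]}$ the places $P(x_3)$ and $P(x_4)$ are necessarily distinct. With this verified, Claim~\ref{4varsphi3claim} follows on $U'$; together with the vanishing of the $\sigma$-$\sigma$ coefficients established in Step~3, the passage from $\alpha^{(2)}$ to $\alpha^{(3)}$ is complete.
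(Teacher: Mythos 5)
Your proof is correct and follows essentially the same route as the paper: both rely on the places formalism, the relation~\eqref{placesEQN4vars}, and a verification that the $(3\,\,4)$-symmetrisation of the correction in~\eqref{alpha3defn4vars} reproduces exactly the coefficient $\mu_{\ssk{i,j\\a,b,c,d}}=\lambda_{P(x_3)\,P(x_4)}$ of each product $\rho_i(x_a,x_b)\rho_j(x_c,x_d)$. The only difference is presentational: the convention $\overline{\mu}_{i,j,D}:=0$ and the uniform identity $\lambda_{P\,Q}=\overline{\mu}_{i,j,P}+\overline{\mu}_{i,j,Q}$ condense the paper's six-case analysis into a single computation.
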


\begin{proof}[Proof of Claim~\ref{4varsphi3claim}]Fix $1\leq i < j \leq t$. Let $A, B, C$ and $D$ are respectively the places corresponding to the first variable in $\rho_i$, the second variable in $\rho_i$, the first variable in $\rho_j$ and the second variable in $\rho_j$. Let $(a,b,c,d) \in \mathcal{V}_4$. Recall that the coefficient of $\rho_i(x_a, x_b)\rho_j(x_c, x_d)$ in~\eqref{main4decompEqnReg3} is $\mu_{\ssk{i,j\\a,b,c,d}}$. Our goal is to show that this coefficient equals the one in the form
\[\Delta(x_{[4]}) = \Big(\alpha^{(3)}(x_{[4]}) + \alpha^{(2)}(x_{[4]})\Big) + \Big(\alpha^{(3)} \circ (3\,\,4)(x_{[4]}) + \alpha^{(2)} \circ (3\,\,4)(x_{[4]})\Big)\]
arising from definition~\eqref{alpha3defn4vars}. We look at different cases on places for $x_3$ and $x_4$.\\
\noindent\emph{Case 1: $x_3,x_4$ are at places $A, B$.} We have $\{a,b\} = \{3,4\}$ and $\{c,d\} = \{1,2\}$. The coefficient of $\rho_i(x_a, x_b)\rho_j(x_c, x_d)$ in $\Delta(x_{[4]})$ equals $\overline{\mu}_{i,j, A} + \overline{\mu}_{i,j, B} = \lambda_{A\,\,D} + \lambda_{B\,\,D} = \lambda_{A\,\,B} = \mu_{\ssk{i,j\\a,b,c,d}}$, where we used~\eqref{placesEQN4vars} for places $A, B$ and $D$ in the second equality.\\
\noindent\emph{Case 2: $x_3,x_4$ are at places $A, C$.} We have $\{a,c\} = \{3,4\}$ and $\{b,d\} = \{1,2\}$. The coefficient of $\rho_i(x_a, x_b)\rho_j(x_c, x_d)$ in $\Delta(x_{[4]})$ equals $\overline{\mu}_{i,j, A} + \overline{\mu}_{i,j, C} = \lambda_{A\,\,D} + \lambda_{C\,\,D} = \lambda_{A\,\,C} = \mu_{\ssk{i,j\\a,b,c,d}}$, where we used~\eqref{placesEQN4vars} for places $A, C$ and $D$ in the second equality.\\
\noindent\emph{Case 3: $x_3,x_4$ are at places $A, D$.} We have $\{a,d\} = \{3,4\}$ and $\{b,c\} = \{1,2\}$. The coefficient of $\rho_i(x_a, x_b)\rho_j(x_c, x_d)$ in $\Delta(x_{[4]})$ equals $\overline{\mu}_{i,j, A} = \lambda_{A\,\,D} = \mu_{\ssk{i,j\\a,b,c,d}}$.\\
\noindent\emph{Case 4: $x_3,x_4$ are at places $B, C$.} We have $\{b,c\} = \{3,4\}$ and $\{a,d\} = \{1,2\}$. The coefficient of $\rho_i(x_a, x_b)\rho_j(x_c, x_d)$ in $\Delta(x_{[4]})$ equals $\overline{\mu}_{i,j, B} + \overline{\mu}_{i,j, C} = \lambda_{B\,\,D} + \lambda_{C\,\,D} = \lambda_{B\,\,C} = \mu_{\ssk{i,j\\a,b,c,d}}$, where we used~\eqref{placesEQN4vars} for places $B, C$ and $D$ in the second equality.\\
\noindent\emph{Case 5: $x_3,x_4$ are at places $B, D$.} We have $\{b,d\} = \{3,4\}$ and $\{a,c\} = \{1,2\}$. The coefficient of $\rho_i(x_a, x_b)\rho_j(x_c, x_d)$ in $\Delta(x_{[4]})$ equals $\overline{\mu}_{i,j, B} = \lambda_{B\,\,D} = \mu_{\ssk{i,j\\a,b,c,d}}$.\\
\noindent\emph{Case 6: $x_3,x_4$ are at places $C, D$.} We have $\{c,d\} = \{3,4\}$ and $\{a,b\} = \{1,2\}$. The coefficient of $\rho_i(x_a, x_b)\rho_j(x_c, x_d)$ in $\Delta(x_{[4]})$ equals $\overline{\mu}_{i,j, C} = \lambda_{C\,\,D} = \mu_{\ssk{i,j\\a,b,c,d}}$.\end{proof}

\noindent\textbf{Step 5. Coefficients $\nu_{\ssk{i,j\\a,b,c,d}}$.} If two of the places $A, B$ and $C$ in~\eqref{placesEQN4vars} are the same, then without loss of the generality $A= B$ is the place in a symmetric form $\sigma_i$ and $C$ is one of the places in an asymmetric form $\rho_j$. Assume that $C$ corresponds to the first variable in $\rho_j$, the other case is similar. Equality~\eqref{placesEQN4vars} reduces to showing that $\lambda_{A\,\,A} = 0$ in this case. To see this, consider the coefficient of the product $\sigma_i(x_3, x_4)\rho_j(x_1, x_2)$ in $\phi^{(3)} + \phi^{(3)} \circ (1\,\,3) + \phi^{(3)}\circ(1\,\,4) = 0$. This coefficient equals
\[\lambda_{A\,\,A} + \lambda_{A\,\,C} + \lambda_{A\,\,C}\]
and vanishes, as desired.\\
\indent On the other hand, if $A, B$ and $C$ are different, then we may assume that $A$ is the place in $\sigma_i$ and $B$ and $C$ are respectively the first and the second variable in $\rho_j$. This time consider the coefficient of the product $\sigma_i(x_3, x_2)\rho_j(x_4, x_1)$ in $\phi^{(3)} + \phi^{(3)} \circ (1\,\,3) + \phi^{(3)}\circ(1\,\,4) = 0$. This coefficient equals
\[\lambda_{A\,\,B} + \lambda_{B\,\,C} + \lambda_{A\,\,C}\]
and vanishes, as desired.\\
\indent For each $i \in [s],j \in [t]$ let $\overline{\mu}_{i,j, B} = \lambda_{A\,\,B}$ and $\overline{\mu}_{i,j, C} = \lambda_{A\,\,C}$, where $A, B$ and $C$ are respectively the places corresponding to $\sigma_i$, the first variable in $\rho_j$ and the second variable in $\rho_j$. We may now define $\alpha^{(4)} \colon G^4 \to \mathbb{F}_2$ by 
\begin{align*}\alpha^{(4)}(x_1, x_2, x_3, x_4) =\alpha^{(3)}(x_1,& x_2, x_3, x_4) \\
+ &\sum_{i \in [s], j \in [t]} \overline{\mu}_{i,j, B}\Big(\sigma_i(x_1, x_2)\rho_j(x_4, x_3) + \sigma_i(x_2, x_3)\rho_j(x_4, x_1) + \sigma_i(x_3, x_1)\rho_j(x_4, x_2)\Big)\\
+ &\sum_{i \in [s], j \in [t]} \overline{\mu}_{i,j, C}\Big(\sigma_i(x_1, x_2)\rho_j(x_3, x_4) + \sigma_i(x_2, x_3)\rho_j(x_1, x_4) + \sigma_i(x_3, x_1)\rho_j(x_2, x_4)\Big).\end{align*} 
We show that $\alpha^{(4)}|_{U'\tdt U'}$ is symmetric.

\begin{claim}\label{4varsfinalsymm}The form $\alpha^{(4)}|_{U'\tdt U'}$ is symmetric.\end{claim}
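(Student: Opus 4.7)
The plan is to reduce the claim to two subclaims about $\alpha^{(4)}|_{U'\tdt U'}$: first, that it is symmetric in the first three variables; and second, that $\alpha^{(4)} + \alpha^{(4)}\circ (3\,\,4) = 0$ on $U'\tdt U'$. Since $\on{Sym}_{[3]}$ together with the transposition $(3\,\,4)$ generates $\on{Sym}_{[4]}$, these two properties together give full symmetry, which is the content of the claim. The first subclaim is almost free: by the end of Step 4 we already know that $\alpha^{(3)}|_{U'\tdt U'}$ is symmetric in $x_{[3]}$, and each block of terms added to form $\alpha^{(4)}$ is written explicitly as a sum over $\on{Sym}_{[3]}$ of $\sigma_i(x_1,x_2)\rho_j(x_4,x_3)$ (for the $\overline{\mu}_{i,j,B}$ block) and of $\sigma_i(x_1,x_2)\rho_j(x_3,x_4)$ (for the $\overline{\mu}_{i,j,C}$ block), using the symmetry of $\sigma_i$ on $U'$ to collapse the six permutations to three distinct products. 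So the first subclaim follows by inspection.

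The heart of the argument is the second subclaim. Setting $\Delta := \alpha^{(4)} - \alpha^{(3)}$, we need
\[\phi^{(3)} + \Delta + \Delta\circ (3\,\,4) = 0 \quad\text{on } U'\tdt U'.\]
By Claim~\ref{4varsphi3claim}, $\phi^{(3)}$ is supported on products $\sigma_i(x_a,x_b)\rho_j(x_c,x_d)$ with $(a,b,c,d)\in\mathcal{V}_5$, so it suffices to verify the coefficient of each such product. Fix $i\in[s], j\in[t]$ and adopt the places formalism of Step 5: call $A$ the sole place of $\sigma_i$, and $B,C$ the first and second places of $\rho_j$. Let $X$ be the place occupied by $x_3$ and $Y$ the place occupied by $x_4$; then by definition of the places the coefficient of the product in $\phi^{(3)}$ is $\lambda_{X\,Y}$, symmetric in its two arguments. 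Inspecting the explicit definition of $\alpha^{(4)}$, in every summand of $\Delta$ the variable $x_4$ sits at $B$ or $C$ (never at $A$), so the contribution of $\Delta$ to our coefficient is $\overline{\mu}_{i,j,Y}$ if $Y\in\{B,C\}$ and $0$ otherwise; symmetrically, the contribution of $\Delta\circ(3\,\,4)$ is $\overline{\mu}_{i,j,X}$ if $X\in\{B,C\}$ and $0$ otherwise.

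The verification of cancellation is then a short case analysis on the unordered pair $\{X,Y\}$, which must be one of $\{A,A\}$, $\{A,B\}$, $\{A,C\}$, or $\{B,C\}$ (the cases $\{B,B\}$ and $\{C,C\}$ are impossible since $B$ and $C$ are single slots). In the diagonal case $\{A,A\}$ both $\Delta$ and $\Delta\circ(3\,\,4)$ contribute zero and we use $\lambda_{A\,A}=0$ from Step 5. In the mixed cases $\{A,B\}$ and $\{A,C\}$ only one of the two summands contributes, and cancellation follows immediately from $\overline{\mu}_{i,j,Z}=\lambda_{A\,Z}$. The remaining case $\{X,Y\}=\{B,C\}$ is the only one that really uses~\eqref{placesEQN4vars} in its three-distinct-place form: there both summands contribute $\overline{\mu}_{i,j,B}+\overline{\mu}_{i,j,C}=\lambda_{A\,B}+\lambda_{A\,C}=\lambda_{B\,C}$, which cancels with the coefficient $\lambda_{B\,C}$ from $\phi^{(3)}$. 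The main source of friction will be the bookkeeping that matches each tuple $(a,b,c,d)\in\mathcal{V}_5$ to its place configuration $\{X,Y\}$ after collapsing equivalent tuples via the symmetry of $\sigma_i$ on $U'$, but no deeper idea is needed beyond what has already been set up in Step 5.
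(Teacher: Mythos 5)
Your proposal is correct and follows essentially the same route as the paper: the paper likewise verifies that the coefficient of each product $\sigma_i(x_a,x_b)\rho_j(x_c,x_d)$ in the symmetrized difference $\bigl(\alpha^{(4)}+\alpha^{(3)}\bigr)+\bigl(\alpha^{(4)}+\alpha^{(3)}\bigr)\circ(3\,\,4)$ matches $\nu_{\ssk{i,j\\a,b,c,d}}$ by a case analysis on the places of $x_3,x_4$ (the cases $\{A,A\}$, $\{A,B\}$, $\{A,C\}$, $\{B,C\}$, with~\eqref{placesEQN4vars} needed only in the last), and symmetry in $x_{[3]}$ is read off from the explicit form of the added terms. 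Your observation that the contribution of $\alpha^{(4)}+\alpha^{(3)}$ is $\overline{\mu}_{i,j,Y}$ exactly when the place $Y$ of $x_4$ lies in $\{B,C\}$ is a clean packaging of the paper's case-by-case bookkeeping.
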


\begin{proof}[Proof of Claim~\ref{4varsfinalsymm}]Let us reuse the notation $\Delta$, this time defining it as
\[\Delta(x_{[4]}) = \Big(\alpha^{(4)}(x_{[4]}) + \alpha^{(3)}(x_{[4]})\Big) + \Big(\alpha^{(4)} \circ (3\,\,4)(x_{[4]}) + \alpha^{(3)} \circ (3\,\,4)(x_{[4]})\Big).\]
Let $i \in [s], j \in [t]$ and $(a,b,c,d) \in \mathcal{V}_5$. We need to show that the coefficient of $ \sigma_i(x_a, x_b) \rho_j(x_c, x_d)$ in $\Delta(x_{[4]})$ equals $\nu_{\ssk{i,j\\a,b,c,d}} $. Let $A, B$ and $C$ be respectively the places corresponding to $\sigma_i$, the first variable in $\rho_j$ and the second variable in $\rho_j$.  As in the proof of Claim~\ref{4varsphi3claim}, we consider the cases for the places of $x_3$ and $x_4$.\\
\noindent\emph{Case 1: $x_3,x_4$ are both at place $A$.} We have $\{a,b\} = \{3,4\}$ and $\{c,d\} = \{1,2\}$. The coefficient of $\sigma_i(x_a, x_b)\rho_j(x_c, x_d)$ in $\Delta(x_{[4]})$ vanishes and we know that $0 = \lambda_{A\,A} = \nu_{\ssk{i,j\\a,b,c,d}}$, as desired.\\
\noindent\emph{Case 2: $x_3,x_4$ are at places $A, B$.} We have $\{b,c\} = \{3,4\}$ and $\{a,d\} = \{1,2\}$ (recalling that $\mathcal{V}_5$ is a proper subset of $\on{Sym}_{[4]}$). The coefficient of $\sigma_i(x_a, x_b)\rho_j(x_c, x_d)$ in $\Delta(x_{[4]})$ is $\overline{\mu}_{i,j, B} = \lambda_{A\,\,B} = \nu_{\ssk{i,j\\a,b,c,d}}$.\\
\noindent\emph{Case 3: $x_3,x_4$ are at places $A, C$.} We have $\{b,d\} = \{3,4\}$ and $\{a,c\} = \{1,2\}$ (again, recalling that $\mathcal{V}_5$ is a proper subset of $\on{Sym}_{[4]}$). The coefficient of $\sigma_i(x_a, x_b)\rho_j(x_c, x_d)$ in $\Delta(x_{[4]})$ is $\overline{\mu}_{i,j, C} = \lambda_{A\,\,C} = \nu_{\ssk{i,j\\a,b,c,d}}$.\\
\noindent\emph{Case 4: $x_3,x_4$ are at places $B, C$.} We have $\{c,d\} = \{3,4\}$ and $\{a,b\} = \{1,2\}$. The coefficient of $\sigma_i(x_a, x_b)\rho_j(x_c, x_d)$ in $\Delta(x_{[4]})$ is $\overline{\mu}_{i,j, B} + \overline{\mu}_{i,j, C} = \lambda_{A\,\,B} + \lambda_{A\,\,C}= \lambda_{B\,\,C}= \nu_{\ssk{i,j\\a,b,c,d}}$, where we used~\eqref{placesEQN4vars} in the second equality.\end{proof}

We have thus obtained a multilinear form $\alpha^{(4)}|_{U' \tdt U'}$ which is symmetric in on $U' \tdt U'$ such that $\on{prank}\Big((\alpha + \alpha^{(4)})|_{U' \tdt U'}\Big) \leq O(r^2)$. Theorem follows from Corollary~\ref{symmetryInher} (recall that $\on{codim}_G U' \leq O(Rr)$ which is the main contribution to the final bound). As in the proof of previous theorem, we choose $C$ and $D$ depending only to the constants from Lemma~\ref{zeromlformindexlemma} so that the partition rank condition of Lemma~\ref{zeromlformindexlemma} holds at all times.\end{proof}

\vspace{\baselineskip}
As a final result in this section we prove Theorem~\ref{mainSymmExtn5}.

\begin{proof}[Proof of Theorem~\ref{mainSymmExtn5}] By assumptions, we have that
\begin{equation}\alpha(x_{[5]})+ \alpha \circ (3\,\,4)(x_{[5]}) = \sum_{i \in [r]} \beta_{i, 1}(x_{I_{i,1}}) \beta_{i, 2}(x_{I_{i,2}}) \dots \beta_{i, d_i}(x_{I_{i,d_i}})\label{decompEqn}\end{equation}
where all partitions of the variables appearing above are non-trivial, that is $d_i \geq 2$.\\

\noindent\textbf{Step 1. Obtaining a decomposition with inherited properties.} We begin our work by deducing some properties of the low partition rank decomposition~\eqref{decompEqn} of $\alpha + \alpha \circ (3\,\,4)$. Using Proposition~\ref{lcmlformsdecomppropo} we may assume that every $\beta_{i, j}$ comes from a slice of $\alpha + \alpha \circ (3\,\,4)$. This has the cost of replacing $r$ by $O({r}^{O(1)})$, which we do with a slight misuse of notation. Writing $\beta(x_I)$ for $\beta_{i, j}(x_{I_{i,j}})$, we know that $\beta(x_I) = \alpha(x_I, y_{[5] \setminus I}) + \alpha \circ (3\,\,4)(x_I, y_{[5] \setminus I})$ for some fixed $y_{[5] \setminus I}$. In particular, the following holds.
\begin{itemize}
\item[\textbf{(S1)}] When $3, 4 \in I$, then $\beta(x_I) = \alpha_{y_{[5] \setminus I}}(x_I) + \alpha_{y_{[5] \setminus I}}\circ (3\,\,4)(x_I)$. Thus, we have the equality $\beta = \tilde{\beta} + \tilde{\beta} \circ (3\,\,4)$, where $\tilde{\beta}(x_I)$ is a multilinear form which is symmetric in the variables $x_{I \cap [3]}$. Therefore, replacing $\beta$ by $\tilde{\beta} + \tilde{\beta} \circ (3\,\,4)$, we may assume that every form $\beta(x_I)$ that has $3, 4 \in I$ is either symmetric in variables $x_{I \cap [3]}$ or symmetric in variables $x_{I \cap (\{1,2,4\})}$.
\item[\textbf{(S2)}] When $|I \cap \{3, 4\}| = 1$, say $\{3\} = I \cap \{3, 4\}$, then we see that $\beta$ is symmetric in variables $x_{I\cap [2]}$.
\item[\textbf{(S3)}] When $3, 4 \notin I$, then $\beta$ is symmetric in variables $x_{I \cap [2]}$.
\end{itemize}
We may therefore assume that all forms in the products in~\eqref{decompEqn} have a symmetry property described in one of \textbf{(S1)}, \textbf{(S2)} and \textbf{(S3)}.\\

\noindent\textbf{Step 2. Simplifying the partitions.} Observe that all non-trivial partitions of the set $[5]$ have either two sets which have sizes 3 and 2, or at least one set is a singleton. We refer to the former partitions as the \emph{relevant} ones. In particular, each summand in~\eqref{decompEqn} either has a relevant partition or it has a factor $\gamma(x_c)$ for some linear form $\gamma$. By passing to subspace defined by zero sets of such linear forms, we may assume that all partitions are in fact relevant.\\ 

\noindent\textbf{Step 3. Regularizing the forms.} In this step we apply our symmetry-respecting regularity lemma (Lemma~\ref{symmRespWRegLemma}).\\

\noindent Our goal is to remove some partitions of variables from expression~\eqref{decompEqn}. We shall first remove all relevant partitions $I_1 \cup I_2 = [5]$ such that $|I_1| = 2$ and $5 \in I_1$, and secondly we remove all relevant partitions $I_1 \cup I_2 = [5]$ such that $|I_1| = 3$ and $5 \in I_1$. Using slightly more general notation, we explain how to apply Lemma~\ref{symmRespWRegLemma} in either of the two cases. We say that the variables $x_1, \dots, x_4$ are \emph{active} and that $x_5$ is the \emph{passive} variable.\\
\indent Let $(\ell_1, \ell_2) \in \{(1,3), (2,2)\}$. The numbers $\ell_1$ and $\ell_2$ stand for the number of active variables in sets $I_1$ and $I_2$. We aim to remove the set $\mathcal{R}$ of all partitions $(A_1 \cup \{5\}, A_2)$ where $|A_i| = \ell_i$. Write $P_1 = \{5\}$ and $P_2 = \emptyset$, which stand for the indices of passive variables, and make the following list. Take all forms $\beta(x_{A \cup P_i})$ appearing in the decomposition~\eqref{decompEqn} such that $A \subseteq [4]$ of size $|A| = \ell_i$. By the symmetry properties \textbf{S1}, \textbf{S2} and \textbf{S3} we see that each of the chosen forms $\beta$ is equal to
\begin{itemize}
\item $\gamma(x_S, x_{P_i})$ where $\gamma$ is symmetric in $x_S$, $|S| = \ell_i$, with $S \subseteq [2]$, or
\item $\gamma(x_S, x_c, x_{P_i})$ where $\gamma$ is symmetric in $x_S$, $|S| = \ell_i - 1$, with $S \subseteq [2]$, $c \in \{3, 4\}$ or 
\item $\gamma(x_S, x_c, x_{P_i})$ where $\gamma$ is symmetric in $x_S$, $|S| = \ell_i - 1$, with $c \in \{3, 4\}$, $S \subseteq [4] \setminus \{c\}$.
\end{itemize}
Consider all $\gamma$ above and replace variables $x_S$ by $z_1, \dots, z_{\ell_i}$ when $x_c$ does not appear, and $x_S$ by $z_1, \dots, z_{\ell_i - 1}$ and $x_c$ by $z_{\ell_i}$ when $x_c$ appears. Every form is thus symmetric in $z_{[\ell_i - 1]}$. Let $C, D \geq 2$ quantities to be specified later, which will depend only on the constants in Lemma~\ref{zeromlformindexlemma}. Apply Lemma~\ref{symmRespWRegLemma} to the list. We thus obtain integers $s,q,t$ such that $s + q + t \leq r$ and multilinear forms $\sigma_{[s]}, \pi_{[q]}, \rho_{[t]}$ with properties \textbf{(i)}-\textbf{(v)} from that lemma for a parameter $R \leq C^{D^{O(r)}}$. We add superscript $\sigma^{i, \ell_i}_1, \dots$ to stress the dependence on $i$ and $\ell_i$. Note that by Remark~\ref{symmRespWRegLemmaRemark} when $\ell_i = 2$ we only have forms $\sigma_{[s]}$ and $\rho_{[t]}$ and when $\ell_i = 1$ we only have forms $\sigma_{[s]}$. Furthermore, since we are applying Lemma~\ref{symmRespWRegLemma} to forms in at most 3 variables, low partition rank means that we may pass to a subspace where we get exact properties. Thus by passing to a further subspace $U$, we may assume that forms $\sigma_{[s]}$ are symmetric in their active variables. To be precise, the subspace $U$ has codimension at most $3rR + r$ inside $G$ (recall that we have first passed to a subspace of codimension at most $r$ in order to remove the irrelevant partitions).\\

Note that the forms $\pi_i$ and $\rho_i$ have the property that they are symmetric in the first $\ell_i - 1$ variables, the variables $x_{P_i}$ play a passive role and $z_{\ell_i}$ is related to $z_{[\ell_i-1]}$ in the sense that we shall consider compositions with transpositions $(a\,\,\ell_i)$. With this in mind we adopt the following notation. For a multilinear form $\phi(z_{[\ell_i]}, x_{P_i})$ we separate its variables as $\phi(z_{[\ell_i - 1]}\,|\,z_{\ell_i}\,|\,x_{P_i})$ where the first group indicated the symmetric part, the second group has a single variable and the third group consists of passive variables. We refer to the variable in the second group as the \emph{asymmetric variable}.\\

Replace forms $\beta(x_I, x_{P_a})$ where $I \subset [4]$, $|I| = \ell_a$, by the newly found maps using property \textbf{(v)} of Lemma~\ref{symmRespWRegLemma}. If $3, 4 \notin I$ then we know that $\beta$ is symmetric in $x_I$ so the second part of Observation~\ref{symmetriclinearcombinations} implies that $\beta(x_I, x_{P_a})$ can be replaced by 
\[\sum_{i \in [s]} \lambda_i \sigma_i(x_I, x_{P_a}) + \sum_{i \in [t], c' \in I} \lambda''_{i, c'} \rho_i(x_{I \setminus c'}\,|\,x_{c'}\,|\, x_{P_a}) + \tau(x_I, x_{P_a})\]
where $\tau(x_I, x_{P_a})$ is a multilinear form of partition rank at most $(r+2)R$. On the other hand, if $I \cap \{3, 4\} \not= \emptyset$ take $c \in I\cap \{3, 4\}$ to be an index such that $\beta$ is symmetric in $x_{I \setminus \{c\}}$ . Properties \textbf{(iii)} and \textbf{(v)} of Lemma~\ref{symmRespWRegLemma} and Observation~\ref{symmetriclinearcombinations} imply that $\beta(x_I, x_{P_a})$ can be replaced by 
\[\sum_{i \in [s]} \lambda_i \sigma_i(x_I, x_{P_a}) + \sum_{i \in [q]} \lambda'_i \pi_i(x_{I \setminus c}\,|\,x_c\,|\, x_{P_a}) + \sum_{i \in [t], c' \in I} \lambda''_{i, c'} \rho_i(x_{I \setminus c'}\,|\,x_{c'}\,|\, x_{P_a}) + \tau(x_I, x_{P_a})\]
where $\tau(x_I, x_{P_a})$ is a mutlilinear form of partition rank at most $(r + 2)R$. Hence, forms $\pi_i$ always appear with $x_3$ or $x_{4}$ as the asymmetric variable.\\

In order to express $\alpha(x_{[5]})+ \alpha \circ (3\,\,4)(x_{[5]})$ in terms of these multilinear forms we need to set up further notation. We use richer indices than just natural numbers, of the form $(i, j, \mathsf{S}, A)$, $(i, j, \mathsf{PS}, A, v)$ and $(i, j, \mathsf{AS}, A, v)$, where $i \in [2]$, $j$ is a suitable index, $A \subseteq [4]$ of size $|A| = \ell_i$, letters $\mathsf{S}$, $\mathsf{PS}$ and $\mathsf{AS}$ indicate the type of form we take, and $v \in A$. Using such an index $\bm{\mathsf{i}}$, we define form $\psi_{\bm{\mathsf{i}}}$ as a multilinear form in variables $x_{A \cup P_i}$ (which are ordered by the values of indices) and
\[\psi_{\bm{\mathsf{i}}}(x_{A \cup P_i}) = \begin{cases}
\sigma^{i, \ell_i}_j (x_A, x_{P_i}), &\text{ if }\bm{\mathsf{i}} = (i, j, \mathsf{S}, A),\\
\pi^{i, \ell_i}_j (x_{A \setminus v} \,|x_v\, |\, x_{P_i}), &\text{ if }\bm{\mathsf{i}} = (i,j, \mathsf{PS}, A, v),\\
\rho^{i, \ell_i}_j (x_{A \setminus v} \,|x_v\,|\,x_{P_i}), &\text{ if }\bm{\mathsf{i}} = (i,j, \mathsf{AS}, A, v).
\end{cases}\]

Let $A(\bsf{i})$ be the set $A$ in the rich index $\bsf{i}$. Let $\mathcal{I}$ be the set of pairs $(\bsf{i}_1,\bsf{i}_2)$ such that 
\begin{itemize}
\item $\bsf{i}_1 = (1, \dots)$ and $\bsf{i}_2 = (2, \dots)$, and
\item $A(\bsf{i}_1) \cup A(\bsf{i}_2)$ is a partition of $[4]$.
\end{itemize}
By our work so far, we have suitable scalars $\lambda_{\bsf{i}_1, \bsf{i}_2}$ such that
\begin{equation}\alpha(x_{[5]})+ \alpha \circ (3\,\,4)(x_{[5]})  = \sum_{(\bsf{i}_1, \bsf{i}_2) \in \mathcal{I}} \lambda_{\bsf{i}_1, \bsf{i}_2} \psi_{\bsf{i}_1}(x_{A(\bsf{i}_1) \cup P_1})\psi_{\bsf{i}_2}(x_{A(\bsf{i}_2) \cup P_2}).\label{decompEqn3addlsymm}\end{equation}

In the rest of the proof we show how to make $\lambda_{\bsf{i}_1, \bsf{i}_2}$ vanish. More precisely, we shall find a multilinear form $\mu(x_{[5]})$ which will be given by an appropriate linear combination of products related to those appearing on the right hand side of~\eqref{decompEqn3addlsymm} and $\alpha + \mu$ will have the desired property. We refer to $\mu(x_{[5]})$ as the \emph{modification term}.\\
\indent To that end, we fix a pair of multilinear forms $\beta_1,  \beta_2$, where $\beta_i$ is one of the forms in the list $\sigma^{i, \ell_i}_j, \pi^{i, \ell_i}_j, \rho^{i, \ell_i}_j$. We consider together all pairs of rich indices $(\bsf{i}_1, \bsf{i}_2) \in \mathcal{I}$ that $\psi_{\bm{\mathsf{i}}_i}$ comes from $\beta_i$ for each $i \in [2]$. Our goal is to describe contribution to the modification term $\mu(x_{[5]})$ coming from the choice $\beta_1, \beta_2$.\\

Note that the form $\alpha(x_{[5]})+ \alpha\circ (3\,\,4)(x_{[5]})$ is symmetric in variables $x_{[2]}$ and in variables $x_{\{3, 4\}}$. Let $\tau \in \on{Sym}_{[2]} \times \on{Sym}_{\{3, 4\}}$ be any permutation acting on these two sets. The fact that 
\[\Big(\alpha(x_{[5]})+ \alpha \circ (3\,\,4)(x_{[5]})\Big) + \Big(\alpha(x_{[5]})+ \alpha \circ (3\,\,4)(x_{[5]})\Big) \circ \tau = 0,\]
identity~\eqref{decompEqn3addlsymm}, properties of Lemma~\ref{symmRespWRegLemma} and Lemma~\ref{zeromlformindexlemma} imply the following fact.

\begin{claim}\label{basicCoeffsSymmetryFact}Let $(\bsf{i}_1, \bsf{i}_2) \in \mathcal{I}$ be a rich index pair giving a product of forms $\Pi(x_{[5]})$ and let $\tau \in \on{Sym}_{[2]} \times \on{Sym}_{\{3, 4\}}$ be a permutation. Let $(\bsf{j}_1, \bsf{j}_2)$ be the rich index pair corresponding to the product $\Pi \circ \tau(x_{[5]})$. Then we have $\lambda_{\bsf{i}_1,  \bsf{i}_2} = \lambda_{\bsf{j}_1,  \bsf{j}_2}$.\end{claim}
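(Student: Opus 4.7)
The plan is to use the symmetry of $\alpha+\alpha\circ(3\,\,4)$ under $\tau$ as a polynomial identity in the basis $\{\psi_{\bsf{i}_1}\psi_{\bsf{i}_2}\}$ of decomposition (4.12), and then invoke Lemma~\ref{zeromlformindexlemma} once for each relevant partition of $[5]$ to force the pairwise coefficient equalities. Concretely, I would substitute the decomposition into
\[0 \;=\; \Big(\alpha(x_{[5]})+\alpha\circ(3\,\,4)(x_{[5]})\Big) + \Big(\alpha(x_{[5]})+\alpha\circ(3\,\,4)(x_{[5]})\Big)\circ\tau.\]
Because $\tau$ acts on rich index pairs in the obvious way (permuting the labels of $A(\bsf{i}_i)$ and, when applicable, of the asymmetric-position marker), collecting terms by target rich index pair $(\bsf{k}_1,\bsf{k}_2)$ yields
\[\sum_{(\bsf{k}_1,\bsf{k}_2)\in\mathcal{I}} \big(\lambda_{\bsf{k}_1,\bsf{k}_2}+\lambda_{\tau^{-1}\cdot(\bsf{k}_1,\bsf{k}_2)}\big)\,\psi_{\bsf{k}_1}(x_{A(\bsf{k}_1)\cup P_1})\,\psi_{\bsf{k}_2}(x_{A(\bsf{k}_2)\cup P_2}) \;=\; 0,\]
so the task is to show that each coefficient in this identity vanishes individually.

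Next, I would split the sum according to the underlying set partition $(A\cup\{5\},[4]\setminus A)$ of $[5]$ and, for each target partition $\mathcal{P}^*=(A^*\cup\{5\},[4]\setminus A^*)$, apply Lemma~\ref{zeromlformindexlemma} with $I_1=A^*\cup\{5\}$ and $I_2=[4]\setminus A^*$. The forms playing the role of $\alpha_1$ on $G^{I_1}$ are the regularized $\sigma^{1,\ell_1}_j$, $\pi^{1,\ell_1}_j$, $\rho^{1,\ell_1}_j$ with their active arguments placed on $A^*$ and passive argument on $\{5\}$, enumerated over all legitimate choices of asymmetric position; and similarly for $\alpha_2$ on $G^{I_2}$. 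Products arising from partitions $\mathcal{P}'\neq\mathcal{P}^*$ always have at least one factor whose index set fails to contain either $I_1$ or $I_2$ (the only nontrivial check is the symmetric case $\ell_1=\ell_2=2$ with $\mathcal{P}'=(([4]\setminus A^*)\cup\{5\},A^*)$, where the factor on $A^*$ does not contain $A^*\cup\{5\}$, as $5\notin A^*$, nor $[4]\setminus A^*$, as the two are disjoint), so they legitimately belong to the $\Pi_i$-family of Lemma~\ref{zeromlformindexlemma}. The high-rank hypothesis is supplied by property \textbf{(iv)} of Lemma~\ref{symmRespWRegLemma}: non-zero linear combinations have partition rank at least $(C(R+2r))^D$, which exceeds the threshold demanded by Lemma~\ref{zeromlformindexlemma} once $C$ and $D$ are chosen large enough in terms of the constants of that lemma and $k=5$.

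The one genuine subtlety — and the step I would need to execute carefully — is the \emph{unless} clause of property \textbf{(iv)}: the combinations $\sum_{\ell\in[2]}\pi_i\circ(\ell\,\,2)$ are allowed to be of low partition rank, so I cannot simply list each $\pi_i\circ(\ell\,\,2)$ as an independent component of $\alpha_1$ or $\alpha_2$. To sidestep this, I would argue as in Observation~\ref{symmetriclinearcombinations}: the coefficient pattern produced by $\lambda_{\bsf{k}_1,\bsf{k}_2}+\lambda_{\tau^{-1}\cdot(\bsf{k}_1,\bsf{k}_2)}$ on the $\pi$-forms is a single scalar attached to one fixed asymmetric-position label (coming from the unique position of $x_3$ or $x_4$ in $\psi_{\bsf{k}_i}$), and hence never coincides with the ``symmetric over $\ell$'' exceptional pattern. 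Equivalently, pushing any exceptional $\pi$-combinations into the $\Pi_i$-bucket costs only an additional $O(rR)$ partition rank, which is absorbed by the constants. With this handled, Lemma~\ref{zeromlformindexlemma} forces every $\lambda_{\bsf{k}_1,\bsf{k}_2}+\lambda_{\tau^{-1}\cdot(\bsf{k}_1,\bsf{k}_2)}=0$, i.e.\ $\lambda_{\bsf{i}_1,\bsf{i}_2}=\lambda_{\bsf{j}_1,\bsf{j}_2}$ whenever the two products are $\tau$-related, which is exactly the claim.
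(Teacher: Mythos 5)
Your proposal is correct and follows exactly the route the paper intends: the paper itself offers no details for this claim beyond asserting that it follows from the vanishing of $\phi+\phi\circ\tau$, the decomposition~\eqref{decompEqn3addlsymm}, property \textbf{(iv)} of Lemma~\ref{symmRespWRegLemma}, and Lemma~\ref{zeromlformindexlemma}, which is precisely the substitution-and-coefficient-comparison argument you carry out, including the correct observation that the exceptional low-rank combinations $\sum_{\ell}\pi_i\circ(\ell\,\,m+1)$ cannot arise because the $\pi$-forms only ever occur with $x_3$ or $x_4$ in the asymmetric slot while the full symmetrization would require a position in $[2]\cap A$ as well. No gaps.
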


\vspace{\baselineskip}

\noindent\textbf{Step 4. Simplifying the expression -- removing partially symmetric forms $\pi_i$.} Note that if the partial symmetric forms play a role, then some $\ell_i \geq 3$. Since we have only 4 active variables, there can only be one partially symmetric form present in the product.\\
\indent Write $\phi = \alpha + \alpha \circ (3\,\,4)$ and recall that $\phi$ satisfies 
\begin{equation}\label{phiIdentity} \phi + \phi \circ (1\,\,3) + \phi\circ(1\,\,4) = 0 \end{equation}
by Lemma~\ref{weaksymmetryextends}.\\

\indent In this step of the proof, we define a multilinear form $\tilde{\mu}(x_{[5]})$ as the sum of all terms in~\eqref{decompEqn3addlsymm} that have a partially symmetric form in the product, with variable $x_{4}$ at its asymmetric place. Thus $\on{prank}  \tilde{\mu} \leq O(r^2)$. We then add $\tilde{\mu}$ to the modification term $\mu$. Let $P(x_{[5]})$ be the contribution of the products involving a partially symmetric form in~\eqref{decompEqn3addlsymm}. By Claim~\ref{basicCoeffsSymmetryFact}, we have that
\[\tilde{\mu} + \tilde{\mu}\circ(3\,\,4) = P.\]
We now show that $\tilde{\mu}$ is symmetric in the first three variables.\\
\indent Recall that we fixed forms $\beta_1$ and $\beta_2$. Suppose that $\beta_a$ is the partially symmetric form among them and let $b$ be the other index in $\{1,2\}$. Thus, $\beta_b$ is a form among $\sigma^{b, \ell_b}$ and $\rho^{b, \ell_b} \circ (c\,\,c')$, and $\beta_a$ is $\pi^{a, \ell_a}_\ell$ for some index $\ell$. We need to show that coefficients of all products of $\beta_a$ with $x_4$ as its asymmetric variable and $\beta_b$ are the same. Observe that $\beta_a$ has at least 3 active variables, as otherwise it cannot be partially symmetric. Since the partitions we consider are relevant, this implies that $P_a = \emptyset$ and $P_b = \{5\}$. Hence, we need to show that the products
\begin{align*}\Pi_1(x_{[5]}) = \pi^{a, \ell_a}_\ell(x_2, x_3 |x_4) \beta_b(x_1, x_5),\,\, &\Pi_2(x_{[5]}) = \pi^{a, \ell_a}_\ell(x_1, x_3 |x_4) \beta_b(x_2, x_5),\\
&\hspace{2cm}\Pi_3(x_{[5]}) = \pi^{a, \ell_a}_\ell(x_1, x_2 |x_4) \beta_b(x_3, x_5)\end{align*}
all have the same coefficient in~\eqref{decompEqn3addlsymm}.\\
\indent Notice that coefficients of $\Pi_1(x_{[5]})$ and $\Pi_2(x_{[5]})$ are the same by Claim~\ref{basicCoeffsSymmetryFact}. On the other hand, the coefficient of $\Pi_1(x_{[5]})$ in~\eqref{phiIdentity} is zero by Lemma~\ref{zeromlformindexlemma}. The contributing products to this coefficient from~\eqref{decompEqn3addlsymm} are $\Pi_1(x_{[5]})$ and $\Pi_1(x_{[5]}) \circ (1 \,\,3) = \Pi_3(x_{[5]})$ (note that $\Pi_1(x_{[5]}) \circ (1 \,\, 4)$ does not have either of $x_{3}$ and $x_{4}$ at the asymmetric place, so it cannot appear in $\phi$). This proves that the coefficients of products $\Pi_1(x_{[5]}), \Pi_2(x_{[5]})$ and $\Pi_3(x_{[5]})$ are equal, showing that $\tilde{\mu}$ is symmetric in $x_{[3]}$.\\

\noindent\textbf{Step 5. Simplifying the expression -- removing the remaining products.} Let forms $\beta_1$ and $\beta_2$ now be the forms which are not partially symmetric. Similarly to the proof of Theorem~\ref{mainSymmExtn4}, we now introduce a set of objects called \emph{places}, each describing a possible position of an active variable in the product. For each symmetric form above we have one place, and for each asymmetric form we have two places, one for the variables is in the symmetric part of the form and another at the asymmetric position in the form. Using the symmetries in $x_{[2]}$ and $x_{\{3, 4\}}$ variables we see that the coefficients of all considered products depend entirely on the choice of the places for $x_{3}$ and $x_{4}$. More precisely, by Claim~\ref{basicCoeffsSymmetryFact}, given two places $A$ and $B$, we have coefficients $\lambda_{A\,A}$ and $\lambda_{A\,B}$ such that
\begin{itemize}
\item all products where $x_{3}$ and $x_{4}$ are at the same place $A$ (thus both variables occur in same $\sigma$ or symmetric part of the same $\rho$) get the coefficient $\lambda_{A\,A}$,
\item all products where $x_{3}$ and $x_{4}$ are at places $A$ and $B$ which occurs at the same form (e.g.\ $\rho(x_1, x_{3}\,|\,x_{4})\dots$; note that this includes both cases of $x_{3}$ being at $A$ and $x_{3}$ being at $B$) get the coefficient $\lambda_{A\,B}$,
\item all products where $x_{3}$ and $x_{4}$ are at places $A$ and $B$ which occurs at different forms (e.g.\ $\rho(x_1\,|\,x_{4}) \sigma(x_2, x_{3},x_5)$; note that this includes both cases of $x_{3}$ being at $A$ and $x_{3}$ being at $B$) get the coefficient $\lambda_{A\,B}$. (Note that we get the same form of the coefficient $\lambda_{A\,B}$ as in the previous case, the difference is in the pairs of places $A$ and $B$ for which this case occurs.)\end{itemize}

Since all partitions of variables appearing are relevant we know that one form has 3 variables and the other has 2 variables, so we never have two copies of the same form in a product. Thus, specifying places $A$ and $B$ for $x_3, x_4$ essentially determines the product (up to permutation in $\on{Sym}_{\{1,2\}} \times \on{Sym}_{\{3,4\}}$, which does not affect the coefficient of the product). Our goal now is describe a multilinear form $\tilde{\mu}(x_{[k]})$ that will be added to the modification term $\mu(x_{[k]})$ so that $(\alpha + \tilde{\mu}) + (\alpha + \tilde{\mu}) \circ (3\,\,4)$ has no products of forms $\beta_1$ and $\beta_2$.\\

We have three different cases depending on the nature of the forms: either both are symmetric, or one is symmetric and the other asymmetric, or both are asymmetric.\\

\noindent\textbf{Symmetric case.} Let the two forms be $\sigma$ and $\sigma'$ with their places $A$ and $B$. Without loss of generality $\sigma$ has at least two active variables. We claim that $\lambda_{A\,A} = 0$. Swithching the roles of $\sigma$ and $\sigma'$ if $\sigma'$ also has at least two acitve variables shows that $\lambda_{B\,B} = 0$. To see that $\lambda_{A\,A}$ vanishes, consider the coefficient of product $\sigma(x_{3}, x_1, \dots) \sigma'(x_{4}, \dots)$ in~\eqref{phiIdentity}. (Note that the positions of the remaining active variable $x_2$ and the passive variables $x_{P_1}$ and $x_{P_2}$ in these forms are uniquely determined, but we opt not to display them for the sake of clarity.) This coefficient vanishes, but at the same time comes from coefficients of products 
\[\sigma(x_{3}, x_1, \dots) \sigma'(x_{4}, \dots),\,\, \sigma(x_{1}, x_{3}, \dots) \sigma'(x_{4}, \dots)\,\,\text{and}\,\,\sigma(x_{3}, x_4, \dots) \sigma'(x_{1}, \dots)\]
in~\eqref{decompEqn3addlsymm}. Thus
\[\lambda_{A\,B} + \lambda_{A\,B} + \lambda_{A\,A} = 0\]
giving $\lambda_{A\,A} = 0$.\\
\indent We may then set $\tilde{\mu}(x_{[5]})$ to be the sum of $\lambda_{A\,B} \sum_{I \in \binom{[3]}{\ell_1}} \beta_1(x_I, x_{P_1}) \allowbreak\beta_2(x_{[3] \setminus I}, x_{4}, x_{P_2})$ for all pairs $\sigma$ and $\sigma'$ appearing in this case (with place $A$ at $\sigma$ and $B$ at $\sigma'$), where $\beta_1, \beta_2$ is our initial notation for these two maps. By definition, $\tilde{\mu}$ is symmetric in $x_{[3]}$ and the vanishing of $\lambda_{C\,C}$ for the relevant places $C$ implies that $\tilde{\mu}(x_{[5]}) + \tilde{\mu} \circ (3\,\,4)(x_{[5]})$ equals the contribution from products of forms in this case.\\

\noindent\textbf{Mixed case.} Let the two forms be $\sigma$ and $\rho$. Let $A$ be the place in $\sigma$, let $B$ be the symmetric place in $\rho$ and $C$ the asymmetric one. As in the previous case, we use notation for products that does not specify positions of $x_2$ and $x_5$ for the sake of clarity, as these are uniquely determined. Considering the products $\sigma(x_3, x_{4}, \dots) \rho(x_1, \dots |\cdot)$ (which makes sense only when there are at least two active variables in $\sigma$), $\sigma(x_1, \dots) \rho(x_3, x_{4}, \dots |\cdot)$ (which makes sense only when there are at least two variables in the symmetric part of $\rho$) and $\sigma(x_1, \dots) \rho(x_3, \dots |x_{4})$ (meaning that $x_4$ is at the place $C$) respectively in~\eqref{phiIdentity} gives
\[0 = \lambda_{A\,A} + \lambda_{A\,B} + \lambda_{A\,B} = \lambda_{B\,B} + \lambda_{A\,B} + \lambda_{A\,B} = \lambda_{B\,C} + \lambda_{A\,C} + \lambda_{A\,B}\]
thus 
\begin{equation}\lambda_{A\,A} = \lambda_{B\,B} = \lambda_{B\,C} + \lambda_{A\,C} + \lambda_{A\,B} = 0.\label{mixedcasecoeffs}\end{equation}
We now describe the contribution $\tilde{\mu}(x_{[5]})$ to the modification term $\mu(x_{[5]})$. For this choice of forms $\sigma$ and $\rho$ as $\beta_1, \beta_2$ we add $\lambda_{A\,B} \Pi(x_{[5]})$ to $\tilde{\mu}(x_{[5]})$ for all products $\Pi$ of $\sigma$ and $\rho$ that have $x_4$ at the symmetric place $B$ of $\rho$ and we add $\lambda_{A\,C} \Pi(x_{[5]})$ to $\tilde{\mu}(x_{[5]})$ for all products $\Pi$ of $\sigma$ and $\rho$ that have $x_4$ at the asymmetric place $C$ of $\rho$. (Note that there are 3 different choice of $\Pi$ for both places and both cases whether $\sigma$ has 2 or 3 variables.) By definition, $\tilde{\mu}(x_{[5]})$ is symmetric in $x_{[3]}$. Finally, using the information on coefficients~\eqref{mixedcasecoeffs} we see that $\tilde{\mu}(x_{[5]}) + \tilde{\mu} \circ(3\,\,4)(x_{[5]})$ equals the contribution to~\eqref{decompEqn3addlsymm} coming from the mixed case, as desired.\\

\noindent\textbf{Asymmetric case.} Let the two forms be $\rho$ and $\rho'$ (in particular this implies that both forms have 2 active variables). Let the symmetric places in $\rho$ and $\rho'$ be $A$ and $C$ respectively, and let the asymmetric place be $B$ and $D$ respectively. Looking at products $\rho(x_1, \dots| \cdot) \rho'(x_3, \dots|x_{4}),$  $\rho(\cdots| x_1) \rho'(x_3, \dots|x_{4}),$ $\rho(x_3, \dots|x_{4})\rho'(x_1, \dots| \cdot)$, $\rho(x_3, \dots|x_{4})\rho'(\cdots|x_1)$, $\rho(x_3, x_{4}, \dots |\cdot) \rho'(x_1, \dots | \cdot)$ (if there are at least two variables in the symmetric part of $\rho$), $\rho(x_1, \dots | \cdot)\rho'(x_3, x_{4}, \dots |\cdot)$ (if there are at least two variables in the symmetric part of $\rho'$) from~\eqref{phiIdentity} we deduce that
\begin{align}0 = \lambda_{C\,D} + \lambda_{A\, D} + \lambda_{A\,C} = \lambda_{C\,D} + \lambda_{B\,D} + \lambda_{B\,C} = &\lambda_{A\,B} + \lambda_{B\, C} + \lambda_{A\,C} = \lambda_{A\,B} +\lambda_{B\,D}+\lambda_{A\,D}\nonumber\\
= &\lambda_{A\,A} +\lambda_{A\,C}+\lambda_{A\,C} = \lambda_{C\,C} + \lambda_{A\,C} + \lambda_{A\,C}.\label{asymmcasecoeffs}\end{align}
Let us now describe the final contribution $\tilde{\mu}(x_{[5]})$ to the modification term $\mu(x_{[5]})$. For this choice of forms $\rho$ and $\rho'$ as $\beta_1, \beta_2$ we add $\lambda_{A\,B} \Pi(x_{[5]})$ to $\tilde{\mu}(x_{[5]})$ for all products $\Pi$ of $\rho$ and $\rho'$ that have $x_4$ at the asymmetric place $B$ of $\rho$, we add $\lambda_{A\,C} \Pi(x_{[5]})$ to $\tilde{\mu}(x_{[5]})$ for all products $\Pi$ of $\rho$ and $\rho'$ that have $x_4$ at the symmetric place $C$ of $\rho'$ and we add $\lambda_{A\,D} \Pi(x_{[5]})$ to $\tilde{\mu}(x_{[5]})$ for all products $\Pi$ of $\rho$ and $\rho'$ that have $x_4$ at the asymmetric place $D$ of $\rho'$. (Note that there are 6 different choices of $\Pi$ for all three places and both cases whether $\rho$ has 2 or 3 variables.) By definition, $\tilde{\mu}(x_{[5]})$ is symmetric in $x_{[3]}$. Finally, using the information on coefficients~\eqref{asymmcasecoeffs} we see that $\tilde{\mu}(x_{[5]}) + \tilde{\mu} \circ(3\,\,4)(x_{[5]})$ equals the contribution to~\eqref{decompEqn3addlsymm} coming from the asymmetric case, as desired.\\

Finally, set $\tilde{\alpha} = \alpha + \mu$. Each of the contributions to the modification term $\mu$ is symmetric in $x_{[3]}$ and so is $\tilde{\alpha}$. But we chose $\mu$ so that $\tilde{\alpha} + \tilde{\alpha} \circ (3\,\,4) = 0$ on the subspace $U$, so in fact $\tilde{\alpha}$ is symmetric in $x_{[4]}$ on $U$. Since $\on{prank} \mu \leq O(r^2)$, the theorem follows from Corollary~\ref{symmetryInher} (as in the proof of the previous theorem, recall that $\on{codim}_G U \leq O(Rr)$ which is the main contribution to the final bound). As in the proof of Theorem~\ref{mainSymmExtn4}, we choose $C$ and $D$ depending only to the constants from Lemma~\ref{zeromlformindexlemma} so that the partition rank condition of Lemma~\ref{zeromlformindexlemma} holds each time the lemma is applied.\end{proof}

\section{Multilinear forms which are approximately without repeated coordinates}

This section is devoted to the proof of Theorem~\ref{copiesPropertyVanish}. As in the case of the previous theorems, we need a way to regularize the forms appearing in low partition rank decompositions. We need a preliminary lemma first.

\begin{lemma}\label{times2boundedRankLemma}For a positive integer $k$ there exist constants $C = C_k, D = D_k \geq 1$ such that the following holds. Suppose that $\alpha \colon G^k \to \mathbb{F}_2$ is symmetric in the first two variables and that $\alpha$ has partition rank at most $r$. Let $\alpha^{\times 2} \colon G^{k-1} \to \mathbb{F}_2$ be the multilinear form defined by $\alpha^{\times 2}(d, x_{[3,k]}) = \alpha(d,d, x_{[3,k]})$. Then $\alpha^{\times 2}$ has partition rank at most $C r^{D}$.\end{lemma}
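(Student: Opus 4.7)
My plan is to apply Proposition~\ref{lcmlformsdecomppropo} to express $\alpha$ as a sum of products of slices of itself, substitute $x_1 = x_2 = d$, and separate the resulting terms by the position of $\{1, 2\}$ in the partition $I_i \cup ([k] \setminus I_i)$. Applying Proposition~\ref{lcmlformsdecomppropo} with $\mathcal{P}$ the set of all non-trivial partitions of $[k]$ yields
\[\alpha(x_1, x_2, x_{[3,k]}) = \sum_{i \in [r']} \phi_i(x_{I_i}) \psi_i(x_{[k]\setminus I_i})\]
with $r' = O(r^{O(1)})$ and every factor a slice of $\alpha$. Since $\alpha$ is symmetric in $x_1, x_2$, any such slice in which both of these coordinates remain free is symmetric in them. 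When $\{1, 2\} \subseteq I_i$ or $\{1, 2\} \cap I_i = \emptyset$, the relevant slice is symmetric in $x_1, x_2$, so setting $x_1 = x_2 = d$ yields a multilinear form in $d$ via the characteristic-$2$ polarization $\phi(d_1 + d_2, d_1 + d_2, \cdot) = \phi(d_1, d_1, \cdot) + \phi(d_2, d_2, \cdot)$. Each such term therefore contributes a partition-rank-$1$ piece to $\alpha^{\times 2}$, for a total of $O(r')$.

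The remaining terms have $|I_i \cap \{1, 2\}| = 1$ and, after substitution, yield products $\phi_i(d, y_{J_i})\psi_i(d, y_{J_i^c})$ that are individually quadratic in $d$ but whose sum is multilinear. I would bundle them into an auxiliary form by introducing two independent copies of the diagonal variable,
\[\tilde{R}(d_1, d_2, y) = \sum_{i \in B} \phi_i(d_1, y_{J_i}) \psi_i(d_2, y_{J_i^c}),\]
so that the Case B contribution to $\alpha^{\times 2}$ equals $\tilde{R}^{\times 2}(d, y)$. The form $\tilde{R}$ has partition rank at most $r'$, and since the subsum of Case A and A$'$ terms is symmetric in $x_1, x_2$ (each summand being a product of slices with the shared-variable slice symmetric in $x_1, x_2$), the difference $\alpha_B = \alpha - \alpha_{\mathrm{AA}'}$ is symmetric in $x_1, x_2$, giving the symmetry of $\tilde{R}$ in $d_1, d_2$.

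The remaining step, bounding $\on{prank}(\tilde{R}^{\times 2})$ in terms of $r'$, is exactly the lemma applied to $\tilde{R}$ in the same number of variables, so naïve recursion will not terminate. I would break the circularity via Theorem~\ref{biasedinversethm}: for each fixed $y$, $\tilde{R}(\cdot, \cdot, y)$ is a symmetric bilinear form over $\mathbb{F}_2^n$ of bilinear rank at most $r'$, and its diagonal $\tilde{R}^{\times 2}(d, y) = \sum_a d^a \tilde{R}(e_a, e_a, y)$ is a linear form in $d$ that vanishes identically exactly when $\tilde{R}(\cdot, \cdot, y)$ is alternating over $\mathbb{F}_2$; hence $\on{bias}(\tilde{R}^{\times 2}) = \Pr_y[\tilde{R}(\cdot, \cdot, y) \text{ is alternating}]$. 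The hard step is producing a polynomial-in-$r'$ lower bound on this density using the explicit Case B decomposition of $\tilde{R}$ (for instance, by pairing terms whose diagonal contributions cancel in characteristic $2$, exactly as in the toy computation $\phi(d_1)\psi(d_2) + \phi(d_2)\psi(d_1) \mapsto 2\phi(d)\psi(d) = 0$); Theorem~\ref{biasedinversethm} then converts this bias bound into the required partition-rank bound $C r^D$.
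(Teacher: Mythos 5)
Your handling of the terms where $x_1,x_2$ land in the same block is correct (the slice containing both is symmetric in them, so its diagonal is additive in $d$ over $\mathbb{F}_2$ and each such term contributes partition rank $1$), and your reduction of the remaining cross terms to a lower bound on $\on{bias}(\tilde{R}^{\times 2})$, to be fed into Theorem~\ref{biasedinversethm}, is the right final move --- the paper also finishes by bounding $\on{bias}(\alpha^{\times 2})$ from below and invoking that theorem. But the step you flag as ``the hard step'' is not a loose end; it is essentially the entire content of the lemma, and the mechanism you propose for it does not work. The symmetry of $\tilde{R}$ in $(d_1,d_2)$ is a property of the sum, not of the individual products, so there is no canonical pairing of terms, and the diagonal of the cross-term part genuinely need not vanish: for $\alpha(x_1,x_2,x_3)=a(x_1)\,a(x_2)\,\ell(x_3)$ (symmetric in $x_1,x_2$, partition rank $1$, a single cross term) the diagonal is $a(d)^2\ell(x_3)=a(d)\ell(x_3)\neq 0$. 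So ``alternating with probability $\geq 2^{-\mathrm{poly}(r)}$'' cannot be established by cancellation, and no alternative argument is given; the proposal is therefore incomplete at its crucial point.

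The idea you are missing is the paper's observation about which \emph{variables} each factor of a cross term depends on. Writing a cross term as $\gamma_i(u,y_{J_i})\gamma_i'(u,y_{J_i^c})$ with $J_i\sqcup J_i^c=[3,k]$, exactly one of the two factors does not involve $y_3$, hence depends on a \emph{proper} subset of the $k-1$ variables of $\alpha^{\times 2}$; call it $\tilde{\gamma}_i$. The set where all these $\tilde{\gamma}_i$ and all the $\beta_i'$ (the non-$\{1,2\}$ factors of the other terms) vanish has density at least $2^{-(k-1)r}$ and is contained in the zero set of $\alpha^{\times 2}$. Inserting $(-1)^{\alpha^{\times 2}}$ (which equals $1$ there) and applying the Gowers--Cauchy--Schwarz inequality, where every cutting function has box norm at most $1$ because it omits a variable, gives $2^{-(k-1)r}\leq \on{bias}(\alpha^{\times 2})^{2^{-(k-1)}}$, and Theorem~\ref{biasedinversethm} then yields the polynomial bound. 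This replaces the fixed-$y$ ``alternating'' analysis, which conditions on $y$ too early, by a joint density argument in $(u,y)$; with that substitution your outline closes up (and the Proposition~\ref{lcmlformsdecomppropo} preprocessing becomes unnecessary).
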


\begin{proof}Let
\[\alpha(x_1, x_2, y_{[3,k]}) = \sum_{i \in [r_1]} \beta_i(x_1, x_2, y_{I_i}) \beta_i'(y_{I_i^c}) + \sum_{i \in [r_2]} \gamma_i(x_1, y_{J_i}) \gamma_i'(x_2, y_{J_i^c})\]
for some integers $r_1, r_2 \leq r$ and suitable multilinear forms $\beta_i, \beta'_i, \gamma_i, \gamma'_i$. Then we have
\[\alpha^{\times 2}(u, y_{[3,k]}) = \sum_{i \in [r_1]} \beta_i(u, u, y_{I_i}) \beta_i'(y_{I_i^c}) + \sum_{i \in [r_2]} \gamma_i(u, y_{J_i}) \gamma_i'(u, y_{J_i^c}).\]
For each $i \in [r_2]$, let $\tilde{\gamma}_i(u, y_{\tilde{J}_i})$ be the form among $\gamma_i(u, y_{J_i})$ and $\gamma_i'(y_{u, J_i^c})$ which does not involve variable $y_3$. In particular, we have the inclusion of varieties
\[\Big\{(u, y_{[3,k]}) \colon (\forall  i \in [r_1]) \beta'(y_{I_i^c}) = 0\Big\} \cap \Big\{(u, y_{[3,k]}) \colon (\forall i \in [r_2]) \tilde{\gamma}_i(u, y_{\tilde{J}_i}) = 0\Big\} \subseteq \{\alpha^{\times 2} = 0\}.\]
We thus obtain
\begin{align*}2^{-(k-1)r} \leq &\exx_{u, y_{[3,k]}} \prod_{i \in [r_1]}\id(\beta'(y_{I_i^c}) = 0) \prod_{i \in [r_2]} \id(\tilde{\gamma}_i(u, y_{\tilde{J}_i}) = 0) \\
= &\exx_{u, y_{[3,k]}} \prod_{i \in [r_1]}\id(\beta'(y_{I_i^c}) = 0) \prod_{i \in [r_2]}\id(\tilde{\gamma}_i(u, y_{\tilde{J}_i}) = 0)(-1)^{\alpha^{\times 2}(u, y_{[3,k]})}.\end{align*}
Using Gowers-Cauchy-Schwarz inequality, this quantity can be bounded from above by $\|(-1)^{\alpha^{\times 2}}\|_{\square^{k-1}} =\Big( \on{bias}( \alpha^{\times 2})\Big)^{2^{-{k-1}}}$. The lemma follows from Theorem~\ref{biasedinversethm}.\end{proof}

We may now state and prove the regularity lemma that we need.

\begin{lemma}\label{symmRegLemmaTimes2}Let $\alpha_1, \dots \alpha_r \colon G^k \to \mathbb{F}_2$ be multilinear forms that are symmetric in the first two variables. Let $M, R_0 \geq 1$ be positive quantities. Then, we can find a positive integer $R$ satisfying $R_0 \leq R \leq (2R_0 + 1)^{O(M)^{2r}}$ and subspaces $\Lambda \leq \Lambda^{\times 2} \leq \mathbb{F}_2^r$ such that
\begin{itemize}
\item[\textbf{(i)}] if $\lambda \in \Lambda$ then $\on{prank} \lambda \cdot \alpha \leq R$ and if $\lambda \notin \Lambda$ then $\on{prank} \lambda \cdot \alpha \geq (2R +1)^M$,
\item[\textbf{(ii)}] if $\lambda \in \Lambda^{\times 2}$ then $\on{prank} \lambda \cdot \alpha^{\times 2} \leq R$ and if $\lambda \notin \Lambda^{\times 2}$ then $\on{prank} \lambda \cdot \alpha^{\times 2} \geq (2R +1)^M$. 
\end{itemize}
\end{lemma}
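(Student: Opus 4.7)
The plan is to run a standard greedy regularization, building $\Lambda$ and $\Lambda^{\times 2}$ by adding one generator at a time while maintaining an explicit rank bound $R$ and the containment $\Lambda \leq \Lambda^{\times 2}$ as an invariant. Initialize with $R := R_0$ and $\Lambda = \Lambda^{\times 2} = \{0\}$; the invariants (I1) $\on{prank}(\lambda \cdot \alpha) \leq R$ for all $\lambda \in \Lambda$, (I2) $\on{prank}(\lambda \cdot \alpha^{\times 2}) \leq R$ for all $\lambda \in \Lambda^{\times 2}$, and (I3) $\Lambda \leq \Lambda^{\times 2}$ all hold trivially.

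At each step, test whether the separation conditions of (i) and (ii) hold for the current value of $R$. If both hold, we are done. Otherwise, suppose (i) fails, so there is some $\mu \notin \Lambda$ with $\on{prank}(\mu \cdot \alpha) < (2R+1)^M$. Form $\Lambda' := \Lambda \oplus \langle \mu\rangle$; by subadditivity of partition rank, every element of $\Lambda'$ has partition rank at most $R + (2R+1)^M$ under $\alpha$. To preserve (I3), also adjoin $\mu$ to $\Lambda^{\times 2}$; by Lemma \ref{times2boundedRankLemma} we get $\on{prank}(\mu \cdot \alpha^{\times 2}) \leq C\bigl((2R+1)^M\bigr)^D$, so every element of the enlarged $\Lambda'^{\times 2}$ has rank at most $R + C(2R+1)^{MD}$ under $\alpha^{\times 2}$. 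Update $R$ to the maximum of these two bounds, which is at most $(2R+1)^{MD+1}$ (up to the $C$ factor). If instead (ii) fails, the analogous step adjoins a vector only to $\Lambda^{\times 2}$, which preserves (I3) automatically and updates $R$ by at most a factor of $(2R+1)^M$.

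The potential function $\Phi := \dim \Lambda + \dim \Lambda^{\times 2}$ strictly increases at each iteration and is capped at $2r$, so the procedure terminates after at most $2r$ rounds. For the quantitative bound on $R$, observe that each round replaces $R$ with something of the form $R \mapsto c(2R+1)^{MD}$ for a constant $c = c_k$ depending only on $k$ (through Lemma \ref{times2boundedRankLemma}). Iterating this recursion $2r$ times from $R_0$ gives $R \leq (2R_0+1)^{(O(MD))^{2r}} = (2R_0+1)^{O(M)^{2r}}$, where the $D$ and $c$ are folded into the $O(\cdot)$ since they depend only on $k$.

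The only mildly delicate point is bookkeeping the updates: when we extend $\Lambda$ we must simultaneously inflate $R$ and extend $\Lambda^{\times 2}$ so that invariants (I1)--(I3) persist, and then verify that on the next pass the separation thresholds $(2R+1)^M$ with the new $R$ are still checked consistently. Because each "extension step" strictly grows $\Phi$, no infinite loop can occur even after such updates, so the termination and the tower-type bound on $R$ follow without further subtlety.
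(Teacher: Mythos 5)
Your proposal is correct and follows essentially the same route as the paper's proof: a greedy iteration that adjoins a low-rank direction to $\Lambda$ (and simultaneously to $\Lambda^{\times 2}$ via Lemma~\ref{times2boundedRankLemma}) or to $\Lambda^{\times 2}$ alone, tracked by the potential $\dim\Lambda+\dim\Lambda^{\times 2}\leq 2r$, with the same recursive update of $R$ yielding the stated bound. No gaps.
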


\begin{proof} Let $C_i$ and $D_i$ be the constants from the previous lemma for multilinear forms in $i$ variables and set $C = \max_{i \in [k]} C_i, D = \max_{i \in [k]} D_i$ which we may assume to satisfy $C, D \geq 1$. At each step we keep track of two subspaces $\Lambda \leq \Lambda^{\times 2} \leq \mathbb{F}_2^r$ and a quantity $R$ such that for each $\lambda \in \Lambda$ we have $\on{prank} \lambda \cdot \alpha \leq R$ and for each $\lambda \in \Lambda^{\times 2}$ we have $\on{prank} \lambda \cdot \alpha^{\times 2} \leq R$. The procedure terminates once $\Lambda$ and $\Lambda^{\times 2}$ have the desired properties, that is when we have $\on{prank} \lambda \cdot \alpha \geq (2R +1)^M$ for each $\lambda \notin \Lambda$ and $\on{prank} \lambda \cdot \alpha^{\times 2} \geq (2R +1)^M$ for each $\lambda \notin \Lambda^{\times 2}$. We begin by setting $R = R_0$, $\Lambda = \Lambda^{\times 2} = \{0\}$.\\

Suppose that the procedure has not yet terminated. Suppose first that for some $\lambda \notin \Lambda$ we have $\on{prank} \lambda \cdot \alpha \leq (2R +1)^M$. By Lemma~\ref{times2boundedRankLemma} we have $\on{prank} \lambda \cdot \alpha^{\times 2} \leq C (2R +1)^{DM}$. Replace $R$ by $2C (2R +1)^{DM}$, $\Lambda$ by $\Lambda + \langle \lambda \rangle$ and $\Lambda^{\times 2}$ by $\Lambda^{\times 2} + \langle \lambda \rangle$. In the second case, we have $\on{prank} \lambda \cdot \alpha^{\times 2} \leq (2R +1)^M$ for some $\lambda \notin \Lambda^{\times 2}$. This time replace $R$ by $2(2R +1)^M$ and $\Lambda^{\times 2}$ by $\Lambda^{\times 2} + \langle \lambda \rangle$ and keep $\Lambda$ the same. The procedure thus terminates after at most $2r$ steps and we obtain the desired subspaces.\end{proof}

\begin{proof}[Proof of Theorem~\ref{copiesPropertyVanish}] We first do the case $k = 4$ which turns out to be rather simple. In this case, $\alpha(d,d, x_3, x_4)$ is a trilinear form of partition rank at most $r$, so there exists a subspace $U \leq G$ of codimension at most $r$ (given by the zero-set of the linear forms that appear as factors in the low partition rank decomposition of $\alpha(d,d, x_3, x_4)$) such that $\alpha(d,d, x_3, x_4) = 0$ for all $d, x_3, x_4 \in U$. We may take $\alpha' = \alpha|_{U \tdt U}$ to finish the proof in this case.\\

Now consider the case $k = 5$. Suppose that $\alpha(x_{[m]}, y_{[m+1,5})$ is symmetric in variables $x_{[m]}$ and that $\alpha(d,d,$ $x_{[3,m]}, y_{[m+1,5]})$ has partition rank at most $r$. Thus
\begin{equation}\label{2copiesApproxEqn1}\alpha(d,d,x_{[3,m]}, y_{[m + 1, 5]}) = \sum_{i \in [r]} \beta_{i, 0}(d, x_{I_{i, 0}}, y_{J_{i, 0}}) \beta_{i, 1}(x_{I_{i, 1}}, y_{J_{i, 1}})\end{equation}
holds for all $d, x_3, \dots, y_5 \in G$, where $\beta_{i, 0}$ and $\beta_{i, 1}$ are suitable multilinear forms.\\ 
We begin our proof by deducing that we may assume that every form $\beta_{i, 1}$ is symmetric in $x$ variables and that $\beta_{i, 0}(d, x_{I_{i, 0}}, y_{J_{i, 0}})$ equals $\tilde{\beta}(d, d, x_{I_{i, 0}}, y_{J_{i, 0}})$ for a suitable multilinear form $\tilde{\beta}$ which is symmetric in the first $|I_{i, 0}| + 2$ variables. Proposition~\ref{lcmlformsdecomppropo} allows us to assume that every $\beta_{i, j}$ is a slice of the form $(d,x_{[3,m]}, y_{[m + 1, k]}) \mapsto \alpha(d,d,x_{[3,m]}, y_{[m + 1, k]})$ (and therefore has the desired properties) at the cost of replacing $r$ by $O(r^{O(1)})$.\\
\indent Furthermore, if we take all linear forms appearing in~\eqref{2copiesApproxEqn1} and set $U$ to be the subspace of codimension at most $O(r^{O(1)})$ where they vanish, we may without loss of generality assume that all forms in~\eqref{2copiesApproxEqn1} are bilinear. We now consider each possible value of $m \in \{2,3,4,5\}$ separately.\\ 

\noindent\textbf{Case 1: $m = 2$.} In this case equality~\eqref{2copiesApproxEqn1} becomes
\[\alpha(d,d,y_{[3, 5]}) = \sum_{i \in [s]} \beta_{i, 0}(d, y_{J_{i, 0}}) \beta_{i, 1}(y_{J_{i, 1}})\]
for some $s\leq O(r^{O(1)})$. We know that $\beta_{i, 0}(d, y_{J_{i, 0}}) = \tilde{\beta}_{i, 0}(d, d, y_{J_{i, 0}})$ for a multilinear form $\tilde{\beta}_{i, 0}$ which is symmetric in the first two variables. Then simply set
\[\sigma(x_{[5]}) = \sum_{i \in [s]}  \tilde{\beta}_{i, 0}(x_1, x_2, x_{J_{i, 0}}) \beta_{i, 1}(x_{J_{i, 1}})\]
which is symmetric in $x_1$ and $x_2$, satisfies $\on{prank} \sigma \leq  O(r^{O(1)})$ and $(\alpha + \sigma)(d,d,x_3, x_4, x_5) = 0$ for all $d,x_3, x_4, x_5 \in U$, as desired.\\

\noindent\textbf{Case 2: $m = 3$.} In this case equality~\eqref{2copiesApproxEqn1} becomes
\[\alpha(d,d, x_3, y_4, y_5) = \sum_{i \in [s_1]} \beta_{i}(d, x_3) \gamma_i(y_4, y_5) + \sum_{i \in [s_2]} \beta'_{i}(d, y_4) \gamma'_i(x_3, y_5) + \sum_{i \in [s_3]} \beta''_{i}(d, y_5) \gamma''_i(x_3, y_4)\]
for some $s_1, s_2, s_3 \leq O(r^{O(1)})$ and bilinear forms $\beta_i,\dots, \gamma''_i$. Recall that all these forms come from slice of $\alpha$ and we have trilinear forms $\tilde{\beta}_i(x_{[3]})$, symmetric in $x_{[3]}$, $\tilde{\beta}'_i(x_{[3]})$, symmetric in $x_{[2]}$, and $\tilde{\beta}'_i(x_{[3]})$, symmetric in $x_{[2]}$, such that $\beta_{i}(d, x_3) = \tilde{\beta}_i(d,d,x_3)$, $\beta'_{i}(d, y_4) = \tilde{\beta}'_i(d,d,y_4)$ and $\beta_{i}''(d, y_5) = \tilde{\beta}''_i(d,d,y_5).$ Let $\sigma \colon G^5 \to \mathbb{F}_2$ be the multilinear form defined as
\begin{align*}\sigma(x_{[5]}) = \sum_{i \in [s_1]}& \tilde{\beta}_{i}(x_1, x_2, x_3) \gamma_i(x_4, x_5) \\
&+ \sum_{i \in [s_2]} \Big(\tilde{\beta}'_{i}(x_1, x_2, x_4) \gamma'_i(x_3, x_5) + \tilde{\beta}'_{i}(x_1, x_3, x_4) \gamma'_i(x_2, x_5)+ \tilde{\beta}'_{i}(x_2, x_3, x_4) \gamma'_i(x_1, x_5)\Big)\\
&+ \sum_{i \in [s_3]} \Big(\tilde{\beta}''_{i}(x_1, x_2, x_5) \gamma''_i(x_3, x_4) + \tilde{\beta}''_{i}(x_1, x_3, x_5) \gamma''_i(x_2, x_4)+ \tilde{\beta}''_{i}(x_2, x_3, x_5) \gamma''_i(x_1, x_4)\Big).\end{align*}
This form is symmetric in $x_{[3]}$ and we have $\on{prank} \sigma \leq O(r^{O(1)})$ and $(\alpha + \sigma)(d,d,x_3, x_4, x_5) = 0$ for all $d,x_3, x_4, x_5 \in U$, as desired.\\

\noindent\textbf{Case 3: $m = 4$.} In this case equality~\eqref{2copiesApproxEqn1} becomes
\[\alpha(d,d, x_3, x_4, y_5) = \sum_{i \in [s_1]} \beta_{i}(d, x_3) \gamma_i(x_4, y_5) + \sum_{i \in [s_2]} \beta'_{i}(d, x_4) \gamma'_i(x_3, y_5) + \sum_{i \in [s_3]} \beta''_{i}(d, y_5) \gamma''_i(x_3, x_4)\]
for some $s_1, s_2, s_3 \leq O(r^{O(1)})$ and bilinear forms $\beta_i,\dots, \gamma''_i$. Similarly to the previous case, recall that all these forms come from slice of $\alpha$ and that we have trilinear forms $\tilde{\beta}_i(x_{[3]})$, symmetric in $x_{[3]}$, $\tilde{\beta}'_i(x_{[3]})$, symmetric in $x_{[3]}$ (note that this form is now symmetric in all variables in contrast to the previous case), and $\tilde{\beta}''_i(x_{[3]})$, symmetric in $x_{[2]}$, such that $\beta_{i}(d, x_3) = \tilde{\beta}_i(d,d,x_3)$, $\beta'_{i}(d, x_4) = \tilde{\beta}'_i(d,d,x_4)$ and $\beta_{i}''(d, y_5) = \tilde{\beta}''_i(d,d,y_5)$. Also, $\gamma''_i(x_3, x_4)$ is symmetric for all $i$.\\
\indent We now regularize some of the forms. Let $C, D \geq 2$ be two parameters to be chosen later. First, apply Lemma~\ref{symmRespWRegLemma} (with parameters $C, D$ and $m = 0$ so that the symmetry properties play no role, see Remark~\ref{symmRespWRegLemmaRemark}) to the list consisting of forms $\gamma_i(u,v)$, $i \in [s_1]$ and $\gamma'_i(u,v)$, $i \in [s_2]$. We thus obtain bilinear forms $\tilde{\gamma}_1, \dots, \tilde{\gamma}_t$, each being a linear combination of the forms in the given list, for some $t \leq O(r^{O(1)})$ and some $R \leq C^{D^{O(r^{O(1)}}}$ such that
\begin{itemize}
\item every bilinear form in the list differs from a linear combination of forms $\tilde{\gamma}_1, \dots, \tilde{\gamma}_t$ by a bilinear form of rank at most $R$,
\item non-zero linear combinations of $\tilde{\gamma}_1, \dots, \tilde{\gamma}_t$ have rank at least $(C(R + 2r))^{D}$.
\end{itemize}
Next, apply Lemma~\ref{symmRegLemmaTimes2} to the list of forms $\tilde{\beta}_i(x_{[3]})$, for $i \in [s_1]$, and $\tilde{\beta}'_i(x_{[3]})$, for $i \in [s_2]$, with parameters $M = D$ and $R_0 = r$. We misuse the notation and still write $R$ for the parameter produced by the lemma, which still satisfies the earlier bound $R \leq C^{D^{O(r^{O(1)}}}$. Let $\Lambda^{\times 2}$ be the subspaces provided by the lemma and let $e_1, \dots, e_q$ be a basis of an additive complement of $\Lambda^{\times 2}$ in $\mathbb{F}_2^{s_1 + s_2}$. Let $\tau_1, \dots, \tau_q$ be the trilinear forms given by linear combinations of the trilinear forms in the given list with coefficients corresponding to $e_1, \dots, e_q$. Then
\begin{itemize}
\item every non-zero linear combination of $\tau_1, \dots, \tau_q$ has partition rank at least $(2r + 1)^D$,
\item every non-zero linear combination of $\tau_1^{\times 2}, \dots, \tau_q^{\times 2}$ has rank at least $(2r + 1)^D$,
\item every ${\tilde{\beta}}_i^{\times 2}$ and every $\tilde{\beta}'_i {}^{\times 2}$ differs from a linear combination of $\tau_1^{\times 2}, \dots, \tau_q^{\times 2}$ by a bilinear form of rank at most $R$. 
\end{itemize}

Using these properties, we may pass to a further subspace $U' \leq U$ of codimension $\on{codim}_G U' \leq O(r^{O(1)}R)$ and we may find coefficients $\lambda_{ij}, \lambda'_{ij}$ for $i \in [q], j \in [t]$ such that 
\begin{align*}\alpha(d,d, x_3, x_4, y_5) = &\sum_{i \in [q], j \in [t]} \lambda_{ij}\tau_i(d, d, x_3) \tilde{\gamma}_j(x_4, y_5) + \sum_{i \in [q], j \in [t]} \lambda'_{ij}\tau_i(d, d, x_4) \tilde{\gamma}_j(x_3, y_5) + \sum_{i \in [s_3]} \beta''_{i}(d, y_5) \gamma''_i(x_3, x_4)\end{align*}
holds for all $d, x_3, x_4, y_5 \in U'$.\\
\indent Recall that $\alpha$ is symmetric in $x_{[4]}$. Thus, for all $d, x_3, x_4, x_5 \in U'$ we have
\begin{align*}&0 = \alpha(d,d, x_3, x_4, y_5) + \alpha(d,d, x_4, x_3, y_5) \\
&\hspace{1cm}= \sum_{i \in [q], j \in [t]} \Big((\lambda_{ij} + \lambda'_{ij})\tau_i^{\times 2}(d, x_3) \tilde{\gamma}_j(x_4, y_5) \Big) + \sum_{i \in [q], j \in [t]} \Big((\lambda'_{ij} + \lambda_{ij})\tau_i^{\times 2}(d, x_4) \tilde{\gamma}_j(x_3, y_5)\Big).\end{align*}

Applying Lemma~\ref{zeromlformindexlemma} shows that $\lambda_{ij} = \lambda'_{ij}$ for all $i,j$. To finish the work in this case, define 
\begin{align*}\sigma(x_{[5]}) = \sum_{i \in [q], j \in [t]}& \lambda_{ij}\Big(\sum_{\{a,b,c\} \in \binom{[4]}{3}}\tau_{i}(x_a, x_b, x_c) \tilde{\gamma}_i(x_{[4] \setminus \{a,b,c\}}, x_5)\Big)\\
&+\sum_{i \in [s_3]} \Big(\sum_{\{a, b\} \in \binom{[4]}{2}} \tilde{\beta}''_{i}(x_a, x_b, x_5) \gamma''_i(x_{[4] \setminus \{a,b\}})\Big).\end{align*}

Finally, pass to the subspace $U''$ consisting of all $u \in U'$ such that all $\gamma''_i(u,u) = 0$ (recall that these forms are symmetric). On $U''$ we have that $(\alpha + \sigma)^{\times 2}$ vanishes, and we know that $\sigma(x_{[5]})$ is symmetric in $x_{[4]}$ and has $\on{prank}\sigma \leq O(r^{O(1)})$.\\

\noindent\textbf{Case 4: $m = 5$.} In this case equality~\eqref{2copiesApproxEqn1} becomes
\[\alpha(d,d, x_3, x_4, x_5) = \sum_{i \in [s_1]} \beta_{i}(d, x_3) \gamma_i(x_4, x_5) + \sum_{i \in [s_2]} \beta'_{i}(d,x_4) \gamma'_i(x_3, x_5) + \sum_{i \in [s_3]} \beta''_{i}(d, x_5) \gamma''_i(x_3, x_4)\]
for some $s_1, s_2, s_3 \leq O(r^{O(1)})$ and bilinear forms $\beta_i,\dots, \gamma''_i$ and the forms $\gamma_i, \gamma'_i, \gamma''_i$ are symmetric. As before, there are symmetric trilinear forms $\tilde{\beta}_i, \tilde{\beta}'_i$ and $\tilde{\beta}''_i$ such that $\beta_i = \tilde{\beta}_i{}^{\times 2}$, etc. We regularize the forms as in the previous case. The lists are slightly different, but the details are the same so we are deliberately concise in order to avoid repetition.\\
\indent Let $C, D \geq 2$ be two parameters to be chosen later. We apply Lemma~\ref{symmRespWRegLemma} to the list consisting of forms $\gamma_i(u,v)$, $i \in [s_1]$, $\gamma'_i(u,v)$, $i \in [s_2]$, and $\gamma''_i(u,v)$, $i \in [s_3]$ and Lemma~\ref{symmRegLemmaTimes2} to the list of forms $\tilde{\beta}_i(x_{[3]})$, $i \in [s_1]$, $\tilde{\beta}'_i(x_{[3]})$, $i \in [s_2]$, and $\tilde{\beta}''_i(x_{[3]})$,  $i \in [s_3]$. We obtain $t, q \leq O(r^{O(1)})$, $R \leq C^{D^{O(r^{O(1)}}}$, bilinear forms $\tilde{\gamma}_1, \dots, \tilde{\gamma}_t$ and trilinear forms $\tau_1, \dots, \tau_q$ such that
\begin{itemize}
\item every form in the list of bilinear forms differs from a linear combination of $\tilde{\gamma}_1, \dots, \tilde{\gamma}_t$ by a bilinear form of rank at most $R$,
\item non-zero linear combinations of $\tilde{\gamma}_1, \dots, \tilde{\gamma}_t$ have rank at least $(C(R + 2r))^{D}$,
\item each of the forms ${\tilde{\beta}}_i^{\times 2}$, $\tilde{\beta}'_i {}^{\times 2}$ and $\tilde{\beta}''_i {}^{\times 2}$ differs from a linear combination of $\tau_1^{\times 2}, \dots, \tau_q^{\times 2}$ by a bilinear form of rank at most $R$,
\item every non-zero linear combination of $\tau_1, \dots, \tau_q$ has partition rank at least $(2r + 1)^D$,
\item every non-zero linear combination of $\tau_1^{\times 2}, \dots, \tau_q^{\times 2}$ has rank at least $(2r + 1)^D$,
\item the newly obtained forms are symmetric.
\end{itemize}

We may now pass to a further subspace $U' \leq U$ of codimension $\on{codim}_G U' \leq O(r^{O(1)}R)$ and we may find coefficients $\lambda_{ij}, \lambda'_{ij}, \lambda''_{ij}$ for $i \in [q], j \in [t]$ such that 
\begin{align*}\alpha(d,d, x_3, x_4, x_5) = \sum_{i \in [q], j \in [t]} \lambda_{ij}\tau_i(d, d, x_3) \tilde{\gamma}_j(x_4, x_5) + &\sum_{i \in [q], j \in [t]} \lambda'_{ij}\tau_i(d, d, x_4) \tilde{\gamma}_j(x_3, x_5)\\
 + &\sum_{i \in [q], j \in [t]} \lambda''_{ij}\tau_i(d, d, x_5) \tilde{\gamma}_j(x_3, x_4)\end{align*}
holds for all $d, x_3, x_4, x_5 \in U'$.\\

Recalling that $\alpha$ is symmetric and applying Lemma~\ref{zeromlformindexlemma} to expressions $\alpha(d,d, x_3, x_4, x_5) + \alpha(d,d, x_4, x_3, x_5)$ and $\alpha(d,d, x_3, x_4, x_5) + \alpha(d,d, x_5, x_4, x_3)$ shows that $\lambda_{ij} = \lambda'_{ij}= \lambda''_{ij}$ for all $i,j$. Let us also pass to a further subspace $U'' \leq U'$ consisting of all $u \in U'$ such that $\tilde{\gamma}_j(u,u) = 0$ for all $j \in [t]$, whose codimension is $\on{codim}_G U'' \leq O(r^{O(1)}R)$. Finally, consider 
\begin{align*}\sigma(x_{[5]}) = \sum_{i \in [q], j \in [t]}& \lambda_{ij}\Big(\sum_{\{a,b,c\} \in \binom{[5]}{3}}\tau_{i}(x_a, x_b, x_c) \tilde{\gamma}_i(x_{[5] \setminus \{a,b,c\}})\Big).\end{align*}

This is a symmetric multilinear form with $\on{prank} \sigma \leq O(r^{O(1)})$ and $(\alpha + \sigma)^{\times 2}$ vanishes on $U''$.\end{proof}

\section{Proof of inverse theorem}

In this section we combine Theorems~\ref{approximateSymmetry2},~\ref{mainSymmExtn4},~\ref{mainSymmExtn5} and~\ref{copiesPropertyVanish} with other additive-combinatorial arguments in order to prove our main result.

\begin{proof}[Proof of Theorem~\ref{mainthm}]We prove the claim for $k \in \{4,5\}$ and assume the theorem for $k - 1$. The proof will have the following structure.
\begin{itemize}
\item[\textbf{Step 1.}] We first show that whenever
\begin{equation}\Big|\exx_{a_1, \dots, a_k, x} \mder_{a_1, \dots, a_k} f(x) (-1)^{\alpha(a_1, \dots, a_k)}\Big| \geq c\label{mainthmassncopy}\end{equation}
is satisifed, we may pass to a carefully chosen subspace $U$ on which $\alpha$ has the additional property that the multilinear form $(a_2, \dots, a_k) \mapsto \alpha(u, a_2, \dots, a_k)$ is a sum of a strongly symmetric and a bounded partition rank form for each $u \in U$.
\item[\textbf{Step 2.}] Next, we prove that $\alpha|_{U' \tdt U'}$ is a sum of strongly symmetric and low partition rank form on a suitable subspace $U'$ of bounded codimension.
\item[\textbf{Step 3.}] Using the structure of multilinear form $\alpha|_{U' \tdt U'}$, we deduce that $\alpha$ is of the desired shape.
\end{itemize}

\noindent\textbf{Step 1.} We formulate the work in this step as the following lemma.

\begin{lemma}\label{subspaceAddlProperty}Let $V \leq G$ be a subspace and let $g \colon V \to \mathbb{D}$ be a function. Suppose that 
\begin{equation}\Big| \exx_{x, a_1, \dots, a_k \in V} \mder_{a_1} \dots \mder_{a_k} g(x) (-1)^{\alpha(a_1, \dots, a_k)}\Big| \geq c.\label{subspaceAddlPropertyAssumptionIneq}\end{equation}
Then there exits a subspace $U \leq V$ of codimension at most $O(\log^{O(1)} (2c^{-1}))$ and a function $g' \colon U \to \mathbb{D}$ such that
\begin{equation}\Big| \exx_{x, a_1, \dots, a_k \in U} \mder_{a_1} \dots \mder_{a_k} g'(x) (-1)^{\alpha(a_1, \dots, a_k)}\Big| \geq c^{2^k}\label{subspacepasscorrelationmder}\end{equation}
and, for any $u \in U$, the multilinear form $(a_2, \dots, a_k) \mapsto \alpha(u, a_2, \dots, a_k)$ (where $a_2, \dots, a_k \in U$) is a sum of a strongly symmetric form and a form of partition rank at most $r$ for some $r \leq O(\exp^{(O(1))} c^{-1})$.
\end{lemma}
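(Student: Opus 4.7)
The plan is a pigeonhole plus Bogolyubov-Ruzsa argument combined with the inductive hypothesis (namely, Theorem~\ref{mainthm} at $k-1$). First, by the triangle inequality applied to~\eqref{subspaceAddlPropertyAssumptionIneq} and writing $h_{a_1}(x) = g(x+a_1)\overline{g(x)}$, I would establish
\[\exx_{a_1 \in V} \Bigl| \exx_{x, a_2, \ldots, a_k \in V} \mder_{a_2}\cdots\mder_{a_k} h_{a_1}(x) (-1)^{\alpha(a_1, a_2, \ldots, a_k)}\Bigr| \geq c,\]
from which, by averaging, the set $A \subseteq V$ of $a_1$ for which the inner expression is at least $c/2$ has density at least $c/2$ in $V$. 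For each $a_1 \in A$, the $(k-1)$-multilinear form $\beta_{a_1}(a_2, \ldots, a_k) := \alpha(a_1, a_2, \ldots, a_k)$ together with the function $h_{a_1}$ and correlation parameter $c/2$ satisfy the hypothesis of the inductive statement, so there exists a strongly symmetric $(k-1)$-multilinear form $\sigma_{a_1}$ with $\on{prank}(\beta_{a_1} + \sigma_{a_1}) \leq r_0$ for some $r_0 \leq \exp^{(O(1))}(O(c^{-1}))$.

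Next, I apply Sanders's Bogolyubov-Ruzsa lemma (Theorem~\ref{sandersBogRuzsa}) to $A$ inside the ambient space $V$: this yields a subspace $U \leq V$ of codimension $O(\log^4(c^{-1}))$ with $U \subseteq 4A$. For any $u \in U$, I fix a decomposition $u = a^{(1)} + a^{(2)} + a^{(3)} + a^{(4)}$ with $a^{(i)} \in A$ and write, by multilinearity of $\alpha$ in the first coordinate,
\[\alpha(u, \cdot) = \sum_{i=1}^{4} \sigma_{a^{(i)}} + \sum_{i=1}^{4} \bigl(\alpha(a^{(i)}, \cdot) + \sigma_{a^{(i)}}\bigr).\]
Since strongly symmetric forms form a vector space under addition, the first sum is itself strongly symmetric, and since partition rank is subadditive, the second sum has partition rank at most $4r_0$. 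This yields the required decomposition with $r = 4r_0 = \exp^{(O(1))}(O(c^{-1}))$.

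Finally, for~\eqref{subspacepasscorrelationmder}, I invoke Lemma~\ref{boundedcodimsubspacepass} to pass from $V$ down to $U$, obtaining a function $g' \colon U \to \mathbb{D}$ whose correlation with $(-1)^{\alpha}$ is still at least $c$, and in particular at least $c^{2^k}$. The main conceptual point is the use of Sanders's lemma to upgrade the density-$c/2$ set of ``good'' first coordinates $A$ to a genuine subspace $U$ of poly-logarithmic codimension; beyond this, the argument only relies on the elementary facts that strongly symmetric forms are closed under addition and that partition rank is subadditive, so I do not anticipate any substantial obstacle beyond bookkeeping the constants.
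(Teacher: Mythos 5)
Your proposal is correct and follows essentially the same route as the paper: pigeonhole to get a density-$c/2$ set of good first coordinates, Sanders's Bogolyubov--Ruzsa lemma to place a subspace $U$ inside its fourfold sumset, the inductive hypothesis applied to each derivative $\mder_{a^{(i)}}g$, and multilinearity plus closure of strongly symmetric forms under addition to assemble the decomposition of $\alpha(u,\cdot)$. The only (harmless) difference is at the end: you cite Lemma~\ref{boundedcodimsubspacepass} to pass to $U$ (which even gives correlation $\geq c$), whereas the paper reruns the Gowers--Cauchy--Schwarz argument inline and settles for $c^{2^k}$; both suffice for~\eqref{subspacepasscorrelationmder}.
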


\begin{proof}[Proof of Lemma~\ref{subspaceAddlProperty}]Let $B$ be the set of all $b \in V$ such that
\[\Big|\exx_{x, a_2, \dots, a_k \in V} \mder_b \mder_{a_2} \dots \mder_{a_k} g(x) (-1)^{\alpha(b, a_2, \dots, a_k)}\Big| \geq \frac{c}{2}.\]
From assumption~\eqref{subspaceAddlPropertyAssumptionIneq} we see that $|B| \geq \frac{c}{2}|V|$. By Theorem~\ref{sandersBogRuzsa}, $4B$ contains a subspace $U$ of codimension at most $O(\log^{O(1)} (2c^{-1}))$. Take any $u \in U$. In particular, since $U \subseteq 4B$, $u$ can be written as $b_1 + b_2 + b_3 + b_4$ for some $b_1, b_2, b_3, b_4 \in B$. For each $i \in [4]$ we then have
\[\Big|\exx_{x, a_2, \dots, a_k \in V} \mder_{a_2} \dots \mder_{a_k} \Big( \mder_{b_i}  g\Big)(x) \omega^{\alpha(b_i, a_2, \dots, a_k)}\Big| \geq \frac{c}{2}.\]
By induction hypothesis applied to the function $\mder_{b_i}f$, we deduce that the multilinear form $(a_2, \dots, a_k) \mapsto \alpha(b_i, a_2, \dots, a_k)$ is a sum of a strongly symmetric form and a form of partition rank at most $r$, for some positive quantity $r \leq O(\exp^{(O(1))} c^{-1})$. Thus, the multilinear form $(a_2, \dots, a_k) \mapsto \alpha(u, a_2, \dots, a_k)$ is a sum of a strongly symmetric form and a form of partition rank at most $4r$.\\
\indent We now pass to $U \tdt U$. Take any direct sum $U \oplus W = V$. Going back to~\eqref{subspaceAddlPropertyAssumptionIneq} and using this decomposition of $V$ we obtain
\[\Big|\exx_{\ssk{a'_1, \dots, a'_k, x \in U\\w_1, \dots, w_k, y \in W}} \mder_{a'_1 + w_1} \dots \mder_{a'_k + w_k} g(x + y) (-1)^{\alpha(a'_1 + w_1, \dots, a'_k + w_k)}\Big| \geq c.\]
Average over elements $w_1, \dots, w_k, y \in W$ and use the triangle inequality to find a choice such that
\[\label{chvarineqdecomp}\Big| \exx_{x, a'_1, \dots, a'_k \in U} \mder_{a'_1 + w_1} \dots \mder_{a'_k + w_k} g(x + y) (-1)^{\alpha(a'_1 + w_1, \dots, a'_k + w_k)}\Big| \geq c.\]

We now use Gowers-Cauchy-Schwarz inequality to simplify the expression above. For each subset $I \subseteq [k]$ let $\tilde{g}_I \colon U^k \to \mathbb{D}$ be the function defined by
\[\tilde{g}_I(u_1, \dots, u_k) = g\Big(u_1 + \dots + u_k + y + \sum_{i \in I} w_i\Big) (-1)^{\alpha\big((u_i + w_i)_{i \in I}, (u_i)_{i \in [k] \setminus I}\big)}.\]
With the functions defined this way we get
\[\exx_{x, a'_1, \dots, a'_k \in U} \mder_{a'_1 + w_1} \dots \mder_{a'_k + w_k} g(x + y) (-1)^{\alpha(a'_1 + w_1, \dots, a'_k + w_k)} = \exx_{u_1, u'_1, \dots, u_k, u'_k \in U} \prod_{I \subset [k]} \operatorname{Conj}^{|I|} g_I(u_I, u'_{[k] \setminus I}).\]
The modulus of this expression can be bounded from above using the Gowers-Cauchy-Schwarz inequality by $\|\tilde{g}_{\emptyset}\|_{\square}$. Therefore we obtain 

\[c^{2^k}\leq \|\tilde{g}_{\emptyset}\|_{\square}^{2^k} = \Big| \exx_{x, a_1, \dots, a_k \in U} \mder_{a_1} \dots \mder_{a_k} g(x + y) (-1)^{\alpha(a_1, \dots, a_k)}\Big|.\]

Define $g' \colon U \to \mathbb{C}$ by $g'(x) = g(x + y)$. Hence
\[\Big| \exx_{x, a_1, \dots, a_k \in U} \mder_{a_1} \dots \mder_{a_k} g'(x) (-1)^{\alpha(a_1, \dots, a_k)}\Big| \geq c^{2^k},\]
which takes the same shape as assumption~\eqref{mainthmassncopy}, but this time $\alpha$ has the additional property that, for any $u \in U$, the multilinear form $(a_2, \dots, a_k) \mapsto \alpha(u, a_2, \dots, a_k)$ is a sum of a strongly symmetric form and a form of partition rank at most $4r$.\end{proof}

\noindent\textbf{Step 2.} In this step we show that $\alpha$ can essentially be assumed to be symmetric and to satisfy $\alpha(d,d,a_3, \dots, a_k) = 0$. This is formulated precisely in the following claim.

\begin{claim}Suppose that a function $f \colon G \to \mathbb{D}$ and a multilinear form $\alpha \colon G^k \to \mathbb{F}_2$ satisfy~\eqref{mainthmassncopy}. For each $\ell \in [2,k]$ there exist a subspace $U \leq G$ of codimension at most $O(\exp^{(O(1))} c^{-1})$, a multilinear form $\beta \colon U^k \to \mathbb{F}_2$ and a function $g \colon U \to \mathbb{D}$ such that
\begin{itemize}
\item[\textbf{(i)}] $\beta$ is symmetric in the first $\ell$ variables,
\item[\textbf{(ii)}] $\beta(d,d,a_3, \dots, a_k) = 0$ for all $d, a_3, \dots, a_k \in U$,
\item[\textbf{(iii)}] $\Big|\ex_{x,a_1, \dots, a_k \in U} \mder_{a_1}\dots \mder_{a_k}g(x) (-1)^{\beta(a_1, \dots, a_k)}\Big| \geq \Omega((\exp^{(O(1))} c^{-1})^{-1})$,
\item[\textbf{(iv)}] $\alpha|_{U \tdt U} = \beta + \sigma + \delta$ for a strongly symmetric multilinear form $\sigma \colon U^k \to \mathbb{F}_2$ and a multilinear form $\delta \colon U^k \to \mathbb{F}_2$ of partition rank at most $O(\exp^{(O(1))} c^{-1})$.
\end{itemize}\end{claim}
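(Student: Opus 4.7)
I would prove the claim by induction on $\ell$, starting from $\ell=2$ and terminating at $\ell=k$; the strongly symmetric form $\sigma$ and low-rank residue $\delta$ are tracked throughout the induction and only the ``symmetric part'' $\beta$ is actively modified. The outputs of Step~1 (Lemma~\ref{subspaceAddlProperty}) — a subspace $V_0$, a function $g_0$ with correlation at least $c^{2^k}$, and for each $u\in V_0$ a decomposition of the $(k-1)$-variable slice $(a_2,\dots,a_k)\mapsto\alpha(u,a_2,\dots,a_k)$ as strongly symmetric plus partition rank at most $r=O(\exp^{(O(1))}c^{-1})$ — will be the starting data.

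\textbf{Base case $\ell=2$.} The symmetry argument (Lemma~\ref{symmArgumentCor}) applied to the correlation gives $\on{prank}(\alpha+\alpha\circ(1\,\,2))\leq O(\log^{O(1)} c^{-1})$, so Theorem~\ref{approximateSymmetry2} produces $\tilde\beta$ symmetric in the first two variables with $\on{prank}(\alpha+\tilde\beta)$ bounded, and Lemma~\ref{closeformsreplacementinverse} transfers the correlation to $\tilde\beta$. To prepare Theorem~\ref{copiesPropertyVanish}, I need the $(k-1)$-variable form $(d,a_3,\dots,a_k)\mapsto\tilde\beta(d,d,a_3,\dots,a_k)$ to have bounded partition rank; this is deduced from the per-$u$ structure of Step~1 together with Lemma~\ref{liftingssforms}, which lets the strongly symmetric ``averages'' of the slices $s_u$ be organized into a single global strongly symmetric form $\sigma\colon U^k\to\mathbb{F}_2$ so that $\alpha|_U-\sigma$ has low partition rank in every variable. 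Theorem~\ref{copiesPropertyVanish} then yields a subspace $U\leq V_0$ and a form $\beta$ symmetric in $x_{[2]}$ with $\beta(d,d,a_3,\dots,a_k)=0$ on $U$ and $\on{prank}(\tilde\beta|_U+\beta)$ controlled; the remaining difference from $\alpha|_U$ splits as $\sigma+\delta$ with $\delta$ low rank, completing the base case.

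\textbf{Inductive step $\ell\to\ell+1$.} Assuming the claim at level $\ell$, apply Lemma~\ref{symmArgumentCor} to the correlation inherited by $\beta$ to obtain $\on{prank}(\beta+\beta\circ(\ell\,\,\ell+1))\leq O(\log^{O(1)} c^{-1})$. If $\ell+1$ is odd (so $\ell+1\in\{3,5\}$), the odd-number trick applies: set $\beta'=\sum_{i=1}^{\ell+1}\beta\circ(i\,\,\ell+1)$, which is symmetric in $x_{[\ell+1]}$ by symmetry of $\beta$ in $x_{[\ell]}$, and since $\ell$ is even one has $\beta+\beta'=\sum_{i=1}^{\ell}(\beta+\beta\circ(i\,\,\ell+1))$ of low partition rank. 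If $\ell+1$ is even (i.e.\ $\ell+1=4$), apply Theorem~\ref{mainSymmExtn4} for $k=4$ — where the hypothesis $\beta(d,d,a_3,a_4)=0$ built into the induction is exactly what is needed — or Theorem~\ref{mainSymmExtn5} for $k=5$, in each case obtaining $\beta'$ symmetric in $x_{[\ell+1]}$ with $\on{prank}(\beta+\beta')$ bounded. After this, the $\beta'(d,d,\dots)=0$ property may be lost, but the failure has low partition rank (bounded by $\on{prank}(\beta+\beta')$), so Theorem~\ref{copiesPropertyVanish} restores it on a further subspace at the cost of absorbing bounded-rank residues into $\delta$. Throughout, Lemma~\ref{boundedcodimsubspacepass} passes the correlation to the subspace, Lemma~\ref{closeformsreplacementinverse} transfers it through the low-rank modifications, and $\sigma$ is unchanged since all modifications added to $\beta$ are balanced by low-rank contributions to $\delta$. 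After finitely many steps we reach $\ell=k$.

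\textbf{Main obstacle.} The subtlest point is the base case: manufacturing a \emph{global} strongly symmetric $\sigma$ from the per-slice strongly-symmetric-plus-low-rank description of $\alpha(u,\cdot)$ provided by Step~1, and using it to certify low partition rank of the diagonal form $\tilde\beta(d,d,\cdot)$ before Theorem~\ref{copiesPropertyVanish} can be invoked; this is where the lifting Lemma~\ref{liftingssforms} and a linearization argument in $u$ do the real work. A secondary obstacle is the $\ell=3\to 4$ step when $k=4$, since Theorem~\ref{mainSymmExtn4} requires the vanishing-on-diagonal condition that is precisely why the claim has been formulated to carry $\beta(d,d,\dots)=0$ throughout the induction. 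Finally, one must verify that the codimensions (additive, polynomial) and partition-rank residues (multiplicative, doubly exponential) accumulate to the stated bound $O(\exp^{(O(1))}c^{-1})$ after at most $k-1$ iterations.
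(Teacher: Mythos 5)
Your skeleton matches the paper's: induction on $\ell$, base case via Lemma~\ref{symmArgumentCor} and Theorem~\ref{approximateSymmetry2}, the odd-$(\ell+1)$ summation trick, Theorems~\ref{mainSymmExtn4}/\ref{mainSymmExtn5} for $\ell+1=4$, and Theorem~\ref{copiesPropertyVanish} to restore the diagonal vanishing. Your inductive-step handling of property \textbf{(ii)} is in fact a legitimate (and slightly cleaner) variant: since $\beta(d,d,\cdot)=0$ and $\beta+\beta'$ is symmetric in the first two variables with bounded partition rank, Lemma~\ref{times2boundedRankLemma} bounds the partition rank of $\beta'(d,d,\cdot)$ directly, so Theorem~\ref{copiesPropertyVanish} applies without further preparation.

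However, there is a genuine gap in the base case, precisely at the point you flag as "the main obstacle." You propose to control the diagonal form $(d,a_3,\dots,a_k)\mapsto\tilde\beta(d,d,a_3,\dots,a_k)$ by assembling the per-$u$ decompositions of the slices $\alpha(u,\cdot)$ (from Lemma~\ref{subspaceAddlProperty}) into a single global strongly symmetric $\sigma$ via Lemma~\ref{liftingssforms} and "a linearization argument in $u$." This does not work as stated: the strongly symmetric part and the low-rank part of the slice at $u$ are not canonical and need not depend on $u$ linearly or even coherently, so there is no global form whose slices they are; moreover the diagonal is not a slice, and knowing each fixed-$u$ slice is strongly symmetric plus rank $\leq r$ gives no bound whatsoever on the partition rank of the diagonal as a $(k-1)$-linear form. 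The paper's actual mechanism is different and essential: substitute $a_1=b+d$, $a_2=d$ so that $\mder_{b+d}\mder_d\cdots g'(x)=\mder_d\cdots h(x)$ with $h(x)=\overline{g'(x)g'(x+b)}$, then invoke the case $k-1$ of Theorem~\ref{mainthm} (the outer induction on $k$, which your proposal never uses) for the $(k-1)$-linear form $(d,a_3,\dots,a_k)\mapsto\alpha'(b+d,d,a_3,\dots,a_k)$; subtracting the already-known structure of the slice at $b$ and using multilinearity yields that $\alpha'(d,d,\cdot)$ is strongly symmetric plus low rank. Even then Theorem~\ref{copiesPropertyVanish} cannot yet be applied: one must first lift the strongly symmetric diagonal part via Lemma~\ref{liftingssforms}, add the lift $\tilde\sigma$ to $\alpha'$, and twist $g'$ by the non-classical polynomial phase from Lemma~\ref{ssformintegration} to preserve property \textbf{(iii)} — only after this subtraction is the diagonal of the modified form genuinely of low partition rank. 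These steps are absent from your argument, and without them the base case (and hence the whole induction) does not close.
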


\begin{proof}We prove the claim by induction on $\ell$. First we prove the base case $\ell = 2$. By Lemma~\ref{symmArgumentCor} we see that $\on{prank}(\alpha + \alpha \circ (1\,\,2)) \leq O\Big((\log c^{-1})^{O(1)}\Big)$. Theorem~\ref{approximateSymmetry2} produces a multilinear form $\beta \colon G^k \to \mathbb{F}_2$, symmetric in the first two variables which differs from $\alpha$ by a multilinear form of partition rank at most $O(\exp^{(O(1))} c^{-1})$. This immediately gives property \textbf{(iv)} with $\sigma = 0$. Lemmas~\ref{closeformsreplacementinverse} and~\ref{boundedcodimsubspacepass} imply property \textbf{(iii)}. Note that the proof of the base case is not yet complete as we have not addressed the $\beta(d,d,a_3, \dots, a_k) = 0$ property. We do this now in a more general form, which will also be used in the inductive step.\\

\noindent\textbf{Obtaining property \textbf{(ii)}.} Suppose that we are given a subspace $U \leq G$ of codimension at most $O(\exp^{(O(1))} c^{-1})$, a multilinear form $\alpha' \colon U^k \to \mathbb{F}_2$ and a function $g \colon U \to \mathbb{D}$ which satisfy conditions \textbf{(i)}, \textbf{(iii)} and \textbf{(iv)} (with $\beta$ replaced by $\alpha'$). Since $\alpha'$ satisfies property \textbf{(iii)}, we may use Lemma~\ref{subspaceAddlProperty} to pass to a further subspace $U' \leq U$ of codimension at most $O(\exp^{(O(1))} c^{-1})$ and to find a function $g' \colon U' \to \mathbb{D}$ such that
\begin{equation}\Big|\ex_{x,a_1, \dots, a_k \in U'} \mder_{a_1}\dots \mder_{a_k}g'(x) (-1)^{\alpha'(a_1, \dots, a_k)}\Big| \geq c'\label{step2mainproofgprimecorrneqn}\end{equation}
where $c' \geq \Omega((\exp^{(O(1))} c^{-1})^{-1})$, and for any $u \in U'$, the multilinear form $(a_2, \dots, a_k) \mapsto \alpha'(u, a_2, \dots, a_k)$ (where $a_2, \dots, a_k \in U'$) is a sum of a strongly symmetric form and a form of partition rank at most $r$ for some $r \leq O(\exp^{(O(1))} c^{-1})$.\\

By making a slight change of variables we obtain
\[\Big|\ex_{x,b,d,a_3, \dots, a_k \in U'} \mder_{b+d} \mder_d\mder_{a_3}\dots \mder_{a_k}g'(x) (-1)^{\alpha'(b+d, d, a_3, \dots, a_k)}\Big| \geq c'.\]
We may find $b \in U'$ such that 
\[\Big| \exx_{x, d, a_3, \dots, a_k} \mder_{b +d} \mder_d \mder_{a_3} \dots \mder_{a_k} g'(x) (-1)^{\alpha'(b + d, d, a_3, \dots, a_k)}\Big| \geq c'.\]
However, 
\[\mder_{b +d} \mder_d \mder_{a_3} \dots \mder_{a_k} g'(x) = \mder_d \mder_{a_3} \dots \mder_{a_k} h(x),\]
where $h(x) = \overline{g'(x) g'(x + b)}$. Thus
\[\Big| \exx_{x, d, a_3, \dots, a_k} \mder_d \mder_{a_3} \dots \mder_{a_k} h(x) (-1)^{\alpha'(b + d, d, a_3, \dots, a_k)}\Big| \geq c'.\]

We may apply the case $k-1$ of the theorem to the multilinear form $(d, a_3, \dots, a_k) \mapsto \alpha'(b + d, d, a_3, \dots, a_k)$ (note that this is still multilinear as $\alpha'$ is symmetric in the first two variables) to conclude that $(d, a_3, \dots, a_k) \mapsto \alpha'(b + d, d, a_3, \dots, a_k)$ is a sum of a strongly symmetric multilinear form and a multilinear form of partition rank at most $O(\exp^{(O(1))} c^{-1})$ on subspace $U'$. But, recall that a similar property holds for the multilinear form $(d, a_3, \dots, a_k) \mapsto \alpha'(b, d, a_3, \dots, a_k)$ by our choice of the subspace $U'$. Thus, we conclude that there exist a strongly symmetric mulitlinear form $\sigma' \colon (U')^{k-1} \to \mathbb{F}_2$ and a multilinear form $\delta' \colon (U')^{k-1} \to \mathbb{F}_2$ of partition rank at most $O(\exp^{(O(1))} c^{-1})$ such that for all $d, a_3, \dots, a_k \in U'$ 
\begin{equation}\label{alphaprimedoubledeqn}\alpha'(d,d,a_3, \dots, a_k) = \sigma'(d,a_3, \dots, a_k) + \delta'(d,a_3, \dots, a_k)\end{equation}
holds. By Lemma~\ref{liftingssforms} there exists a strongly symmetric multilinear form $\tilde{\sigma} \colon (U')^k \to \mathbb{F}_2$ such that 
\begin{equation}\label{tildesigmalifteqn}\tilde{\sigma}(d,d,a_3, \dots, a_k) = \sigma'(d,a_3, \dots, a_k).\end{equation}

Let $\tilde{\alpha} = \alpha'|_{U'\tdt U'} + \tilde{\sigma}$, which is still symmetric in the first $\ell$ variables. Using Lemma~\ref{ssformintegration} we may find phase $s \colon U' \to \mathbb{D}$ of a non-classical polynomial such that $\mder_{a_1} \dots \mder_{a_k} s(x) = (-1)^{\tilde{\sigma}(a_1, \dots, a_k)}$ for all $a_1, \dots, a_k, x \in U'$. Putting $\tilde{g}(x) = g'(x) s(x)$, from~\eqref{step2mainproofgprimecorrneqn} we obtain
\begin{equation}\Big|\ex_{x,a_1, \dots, a_k \in U'} \mder_{a_1}\dots \mder_{a_k} \tilde{g}(x) (-1)^{\tilde{\alpha}(a_1, \dots, a_k)}\Big| \geq c'.\label{tildealphacorrelationeqn}\end{equation}
On the other hand, from~\eqref{alphaprimedoubledeqn} and~\eqref{tildesigmalifteqn}, for all $d, a_3, \dots, a_k \in U'$ we see that 
\begin{align*}\tilde{\alpha}(d,d,a_3, \dots, a_k) = &\alpha'(d,d,a_3, \dots, a_k) + \tilde{\sigma}(d,d,a_3, \dots, a_k) \\
=& \sigma'(d,a_3, \dots, a_k) + \delta'(d,a_3, \dots, a_k) +  \sigma'(d,a_3, \dots, a_k) = \delta'(d,a_3, \dots, a_k)\end{align*}
which has partition rank at most $O(\exp^{(O(1))} c^{-1})$. By Theorem~\ref{copiesPropertyVanish} we conclude that there exist a subspace $U'' \leq U'$ of codimension $O(\exp^{(O(1))} c^{-1})$ and a multilinear form $\beta \colon (U'')^k \to \mathbb{F}_2$, symmetric in first $\ell$ variables, such that $\beta(d,d,a_3, \dots, a_k) = 0$ for all $d, a_3, \dots, a_k \in U''$ and $\on{prank}(\tilde{\alpha}|_{U'' \tdt U''} + \beta) \leq O(\exp^{(O(1))} c^{-1})$. Since we need to pass to further subspace $U''$, we use~\eqref{tildealphacorrelationeqn} and apply Lemma~\ref{boundedcodimsubspacepass} which provides us with a function $h \colon U'' \to \mathbb{D}$ such that
\[\Big|\ex_{x,a_1, \dots, a_k \in U''} \mder_{a_1}\dots \mder_{a_k} h(x) (-1)^{\tilde{\alpha}(a_1, \dots, a_k)}\Big| \geq c'.\]
Since $\on{prank}(\tilde{\alpha}|_{U'' \tdt U''} + \beta) \leq O(\exp^{(O(1))} c^{-1})$, Lemma~\ref{closeformsreplacementinverse} allows us to conclude that $\beta$ satisfies property \textbf{(iii)} with function $h$ on subspace $U''$. Finally, writing $\tilde{\delta} = \tilde{\alpha}|_{U'' \tdt U''} + \beta$, we have
\begin{align*}\alpha|_{U'' \tdt U''} = &\alpha'|_{U'' \tdt U''} + \sigma|_{U'' \tdt U''} + \delta|_{U'' \tdt U''} \\
= &\tilde{\alpha}|_{U'' \tdt U''} + (\tilde{\sigma}|_{U'' \tdt U''} + \sigma|_{U'' \tdt U''}) + \delta|_{U'' \tdt U''}\\
= &\beta + (\tilde{\sigma}|_{U'' \tdt U''} + \sigma|_{U'' \tdt U''}) + (\delta|_{U'' \tdt U''} + \tilde{\delta})\end{align*}
proving property \textbf{(iv)}. In particular, when $\ell = 2$ this argument allows us to complete the base case, so we now move on to proving the inductive step.\\

\noindent\textbf{Inductive step.} Suppose now that the claim holds for some $\ell \geq 2$. Let $U, \beta$ and $g$ be the relevant subspace, multilinear form and function for $\ell$. Since $\beta$ satisfies property \textbf{(iii)}, we may use Lemma~\ref{symmArgumentCor} to conclude that $\on{prank}(\beta + \beta \circ (1 \,\, \ell + 1)) \leq O(\exp^{(O(1))} c^{-1})$. When $\ell + 1$ is odd, set $\beta' = \sum_{j \in [\ell + 1]} \beta \circ (j\,\,\ell + 1)$. If $\ell + 1$ is even, so it has to be $\ell + 1 = 4$, we apply Theorem~\ref{mainSymmExtn4} when $k =4$ and Theorem~\ref{mainSymmExtn5} when $k =5$. In all the described cases, we conclude that there exists a further multilinear form $\beta' \colon U^k \to \mathbb{F}_2$, symmetric in the first $\ell + 1$ variables such that $\on{prank}(\beta + \beta') \leq O(\exp^{(O(1))} c^{-1})$. Using Lemma~\ref{closeformsreplacementinverse} we immediately see that $\beta'$ satisfies conditions \textbf{(i)}, \textbf{(iii)} and \textbf{(iv)} for $\ell + 1$. The previous part of this proof allows us to pass to a further multilinear form that satisfies property \textbf{(ii)} as well, completing the proof of the claim.\end{proof}

\noindent\textbf{Step 3.} We may now apply the claim above with $\ell = k$ to see that on a subspace $U$ of codimension $O(\exp^{(O(1))} c^{-1})$ we have $\alpha|_{U\tdt U} = \sigma + \delta$ for a strongly symmetric multilinear form $\sigma \colon U^k \to \mathbb{F}_2$ and a multilinear form $\delta \colon U^k \to \mathbb{F}_2$ of partition rank at most $O(\exp^{(O(1))} c^{-1})$. Pick any projection $\pi \colon G \to U$ and define $\tilde{\sigma}, \tilde{\delta} \colon G \tdt G \to \mathbb{F}_2$ by setting
\[\tilde{\sigma}(x_1, \dots, x_k) = \sigma(\pi(x_1), \dots, \pi(x_k))\hspace{1cm}\text{and}\hspace{1cm}\tilde{\delta}(x_1, \dots, x_k) = \delta(\pi(x_1), \dots, \pi(x_k)).\]
Clearly, $\tilde{\sigma}$ is strongly symmetric, while $\tilde{\delta}$ is of bounded partition rank and $\tilde{\sigma}|_{U \tdt U} = \sigma$, $\tilde{\delta}|_{U \tdt U} = \delta$. Set $\rho = \alpha + \tilde{\sigma} + \tilde{\delta}$ which is a multilinear form satisfying $\rho = 0$ on $U \tdt U$. It remains to show that such a map has low partition rank. Let $c''$ be the density of $U$. Then by Lemma~\ref{unifBound}
\[{c''}^k \leq \exx_{x_1, \dots, x_k \in G}(-1)^{\rho(x_1, \dots, x_k)} \id_U(x_1) \cdots \id_U(x_k) \leq \| (-1)^{\rho} \|_{\square^k} =  (\on{bias} \rho)^{2^{-k}}.\]
By Theorem~\ref{biasedinversethm}, it follows that the partition rank of $\rho$ is at most $O(\exp^{(O(1))} c^{-1})$. Thus,
\[\alpha = \tilde{\sigma} + (\tilde{\delta} + \rho),\]
completing the proof.\end{proof}

Corollary~\ref{inverseu5u6cor} follows from Theorems~\ref{partialInverseTheorem},~\ref{mainthm}, Lemma~\ref{closeformsreplacementinverse} and lower order inverse theorems.

\section{Concluding remarks}

\hspace{12pt}\indent Let us now briefly return to the following question which is central to this paper. Suppose that $\alpha \colon G^{2k} \to \mathbb{F}_2$ is a multilinear form which is symmetric in the first $2k-1$ variables and satisfies $\on{prank} \Big(\alpha + \alpha\circ(2k-1 \,\,2k)\Big) \leq r$. Is $\alpha$ close to a symmetric multilinear form? While we know that the answer is negative, the arguments in the proof of Theorem~\ref{mainSymmExtn5} could be used to show that we may modify $\alpha$ until the low partition rank decomposition of $\alpha + \alpha\circ(2k-1 \,\,2k)$ only has products $\sigma(x_I, x_{2k-1}) \cdot \sigma(x_{[2k-2] \setminus I}, x_{2k})$ for a symmetric multilinear form $\sigma$. In other words, using the places terminology from the proof of Theorem~\ref{mainSymmExtn5}, we first need to make a distinction  between the cases when two places $A$ and $B$ (not necessarily distinct) occur in the same form inside the product, having coefficient $\lambda^{\text{same}}_{A\,\,B}$, or they occur in different forms, having coefficient $\lambda^{\text{diff}}_{A\,\,B}$. It turns out that we may still prove various equalities between coefficients, but the single piece of information that remains elusive is in the case when the places $A$ and $B$ are the same place in a symmetric form. For example looking at the product $\sigma(x_1, \dots, x_{2k-1})\sigma(x_{2k}, \dots)$, which has zero coefficient in $\phi + \phi \circ (1\,\,2k-1) + \phi \circ (1\,\,2k)$, where $\phi = \alpha + \alpha\circ(2k-1 \,\,2k)$, only proves $\lambda^{\text{same}}_{A\,\,A} = 0$ and says nothing about $\lambda^{\text{diff}}_{A\,\,B}$. Of course, the reason for that is the counterexample~\cite{FarSymm} which is multilinear form $\alpha \colon G^4 \to \mathbb{F}_2$ such that $\alpha$ is symmetric in the first 3 variables and satisfies 
\[\alpha(x_1, x_2, x_3, x_4) + \alpha(x_1, x_2, x_4, x_3) = \sigma(x_1, x_3)\sigma(x_2, x_4) + \sigma(x_2, x_3)\sigma(x_1, x_4)\]
for a high rank symmetric bilinear form $\sigma$. The arguments above actually show that the given counterexample is essentially the only way for the symmetry extension to fail. Note also that when $k =4$, we were able to overcome this difficulty by having an additional algebraic property of $\alpha$ that $\alpha(u,u,x_3, x_4) = 0$, which removed the problematic products of forms from $\alpha + \alpha\circ(2k-1 \,\,2k)$. Let us also remark the symmetry-respecting weak regularity lemma (Lemma~\ref{symmRespWRegLemma}) holds for any number of variables.\\

\indent With this in mind, we believe that resolving the following problem, combined with arguments in this paper, should lead to a solution of Problem~\ref{corrtossymm} and thus, paired with Theorem~\ref{partialInverseTheorem}, to a quantitative inverse theorem for Gowers uniformity norms in low characteristic.

\begin{conjecture}\label{finaldiffconj}Suppose that $\alpha \colon G^{2k}\to\mathbb{F}_2$ is a multilinear form which is symmetric in the first $2k-1$ variables such that
\[\alpha(x_{[2k]}) + \alpha(x_{[2k-2]}, x_{2k}, x_{2k-1}) = \sum_{I \in \binom{[2k-2]}{k-1}} \sigma(x_I, x_{2k-1})\sigma(x_{[2k-2] \setminus I}, x_{2k})\]
for a symmetric multilinear form $\sigma \colon G^k \to \mathbb{F}_2$. Suppose that $f \colon G \to \mathbb{D}$ is a function such that
\[\Big| \exx_{x, a_1, \dots, a_{2k}} \mder_{a_1} \dots \mder_{a_{2k}} f(x) \omega^{\alpha(a_1, \dots, a_{2k})}\Big| \geq c.\]
Then $\on{prank} \sigma \leq \exp^{(O_k(1))}(O_{k}(c^{-1}))$.
\end{conjecture}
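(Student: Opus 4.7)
The overall plan is to combine the symmetry argument for multilinear spectra with a regularisation of the form $\sigma$, using the correlation hypothesis in two different ways: once globally via Lemma~\ref{symmArgumentCor} and once locally via Lemma~\ref{subspaceAddlProperty}. First I would apply Lemma~\ref{symmArgumentCor} to the correlation condition with $\pi = (2k-1\,\,2k)$, which, together with the hypothesised identity, immediately shows that the ``obstruction form''
\[S(x_{[2k]}) = \sum_{I \in \binom{[2k-2]}{k-1}} \sigma(x_I, x_{2k-1})\sigma(x_{[2k-2]\setminus I}, x_{2k})\]
has partition rank at most $R_0 := O((\log c^{-1})^{O(1)})$. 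The other transpositions $(i\,\,2k)$ for $i \in [2k-2]$, combined with the fact that $\alpha$ is already symmetric in $x_{[2k-1]}$, should yield additional identities in the spirit of Lemma~\ref{weaksymmetryextends} expressing each $\alpha + \alpha \circ \pi$ as a similarly structured low-partition-rank sum involving $\sigma$.

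Next, assuming Theorem~\ref{mainthm} inductively for $(2k-1)$ variables, I would apply Lemma~\ref{subspaceAddlProperty} to pass to a subspace $U \leq G$ of codimension at most $O(\exp^{(O(1))}(c^{-1}))$ on which, for each $u \in U$, the slice $(a_2,\dots,a_{2k}) \mapsto \alpha(u, a_2,\dots, a_{2k})$ is the sum of a strongly symmetric form and a form of partition rank at most $r \leq O(\exp^{(O(1))}(c^{-1}))$. Setting $x_1 = u$ in the hypothesised identity, the strongly symmetric piece on the left-hand side is invariant under $(2k-1\,\,2k)$ and cancels, leaving a low-partition-rank identity. Unfolding the right-hand side and writing $\sigma_u(y_{[k-1]}) = \sigma(u, y_{[k-1]})$, this becomes
\[\sum_{J \in \binom{[2,2k-2]}{k-2}} \Bigl[\sigma_u(x_J, x_{2k-1})\sigma(x_{[2,2k-2]\setminus J}, x_{2k}) + \sigma_u(x_J, x_{2k})\sigma(x_{[2,2k-2]\setminus J}, x_{2k-1})\Bigr]\]
having partition rank at most $r$, for every $u \in U$.

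The final step would be to convert the low partition rank of $S$ and of the family of $\sigma_u$-identities into a low-partition-rank decomposition of $\sigma$ itself, by an algebraic regularity method analogous to those developed in Section~4. Concretely, I would use Proposition~\ref{lcmlformsdecomppropo} to express $S$ and the slice identities with factors coming from slices of $\sigma$, then apply the symmetry-respecting regularity lemma (Lemma~\ref{symmRespWRegLemma}) together with Lemma~\ref{symmRegLemmaTimes2} to produce a regularised family representing $\sigma$. Combining Lemma~\ref{zeromlformindexlemma} with Theorem~\ref{biasedinversethm} would then force the number of regularised pieces to be bounded in terms of $c$, giving $\on{prank}(\sigma) \leq \exp^{(O(1))}(c^{-1})$.

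The principal obstacle is precisely this last step. The counterexample underlying Theorem~\ref{4varscounter} is exactly an instance of the form $S$ with $\sigma$ of high partition rank, so low partition rank of $S$ alone cannot suffice; the slice-level information arising from the correlation hypothesis must be used essentially. Identifying the extra algebraic constraint that rules out the counterexample --- and, crucially, converting it into quantitative control on $\sigma$ without losing the correlation structure --- is where I expect genuinely new ideas to be required, and is presumably why the authors state the result as Conjecture~\ref{finaldiffconj} rather than a theorem.
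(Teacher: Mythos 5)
This statement is Conjecture~\ref{finaldiffconj}: the paper does not prove it, and explicitly presents it as the missing ingredient for the general-$k$ inverse theorem. Your proposal is likewise not a proof, and you say as much in your final paragraph, so the honest verdict is that the gap you name is the entire content of the statement. Let me make the gap more concrete than you do. The form $S(x_{[2k]})=\sum_I \sigma(x_I,x_{2k-1})\sigma(x_{[2k-2]\setminus I},x_{2k})$ has partition rank at most $\binom{2k-2}{k-1}=O_k(1)$ \emph{unconditionally}, since every summand is a product of two multilinear forms on complementary sets of variables. The same is true of every slice $S(u,x_{[2,2k]})$. Consequently, your Step 1 (Lemma~\ref{symmArgumentCor} applied to $(2k-1\;2k)$) and Step 2 (the slice identities obtained from Lemma~\ref{subspaceAddlProperty}) produce statements that hold for \emph{every} symmetric $\sigma$, including one of partition rank $\Omega(\sqrt[3]{n})$ --- this is exactly the counterexample of Theorem~\ref{4varscounter}, as the concluding remarks point out. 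The regularity machinery of Section~4 (Proposition~\ref{lcmlformsdecomppropo}, Lemmas~\ref{symmRespWRegLemma}, \ref{symmRegLemmaTimes2}, \ref{zeromlformindexlemma}) only re-expresses or regularises an already-given low-partition-rank decomposition; none of it can certify that an individual factor $\sigma$ of such a decomposition has low partition rank. So your Step 3 cannot work as described: the conclusion $\on{prank}\sigma\leq\exp^{(O(1))}(c^{-1})$ must come from using the correlation with $f$ in a way that goes beyond the symmetry argument, e.g.\ by extracting bias or correlation information about $\sigma$ itself from the $\mathsf{U}$-type average, and no step of your outline does this.

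A secondary issue: invoking Theorem~\ref{mainthm} ``inductively for $2k-1$ variables'' in Step 2 presupposes the inverse theorem at level $2k-1$, which in the paper's intended scheme is itself only available once Conjecture~\ref{finaldiffconj} (or Problem~\ref{corrtossymm}) has been resolved at lower levels; this is admissible inside a global induction on $k$ but should be stated as such. In summary, your proposal is a reasonable restatement of the known reductions together with an accurate identification of where the open problem lies, but it does not prove the conjecture, and its first two steps in fact yield no information about $\sigma$ at all.
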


The bounds in the conjecture are chosen to be in line with other bounds in this paper, but we suspect that the conjecture holds with the bound of the shape $\on{prank} \sigma \leq O_{k} (\log c^{-1})$.\\

Let us more generally pose the following more open-ended question.

\begin{question}Suppose that $\alpha \colon G^{2k} \to \mathbb{F}_2$ is a multilinear form which is symmetric in the first $2k-1$ variables and satisfies $\on{prank} \Big(\alpha + \alpha\circ(2k-1 \,\,2k)\Big) \leq r$. Under what algebraic condition on $\alpha$ can we guarantee to find a symmetric multilinear form $\alpha' \colon G^{2k} \to \mathbb{F}_2$ which satisfies $\on{prank} \Big(\alpha + \alpha'\Big) \leq O_r(1)$?\end{question}

\thebibliography{99}

\bibitem{Austin1} T. Austin, \emph{Partial difference equations over compact Abelian groups, I: modules of solutions}, arXiv preprint (2013), \verb+arXiv:1305.7269+.

\bibitem{Austin2} T. Austin, \emph{Partial difference equations over compact Abelian groups, II: step-polynomial solutions}, arXiv preprint (2013), \verb+arXiv:1309.3577+.

\bibitem{BergelsonTaoZiegler} V. Bergelson, T. Tao and T. Ziegler, \emph{An inverse theorem for the uniformity seminorms associated with the action of $\mathbb{F}^{\infty}_p$}, Geometric and Functional Analysis \textbf{19} (2010), 1539--1596.

\bibitem{BhowLov} A. Bhowmick and S. Lovett, \emph{Bias vs structure of polynomials in large fields, and applications in effective algebraic geometry and coding theory}, arXiv preprint (2015), \verb+arXiv:1506.02047+. 

\bibitem{CamSzeg} O.A. Camarena and B. Szegedy, \emph{Nilspaces, nilmanifolds and their morphisms}, arXiv preprint (2010), \verb+arXiv:1009.3825+.

\bibitem{CandelaNotes1} P. Candela, \emph{Notes on nilspaces: algebraic aspects}, Discrete Analysis paper no. 15 (2017), 1--59.

\bibitem{CandelaNotes2} P. Candela, \emph{Notes on compact nilspaces}, Discrete Analysis paper no. 16 (2017), 1--57.

\bibitem{nilspacesCharp} P. Candela, D. Gonz\'alez-S\'anchez and B. Szegedy, \emph{On higher-order fourier analysis in characteristic}, arXiv preprint (2021), \verb+arXiv:2109.15281+

\bibitem{CandelaSzegedy1}P. Candela and B. Szegedy, \emph{Nilspace factors for general uniformity seminorms, cubic exchangeability and limits}, Memoirs of the American Mathematical Society, \emph{to appear}.

\bibitem{CandelaSzegedy2}P. Candela and B. Szegedy,\emph{Regularity and inverse theorems for uniformity norms on compact abelian groups and nilmanifolds}, arXiv preprint (2019), \verb+arXiv:1902.01098+.  

\bibitem{TimSze} W.T. Gowers, \emph{A new proof of Szemer\'edi's theorem}, Geometric and Functional Analysis \textbf{11} (2001), 465--588.

\bibitem{U4paper} W.T. Gowers and L. Mili\'cevi\'c, \emph{A quantitative inverse theorem for the $U^4$ norm over finite fields}, arXiv preprint (2017), \verb+arXiv:1712.00241+.

\bibitem{extensionsPaper} W.T. Gowers and L. Mili\'cevi\'c, \emph{A note on extensions of multiliear maps defined on multilinear varieties}, Proceedings of the Edinburgh Mathematical Society \textbf{64}, no. 2 (2021), 148--173.

\bibitem{multihomPaper} W.T. Gowers and L. Mili\'cevi\'c, \emph{An inverse theorem for Freiman multi-homomorphisms}, arXiv preprint (2020), \verb+arXiv:2002.11667+. 

\bibitem{TimWolf} W.T. Gowers and J. Wolf, \emph{Linear forms and higher-degree uniformity functions on $\mathbb{F}^n_p$}, Geometric and Functional Analysis \textbf{21} (2011), 36--69.

\bibitem{StrongU3} B. Green and T. Tao, \emph{An inverse theorem for the Gowers $U^3(G)$-norm}, Proceedings of the Edinburgh Mathematical Society \textbf{51} (2008), 73--153.

\bibitem{GreenTaoPolys} B. Green and T. Tao. \emph{The distribution of polynomials over finite fields, with applications to the Gowers norms}, Contributions to Discrete Mathematics \textbf{4} (2009), no. 2, 1--36.

\bibitem{GreenTaoPrimes} B. Green and T. Tao, \emph{Linear equations in primes}, Annals of Mathematics \textbf{171} (2010), no. 3, 1753--1850.

\bibitem{StrongUkZ} B. Green, T. Tao and T. Ziegler, \emph{An inverse theorem for the Gowers $U^{s+1}[N]$-norm}, Annals of Mathematics \textbf{176} (2012), 1231--1372.

\bibitem{GMV1} Y. Gutman, F. Manners and P. Varj\'u, \emph{The structure theory of Nilspaces I}, Journal d'Analyse Math\'ematique \textbf{140} (2020), 299--369.

\bibitem{GMV2} Y. Gutman, F. Manners and P. Varj\'u, \emph{The structure theory of Nilspaces II: Representation as nilmanifolds}, Transactions of the American Mathematical Society \textbf{371} (2019), 4951--4992.

\bibitem{GMV3} Y. Gutman, F. Manners and P. Varj\'u, \emph{The structure theory of Nilspaces III: Inverse limit representations and topological dynamics}, Advances in Mathematics \textbf{365} (2020), 107059.

\bibitem{U3AsgarTao}A. Jamneshan and T. Tao, \emph{The inverse theorem for the $U^3$ Gowers uniformity norm on arbitrary finite abelian groups: Fourier-analytic and ergodic approaches}, arXiv preprint (2021), \verb+arXiv:2112.13759+.

\bibitem{Janzer2} O. Janzer, \emph{Polynomial bound for the partition rank vs the analytic rank of tensors}, Discrete Analysis, paper no. 7 (2020), 1--18.

\bibitem{Manners} F. Manners, \emph{Quantitative bounds in the inverse theorem for the Gowers $U^{s+1}$-norms over cyclic groups}, arXiv preprint (2018), \verb+arXiv:1811.00718+.

\bibitem{LukaRank} L. Mili\'cevi\'c, \emph{Polynomial bound for partition rank in terms of analytic rank}, Geometric and Functional Analysis \textbf{29} (2019), 1503--1530.

\bibitem{DirGU} L. Mili\'cevi\'c, \emph{An inverse theorem for certain directional Gowers uniformity norms}, arXiv preprint (2021), \verb+arXiv:2103.06354+. 

\bibitem{FarSymm} L. Mili\'cevi\'c, \emph{Approximately symmetric forms far from being exactly symmetric}, arXiv preprint (2021), \verb+arXiv:2112.14755+. 

\bibitem{Naslund}  E. Naslund, \emph{The partition rank of a tensor and $k$-right corners in $\mathbb{F}_q^n$}, Journal of Combinatorial Theory, Series A \textbf{174} (2020), 105190.

\bibitem{SamorU3} A. Samorodnitsky, \emph{Low-degree tests at large distances} in \emph{STOC'07-Proceedings of the 39\tss{th} Annual ACM Symposium on Theory of Computing}, ACM, New York (2007), 506--515.

\bibitem{Sanders} T. Sanders, \emph{On the Bogolyubov-Ruzsa lemma}, Analysis \& PDE \textbf{5} (2012), no. 3, 627--655.

\bibitem{Szeg} B. Szegedy, \emph{On higher order Fourier analysis}, arXiv preprint (2012), \verb+arXiv:1203.2260+.

\bibitem{TaoZiegler} T. Tao and T. Ziegler, \emph{The inverse conjecture for the Gowers norm over finite fields in low characteristic}, Annals of Combinatorics \textbf{16} (2012), 121--188.

\bibitem{Tidor} J. Tidor, \emph{Quantitative bounds for the $U^4$-inverse theorem over low characteristic finite fields}, arXiv preprint (2021), \verb+arXiv:2109.13108+.

\end{document}